\DeclareSymbolFont{cyrletters}{OT2}{wncyr}{m}{n}
\DeclareMathSymbol{\Sha}{\mathalpha}{cyrletters}{"58}
\DeclareMathOperator{\SL}{SL}
\DeclareMathOperator{\DSL}{\widetilde{SL}}
\DeclareMathOperator{\GL}{GL}
\DeclareMathOperator{\Sh}{Sh}
\def\ov{\overline}
\newcommand{\Hup}{\mathbb{H}}
\newcommand{\ord}{{\rm ord}}
\newcommand{\A}{\mathbb{A}}
\newcommand{\As}{\mathbb{A}_\Q^{\times}}
\newcommand{\Qs}{\mathbb{Q}^{\times}}
\newcommand{\Q}{{\mathbb Q}}
\newcommand{\Z}{{\mathbb Z}}
\newcommand{\N}{{\mathbb N}}
\newcommand{\C}{{\mathbb C}}
\newcommand{\R}{{\mathbb R}}
\newcommand{\kro}[2]{\left( \frac{#1}{#2} \right) }
\newcommand{\hs}[1]{\left( #1 \right)_p}
\newcommand{\hst}[1]{\left( #1 \right)_2}
\newcommand{\hsq}[1]{\left( #1 \right)_q}
\newcommand{\hsb}[1]{\Big( #1 \Big)_p}
\newcommand{\mat}[4]{\left(\begin{matrix} #1 & #2 \\ #3 & #4 \end{matrix}\right)}
            \DeclareFontFamily{U}{wncy}{} 
            \DeclareFontShape{U}{wncy}{m}{n}{%
               <5>wncyr5%
               <6>wncyr6%
               <7>wncyr7%
               <8>wncyr8%
               <9>wncyr9%
               <10>wncyr10%
               <11>wncyr10%
               <12>wncyr6%
               <14>wncyr7%
               <17>wncyr8%
               <20>wncyr10%
               <25>wncyr10}{} 
\DeclareMathAlphabet{\cyr}{U}{wncy}{m}{n}
\begin{document}

\newtheorem{thm}{Theorem}
\newtheorem*{thms}{Theorem}
\newtheorem{lem}{Lemma}[section]
\newtheorem{prop}[lem]{Proposition}
\newtheorem{alg}[lem]{Algorithm}
\newtheorem{cor}[lem]{Corollary}
\newtheorem{conj}[lem]{Conjecture}
\newtheorem{remark}{Remark}

\theoremstyle{definition}

\newtheorem{ex}{Example}

\theoremstyle{remark}

\title[Minus space of half-integral weight]{Newforms of half-integral weight: the minus space counterpart 
}

\author{Ehud Moshe Baruch}
\address{Department of Mathematics\\
         Technion\\
         Haifa , 32000\\
         Israel}
\email{embaruch@math.technion.ac.il}
\thanks{}
\author{Soma Purkait}
\address{Faculty of Mathematics\\
         Kyushu University\\
         Japan}
\email{somapurkait@gmail.com}
\keywords{Hecke algebras, Half-integral weight forms, Niwa isomorphism, Kohnen plus 
space}
\subjclass[2010]{Primary: 11F37; Secondary: 11F12, 11F70}
\date{August 2016}
\begin{abstract}
We define a subspace of the space of holomorphic modular forms of 
weight $k+1/2$ and level $4M$ where $M$ is odd and square-free. 
We show that this subspace is isomorphic under the 
Shimura-Niwa correspondence to the space of newforms of weight 
$2k$ and level $2M$ and that this is a Hecke isomorphism. 
The space we define is a proper subspace of the orthogonal 
complement of the Kohnen plus space if the Kohnen plus space is nonzero.
\end{abstract}
\maketitle

\section{Introduction}
Let $M$ be odd and square-free and $k$ be a positive integer. 
In a remarkable work, Niwa~\cite{Niwa} comparing the traces of 
Hecke operators proved existence of Hecke isomorphism 
between the space of holomorphic cusp forms of weight 
$k+1/2$ on the congruence subgroup $\Gamma_0(4M)$ and  
the space of weight $2k$ cusp forms on 
$\Gamma_0(2M)$.
In \cite{Kohnen1, Kohnen2} Kohnen defines plus space $S_{k+1/2}^{+}(4M)$, 
a subspace of the space  $S_{k+1/2}(\Gamma_0(4M))$, which is given by a certain 
Fourier coefficient condition. Kohnen considers a 
new space $S_{k+1/2}^{+, \mathrm{new}}(4M)$ inside his plus space and 
proves that this new subspace is Hecke isomorphic to 
$S_{2k}^{\mathrm{new}}(\Gamma_0(M))$, the space of newforms 
of weight $2k$ and level $M$, giving a newform theory of 
half-integral weight.

In the case $M=1$, the Kohnen plus space is given as an eigenspace of 
a certain Hecke operator considered by Niwa~\cite{Niwa}. 
Niwa's operator has two eigenvalues, one positive and 
one negative and the Kohnen plus space is the eigenspace of the positive 
eigenvalue. In this case the Kohnen plus space itself is  
the newspace $S_{k+1/2}^{+, \mathrm{new}}(4)$ and is 
isomorphic as a Hecke module to $S_{2k}(\Gamma_0(1))$. 
In the case $M>1$, the Kohnen plus space can be again given as an 
eigenspace of a certain operator \cite{Kohnen2} and we shall see that 
this operator at level $4M$ is indeed an analogue of Niwa's operator at 
level $4$. 

The Kohnen isomorphism uses the above mentioned Niwa isomorphism 
between the full spaces of weight $k+1/2$, level $4M$ and 
weight $2k$, level $2M$. From Kohnen's results, 
it is clear that the Niwa map sends the Kohnen 
plus space to a subspace of old forms inside 
$S_{2k}(\Gamma_0(2M))$. Kohnen's map can be viewed as a 
composition of the Niwa map with another map which sends an 
old form of level $2M$ from this image to its newform 
originator at level $M$.  
Our objective in this paper is to identify the subspace of 
half-integer weight forms which is sent by the Niwa map 
isomorphically onto the space of newforms for $\Gamma_0(2M)$. 
It is clear that this space will complement the Kohnen plus space. 
However, their sum will not give the whole space if the Kohnen plus space 
is nonzero. 

In fact, our results are motivated by the results of 
Loke and Savin~\cite{L-S} who interpreted the Kohnen 
plus space in representation theory language.
For the case $M=1$, Loke and Savin defined another space of half-integer 
weight forms which they showed is "conjugate" to the Kohnen plus space. 
This means that it is an image of the Kohnen plus space by an invertible 
Hecke operator and is isomorphic to the Kohnen plus space as a Hecke module. 
We show that the Kohnen plus space and the space considered by Loke and 
Savin do not intersect and that their sum maps isomorphically to the 
space of old forms $S_{2k}^{\mathrm{old}}(\Gamma_0(2))$ under the Niwa map. 
Hence it is natural to consider the orthogonal complement of the direct 
sum under the Petersson inner product, which we call the minus space. 
We show that the minus space is mapped isomorphically under the Niwa map 
to $S_{2k}^{\mathrm{new}}(\Gamma_0(2))$, the space of newforms on 
$\Gamma_0(2)$. We also characterize this space as a common eigenspace of 
two Hecke operators: the Niwa operator used by Kohnen to define the Kohnen plus 
space and a conjugate of the Niwa operator which was considered by Loke 
and Savin. The minus space is the intersection of the negative eigenspace 
of both operators. We normalize the negative eigenvalue to be $-1$ as in 
\cite{B-P}. 
Our description of the minus space at level $4$ is completely analogous to our 
description of the new space $S_{2k}^{\mathrm{new}}(\Gamma_0(2))$ 
in \cite{B-P} where we showed that 
$S_{2k}^{\mathrm{new}}(\Gamma_0(2))$ is the 
common $-1$ eigenspace of two Hecke operators. 
To summarize the case of $M=1$: We show that the space 
$S_{k+1/2}(\Gamma_0(4))$ decomposes into a direct sum of three 
spaces: The Kohnen plus space, a "conjugate" of the Kohnen plus space given 
by Loke and Savin and the minus space. The Kohnen plus space and its 
conjugate are indistinguishable as Hecke modules which is the same as 
saying that they are mapped under the Niwa map to "conjugate" spaces of 
old forms. The minus space is different as a Hecke module from both spaces.

In order to generalize this result for $M$ odd and square-free we 
consider a certain $p$-adic Hecke algebra for every prime $p$ dividing $M$. 
Our work follows that of Loke and Savin who studies a certain $2$-adic 
Hecke algebra which allowed them to give a representation theoretic 
interpretation of the Kohnen plus space and to introduce the operator which is 
conjugate of Niwa's operator and the space which is a ``conjugate'' to 
Kohnen's plus space. In our $p$-adic Hecke algebra, we consider two
$p$-adic operators that give rise to conjugate classical Hecke operators 
which when we use along with Niwa's operator and it's conjugate 
allow us to define our minus space at level $4M$. 
We give two descriptions of the minus space: 
One description as an orthogonal complement of a 
certain sum of subspaces and another description as a common $-1$ 
eigenspace of the Niwa operator, its conjugate and a pair of conjugate operators 
for each prime dividing $M$. This again is completely analogous to our 
description of the space of newforms of weight $2k$ for $\Gamma_0(2M)$ 
given in \cite[Theorem 1]{B-P}.
Our main result is that the minus space of weight $k+1/2$ at level $4M$ 
is isomorphic as a Hecke module to the space of newforms of weight $2k$ 
at level $2M$.

Our paper is divided as follows. We set up notation following 
Shimura's work on half-integral weight forms and recall Gelbart's theory 
of the double cover of $\SL_2(\Q_p)$. In Section $3$ 
we define a genuine Hecke algebra of the double cover of $\SL_2(\Q_p)$ 
modulo certain subgroups and a genuine central character and give its 
presentation using generators and relations. In particular we recall 
the work of Loke and Savin when $p=2$. 
In Section $4$ we translate 
certain elements in our $p$-adic Hecke algebra to classical Hecke operators 
on $S_{k+1/2}(\Gamma_0(4M))$. We obtain two classical operators, 
$\widetilde{Q}_p$ with eigenvalues $p$ and $-1$ and an involution
$\widetilde{W}_{p^2}$. We further consider $\widetilde{Q}'_p$ 
which is conjugate of $\widetilde{Q}_p$ by $\widetilde{W}_{p^2}$. 
We check that these operators are self-adjoint 
with respect to the Petersson inner product. We recall 
Kohnen's classical operator $Q$ on $S_{k+1/2}(\Gamma_0(4M))$ 
which he uses to describe his plus space. We show that his operator 
$Q$ comes from the $2$-adic Hecke algebra considered by Loke and Savin, 
and is an analogue of Niwa's operator at level $4$. Let 
$\widetilde{Q}'_2:= \kro{2}{2k+1} Q/\sqrt{2}$ and 
$\widetilde{Q}_2$ be conjugate of $\widetilde{Q}'_2$ by an 
involution $\widetilde{W}_{4}$. In Section $5$ we 
define our minus space $S_{k+1/2}^{-}(4M)$ and prove our main result:
\begin{thms}
Let $S^{-}_{k+1/2}(4M) \subseteq S_{k+1/2}(\Gamma_0(4M))$ 
be the common $-1$ eigenspace of operators 
$\widetilde{Q}_p$ and $\widetilde{Q}'_p$ for all primes $p$ dividing $2M$.
Then $S^{-}_{k+1/2}(4M)$ has a basis of eigenforms for 
all the operators $T_{q^2}$ where $q$ is a prime coprime to $2M$ and  
all the operators $U_{p^2}$ where $p$ is a prime dividing $2M$, and 
maps isomorphically under the Niwa map onto the 
space $S^{\mathrm{new}}_{2k}(\Gamma_0(2M))$. 
\end{thms}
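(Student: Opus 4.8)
The plan is to transport the entire problem across the Niwa isomorphism to the integral-weight side, where the analogous characterization of $S^{\mathrm{new}}_{2k}(\Gamma_0(2M))$ has already been established in \cite{B-P}. Since the Niwa map is an isomorphism of the full spaces $S_{k+1/2}(\Gamma_0(4M)) \xrightarrow{\sim} S_{2k}(\Gamma_0(2M))$, it suffices to show that it matches the defining eigenvalue conditions of $S^{-}_{k+1/2}(4M)$ with those cutting out the newspace on the integral side; everything else (injectivity, surjectivity, and the eigenbasis) will then follow formally.

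First I would record the Hecke-equivariance of the Niwa map for all the relevant operators. For primes $q \nmid 2M$ this is the classical statement that the Niwa map intertwines $T_{q^2}$ with $T_q$, and I would verify the corresponding compatibility at the primes $p \mid 2M$, matching $U_{p^2}$ with the integral-weight $U_p$. The crucial input is the translation carried out in Section~4: using it I would compute the image under the Niwa map of the operators $\widetilde{Q}_p$ and $\widetilde{Q}'_p$ and identify the pair $\{\widetilde{Q}_p,\widetilde{Q}'_p\}$ with the conjugate pair of integral-weight operators appearing in \cite[Theorem~1]{B-P} (built from $U_p$ and its Atkin--Lehner conjugate). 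Getting the normalizations right, so that the negative eigenvalue is exactly $-1$ on both sides, is essential, and this matching is where the genuine work lies.

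Next I would invoke \cite[Theorem~1]{B-P}, which characterizes $S^{\mathrm{new}}_{2k}(\Gamma_0(2M))$ as the common $-1$ eigenspace of precisely those conjugate operators, for every $p \mid 2M$. Combined with the equivariance of the previous step, the Niwa map then carries the common $-1$ eigenspace $S^{-}_{k+1/2}(4M)$ into the common $-1$ eigenspace on the integral side, namely $S^{\mathrm{new}}_{2k}(\Gamma_0(2M))$. Injectivity is immediate since the Niwa map is an isomorphism on the full spaces, and surjectivity follows symmetrically: the inverse Niwa map sends any newform to a form satisfying all the defining $-1$ eigenvalue conditions, hence into $S^{-}_{k+1/2}(4M)$. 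This yields the claimed Hecke-module isomorphism. The eigenbasis assertion is then a corollary: the normalized newforms form a simultaneous eigenbasis of $S^{\mathrm{new}}_{2k}(\Gamma_0(2M))$ for all $T_q$ ($q \nmid 2M$) and $U_p$ ($p \mid 2M$), and transporting this basis back through the inverse Niwa map produces, by equivariance, a basis of $S^{-}_{k+1/2}(4M)$ consisting of eigenforms for all $T_{q^2}$ and $U_{p^2}$.

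The main obstacle I anticipate is the local computation in the second paragraph: precisely matching the genuine $p$-adic Hecke-algebra operators $\widetilde{Q}_p$ and $\widetilde{Q}'_p$ on the metaplectic cover with the correct integral-weight operators under the Niwa map, and tracking the metaplectic cocycle and the self-adjointness established in Section~4 carefully enough to confirm that the ``$-1$'' eigenvalue on the half-integral side corresponds exactly to the ``$-1$'' eigenvalue in the \cite{B-P} description of the newspace. Once this identification of the conjugate pairs is in hand, the remaining arguments are formal consequences of the Niwa isomorphism and \cite[Theorem~1]{B-P}.
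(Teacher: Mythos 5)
There is a genuine gap at the very first step of your plan: you propose to ``record the Hecke-equivariance of the Niwa map for all the relevant operators,'' including matching $U_{p^2}$ with $U_p$ and $\widetilde{Q}_p,\widetilde{Q}'_p$ with the conjugate operators of \cite{B-P}, for the bad primes $p\mid 2M$. That equivariance is not available. Niwa's theorem, obtained by comparing traces, only produces an isomorphism $\psi$ intertwining $T_{q^2}$ with $T_q$ for $q$ coprime to $2M$ (plus $U_4\leftrightarrow U_2$ in the special case of level $4$). The map $\psi$ is not given by a formula on which one could ``compute the image'' of $\widetilde{Q}_p$ or $\widetilde{W}_{p^2}$; establishing its compatibility with the operators at $p\mid 2M$ is essentially the content of the theorem, not an input to it. Your surjectivity argument (``the inverse Niwa map sends any newform to a form satisfying all the defining $-1$ eigenvalue conditions'') relies on the same unavailable equivariance.

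The paper circumvents this entirely. It first builds an explicit subspace $E\subseteq S_{k+1/2}(\Gamma_0(4M))$ out of lower-level spaces, the Kohnen plus new space, and their images under the involutions $\widetilde{W}_{p^2}$, $\widetilde{W}_4$; it proves the relevant direct-sum decompositions by showing the pairwise intersections vanish, using the fact that an element of the intersection would force $(U_{p^2})^2$ to have eigenvalue $p^{2k}$, which contradicts the Ramanujan bound (real eigenvalues of $(U_p)^2$ on the old space are $\pm p^{2k-1}$); this step is carried out via the Shimura lifts $\Sh_t$, which \emph{do} commute with all Hecke operators including $U_{p^2}$, even though no single $\Sh_t$ is injective. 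A dimension count then shows $\psi(E)=S_{2k}^{\mathrm{old}}(\Gamma_0(2M))$, so by injectivity $\psi(E^{\perp})=S_{2k}^{\mathrm{new}}(\Gamma_0(2M))$; the $U_{p^2}$-eigenvalues on $E^{\perp}$ and the identification of $E^{\perp}$ with the common $-1$ eigenspace of $\widetilde{Q}_p,\widetilde{Q}'_p$ are deduced afterwards from self-adjointness and again from the Shimura lifts. If you want to salvage your approach, you would need to first prove the bad-prime equivariance of $\psi$ independently, which is a substantial theorem in its own right and not something that can be ``recorded.''
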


\section{Preliminaries and Notation}
Let $k,\ N$ denote positive integers. We denote by 
$S_k(\Gamma_0(N))$ the space of holomorphic cusp forms of weight $k$ 
on the group $\Gamma_0(N)$. For each prime $p$ not dividing $N$ 
we have Hecke operator $T_p$ on $S_k(\Gamma_0(N))$ whose action 
on $q$-expansion can be given as follows: if 
$f=\sum_{n=1}^\infty a_nq^n \in S_k(\Gamma_0(N))$ then 
$T_p(f) = \sum_{n=1}^\infty (a_{pn} + p^{k-1}a_{n/p})q^n$.

For $m\in \N$, let $U_m$, $V_m$ be given by following action 
on any formal $q$-series: 
\[U_m(\sum_{n=1}^\infty a_nq^n) = \sum_{n=1}^\infty a_{mn}q^n, 
\quad V_m(\sum_{n=1}^\infty a_nq^n) = \sum_{n=1}^\infty a_{n}q^{mn}.\]
It is well-known that $V_m$ maps $S_k(\Gamma_0(N))$ to $S_k(\Gamma_0(mN))$
and if $m \mid N$ then $U_m$ is an operator on $S_k(\Gamma_0(N))$. 

We briefly recall the theory of half-integral weight modular forms \cite{Shimura}. 
Let $\mathcal{G}$ be the set of all ordered pairs $(\alpha, \phi(z))$ where 
$\alpha = \mat{a}{b}{c}{d} \in \GL_2(\R)^{+}$ and $\phi(z)$ is a holomorphic 
function on the upper half plane $\Hup$ such that 
$\phi(z)^2 = t\det(\alpha)^{-1/2}(cz+d)$ with $t$ in the unit circle 
$S^1:= \{z \in \C : |z| =1\}$. Then $\mathcal{G}$ is a 
group under the following operation:
\[ (\alpha, \phi(z)).(\beta, \psi(z))= (\alpha\beta, \phi(\beta z)\psi(z)).\]
Let $P:\mathcal{G}\rightarrow\GL_2^+(\R)$ be the 
homomorphism given by the projection map onto the first coordinate. 

Let $\zeta = (\alpha, \phi(z)) \in \mathcal{G}$. 
Define the slash operator $|[\zeta]_{k+1/2}$ on functions $f$ on $\Hup$ by 
$f|[\zeta]_{k+1/2}(z) = 
f(\alpha z)(\phi(z))^{-2k-1}$. 

Let $N$ be divisible by $4$ and $\alpha =\mat{a}{b}{c}{d} \in \Gamma_0(N)$. 
Define the automorphy factor
\[j(\alpha,z) = \varepsilon_d^{-1}\kro{c}{d}(cz+d)^{1/2},\]
where $\varepsilon_d = 1$ or $(-1)^{1/2}$ according as 
$d \equiv 1\text{ or }3 \pmod{4}$ and $\kro{c}{d}$ is as 
in Shimura's notation. 
Let
\[\Delta_0(N):=\{\alpha^* = (\alpha, j(\alpha,z)) \in 
\mathcal{G}\ |\ \alpha \in \Gamma_0(N)\} \leq \mathcal{G}.\] 
The map $L:\Gamma_0(N)\rightarrow \mathcal{G}$ given by 
$\alpha\mapsto\alpha^*$ defines an isomorphism onto $\Delta_0(N)$. Thus 
$P|_{\Delta_0(N)}$ and $L$ are inverse of each other. 
Denote by $\Delta_0(N)$ and $\Delta_1(N)$ respectively the images
of $\Gamma_0(N)$ and $\Gamma_1(N)$. 

Let $\chi$ be an even Dirichlet character modulo $N$. 
Let $S_{k +1/2}(\Gamma_0(N), \chi)$ be the space of cusp forms of weight 
$k+1/2$, level $N$ 
and character $\chi$ consisting of $f \in S_{k +1/2}(\Delta_1(N))$
such that $f|[\alpha^*]_{k +1/2}(z) =\chi(d)f(z)$ for 
all $\alpha \in \Gamma_0(N)$. In particular when $\chi$ is trivial 
$S_{k +1/2}(\Gamma_0(N),\chi) = S_{k +1/2}(\Delta_0(N))$.

Let $\xi$ be an element of $\mathcal{G}$ such that $\Delta_0(N)$ and 
$\xi^{-1}\Delta_0(N)\xi$ are commensurable. Then we have an 
operator $|[\Delta_0(N)\xi\Delta_0(N)]_{k/2}$ on $S_{k+1/2}(\Delta_0(N))$ defined by
\[f|[\Delta_0(N)\xi\Delta_0(N)]_{k+1/2} = \text{det}(\xi)^{(2k-3)/4}
\sum_v f|[\xi_v]_{k+1/2} \]
where $\Delta_0(N)\xi\Delta_0(N)=\bigcup_v\Delta_0(N)\xi_v$.

Let $\xi = (\mat{1}{0}{0}{p^2},p^{1/2})$. If
$p$ is a prime dividing $N$, then by 
\cite[Proposition 1.5]{Shimura},
\[f|[\Delta_0(N)\xi\Delta_0(N)]_{k+1/2} = 
p^{(2k-3)/2}\sum_{s=0}^{p^2-1}f|[(\mat{1}{s}{0}{p^2},p^{1/2})]_{k+1/2}(z),\]
thus if $f = \sum_{n=1}^\infty a_nq^n$ then 
$f|[\Delta_0(N)\xi\Delta_0(N)]_{k+1/2} = \sum_{n=1}^{\infty}a_{p^2n}q^n =
U_{p^2}(f)$. If $p$ is a prime such that $(p,N)=1$ then the Hecke 
operator $T_{p^2}$ is defined by 
\[T_{p^2}(f)=f|[\Delta_0(N)\xi\Delta_0(N)]_{k+1/2}.\]

\vskip5mm

We shall be studying local Hecke algebra of the double cover of $\SL_2$. We 
next recall Gelbart's~\cite{Gelbart} description of the double cover. 
Let $p$ be any prime (including infinite place). The group $\SL_2(\Q_p)$ has 
a non-trivial central extension by $\mu_2 = \{\pm1\}$:
\begin{equation*}
\begin{split}
1\ \longrightarrow \ &\mu_2 \ \longrightarrow \ \DSL_2(\Q_p)\ 
\longrightarrow\ \SL_2(\Q_p)\ 
\longrightarrow\ 1\\
\{(I,&\pm1)\}\ \quad (g, \pm1) \quad \longmapsto \quad g \qquad \qquad
\end{split}
\end{equation*}
We use the $2$-cocycle defined below to determine the double cover 
$\DSL_2(\Q_p)$. 
For $g =\mat{a}{b}{c}{d} \in \SL_2(\Q_p)$, define
\[\tau(g) = \begin{cases}
         c & \text{if $c \ne 0$}\\
         d & \text{if $c = 0$}
        \end{cases}; \]
if $p=\infty$, set $s_p(g)=1$ while for a finite prime $p$
\[s_p(g) = \begin{cases}
         \hs{c,d} & \text{if $cd \ne 0$ and $\ord_p(c)$ is odd}\\
         1 & \text{else}.
        \end{cases}
\]
Define the $2$-cocycle $\sigma_p$ on $\SL_2(\Q_p)$ as follows:
\[\sigma_p(g, h) = \hs{\tau(gh)\tau(g),\tau(gh)\tau(h)}s_p(g)s_p(h)s_p(gh).\]
Then the double cover 
$\DSL_2(\Q_p)$ is the set $\SL_2(\Q_p) \times \mu_2$ with the group law:
\[(g,\ \epsilon_1)(h,\ \epsilon_2) = (gh,\ \epsilon_1\epsilon_2\sigma_p(g, h)).\]
For any subgroup $H$ of $\SL_2(\Q_p)$, we shall denote by $\ov{H}$ the complete 
inverse image of $H$ in $\DSL_2(\Q_p)$. 

We consider the following subgroups of $\SL_2(\Z_p)$:
\[K_0^p(p^n) =\left\{ \mat{a}{b}{c}{d} \in \SL_2(\Z_p)\ :\ c \in p^n\Z_p \right\},\]
\[K_1^p(p^n) =\left\{ \mat{a}{b}{c}{d} \in \SL_2(\Z_p)\ :\ c \in p^n\Z_p,\  
a \equiv 1 \pmod{p^n\Z_p} \right\}.\]
 
By \cite[Proposition 2.8]{Gelbart} for odd primes $p$, 
$\DSL_2(\Q_p)$ splits over $\SL_2(\Z_p)$. Thus 
$\ov{\SL_2(\Z_p)}$ is isomorphic to the direct product $\SL_2(\Z_p) \times \mu_2$ 
and $\ov{K_0^p(p)}$ is isomorphic to $K_0^p(p) \times \mu_2$. It 
follows from \cite[Corollary 2.13]{Gelbart} that the center $M_p$ of 
$\DSL_2(\Q_p)$ is simply the direct product $\{\pm I\} \times \mu_2$. 
Thus any genuine central character is given by a 
non-trivial character of $\mu_2 \times \mu_2$.  

However $\DSL_2(\Q_2)$ does not split over $\SL_2(\Z_2)$ but instead splits 
over the subgroup $K_1^2(4)$.
In this case the center $M_2$ of $\DSL_2(\Q_2)$ is a cyclic group of order $4$ 
generated by $(-I,1)$ and so a genuine central character is given by a sending 
$(-I,1)$ to a primitive fourth root of unity. 

We set up a few more notation.
For $s \in \Q_p$, $t \in \Qs_p$ let us define the following elements of 
$\SL_2(\Q_p)$:
\[x(s) =\mat{1}{s}{0}{1},\ y(s)=\mat{1}{0}{s}{1},\  
w(t) = \mat{0}{t}{-t^{-1}}{0},\  h(t)=\mat{t}{0}{0}{t^{-1}}.\]
Let $N=\{(x(s), \epsilon): s \in \Q_p,\ \epsilon = \pm 1\}$, 
$\bar{N}=\{(y(s), \epsilon): s \in \Q_p,\ \epsilon = \pm 1\}$ 
and $T=\{(h(t), \epsilon): t \in \Qs_p,\ \epsilon = \pm 1\}$
be the subgroups of $\DSL_2(\Q_p)$. Then the normalizer
$N_{\DSL_2(\Q_p)}(T)$ of $T$ in $\DSL_2(\Q_p)$ consists of 
elements $(h(t), \epsilon)$, $(w(t), \epsilon)$ for $t \in \Qs_p$. 
We note the following useful relations:
\begin{equation}\label{eq:eq1}
\begin{split}
(h(s),1)(h(t),1)=(h(st),\hs{s,t}),& \quad  (w(s),1)(w(t),1)=(h(-st^{-1}),\hs{s,t})\\
(h(s),1)(w(t),1)=(w(st),\hs{s,-t}),& \quad  (w(s),1)(h(t),1)=(w(st^{-1}),\hs{-s,t})
\end{split}
\end{equation}
For any subgroup $S$ of $\DSL_2(\Q_p)$, let 
$N^{S} = N \cap S$, $T^{S}= T \cap S$ and $\bar{N}^{S} = \bar{N} \cap S$.

\section{A Local Hecke Algebra of $\DSL_2(\Q_p)$}
Loke and Savin \cite{L-S} studied a genuine local Hecke algebra of 
$\DSL(\Q_2)$ corresponding to $\overline{K_0^2(4)}$ and a genuine central 
character, and gave 
an interpretation of Kohnen's plus space at level $4$ in terms 
of certain elements in this $2$-adic Hecke algebra. 
In this section we shall recall their work on the $2$-adic Hecke algebra. 
We shall then study genuine Iwahori Hecke algebra 
for $\DSL_2(\Q_p)$ corresponding to $\ov{K_0^p(p)}$ and a genuine 
character of $M_p$ for general odd prime $p$. 

Let $p$ be any finite prime and $C_c^{\infty}(\DSL_2(\Q_p))$ be 
the space of locally constant, compactly supported complex-valued functions 
on $\DSL_2(\Q_p)$. For an open compact subgroup $S$ of $\DSL_2(\Q_p)$ and 
a genuine character $\gamma$ of $S$, let
$H(S, \gamma)$ be the subalgebra of $C_c^{\infty}(\DSL_2(\Q_p))$ defined as 
follows: 
\[\{ f \in C_c^{\infty}(\DSL_2(\Q_p)) : 
f(\tilde{k}\tilde{g}\tilde{k'})=\overline{\gamma}(\tilde{k})\overline{\gamma}(\tilde{k'})f(\tilde{g}) 
\text{ for } \tilde{g} \in \DSL_2(\Q_p),\ \tilde{k},\ \tilde{k'} \in S\}.\]
Then $H(S, \gamma)$ is a $\C$-algebra under convolution which, for any
$f_1, f_2 \in H(S, \gamma)$, is defined by
\[ f_1*f_2(\tilde{h}) = 
\int_{\DSL_2(\Q_p)} f_1(\tilde{g})f_2(\tilde{g}^{-1}\tilde{h}) d\tilde{g} = 
\int_{\DSL_2(\Q_p)} f_1(\tilde{h}\tilde{g})f_2(\tilde{g}^{-1}) d\tilde{g}, \]
where $d\tilde{g}$ is the Haar measure on $\DSL_2(\Q_p)$ such that the measure 
of $S$ is one. We call $H(S, \gamma)$ the genuine Hecke algebra of 
$\DSL_2(\Q_p)$ with respect to $S$ and $\gamma$.

For certain $S$ and $\gamma$, we would like to describe 
the algebra $H(S, \gamma)$ using generators and relations. In order 
to do so we need to first compute the support of $H(S, \gamma)$. 
We say that $H(S, \gamma)$ is supported on $\tilde{g} \in \DSL_2(\Q_p)$ 
if there exists $f \in H(S, \gamma)$ such that $f(\tilde{g})\ne 0$. 
We shall use the following lemmas to compute the support. 
\begin{lem} \label{lem1:sup}
Let $S_{\tilde{g}} = S \cap 
\tilde{g} S \tilde{g}^{-1}$. Then
$H(S, \gamma)$ is supported on $\tilde{g}$ 
if and only if for every $\tilde{k} \in S_{\tilde{g}}$ we have 
$\gamma([\tilde{k}^{-1},\ \tilde{g}^{-1}]) = 1$.
\end{lem}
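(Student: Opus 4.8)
The plan is to treat this as the standard computation of the support of a Hecke algebra attached to a double coset, carried out directly on the metaplectic cover. The point is that the obstruction to producing a nonzero element of $H(S,\gamma)$ supported at $\tilde{g}$ is precisely the failure of $\gamma$ to be ``consistent'' across the double coset $S\tilde{g}S$. First I would record that the asserted expression makes sense: for $\tilde{k}\in S_{\tilde{g}}$ one has $\tilde{k}\in S$, and writing $\tilde{m}=\tilde{g}^{-1}\tilde{k}\tilde{g}$, the definition $S_{\tilde{g}}=S\cap\tilde{g}S\tilde{g}^{-1}$ forces $\tilde{m}\in S$; hence $[\tilde{k}^{-1},\tilde{g}^{-1}]=\tilde{k}^{-1}\tilde{g}^{-1}\tilde{k}\tilde{g}=\tilde{k}^{-1}\tilde{m}\in S$, so $\gamma$ is defined on it. I would also note that $S_{\tilde{g}}$ is genuinely a group, being an intersection of two subgroups.

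For necessity, suppose some $f\in H(S,\gamma)$ has $f(\tilde{g})\neq 0$, and fix $\tilde{k}\in S_{\tilde{g}}$. I would evaluate $f$ on the single element $\tilde{k}\tilde{g}=\tilde{g}\tilde{m}$ in two ways using the transformation law: left invariance gives $f(\tilde{k}\tilde{g})=\overline{\gamma}(\tilde{k})f(\tilde{g})$, while right invariance gives $f(\tilde{g}\tilde{m})=\overline{\gamma}(\tilde{m})f(\tilde{g})$. Since $f(\tilde{g})\neq 0$, these force $\gamma(\tilde{k})=\gamma(\tilde{m})$, that is, $\gamma([\tilde{k}^{-1},\tilde{g}^{-1}])=\gamma(\tilde{k})^{-1}\gamma(\tilde{m})=1$, which is exactly the asserted condition.

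For sufficiency, assume $\gamma(\tilde{k})=\gamma(\tilde{m})$ for every $\tilde{k}\in S_{\tilde{g}}$ and build the candidate by hand: let $f$ vanish off $S\tilde{g}S$ and set $f(\tilde{k}_1\tilde{g}\tilde{k}_2)=\overline{\gamma}(\tilde{k}_1)\overline{\gamma}(\tilde{k}_2)$ for $\tilde{k}_1,\tilde{k}_2\in S$. The crux is well-definedness: if $\tilde{k}_1\tilde{g}\tilde{k}_2=\tilde{k}_1'\tilde{g}\tilde{k}_2'$, then $\tilde{a}:=(\tilde{k}_1')^{-1}\tilde{k}_1$ satisfies $\tilde{g}^{-1}\tilde{a}\tilde{g}=\tilde{k}_2'\tilde{k}_2^{-1}=:\tilde{b}\in S$, so $\tilde{a}\in S_{\tilde{g}}$ with $\tilde{b}$ its conjugate, and the two prescribed values agree precisely when $\gamma(\tilde{a})=\gamma(\tilde{b})$, which is the hypothesis. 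Once well-defined, $f$ is locally constant because $S$ is open and $\gamma$ smooth, has compact support because $S\tilde{g}S$ is the image of the compact set $S\times\{\tilde{g}\}\times S$ under multiplication, satisfies the $(S,\gamma)$-transformation law by construction, and has $f(\tilde{g})=1\neq 0$.

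I expect the well-definedness step in the sufficiency direction to be the main obstacle, since it requires identifying the full two-sided ambiguity of a factorization $\tilde{k}_1\tilde{g}\tilde{k}_2$ and recognizing that it is parametrized exactly by $\tilde{a}\mapsto(\tilde{a},\tilde{g}^{-1}\tilde{a}\tilde{g})$ with $\tilde{a}\in S_{\tilde{g}}$; the rest is formal. I would stress that nothing in the argument uses the metaplectic structure or the genuineness of $\gamma$ beyond $\gamma$ being an ordinary character of the compact open subgroup $S$, so the lemma is really a general fact about Hecke algebras twisted by a character.
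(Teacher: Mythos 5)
Your proof is correct and follows essentially the same route as the paper's own (omitted) argument: necessity by evaluating $f(\tilde{k}\tilde{g})=f(\tilde{g}\cdot\tilde{g}^{-1}\tilde{k}\tilde{g})$ in two ways via the transformation law, and sufficiency by constructing the characteristic-type function on $S\tilde{g}S$ and checking well-definedness through the parametrization of the ambiguity by $S_{\tilde{g}}$. Nothing further is needed.
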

\begin{lem} \label{lem2:sup}
The function $\alpha_{\tilde{g}} : S_{\tilde{g}} \longrightarrow \C$ 
defined by $\alpha_{\tilde{g}}(\tilde{k}) = 
\gamma([\tilde{k}^{-1},\ \tilde{g}^{-1}])$ is a character of $S_{\tilde{g}}$.
\end{lem}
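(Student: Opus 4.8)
The plan is to show that $\alpha_{\tilde{g}}$ factors as a pointwise product of two honest characters of $S_{\tilde{g}}$, so that its being a character becomes automatic. First I would record the two structural facts needed: that $S_{\tilde{g}} = S \cap \tilde{g}S\tilde{g}^{-1}$ is a subgroup, being an intersection of two subgroups, and that conjugation by $\tilde{g}^{-1}$ carries $S_{\tilde{g}}$ into $S$. Indeed, if $\tilde{k} \in S_{\tilde{g}}$ then $\tilde{k} \in \tilde{g}S\tilde{g}^{-1}$, whence $\tilde{g}^{-1}\tilde{k}\tilde{g} \in S$. This is the only place where the second defining condition of $S_{\tilde{g}}$ is used, and it is exactly what makes the formula for $\alpha_{\tilde{g}}$ meaningful.

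Next I would unwind the commutator. With the convention $[a,b] = aba^{-1}b^{-1}$ one has
\[ [\tilde{k}^{-1},\tilde{g}^{-1}] = \tilde{k}^{-1}\tilde{g}^{-1}\tilde{k}\tilde{g} = \tilde{k}^{-1}\,(\tilde{g}^{-1}\tilde{k}\tilde{g}), \]
a product of two elements of $S$ by the previous paragraph, hence itself an element of $S$, on which $\gamma$ is defined. Since $\gamma$ is a character of $S$ it is multiplicative there, and therefore
\[ \alpha_{\tilde{g}}(\tilde{k}) = \gamma(\tilde{k})^{-1}\,\gamma(\tilde{g}^{-1}\tilde{k}\tilde{g}). \]

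Now set $\gamma_1 := \gamma|_{S_{\tilde{g}}}$ and $\gamma_2(\tilde{k}) := \gamma(\tilde{g}^{-1}\tilde{k}\tilde{g})$. The map $\gamma_1$ is a character by restriction, and $\gamma_2$ is a character because $\tilde{k}\mapsto \tilde{g}^{-1}\tilde{k}\tilde{g}$ is a group homomorphism $S_{\tilde{g}}\to S$ (landing in $S$ by the first paragraph) composed with the character $\gamma$. Thus $\alpha_{\tilde{g}} = \gamma_1^{-1}\gamma_2$, and since all values lie in the abelian group $S^1 \subseteq \C^{\times}$, the pointwise product of two characters is again a character, which is the claim.

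The argument is essentially bookkeeping, and the only point that demands care is verifying that the commutator genuinely lands in $S$: this forces the convention $[a,b]=aba^{-1}b^{-1}$ and uses \emph{both} conditions in the definition of $S_{\tilde{g}}$, since with the opposite convention the conjugate $\tilde{g}\tilde{k}^{-1}\tilde{g}^{-1}$ would appear and need not lie in $S$. I would also remark, though it is not needed for the statement, that $\alpha_{\tilde{g}}$ is unchanged when $\tilde{k}$ is multiplied by an element of $\mu_2$ (the two genuine contributions cancel, as $\mu_2$ is central), so $\alpha_{\tilde{g}}$ in fact descends to a character of the image of $S_{\tilde{g}}$ in $\SL_2(\Q_p)$. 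This compatibility also reconciles the lemma with the support criterion of Lemma~\ref{lem1:sup}, where triviality of $\alpha_{\tilde{g}}$ is precisely the equality $\gamma_1 = \gamma_2$.
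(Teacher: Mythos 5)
Your proof is correct and is essentially the paper's argument: both rest on the observation that $[\tilde{k}^{-1},\tilde{g}^{-1}]=\tilde{k}^{-1}\,(\tilde{g}^{-1}\tilde{k}\tilde{g})$ is a product of two elements of $S$ on which $\gamma$ can be applied multiplicatively. The paper simply expands $\alpha_{\tilde{g}}(\tilde{k}_1\tilde{k}_2)$ directly and regroups the factors, whereas you package the same expansion as the pointwise factorization $\alpha_{\tilde{g}}=\gamma|_{S_{\tilde{g}}}^{-1}\cdot\gamma(\tilde{g}^{-1}\cdot\tilde{g})$; this is a cosmetic difference only.
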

In order to compute the support using above lemmas we shall need certain 
results on cocycle multiplication. We note them in the appendix.

We also note the following well-known lemmas that will be useful in computing 
convolutions.
\begin{lem} \label{lem:rel1}
Let $f_1,\ f_2 \in H(S, \gamma)$ such that $f_1$ is supported on
$S\tilde{x}S=\bigcup_{i=1}^{m}\tilde{\alpha}_i S$ and 
$f_2$ is supported on 
$S\tilde{y}S=\bigcup_{j=1}^{n}\tilde{\beta}_i S$. Then
\[
f_1*f_2(\tilde{h})=
\sum_{i=1}^{m}f_1(\tilde{\alpha}_i)f_2(\tilde{\alpha}_{i}^{-1}\tilde{h})\]
where the nonzero summands are precisely for those $i$ for which there exist 
a $j$ such that $\tilde{h}\in \tilde{\alpha}_i \tilde{\beta}_j S$.
\end{lem}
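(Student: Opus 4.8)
The plan is to unwind the definition of convolution and use the coset decompositions to reduce the integral to a finite sum, tracking exactly when each summand is nonzero. I would start from the right-hand form of the convolution integral,
\[
f_1*f_2(\tilde{h}) = \int_{\DSL_2(\Q_p)} f_1(\tilde{g})f_2(\tilde{g}^{-1}\tilde{h})\, d\tilde{g},
\]
and use that $f_1$ is supported on $S\tilde{x}S = \bigcup_{i=1}^m \tilde{\alpha}_i S$ (a disjoint union of left $S$-cosets). Since the integrand vanishes unless $\tilde{g}$ lies in this support, I would break the integral as a sum over $i$ of integrals over the cosets $\tilde{\alpha}_i S$.

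Next I would exploit the left-invariance property of elements of $H(S,\gamma)$. On the coset $\tilde{\alpha}_i S$, write $\tilde{g} = \tilde{\alpha}_i \tilde{k}$ with $\tilde{k} \in S$. Because $f_1(\tilde{\alpha}_i \tilde{k}) = \overline{\gamma}(\tilde{k}) f_1(\tilde{\alpha}_i)$ and correspondingly $f_2(\tilde{k}^{-1}\tilde{\alpha}_i^{-1}\tilde{h}) = \gamma(\tilde{k}^{-1})^{-1}\cdots$, I would check that the $\gamma$-factors from $f_1$ and $f_2$ cancel (here the genuine character enters, and the normalization that $S$ has Haar measure $1$ makes each coset contribute its value at the representative). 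This collapses the integral over $\tilde{\alpha}_i S$ to the single term $f_1(\tilde{\alpha}_i) f_2(\tilde{\alpha}_i^{-1}\tilde{h})$, yielding
\[
f_1*f_2(\tilde{h}) = \sum_{i=1}^m f_1(\tilde{\alpha}_i) f_2(\tilde{\alpha}_i^{-1}\tilde{h}).
\]

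Finally I would identify precisely which summands survive. The factor $f_2(\tilde{\alpha}_i^{-1}\tilde{h})$ is nonzero if and only if $\tilde{\alpha}_i^{-1}\tilde{h}$ lies in the support $S\tilde{y}S = \bigcup_{j=1}^n \tilde{\beta}_j S$ of $f_2$, i.e. if and only if there exists $j$ with $\tilde{\alpha}_i^{-1}\tilde{h} \in \tilde{\beta}_j S$, equivalently $\tilde{h} \in \tilde{\alpha}_i \tilde{\beta}_j S$. This is exactly the stated nonvanishing condition.

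\textbf{Main obstacle.} The one step requiring genuine care is the cancellation of the $\gamma$-factors on each coset: one must verify that the cocycle-twisted transformation of $f_1$ under right multiplication by $\tilde{k}$ exactly compensates the transformation of $f_2$ under left multiplication by $\tilde{k}^{-1}$, so that the product $f_1(\tilde{g})f_2(\tilde{g}^{-1}\tilde{h})$ is genuinely constant on $\tilde{\alpha}_i S$. Since $\gamma$ is a genuine character, $\overline{\gamma}(\tilde{k})\gamma(\tilde{k}^{-1})^{-1}$ could in principle carry a sign from $\mu_2$, and I would confirm it is trivial; this is where the defining invariance $f(\tilde{k}\tilde{g}\tilde{k'}) = \overline{\gamma}(\tilde{k})\overline{\gamma}(\tilde{k'})f(\tilde{g})$ is used in both arguments. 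Everything else is the routine bookkeeping of decomposing a double coset into left cosets and integrating term by term.
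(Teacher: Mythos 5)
Your proof is correct and is exactly the standard argument the paper has in mind --- the paper states this lemma without proof as ``well-known,'' and your reduction of the convolution integral to a sum over the left cosets $\tilde{\alpha}_i S$ (using measure one for each coset and the cancellation $\overline{\gamma}(\tilde{k})\,\overline{\gamma}(\tilde{k}^{-1})=\overline{\gamma}((I,1))=1$, which also settles your flagged sign worry, since the inverse is taken in $\DSL_2(\Q_p)$ so the cocycle is already absorbed into the group law) is the intended computation. The only point worth making explicit in the final step is that $f_1(\tilde{\alpha}_i)\neq 0$ for every $i$, and that $f_2$ is nonvanishing on all of $S\tilde{y}S$ rather than merely supported inside it: both follow from the transformation rule $f(\tilde{k}\tilde{g}\tilde{k}')=\overline{\gamma}(\tilde{k})\overline{\gamma}(\tilde{k}')f(\tilde{g})$, which forces a nonzero element of $H(S,\gamma)$ supported on a single double coset to be nonvanishing on that entire double coset, and this is what justifies the word \emph{precisely} in the nonvanishing criterion.
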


For $\tilde{g} \in \DSL_2(\Q_p)$ let $\mu(\tilde{g})$ denotes the number 
of disjoint left (right) $S$ cosets in the decomposition of double 
coset $S\tilde{g}S$. 
\begin{lem}\label{lem:rel2}
Let $\tilde{g},\ \tilde{h} \in \DSL_2(\Q_p)$ be such that 
$\mu(\tilde{g})\mu(\tilde{h})= \mu(\tilde{g}\tilde{h})$. Let 
$f_1$ and $f_2 \in H(S, \gamma)$ are respectively supported on 
$S\tilde{g}S$ and $S\tilde{h}S$. 
Then $f_1*f_2$ is precisely supported on 
$S\tilde{g}\tilde{h}S$ and 
$f_1*f_2(\tilde{g}\tilde{h}) = f_1(\tilde{g})f_2(\tilde{h})$.
\end{lem}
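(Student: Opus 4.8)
The plan is to reduce everything to the explicit convolution formula of Lemma~\ref{lem:rel1}, together with a coset-counting argument that extracts the full force of the hypothesis $\mu(\tilde g)\mu(\tilde h)=\mu(\tilde g\tilde h)$. Throughout, the two equivariance conditions defining $H(S,\gamma)$ guarantee that a nonzero element supported on a double coset $S\tilde x S$ does not vanish at $\tilde x$: if $f(\tilde x)=0$ then, writing every point of $S\tilde x S$ as $\tilde k_1\tilde x\tilde k_2$ and using $f(\tilde k_1\tilde x\tilde k_2)=\overline\gamma(\tilde k_1)\overline\gamma(\tilde k_2)f(\tilde x)$, the function $f$ would vanish identically.

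First I would fix coset decompositions. Write $m=\mu(\tilde g)$ and $n=\mu(\tilde h)$ and decompose
\[
S\tilde g S=\bigsqcup_{i=1}^{m}\tilde\alpha_i S,\qquad
S\tilde h S=\bigsqcup_{j=1}^{n}\tilde\beta_j S,
\]
choosing representatives so that $\tilde\alpha_1=\tilde g$ and $\tilde\beta_1=\tilde h$ (each of $\tilde g,\tilde h$ lies in a unique left coset of its own double coset, so this is merely a relabelling). Since $S\cdot S\tilde h S=S\tilde h S$, we get
\[
(S\tilde g S)(S\tilde h S)=\bigcup_{i=1}^{m}\tilde\alpha_i\,(S\tilde h S)=\bigcup_{i,j}\tilde\alpha_i\tilde\beta_j S .
\]
The set $(S\tilde g S)(S\tilde h S)$ is invariant under left and right multiplication by $S$ and contains $\tilde g\tilde h$, so it contains $S\tilde g\tilde h S$; hence $S\tilde g\tilde h S$ is covered by the $mn$ left cosets $\tilde\alpha_i\tilde\beta_j S$, which a priori might coincide.

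The key step is the counting. By hypothesis $S\tilde g\tilde h S$ is a disjoint union of exactly $\mu(\tilde g\tilde h)=mn$ left cosets, and it sits inside the union of the at most $mn$ cosets $\tilde\alpha_i\tilde\beta_j S$. A union of $mn$ distinct cosets cannot be contained in a union of fewer than $mn$ distinct cosets, so the $\tilde\alpha_i\tilde\beta_j S$ must be pairwise distinct and
\[
S\tilde g\tilde h S=\bigsqcup_{i,j}\tilde\alpha_i\tilde\beta_j S .
\]
This is the only place the multiplicativity of $\mu$ enters, and I expect it to be the main (though short) obstacle, since it is precisely what forbids the product cosets from collapsing.

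With this in hand both conclusions follow from Lemma~\ref{lem:rel1}. For any $\tilde z$ we have $f_1*f_2(\tilde z)=\sum_{i=1}^{m}f_1(\tilde\alpha_i)f_2(\tilde\alpha_i^{-1}\tilde z)$, and the $i$-th summand is nonzero only if $\tilde\alpha_i^{-1}\tilde z\in S\tilde h S$, i.e. only if $\tilde z\in\tilde\alpha_i\tilde\beta_j S$ for some $j$; hence $f_1*f_2$ vanishes off $\bigsqcup_{i,j}\tilde\alpha_i\tilde\beta_j S=S\tilde g\tilde h S$. Evaluating at $\tilde z=\tilde g\tilde h=\tilde\alpha_1\tilde\beta_1$ and using the disjointness just established, $\tilde g\tilde h$ lies in $\tilde\alpha_i\tilde\beta_j S$ for no pair other than $(1,1)$, so $f_2(\tilde\alpha_i^{-1}\tilde g\tilde h)=0$ for every $i\neq 1$ and only the $i=1$ term survives, giving
\[
f_1*f_2(\tilde g\tilde h)=f_1(\tilde\alpha_1)f_2(\tilde\alpha_1^{-1}\tilde g\tilde h)=f_1(\tilde g)f_2(\tilde h).
\]
Finally, since $f_1,f_2$ are nonzero elements of $H(S,\gamma)$ supported on $S\tilde gS$ and $S\tilde hS$, the equivariance remark above gives $f_1(\tilde g)\neq 0$ and $f_2(\tilde h)\neq 0$; thus $f_1*f_2(\tilde g\tilde h)\neq 0$, and combined with the vanishing off $S\tilde g\tilde h S$ this shows that $f_1*f_2$ is \emph{precisely} supported on $S\tilde g\tilde h S$.
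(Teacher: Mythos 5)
The paper itself states this lemma as well-known and supplies no proof, so there is no argument of the paper to compare against; judged on its own, your proof is complete and correct. Your counting step is exactly the right use of the hypothesis: since $S\tilde{g}\tilde{h}S$ is a disjoint union of precisely $\mu(\tilde{g}\tilde{h})=\mu(\tilde{g})\mu(\tilde{h})$ left cosets and is contained in the union of at most $\mu(\tilde{g})\mu(\tilde{h})$ cosets $\tilde{\alpha}_i\tilde{\beta}_j S$, these must be pairwise disjoint and exhaust $S\tilde{g}\tilde{h}S$, after which the vanishing off $S\tilde{g}\tilde{h}S$, the survival of only the $(i,j)=(1,1)$ term at $\tilde{z}=\tilde{g}\tilde{h}$ in the formula of Lemma~\ref{lem:rel1}, and the nonvanishing $f_1(\tilde{g})f_2(\tilde{h})\neq 0$ via $\gamma$-equivariance all follow as you say.
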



\subsection{Local Hecke algebra of $\DSL_2(\Q_2)$ modulo $\ov{K_0^2(4)}$}
Let $S = \ov{K_0^2(4)}$ and $\gamma$ be a genuine character of $M_2$ determined 
by its value on $(-I,1)$. Since 
$\ov{K_0^2(4)}$ is the direct product $K_1^2(4) \times M$, 
we can extend $\gamma$ to a genuine character of $\overline{K_0^2(4)}$ by setting 
it trivial on $K_1^2(4)$. Loke and Savin described $H(S, \gamma)$ for the above 
choice of $S$ and $\gamma$ as follows.

Using relations in \eqref{eq:eq1}, extend $\gamma$ to the normalizer 
$N_{\DSL(\Q_2)}(T)$ by defining $\gamma((h(2^n),1)) = 1$ for all integers $n$ 
and $\gamma((w(1),1)) = (1 + \gamma((-I,1)))/\sqrt{2}$, a primitive 
$8$th root of unity. For $n\in \Z$, define the elements $\mathcal{T}_n$ and
$\mathcal{U}_n$ of $H(\ov{K_0^2(4)}, \gamma)$ supported respectively on 
the $\ov{K_0^2(4)}$ double cosets of $(h(2^n),1)$ and 
$(w(2^{-n}),1)$ such that
\[\mathcal{T}_n(\tilde{k}(h(2^n),1)\tilde{k'}) = 
\overline{\gamma}(\tilde{k})\overline{\gamma}((h(2^n),1))\overline{\gamma}(\tilde{k'}),\]
\[\mathcal{U}_n(\tilde{k}(w(2^{-n}),1)\tilde{k'}) = 
\overline{\gamma}(\tilde{k})\overline{\gamma}((w(2^{-n}),1))\overline{\gamma}(\tilde{k'})
\quad \text{for $\tilde{k}$, $\tilde{k'} \in \ov{K_0^2(4)}$}.\] 
\begin{thm}(Loke-Savin~\cite{L-S})\label{thm:LS} 
For $m, n \in \Z$,
\begin{enumerate}
 \item If $mn \ge 0$ then $\mathcal{T}_m*\mathcal{T}_n =\mathcal{T}_{m+n}$.
 \item $\mathcal{U}_1 * \mathcal{T}_n = \mathcal{U}_{n+1}$ and 
 $\mathcal{T}_n * \mathcal{U}_1 = \mathcal{U}_{1-n}$.
 \item $\mathcal{U}_1 * \mathcal{U}_n = \mathcal{T}_{n-1}$ and
  $\mathcal{U}_n * \mathcal{U}_1 = \mathcal{T}_{1-n}$.
\end{enumerate}
The Hecke algebra $H(\ov{K_0^2(4)}, \gamma)$ is generated by $\mathcal{U}_0$ and $\mathcal{U}_1$ modulo 
relations 
$(\mathcal{U}_0-2\sqrt{2})(\mathcal{U}_0+\sqrt{2})=0$ 
and $\mathcal{U}_1^2 = 1$. 
\end{thm}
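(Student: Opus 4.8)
\emph{Overview of the plan.} The argument follows the three preliminary lemmas in order: first locate the support of $H(\ov{K_0^2(4)},\gamma)$, then compute every convolution of the basis elements $\mathcal{T}_n$ and $\mathcal{U}_n$, and finally read off the presentation. Write $S=\ov{K_0^2(4)}$ throughout. Using the Bruhat decomposition of $\DSL_2(\Q_2)$ together with the relations \eqref{eq:eq1}, every double coset $S\tilde g S$ is represented by a normalizer element, i.e.\ by some $(h(2^n),1)$ or $(w(2^{-n}),1)$. For each such representative $\tilde g$ I would form $S_{\tilde g}=S\cap \tilde g S\tilde g^{-1}$ and test the criterion of Lemma~\ref{lem1:sup}; since $\alpha_{\tilde g}(\tilde k)=\gamma([\tilde k^{-1},\tilde g^{-1}])$ is a character of $S_{\tilde g}$ by Lemma~\ref{lem2:sup}, it suffices to check triviality on a generating set. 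This is exactly where the cocycle computations recorded in the appendix enter, reducing each commutator to Hilbert symbols $\hst{\,\cdot\,,\,\cdot\,}$; carrying this out confirms that $H(S,\gamma)$ is supported precisely on the cosets of $(h(2^n),1)$ and $(w(2^{-n}),1)$, so that $\mathcal{T}_n,\mathcal{U}_n$ are well defined and form a basis.

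Next I would treat the length-additive products. After recording the single-coset index $\mu(\tilde g)$ of each double coset, I would isolate the products with $\mu(\tilde g)\mu(\tilde h)=\mu(\tilde g\tilde h)$; for these Lemma~\ref{lem:rel2} applies verbatim, so the convolution is supported on the single coset of $\tilde g\tilde h$ with value $\gamma$ of the product element, which \eqref{eq:eq1} computes explicitly. This yields (1) for $mn\ge 0$ — the constraint $mn\ge 0$ being exactly the length-additivity $\mu(h(2^m))\mu(h(2^n))=\mu(h(2^{m+n}))$, while $\hst{2^m,2^n}=1$ makes the value precisely $\mathcal{T}_{m+n}$ — and both identities of (2), since $(w(2^{-1}),1)(h(2^n),1)=(w(2^{-(n+1)}),\hst{-2^{-1},2^n})$ and $(h(2^n),1)(w(2^{-1}),1)=(w(2^{n-1}),\hst{2^n,-2^{-1}})$ land on the cosets of $\mathcal{U}_{n+1}$ and $\mathcal{U}_{1-n}$. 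The extension of $\gamma$ to the normalizer, with $\gamma((h(2^n),1))=1$ and $\gamma((w(1),1))=(1+\gamma((-I,1)))/\sqrt 2$, turns each product into its stated value, and the generic (length-additive) cases of (3) follow the same way.

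The heart of the proof is the few non-additive products, namely $\mathcal{U}_1*\mathcal{U}_1$ and $\mathcal{U}_0*\mathcal{U}_0$, where the length drops and Lemma~\ref{lem:rel2} fails. Here I would invoke the full convolution formula of Lemma~\ref{lem:rel1}, decomposing each double coset into single cosets $\tilde\alpha_i S$ and, for each $i$, deciding into which double coset the products $\tilde\alpha_i\tilde\beta_j$ fall while summing the corresponding values of $\gamma$. The decisive identity is $(w(1),1)(w(1),1)=(h(-1),\hst{1,1})=(-I,1)$ from \eqref{eq:eq1}; as $(-I,1)$ is central and lies in $S$, its double coset is the identity coset $S$, and it contributes the scalar $\gamma((-I,1))$, a primitive fourth root of unity. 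Tracking these contributions, the degenerate part collapses to a multiple of $\mathcal{T}_0$ while the surviving cosets reassemble into $\mathcal{U}_0$; this produces $\mathcal{U}_1*\mathcal{U}_1=\mathcal{T}_0$, i.e.\ $\mathcal{U}_1^2=1$, and $\mathcal{U}_0*\mathcal{U}_0=\sqrt 2\,\mathcal{U}_0+4\,\mathcal{T}_0$, i.e.\ $(\mathcal{U}_0-2\sqrt 2)(\mathcal{U}_0+\sqrt 2)=0$. I expect this delicate bookkeeping of coset counts and Hilbert-symbol signs to be the main obstacle.

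Finally I would record generation and the presentation. From (3), $\mathcal{U}_1*\mathcal{U}_0=\mathcal{T}_{-1}$ and $\mathcal{U}_0*\mathcal{U}_1=\mathcal{T}_1$; iterating (1) produces every $\mathcal{T}_n$, and then $\mathcal{U}_{n+1}=\mathcal{U}_1*\mathcal{T}_n$ from (2) produces every $\mathcal{U}_n$, so $\mathcal{U}_0$ and $\mathcal{U}_1$ generate $H(S,\gamma)$. To see that $\mathcal{U}_1^2=1$ and $(\mathcal{U}_0-2\sqrt 2)(\mathcal{U}_0+\sqrt 2)=0$ form a complete set of relations, I would verify that the abstract algebra on two generators modulo these relations has a spanning set in bijection with the basis $\{\mathcal{T}_n,\mathcal{U}_n\}$ of $H(S,\gamma)$, so that the natural surjection onto $H(S,\gamma)$ is forced to be an isomorphism.
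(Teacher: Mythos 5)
Your outline is sound, and it is essentially the strategy this paper itself carries out --- but for the odd-prime analogue (Propositions~\ref{prop2:rel1} and~\ref{prop2:rel2} and the presentation theorem that follows them); the $p=2$ statement is only quoted from Loke--Savin, so there is no in-paper proof of it to match line by line. Measured against that template, two points in your plan need repair. First, parts (1)--(3) of the theorem are asserted for all $m,n\in\Z$, whereas Lemma~\ref{lem:rel2} applies only when the single-coset indices multiply; for $\mathcal{U}_1*\mathcal{T}_n$, $\mathcal{T}_n*\mathcal{U}_1$ and $\mathcal{U}_1*\mathcal{U}_n$ this fails for one sign of $n$ (compare the careful restrictions to $n\ge 0$ in Proposition~\ref{prop2:rel1}). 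The remaining cases must be recovered from the directly computed ones by associativity together with the quadratic relations, and your write-up should say so explicitly. Second, you have the ``hard'' convolutions misidentified: $(w(2^{-1}),1)$ normalizes $\ov{K_0^2(4)}$ (conjugation sends $\mat{a}{b}{c}{d}$ to $\mat{d}{-c/4}{-4b}{a}$), so its double coset is a single left coset, $\mu(w(2^{-1}))=1$, and $\mathcal{U}_1*\mathcal{U}_1$ is the trivially length-additive case: it equals $\overline{\gamma}((w(2^{-1}),1))^{2}\,\gamma((-I,1))\cdot\mathcal{T}_0=\mathcal{T}_0$ with no coset summation at all, for either choice of the primitive fourth root $\gamma((-I,1))$. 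The only genuinely non-additive product you must grind through Lemma~\ref{lem:rel1} is $\mathcal{U}_0*\mathcal{U}_0$, where the double coset of $(w(1),1)$ splits into the four cosets $x(s)w(1)\ov{K_0^2(4)}$, $s\in\Z_2/4\Z_2$; there your bookkeeping is the right move and does produce $\sqrt{2}\,\mathcal{U}_0+4$, with the proof of Proposition~\ref{prop2:rel2}(1) as the exact model. Your closing argument for the presentation (surjection from the abstract algebra, alternating words in $\mathcal{U}_0,\mathcal{U}_1$ mapping to distinct basis elements, hence trivial kernel) coincides with the paper's proof of the odd-$p$ presentation theorem and is fine as stated.
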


\subsection{Iwahori Hecke Algebra of $\DSL_2(\Q_p)$ modulo $\ov{K_0^p(p)}$, $p$ odd.}
Fix an odd prime $p$. Let $S = \ov{K_0^p(p)}$. 
Let $\gamma$ be a character of $K_0^p(p)$ such that it is
trivial on $K_1^p(p)$.
Since $\frac{K_0^p(p)}{K_1^p(p)} \cong (\Z_p/p\Z_p)^\times$, 
we can define  
$\gamma$ by a character of $(\Z/p\Z)^\times$. We shall use the same symbol 
$\gamma$ to denote a genuine character of $S$ by defining 
$\gamma(A, \epsilon) = \epsilon\gamma(A)$ for $A \in K_0^p(p)$.
We call $H(S, \gamma)$ with the above choice of 
$S$ and $\gamma$ to be the genuine Iwahori Hecke algebra 
of $\DSL_2(\Q_p)$ with central character $\gamma$.
Our main result in this subsection is to describe this Iwahori Hecke algebra 
using generators and relations when $\gamma$ is quadratic.

In the rest of this subsection we shall denote $K_0^p(p)$ simply by $K_0$. 
We first note the following lemma.
\begin{lem}\label{lem:rep1}
A complete set of representatives for the double cosets of $\DSL_2(\Q_p)$ 
mod $\ov{K}_0$ are given by $(h(p^n),1),\ (w(p^{-n}),1)$ where $n$ varies 
over integers.
\end{lem}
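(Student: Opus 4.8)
The plan is to establish Lemma~\ref{lem:rep1} via the standard Bruhat and Cartan decomposition machinery, transported up to the double cover $\DSL_2(\Q_p)$. First I would recall that since $p$ is odd, $\DSL_2(\Q_p)$ splits over $\SL_2(\Z_p)$ by \cite[Proposition 2.8]{Gelbart}, so $\ov{K}_0 = \ov{K_0^p(p)}$ is a genuine copy of $K_0^p(p) \times \mu_2$ sitting inside $\DSL_2(\Q_p)$; the factor $\mu_2$ is central and will not affect which \emph{projected} double cosets occur. Thus it suffices to work downstairs: I would first show that the double cosets $K_0^p(p) \backslash \SL_2(\Q_p) / K_0^p(p)$ are represented by the matrices $h(p^n)$ and $w(p^{-n})$ for $n \in \Z$, and then lift each representative to the canonical element $(h(p^n),1)$, respectively $(w(p^{-n}),1)$, of $\DSL_2(\Q_p)$.

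For the downstairs computation I would combine the Iwahori factorization with the affine Weyl group. Writing $I = K_0^p(p)$ for the Iwahori subgroup, the Bruhat--Tits theory for $\SL_2(\Q_p)$ gives the Iwahori--Bruhat decomposition
\[
\SL_2(\Q_p) = \bigsqcup_{\tilde{w} \in \widetilde{W}} I\,\tilde{w}\,I,
\]
where $\widetilde{W}$ is the affine Weyl group, an infinite dihedral group generated by the two reflections. I would identify explicit representatives of $\widetilde{W}$ in $N_{\SL_2(\Q_p)}(T) \cap \SL_2(\Q_p)$: the translation part is generated by $h(p)$ and the reflection part by $w(1)$. Using the relations \eqref{eq:eq1} one checks that every element of the normalizer reduces, modulo the torus part lying in $I$, to one of $h(p^n)$ or a product $h(p^m)w(1)$; and $h(p^m)w(1) = w(p^m)$ up to the relevant Hilbert-symbol twist, which matches the claimed family $w(p^{-n})$ after reindexing. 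The key point is that the split torus $T(\Z_p) = \{h(t): t\in\Zs_p\}$ is absorbed into $I$, so the only surviving invariant is the power of $p$ in the diagonal, together with the Weyl-group choice of whether $w$ appears.

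The lifting step is where I would be most careful, since it is the genuinely half-integral feature. Because $\ov{K}_0 \cong K_0^p(p) \times \mu_2$ with $\mu_2$ central, any element $(g,\epsilon)$ of $\DSL_2(\Q_p)$ satisfies $(g,\epsilon) \in \ov{K}_0\,(g,1)\,\ov{K}_0$, so the sign $\epsilon$ never enlarges the set of double-coset representatives: one always normalizes to $\epsilon = 1$. Hence the bijection between downstairs double cosets and upstairs double cosets is exactly the projection $P$, and the representatives $(h(p^n),1)$ and $(w(p^{-n}),1)$ suffice. I would also verify that these representatives are pairwise inequivalent upstairs, which again follows from inequivalence downstairs since $P$ descends to a well-defined map on double cosets.

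The main obstacle I anticipate is the bookkeeping in the downstairs Bruhat computation: correctly passing between the abstract affine Weyl group and concrete matrix representatives, and tracking how the torus part $h(t)$ with $t \in \Zs_p$ gets absorbed into the Iwahori while the $p$-power part survives. The cocycle $\sigma_p$ and the Hilbert symbols in \eqref{eq:eq1} introduce signs when multiplying lifted elements, so I would need the appendix relations on cocycle multiplication (referenced in the text) to confirm that these signs can always be pushed into the central $\mu_2$ and hence absorbed, leaving the clean representatives stated. Once that absorption is justified, the enumeration of representatives is immediate.
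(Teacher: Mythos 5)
Your proposal is correct. The paper states this lemma without proof (it is the standard Iwahori--Bruhat decomposition for $\SL_2(\Q_p)$, lifted to the double cover), and your argument --- computing $K_0^p(p)\backslash \SL_2(\Q_p)/K_0^p(p)$ via the affine Weyl group, noting that the torus part $h(u)$ with $u\in\Zs_p$ is absorbed into the Iwahori while only the $p$-power and the Weyl choice survive, and then observing that the central $\mu_2$ lies in $\ov{K}_0$ so every sign can be normalized away --- is exactly the justification the authors implicitly rely on.
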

We need to compute the support of $H(\ov{K}_0, \gamma)$. 
Fix an integer $n$. Let $A:=h(p^n)$ and $\tilde{A} := (A, \epsilon_1)$.
We shall show that $H(\ov{K}_0, \gamma)$ is supported on 
$\tilde{A}$. We have 
\begin{equation*}
\begin{split}
 K_{\tilde{A}}=\Big\{ \left(\mat{a}{b}{c}{d}, \pm1\right) \in 
 \ov{SL}_2(\Z_p) \ : \ & \ord_p(c) \ge \max\{-2n+1,1\}, \\
 & \ord_p(b) \ge \max\{2n,0\} \Big\}.
\end{split}
\end{equation*}
We check that $K_{\tilde{A}}$ has a triangular decomposition 
$K_{\tilde{A}} = N^{K_{\tilde{A}}} T^{K_{\tilde{A}}} \bar{N}^{K_{\tilde{A}}}$
where $T^{K_{\tilde{A}}} = T^{\ov{K}_0}$,
$N^{K_{\tilde{A}}} =\{ (x(s), \pm1) : \ord_p(s) \ge \max\{2n,0\}\}$ and
$\bar{N}^{K_{\tilde{A}}} =\{ (y(t), \pm1) : \ord_p(t) \ge \max\{-2n+1,1\}\}$.

By Lemma~\ref{lem1:sup} and~\ref{lem2:sup}, it is enough 
to check that the value of $\gamma$ on the commutator
$[(B,\ \epsilon_2)^{-1},\ (A,\ \epsilon_1)^{-1}]$ is $1$ for any
$(B,\ \epsilon_2)$ in $N^{K_{\tilde{A}}},\ T^{K_{\tilde{A}}}$ and 
$\bar{N}^{K_{\tilde{A}}}$ respectively. 

By Lemma~\ref{lem2:sigma}, for 
$B = (x(s),\epsilon_2) \in N^{K_{\tilde{A}}}$, we get
$[(B,\ \epsilon_2)^{-1},\ (A,\ \epsilon_1)^{-1}]=
(\mat{1}{sp^{-2n}-s}{0}{1},1)$; 
for $B = (h(u),\epsilon_2) \in T^{K_{\tilde{A}}}$, 
 $[(B,\ \epsilon_2)^{-1},\ (A,\ \epsilon_1)^{-1}]=
(I,1)$; and
for $B = (y(t),\epsilon_2) \in N^{K_{\tilde{A}}}$, 
we get that $[(B,\ \epsilon_2)^{-1},\ (A,\ \epsilon_1)^{-1}]=
(\mat{1}{0}{(p^{2n}-1)t}{1},1)$. Since each of them 
belong to $K_1^p(p)\times\{1\}$, we are done.
\vskip 2mm

Next let $A:=w(p^{-n})$. We show that $H(\ov{K}_0, \gamma)$ is supported on 
$\tilde{A}= (A, \epsilon_1)$ provided 
$\gamma(u^2)=1$ for all units $u$ in $\Z_p$. In this case we have 
\begin{equation*}
\begin{split}
 K_{\tilde{A}}=\Big\{ \left(\mat{a}{b}{c}{d}, \pm1\right) \in 
 \ov{SL}_2(\Z_p) \ : \ & \ord_p(c) \ge \max\{2n,1\}, \\
 & \ord_p(b) \ge \max\{-2n+1,0\} \Big\}
\end{split}
\end{equation*}
and $K_{\tilde{A}}$ has a triangular decomposition
$K_{\tilde{A}} = N^{K_{\tilde{A}}} T^{K_{\tilde{A}}} \bar{N}^{K_{\tilde{A}}}$
where $T^{K_{\tilde{A}}} = T^{\ov{K}_0}$,
$N^{K_{\tilde{A}}}=\{ (x(s), \pm1) : \ord_p(s) \ge \max\{-2n+1,0\}\}$ and
$\bar{N}^{K_{\tilde{A}}}=\{ (y(t), \pm1) : \ord_p(t) \ge \max\{2n,1\}\}$.

By Lemma~\ref{lem2:sigma}, for $B = (x(s),\epsilon_2) \in N^{K_{\tilde{A}}}$, we get 
$[(B,\ \epsilon_2)^{-1},\ (A,\ \epsilon_1)^{-1}]=
(\mat{1+s^2 p^{2n}}{-s}{-sp^{2n}}{1},1)$, so $\gamma$ takes value 
$1$ on this commutator. 
In the case $B = (y(t),\epsilon_2) \in N^{K_{\tilde{A}}}$,
we have
\[B^{-1}A^{-1}BA = \mat{1}{-p^{-2n}t}{-t}{1+p^{-2n}t^2}, \text{ where } 
\ord_p(t) \ge \max\{2n,1\},\]
so $s_p(B^{-1}A^{-1}BA)=1$ if either  $-t(1+p^{-2n}t^2)=0$ or 
$\ord_p(t)$ is even. Assume that $-t(1+p^{-2n}t^2) \ne 0$ and 
$\ord_p(t)$ is odd. Then $s_p(B^{-1}A^{-1}BA) = 
\hs{-t, 1+p^{-2n}t^2} = \hs{-p, 1+p^{-2n}t^2}$.
Let $u = 1+p^{-2n}t^2$. Since $\ord_p(t) \ge \max\{2n,1\}$, we have 
$u \equiv 1 \pmod{p \Z_p}$.
Hence $s_p(B^{-1}A^{-1}BA) = \hs{-p, u} = \kro{u}{p} = 1$. 
So in this case also $\gamma$ takes value $1$. 

For $B = (h(u),\epsilon_2) \in T^{K_{\tilde{A}}}$, 
$[(B,\ \epsilon_2)^{-1},\ (A,\ \epsilon_1)^{-1}]=
(\mat{1/u^2}{0}{0}{u^2},1)$, so 
$\gamma([(B,\ \epsilon_2)^{-1},\ (A,\ \epsilon_1)^{-1}])=\gamma(u^2)$.

Thus if $\gamma(u^2)=1$ for all units $u$ in $\Z_p$ then  
$H(\ov{K}_0, \gamma)$ is supported on $(w(p^{-n}),\epsilon)$. In particular 
this holds if our choice of $\gamma$ is quadratic. Thus we have
\begin{prop}\label{prop1:sup} 
If $\gamma$ is a quadratic character then
$H(\ov{K}_0, \gamma)$ is supported on the double cosets of $\ov{K}_0$ represented 
by $(h(p^{n}), 1)$ and $(w(p^{-n}),1)$ as $n$ varies over integers.
\end{prop}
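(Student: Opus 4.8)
\emph{Proof proposal.} The plan is to combine the coset description of Lemma~\ref{lem:rep1} with the support criterion of Lemmas~\ref{lem1:sup} and~\ref{lem2:sup}. By Lemma~\ref{lem:rep1} every double coset $\ov{K}_0\tilde{g}\ov{K}_0$ is represented by some $\tilde{A}=(h(p^n),1)$ or $\tilde{A}=(w(p^{-n}),1)$, so it suffices to decide, for each such representative, whether $H(\ov{K}_0,\gamma)$ is supported there. By Lemma~\ref{lem1:sup} this happens precisely when $\alpha_{\tilde{A}}(\tilde{k})=\gamma([\tilde{k}^{-1},\tilde{A}^{-1}])$ is identically $1$ on $K_{\tilde{A}}=\ov{K}_0\cap\tilde{A}\,\ov{K}_0\,\tilde{A}^{-1}$; and by Lemma~\ref{lem2:sup} this map is a character, so triviality need only be tested on a generating set of $K_{\tilde{A}}$.

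First I would invoke the triangular decomposition $K_{\tilde{A}}=N^{K_{\tilde{A}}}T^{K_{\tilde{A}}}\bar{N}^{K_{\tilde{A}}}$ recorded above, which supplies generators of the three types $(x(s),\pm1)$, $(h(u),\pm1)$, $(y(t),\pm1)$, reducing the verification to these. For $\tilde{A}=(h(p^n),1)$ the explicit commutators computed above (via Lemma~\ref{lem2:sigma}) all lie in $K_1^p(p)\times\{1\}$, on which $\gamma$ is trivial by construction; hence $\alpha_{\tilde{A}}\equiv 1$ and support at $(h(p^n),1)$ holds with no hypothesis on $\gamma$.

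The case $\tilde{A}=(w(p^{-n}),1)$ is where the quadratic hypothesis and the arithmetic of $\Q_p$ enter. The $N$-part again produces a commutator in $K_1^p(p)\times\{1\}$, hence contributes $1$; the $T$-part produces $(\mat{1/u^2}{0}{0}{u^2},1)$, on which $\gamma$ evaluates to $\gamma(u^2)$, which is $1$ exactly because $\gamma$ is quadratic. The main obstacle is the $\bar{N}$-part: the commutator is $B^{-1}A^{-1}BA=\mat{1}{-p^{-2n}t}{-t}{1+p^{-2n}t^2}$ with $\ord_p(t)\ge\max\{2n,1\}$, and one must show the cocycle factor $s_p$ is trivial on it. When $\ord_p(t)$ is even this is immediate; the delicate sub-case is $\ord_p(t)$ odd, where the task reduces to evaluating the Hilbert symbol $\hs{-t,\,1+p^{-2n}t^2}=\hs{-p,\,u}$ with $u=1+p^{-2n}t^2\equiv 1\pmod{p\Z_p}$ and identifying it with the Legendre symbol $\kro{u}{p}=1$. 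I expect this Hilbert-symbol reduction to be the crux of the argument; once it is in hand, $\alpha_{\tilde{A}}\equiv 1$ for every $n$, and collecting the two cases yields the stated support.
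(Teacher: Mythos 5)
Your proposal follows the paper's own argument step for step: the same reduction via Lemma~\ref{lem:rep1} and Lemmas~\ref{lem1:sup}--\ref{lem2:sup}, the same triangular decomposition of $K_{\tilde{A}}$, the same observation that the $h(p^n)$ case and the $N$-part of the $w(p^{-n})$ case land in $K_1^p(p)\times\{1\}$, the same use of quadraticity on the $T$-part, and the same Hilbert-symbol evaluation $\hs{-p,u}=\kro{u}{p}=1$ for $u\equiv 1\pmod{p\Z_p}$ on the $\bar{N}$-part. This is correct and essentially identical to the paper's proof.
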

We now obtain the generators and relations in
$H(\ov{K}_0, \gamma)$ when $\gamma$ is quadratic.

We consider the character $\gamma$ of $\ov{K}_0$ to be 
the genuine character of the center $M_p$ and extend it to the 
normalizer group $N_{\DSL_2(\Q_p)}(T)$ as follows.

Let $\varepsilon_p = 1$ or $(-1)^{1/2}$ depending on whether 
$p \equiv 1\ \text{or}\ 3\pmod{4}$, i.e. $\varepsilon_p^2 = \kro{-1}{p}$.
Let $t=p^nu \in \Qs_p$ where $n \in \Z$ and 
$u$ is a unit in $\Z_p$. Define
\[\gamma((h(t),1))=
\begin{cases}
1 & \text{ if $n$ is even}\\
\varepsilon_p\kro{u}{p} & \text{ if $n$ is odd}. 
\end{cases}\]
It is easy to see that $\gamma$ extends to a character of 
$T$. 

We now extend the character to the normalizer 
$N_{\DSL_2(\Q_p)}(T)$ by defining $\gamma((w(1),1)) = 1$ 
and extend it using the relation
\[(w(t),1)=(h(t),1)(w(1),1)(I,\hs{-1,t^{-1}}).\] 
Thus for $t=p^nu$ as above, 
\[\gamma((w(t),1))=
\begin{cases}
1 & \text{ if $n$ is even}\\
\varepsilon_p\kro{-u}{p} & \text{ if $n$ is odd}. 
\end{cases}\]

We define the elements $\mathcal{T}_n$ and $\mathcal{U}_n$ of 
$H(\ov{K}_0, \gamma)$ supported 
respectively on the double cosets of $(h(p^n),1)$ and 
$(w(p^{-n}),1)$ as in the even prime case, i.e. 
\[\mathcal{T}_n(\tilde{k}(h(p^n),1)\tilde{k'}) = 
\overline{\gamma}(\tilde{k})\overline{\gamma}((h(p^n),1))\overline{\gamma}(\tilde{k'}),\]
\[\mathcal{U}_n(\tilde{k}(w(p^{-n}),1)\tilde{k'}) = 
\overline{\gamma}(\tilde{k})\overline{\gamma}((w(p^{-n}),1))\overline{\gamma}(\tilde{k'})
\quad \text{for $\tilde{k}$, $\tilde{k'} \in \ov{K}_0$}.\] 
Thus 
Proposition~\ref{prop1:sup} implies that 
$\mathcal{T}_n$ and $\mathcal{U}_n$ forms a $\C$-basis for $H(\ov{K}_0, \gamma)$ 
when $\gamma$ is quadratic.

In order to obtain relations amongst $\mathcal{T}_n$ and $\mathcal{U}_n$, 
we note the following lemma which can be obtained by using triangular 
decomposition of $K_0$.
\begin{lem}\label{lem:decomp}
1. For $n\geq 0$,
\[
K_0 h(p^n) K_0 = \bigcup_{s\in \Z_p / p^{2n}\Z_p}
x(s)h(p^n)K_0=\bigcup_{s\in \Z_p / p^{2n}\Z_p}K_{0}h(p^{n})y(ps).
\]
2. For $n \geq 1$,
\[
K_0 h(p^{-n}) K_0 = \bigcup_{s\in \Z_p / p^{2n}\Z_p}
y(ps)h(p^{-n})K_0=\bigcup_{s\in \Z_p / p^{2n}\Z_p}K_{0}h(p^{-n})x(s).
\]
3. For $n\geq 1$,
\[
K_0 w(p^{-n}) K_0 = \bigcup_{s\in \Z_p / p^{2n-1}\Z_p}
y(ps)w(p^{-n})K_0=\bigcup_{s\in \Z_p / p^{2n-1}\Z_p}K_{0}w(p^{-n})y(ps).
\]
4. For $n \geq 0$,
\[
K_0 w(p^{n}) K_0 = \bigcup_{s\in \Z_p / p^{2n+1}\Z_p}
x(s)w(p^{n})K_0=\bigcup_{s\in \Z_p / p^{2n+1}\Z_p}K_{0}w(p^{n})x(s).
\]
\end{lem}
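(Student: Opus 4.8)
The plan is to treat all four statements by one uniform mechanism and to read off both the left- and the right-coset forms from it. Recall the standard fact that, for $g \in \SL_2(\Q_p)$ and an open compact subgroup $K$, the left cosets $\alpha K$ contained in $KgK$ biject with $K/(K \cap gKg^{-1})$ via $\alpha \mapsto \alpha g K$, while the right cosets $K\beta$ biject with $(K \cap g^{-1}Kg)\backslash K$ via $\beta \mapsto Kg\beta$. Thus for each $g \in \{h(p^{n}),\, h(p^{-n}),\, w(p^{-n}),\, w(p^{n})\}$ I would (i) compute the intersection subgroup $K_0 \cap gK_0 g^{-1}$ explicitly by conjugation, (ii) recognize it as the congruence subgroup of $K_0$ cut out by a single condition on one off-diagonal entry, and (iii) exhibit unipotent representatives together with their count. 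The one elementary input used throughout is that every $\mat{a}{b}{c}{d} \in K_0$ has $a,d \in \Z_p^\times$: since $c \in p\Z_p$, the relation $ad-bc=1$ forces $ad \equiv 1 \pmod{p}$.

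I would present part~1 as the model. Conjugation gives $h(p^n)\mat{a}{b}{c}{d}h(p^n)^{-1} = \mat{a}{p^{2n}b}{p^{-2n}c}{d}$, so for $n \ge 0$ one finds
\[ K_0 \cap h(p^n) K_0 h(p^n)^{-1} = \Big\{ \mat{a}{b}{c}{d} \in \SL_2(\Z_p) : c \in p\Z_p,\ b \in p^{2n}\Z_p \Big\}. \]
Given any $k = \mat{a}{b}{c}{d} \in K_0$, since $d$ is a unit I can solve $s \equiv b d^{-1} \pmod{p^{2n}}$, so that $x(-s)k$ has upper-right entry in $p^{2n}\Z_p$ while its lower-left entry stays in $p\Z_p$; hence $x(-s)k$ lies in the intersection subgroup, and $\{x(s) : s \in \Z_p/p^{2n}\Z_p\}$ is a complete set of representatives (distinctness is immediate, as $x(s)^{-1}x(s') = x(s'-s)$ lies in the intersection only when $s'\equiv s$). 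This gives the first form $K_0 h(p^n)K_0 = \bigcup_{s} x(s)h(p^n)K_0$ with $p^{2n}$ cosets. The right-coset form follows either symmetrically from $(K_0 \cap h(p^n)^{-1}K_0 h(p^n))\backslash K_0$, or --- as the statement suggests --- from the triangular decomposition $K_0 = N_0 T_0 \bar N_0$ with $N_0=\{x(s):s\in\Z_p\}$, $T_0=\{h(u):u\in\Z_p^\times\}$, $\bar N_0=\{y(ps):s\in\Z_p\}$: absorbing $h(p^n)N_0 T_0 \subseteq K_0 h(p^n)$ collapses the double coset to $K_0 h(p^n)\bar N_0$, and since $h(p^n)\,y(p(s'-s))\,h(p^n)^{-1} = y(p^{1-2n}(s'-s))$ lies in $K_0$ exactly when $s'\equiv s \pmod{p^{2n}}$, one obtains $\bigcup_{s} K_0 h(p^n) y(ps)$.

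Parts~2--4 run on the identical template, with only the bookkeeping of $p$-powers changing. For $h(p^{-n})$ ($n\ge 1$) the roles of $b$ and $c$ interchange: the governing entry is now $c$, yielding $p^{2n}$ cosets with representatives $y(ps)$. For the Weyl-type elements I would use $w(t)\mat{a}{b}{c}{d}w(t)^{-1} = \mat{d}{-t^2 c}{-t^{-2}b}{a}$; for $w(p^n)$ ($n\ge0$) intersecting with $K_0$ pins down $b \in p^{2n+1}\Z_p$, giving index $p^{2n+1}$ and representatives $x(s)$, while for $w(p^{-n})$ ($n\ge1$) the binding condition becomes $c \in p^{2n}\Z_p$ inside $c\in p\Z_p$, giving index $p^{2n-1}$ and representatives $y(ps)$. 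In each case the unit-diagonal solvability argument produces the unipotent representatives, and the congruence modulus records the count. The one genuinely delicate point --- and the place I would verify most carefully --- is the arithmetic of the exponents in the $w$-cases: getting the moduli $p^{2n\pm 1}$ right requires combining the $\mp 2n$ shifts coming from conjugation with the defining condition $c\in p\Z_p$ of $K_0$, where it is easy to be off by one power of $p$. Everything else reduces to routine matrix manipulation over $\Z_p$.
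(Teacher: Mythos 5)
Your proof is correct, and it fleshes out in full the argument the paper merely gestures at (the paper offers no proof beyond the remark that the lemma ``can be obtained by using triangular decomposition of $K_0$''). Your route via the bijection of left cosets in $K_0 g K_0$ with $K_0/(K_0\cap gK_0g^{-1})$, the explicit conjugation formulas, and the unit-diagonal solvability of $s\equiv bd^{-1}$ or $s\equiv (c/p)a^{-1}$ modulo the appropriate power of $p$ is the standard way to make that hint precise, and all four index computations ($p^{2n}$, $p^{2n}$, $p^{2n-1}$, $p^{2n+1}$) come out right.
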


\begin{prop}\label{prop2:rel1}
We have following relations:
\begin{enumerate}
 \item If $mn \ge 0$ then $\mathcal{T}_m*\mathcal{T}_n =\mathcal{T}_{m+n}$.
 \item For $n \ge 0$, $\mathcal{U}_1 * \mathcal{T}_n = \mathcal{U}_{n+1}$ and 
 $\mathcal{T}_{-n} * \mathcal{U}_1 = \mathcal{U}_{n+1}$.
 \item For $n \ge 0$, $\mathcal{U}_0 * \mathcal{T}_{-n} = \mathcal{U}_{-n}$ and 
 $\mathcal{T}_n * \mathcal{U}_0 = \mathcal{U}_{-n}$.
 \item For $n \ge 1$, $\mathcal{U}_0 * \mathcal{U}_n = \overline{\gamma}(-1) 
 \cdot \mathcal{T}_n$ and $\mathcal{U}_n * \mathcal{U}_0 = 
 \overline{\gamma}(-1) \cdot \mathcal{T}_{-n}$.
\end{enumerate}
\end{prop}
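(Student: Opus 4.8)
The plan is to prove each relation by reducing a convolution on the genuine Hecke algebra $H(\ov{K}_0,\gamma)$ to a single evaluation, using the coset decompositions in Lemma~\ref{lem:decomp} together with Lemmas~\ref{lem:rel1} and~\ref{lem:rel2}. The key structural input is that, whenever the coset-counting identity $\mu(\tilde g)\mu(\tilde h)=\mu(\tilde g\tilde h)$ holds, Lemma~\ref{lem:rel2} tells us the product is supported on the single double coset $\ov{K}_0\tilde g\tilde h\ov{K}_0$ and that its value there is simply $f_1(\tilde g)f_2(\tilde h)$. So for each claimed relation I would first verify the multiplicativity of the coset index, then pin down the representative of the product double coset using the relations in \eqref{eq:eq1}, and finally track the value of the genuine character $\gamma$ (extended to $N_{\DSL_2(\Q_p)}(T)$ as above) to read off the correct scalar.

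\medskip

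\emph{Relation (1)} is the cleanest: from Lemma~\ref{lem:decomp} one reads off $\mu((h(p^m),1))=p^{2|m|}$ (and $=1$ for $m=0$), and when $m,n$ have the same sign the indices multiply, $p^{2|m|}\cdot p^{2|n|}=p^{2|m+n|}$, so Lemma~\ref{lem:rel2} applies directly. Using \eqref{eq:eq1} we get $(h(p^m),1)(h(p^n),1)=(h(p^{m+n}),\hs{p^m,p^n})$, and since $\hs{p^m,p^n}=1$ for like-signed exponents (the Hilbert symbol $\hs{p,p}=\kro{-1}{p}$ enters only when the orders have opposite parity), the value matches $\mathcal{T}_{m+n}$ with the correct normalization $\gamma((h(p^{m+n}),1))$. \emph{For relations (2) and (3)} I would proceed identically: the multiplications $(w(1),1)(h(p^n),1)$ and $(h(p^n),1)(w(1),1)$ are computed from \eqref{eq:eq1}, yielding $w(p^{\mp n})$ up to a central sign, and the index identity can be checked against $\mu((w(p^{-n}),1))=p^{2n-1}$ for $n\ge1$ and $=p^{2n+1}$ for $n\le0$ read off from parts (3),(4) of Lemma~\ref{lem:decomp}; the values $\gamma((w(1),1))=1$, $\gamma((h(p^n),1))=1$ for even $n$ then give the asserted equalities $\mathcal{U}_{n+1}$ and $\mathcal{U}_{-n}$ without extra scalars.

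\medskip

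\emph{Relation (4)} is where I expect the main obstacle, for two reasons. First, $\mathcal{U}_0$ is supported on the double coset of $(w(1),1)$ and $\mathcal{U}_n$ on that of $(w(p^{-n}),1)$; the product $(w(1),1)(w(p^{-n}),1)$ lands on a \emph{diagonal} element by the second identity in \eqref{eq:eq1}, namely $(h(-p^{n}),\hs{1,p^{-n}})$ up to normalization, so I must carefully transport the cocycle factor through. Second, and more seriously, the coset index need \emph{not} be multiplicative here when $n\ge1$: one has $\mu((w(1),1))=p$ and $\mu((w(p^{-n}),1))=p^{2n-1}$, whose product $p^{2n}$ does match $\mu((h(p^n),1))=p^{2n}$, so Lemma~\ref{lem:rel2} does apply — but verifying this cancellation and checking that no intermediate double coset intrudes requires care. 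The scalar $\overline{\gamma}(-1)$ arises precisely from the Hilbert-symbol term $\hs{1,p^{-n}}$ combined with the extension formula $\gamma((w(t),1))=\varepsilon_p\kro{-u}{p}$ for odd valuation: writing $-p^n=p^n\cdot(-1)$ with $-1$ a unit, the product of the two $\varepsilon_p\kro{-u}{p}$ factors from $\mathcal{U}_0$ and $\mathcal{U}_n$ collapses, via $\varepsilon_p^2=\kro{-1}{p}$, to exactly $\overline{\gamma}(-1)$ times the normalized value of $\mathcal{T}_{\pm n}$.

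\medskip

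\emph{The hard part} throughout is the bookkeeping of central signs: every application of \eqref{eq:eq1} introduces a Hilbert symbol $\hs{\cdot,\cdot}$ into the $\mu_2$-coordinate, and because $\gamma$ is genuine these signs feed directly into the scalar on the right-hand side. I would therefore organize the computation so that the cocycle contributions are collected first (using the appendix results on cocycle multiplication referenced after Lemma~\ref{lem2:sup}), and only afterwards evaluate $\gamma$ on the resulting normalizer element, so that the asymmetry between relations (2)/(3) (scalar $1$) and relation (4) (scalar $\overline{\gamma}(-1)$) emerges transparently from whether the product stays on the $w$-stratum or crosses to the $h$-stratum.
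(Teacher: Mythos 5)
Your overall framework is the same as the paper's: reduce each convolution to a single evaluation via Lemma~\ref{lem:decomp} and Lemmas~\ref{lem:rel1}--\ref{lem:rel2}, then track the cocycle and the extension of $\gamma$ to the normalizer. However, two of your sign computations are wrong, and the first one would actually break the proof of relation (1).

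For (1), you assert that $\hs{p^m,p^n}=1$ for like-signed exponents and that the symbol ``enters only when the orders have opposite parity.'' This is false: from the formula in the appendix, $\hs{p^m,p^n}=\kro{-1}{p}^{mn}$, so the symbol is nontrivial exactly when $m$ and $n$ are \emph{both odd} --- e.g.\ $m=n=1$ gives $\hs{p,p}=\kro{-1}{p}$, which is $-1$ for $p\equiv 3\pmod 4$. This factor is not a nuisance you can wave away; it is essential. When $m,n$ are both odd the normalization of the left side is $\overline{\gamma}((h(p^m),1))\,\overline{\gamma}((h(p^n),1))=\overline{\varepsilon_p}^{\,2}=\kro{-1}{p}$, while $m+n$ is even so $\mathcal{T}_{m+n}((h(p^{m+n}),1))=1$; the discrepancy is cancelled precisely by the cocycle factor $\hs{p,p}=\kro{-1}{p}$ coming from $(h(p^m),1)(h(p^n),1)=(h(p^{m+n}),1)(I,\hs{p,p})$. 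If the symbol were trivial as you claim, you would obtain $\mathcal{T}_m*\mathcal{T}_n=\kro{-1}{p}\mathcal{T}_{m+n}$ in the both-odd case, which is the wrong answer. The paper's proof of (1) is exactly this three-way parity case analysis.

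For (4), you attribute the scalar $\overline{\gamma}(-1)$ to the Hilbert symbol $\hs{1,p^{-n}}$, but that symbol is identically $1$. The scalar actually comes from the central element $-I$: the product of the two $w$'s is a \emph{negative} diagonal matrix, and one must write, e.g., $(w(p^{-n}),1)(w(1),1)=(h(p^{-n}),1)(-I,\hs{p^{-n},-1})$, so that $\gamma$ evaluated on the $(-I,\cdot)$ factor produces $\gamma(-1)$ while the symbol $\hs{p^{-n},-1}=\kro{-1}{p}^{\,n}$ enters the remaining sign bookkeeping together with the single nontrivial normalization $\overline{\gamma}((w(p^{-n}),1))$ (note $\mathcal{U}_0$ contributes $\gamma((w(1),1))=1$, so there are not ``two $\varepsilon_p\kro{-u}{p}$ factors'' as you state). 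Your index-multiplicativity checks ($\mu(w(1))\mu(w(p^{-n}))=p\cdot p^{2n-1}=p^{2n}=\mu(h(p^n))$) are correct and do match the paper, but the sign analysis needs to be redone along the lines above before the proof is complete.
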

\begin{proof}
We prove $(1)$ and the second part of $(4)$. The rest are similar.

For $(1)$ let $mn \ge 0$. We may assume both $m,\ n \ge 0$.
It follows from Lemma~\ref{lem:decomp} and \ref{lem:rel2} that 
$\mathcal{T}_m*\mathcal{T}_n$ is precisely supported on the double coset 
$\ov{K}_0(h(p^{n+m}),1)\ov{K}_0$ and that 
\[\mathcal{T}_m*\mathcal{T}_n((h(p^{m}),1)(h(p^{n}),1)) = \mathcal{T}_m((h(p^{m}),1))\mathcal{T}_n((h(p^{n}),1)).\]
Let $m$ and $n$ both be even. 
Then $(h(p^{m}),1)(h(p^{n}),1) = (h(p^{n+m}),1)$ and so
\[\mathcal{T}_m*\mathcal{T}_n((h(p^{n+m}),1)) = \mathcal{T}_m((h(p^{m}),1))\mathcal{T}_n((h(p^{n}),1)) = \]
\[= \overline{\gamma}((h(p^{m}),1))\overline{\gamma}((h(p^{n}),1)) =1 = 
\mathcal{T}_{m+n}((h(p^{n+m}),1)),
\]
hence $\mathcal{T}_m*\mathcal{T}_n =\mathcal{T}_{m+n}$.
Next suppose both $m$ and $n$ are odd, so $m+n$ is even. 
Then $(h(p^{m}),1)(h(p^{n}),1) = (h(p^{n+m}),1)(I,\hs{p,p})$ and so 
\[\mathcal{T}_m*\mathcal{T}_n((h(p^{n+m}),1)) = \overline{\gamma}((I,\hs{p,p}))\mathcal{T}_m((h(p^{m}),1))\mathcal{T}_n((h(p^{n}),1)) = \]
\[= \kro{-1}{p}\overline{\gamma}((h(p^{m}),1))\overline{\gamma}((h(p^{n}),1)) 
=\kro{-1}{p}\overline{\varepsilon_p^2} = 1=\mathcal{T}_{m+n}((h(p^{n+m}),1)).\] 
Now suppose $m$ is odd and $n$ is even (or vice versa), so $m+n$ is odd. 
In this case $(h(p^{m}),1)(h(p^{n}),1) = (h(p^{n+m}),1)$ and so
\[\mathcal{T}_m*\mathcal{T}_n((h(p^{n+m}),1)) = 
\overline{\varepsilon_p} =\mathcal{T}_{m+n}((h(p^{n+m}),1))\]
and we are done.

For $(4)$, let $n \ge 1$. As before using Lemma~\ref{lem:decomp} and 
\ref{lem:rel2} we know that  $\mathcal{U}_n * \mathcal{U}_0$ is 
supported on the double coset 
$\ov{K}_0(h(p^{-n}),1)\ov{K}_0$ and that 
\[\mathcal{U}_n * \mathcal{U}_0((w(p^{-n}),1)(w(1),1)) = \mathcal{U}_n((w(p^{-n}),1))\mathcal{U}_0((w(1),1)).\]
We have $(w(p^{-n}),1)(w(1),1) = (h(p^{-n}),1)(-I,\hs{p^{-n},-1})$ and so
\[\overline{\gamma}(-1)\ \mathcal{U}_n * \mathcal{U}_0((h(p^{-n}),1)) = \hs{p^{-n},-1}\mathcal{U}_n((w(p^{-n}),1))\mathcal{U}_0((w(1),1)) = \]
\[=\begin{cases}
    \kro{-1}{p}\overline{\varepsilon_p}\kro{-1}{p} = \overline{\varepsilon_p} & \text{ if $n$ is odd}\\
     1 & \text{ if $n$ is even}
   \end{cases}
=\mathcal{T}_{-n}((h(p^{-n}),1)),
\]
and thus $\mathcal{U}_n * \mathcal{U}_0 = \overline{\gamma}(-1) \cdot \mathcal{T}_{-n}$.
\end{proof}

We shall consider two choices for 
$\gamma$ as a character of $(\Z/p\Z)^*$, either $\gamma$ is trivial 
or $\gamma$ is given by the Kronecker symbol $\gamma = \kro{\cdot}{p}$. 
Then we have following proposition.
\begin{prop}\label{prop2:rel2}
We have following relations.
\begin{enumerate}
 \item $\mathcal{U}_0^2 = \begin{cases}
                           (p-1)\mathcal{U}_0 + p & \text{ if $\gamma$ is trivial}\\
                           \kro{-1}{p}p & \text{ if $\gamma = \kro{\cdot}{p}$}.
                          \end{cases}$
 \item $\mathcal{U}_1^2 = \begin{cases}
                           p & \text{ if $\gamma$ is trivial}\\
                           \varepsilon_p(p-1){U}_1 + \kro{-1}{p}p & \text{ if $\gamma = \kro{\cdot}{p}$}.
                          \end{cases}$ 
 \item If $\gamma$ is trivial, then 
 $\mathcal{T}_1*\mathcal{U}_1=p\ \mathcal{U}_0$ and 
 $\mathcal{T}_{-1} = 1/p \cdot \mathcal{U}_1 * \mathcal{T}_1 * \mathcal{U}_1$.
\end{enumerate}
\end{prop}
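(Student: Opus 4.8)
The plan is to read off all three identities from the explicit left-coset decompositions of Lemma~\ref{lem:decomp} together with the convolution formula of Lemma~\ref{lem:rel1}. The essential feature is that none of these products is length-additive, so Lemma~\ref{lem:rel2} is unavailable and the convolutions must be computed directly. Indeed, by \eqref{eq:eq1} we have $(w(1),1)(w(1),1)=(-I,1)$ and $(w(p^{-1}),1)(w(p^{-1}),1)=(-I,\kro{-1}{p})$ (using $\hs{p,p}=\kro{-1}{p}$), so both $\mathcal{U}_0*\mathcal{U}_0$ and $\mathcal{U}_1*\mathcal{U}_1$ collapse onto the trivial double coset $\ov K_0$, where $\mu=1$, while for $\mathcal{T}_1*\mathcal{U}_1$ one has $\mu((h(p),1))\,\mu((w(p^{-1}),1))=p^2\cdot p=p^3\neq p=\mu((w(1),1))$. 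By Proposition~\ref{prop1:sup} every such product lies in the span of the basis $\{\mathcal{T}_n,\mathcal{U}_n\}$, and the support analysis below shows that at most two basis elements occur in each case; it therefore suffices to evaluate each convolution at two double-coset representatives and to identify the coefficients.

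For the square $\mathcal{U}_0*\mathcal{U}_0$ I would use the $p$ representatives $x(s)w(1)\ov K_0$, $s\in\Z_p/p\Z_p$, of Lemma~\ref{lem:decomp}(4). The combinatorial heart is the identity $x(s)w(1)\,x(s')w(1)=-\,x(s)y(-s')$ (a direct $2\times2$ computation), whose lower-left entry is $s'$: the product therefore lies in $\ov K_0$ exactly when $s'\equiv0$ and in the $w(1)$-cell otherwise. Evaluating $\mathcal{U}_0*\mathcal{U}_0$ at $(I,1)$ collects the $p$ diagonal contributions, whose cocycle and $\gamma$-weights combine to $\overline\gamma(-1)$, giving the coefficient $\overline\gamma(-1)\,p$ of $\mathcal{T}_0$, that is, $p$ for trivial $\gamma$ and $\kro{-1}{p}p$ for $\gamma=\kro{\cdot}{p}$. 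Evaluating at $(w(1),1)$ gives the off-diagonal character sum over the $p-1$ nonzero residues $s'$, namely $\sum_{s'}\gamma(s')$; this equals $p-1$ when $\gamma$ is trivial, producing the $(p-1)\mathcal{U}_0$ term, and vanishes when $\gamma=\kro{\cdot}{p}$ by orthogonality, which is why that term disappears in the second case.

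Part $(2)$ is formally identical, now with the $p$ representatives $y(ps)w(p^{-1})\ov K_0$, $s\in\Z_p/p\Z_p$, of Lemma~\ref{lem:decomp}(3). The one structural difference is that $w(p^{-1})$ has \emph{odd} order at $p$; consequently, in the off-diagonal reduction the auxiliary torus element that appears also has odd order and so picks up the normalizer weight $\varepsilon_p\kro{\cdot}{p}$, twisting the off-diagonal summand by an extra $\kro{\cdot}{p}$ relative to part $(1)$. This interchanges the two cases: the $(I,1)$-evaluation still gives $\overline\gamma(-1)\,p$ (hence $p$ resp. $\kro{-1}{p}p$), whereas the off-diagonal sum $\varepsilon_p\sum_{s'}\kro{s'}{p}\gamma(s')$ now \emph{vanishes} for trivial $\gamma$—leaving $\mathcal{U}_1^2=p$—and equals $\varepsilon_p(p-1)$ for $\gamma=\kro{\cdot}{p}$, yielding the $\varepsilon_p(p-1)\mathcal{U}_1$ term.

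For $(3)$ only the first identity $\mathcal{T}_1*\mathcal{U}_1=p\,\mathcal{U}_0$ (with $\gamma$ trivial) is a genuine convolution: writing $\mathcal{T}_1$ on the $p^2$ cosets $x(s)h(p)\ov K_0$ of Lemma~\ref{lem:decomp}(1) and applying Lemma~\ref{lem:rel1}, all contributions collapse onto the $w(1)$-cell with total multiplicity $p$. The second identity is then purely formal: by associativity and the first identity,
\[\mathcal{U}_1*\mathcal{T}_1*\mathcal{U}_1=\mathcal{U}_1*(p\,\mathcal{U}_0)=p\,(\mathcal{U}_1*\mathcal{U}_0)=p\,\mathcal{T}_{-1},\]
the last step being Proposition~\ref{prop2:rel1}(4) with $\overline\gamma(-1)=1$; dividing by $p$ gives $\mathcal{T}_{-1}=\tfrac1p\,\mathcal{U}_1*\mathcal{T}_1*\mathcal{U}_1$. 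The main obstacle throughout is the cocycle bookkeeping: after multiplying two coset representatives one must rewrite the product in normal form $\tilde k\,\xi\,\tilde k'$ with $\xi$ a standard representative and $\tilde k,\tilde k'\in\ov K_0$, keeping track of every Hilbert-symbol factor generated by \eqref{eq:eq1} and the appendix and then evaluating $\gamma$ on $\tilde k,\tilde k'$. Only once this reduction is done correctly do the resulting sums over $\Z_p/p\Z_p$ separate into the trivial-versus-quadratic dichotomy recorded in the statement.
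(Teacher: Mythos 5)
Your proposal is correct and follows essentially the same route as the paper: both prove (1) and (2) by evaluating the convolution via Lemma~\ref{lem:rel1} at $(I,1)$ and at the relevant $w$-representative using the coset decompositions of Lemma~\ref{lem:decomp}, with the $p$ diagonal terms contributing $\overline{\gamma}(-1)\,p$ and the off-diagonal terms reducing (after rewriting $y(s)$, resp.\ $x(s/p)$, in normal form) to exactly the character sums you describe, yielding the trivial-versus-quadratic dichotomy. The only divergence is in (3), where the paper obtains $\mathcal{T}_1*\mathcal{U}_1=p\,\mathcal{U}_0$ purely formally by right-multiplying the relation $\mathcal{U}_0*\mathcal{U}_1=\mathcal{T}_1$ of Proposition~\ref{prop2:rel1}(4) by $\mathcal{U}_1$ and invoking $\mathcal{U}_1^2=p$, rather than by the direct coset computation you sketch; both work, and your derivation of the second identity of (3) from the first matches the paper's.
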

\begin{proof}
For $(1)$ we use Lemma~\ref{lem:rel1} to check that $\mathcal{U}_0*\mathcal{U}_0$ is at most
supported on the double cosets $\ov{K}_0$ and $\ov{K}_0(w(1),1)\ov{K}_0$.
Thus we need to only compute the values of $\mathcal{U}_0^2$ at $(I,1)$ and $(w(1),1)$. 
Using Lemma~\ref{lem:decomp} and \ref{lem:rel1} we have
\begin{equation*}
\begin{split}
\mathcal{U}_0*\mathcal{U}_0((I,1))&=
\sum_{s=0}^{p-1}\mathcal{U}_0((x(s),1)(w(1),1))\mathcal{U}_0((w(1),1)^{-1}(x(s),1)^{-1}) \\
&=\sum_{s=0}^{p-1}\mathcal{U}_0((w(1),1))\mathcal{U}_0((w(-1),1)(x(-s),1))\\
&=\sum_{s=0}^{p-1}\mathcal{U}_0((h(-1),1)(w(1),1)(x(-s),1))\\
&=\sum_{s=0}^{p-1} \overline{\gamma}(-1) = \begin{cases}
                                            p & \text{ if $\gamma$ is trivial}\\
                                           \kro{-1}{p}p & \text{ if $\gamma = \kro{\cdot}{p}$}.
                                           \end{cases}
\end{split}
\end{equation*}
Similarly, we get that $\mathcal{U}_0*\mathcal{U}_0((w(1),1))=$
\begin{equation*}
\begin{split}
&=\sum_{s=0}^{p-1}\mathcal{U}_0((x(s),1)(w(1),1))\mathcal{U}_0((w(1),1)^{-1}(x(s),1)^{-1}(w(1),1)) \\
&=\sum_{s=0}^{p-1}\mathcal{U}_0((\mat{0}{-1}{1}{-s},1)(w(1),1)))
=\sum_{s=0}^{p-1}\mathcal{U}_0((y(s),1))\\ 
&=\sum_{s=1}^{p-1}\mathcal{U}_0((y(s),1)) \ \ \text{since $\mathcal{U}_0((I,1))=0$.}\\
\end{split}
\end{equation*}
It is easy to check that for $1 \le s \le p-1$ 
\[(y(s),1) = (\mat{1}{1/s}{0}{1},1)(w(1),1)(\mat{-s}{-1}{0}{-1/s},1) \in 
\ov{K}_0(w(1),1)\ov{K}_0\]
and hence $\mathcal{U}_0*\mathcal{U}_0((w(1),1))=$
\[=
\sum_{s=1}^{p-1}\overline{\gamma}(-1/s)
=\sum_{s=1}^{p-1}\gamma(s) = \begin{cases}
                                            p-1 & \text{ if $\gamma$ is trivial}\\
                                           \sum_{s=1}^{p-1}\kro{s}{p} =0 & \text{ if $\gamma = \kro{\cdot}{p}$}.
                                           \end{cases}\]
Thus if we write $\mathcal{U}_0^2 = c_1\mathcal{U}_0 + c_2$, we get that 
\[c_1 = \begin{cases}
         p-1 & \text{ if $\gamma$ trivial}\\
         0 & \text{ if $\gamma = \kro{\cdot}{p}$}
        \end{cases},
      \quad
 c_2 = \begin{cases}
         p & \text{ if $\gamma$ trivial}\\
         \kro{-1}{p}p & \text{ if $\gamma = \kro{\cdot}{p}$}.
        \end{cases}
      \]

Now we prove $(2)$. Again using Lemma~\ref{lem:rel1} we see that 
$\mathcal{U}_1*\mathcal{U}_1$ is at most supported on the double cosets 
$\ov{K}_0$ and $\ov{K}_0(w(p^{-1}),1)\ov{K}_0$.
So we need to find the values of $\mathcal{U}_1^2$ at $(I,1)$ and $(w(p^{-1}),1)$. 
Using Lemma~\ref{lem:decomp} and \ref{lem:rel1}, 
\begin{equation*}
\begin{split}
\mathcal{U}_1*\mathcal{U}_1((I,1))&=
\sum_{s=0}^{p-1}\mathcal{U}_1((y(ps),1)(w(p^{-1}),1))\mathcal{U}_1((w(p^{-1}),1)^{-1}(y(ps),1)^{-1}) \\
&=\sum_{s=0}^{p-1}\mathcal{U}_1((w(p^{-1}),1))\mathcal{U}_1((w(-p^{-1}),1)(y(-ps),1))\\
&=\sum_{s=0}^{p-1}\overline{\varepsilon_p} \kro{-1}{p} \mathcal{U}_1((h(-1),\hs{-p,-1})(w(p^{-1}),1))\\
&=\sum_{s=0}^{p-1}\overline{\varepsilon_p}\kro{-1}{p}\gamma(-1)
\kro{-1}{p}\overline{\varepsilon_p}\kro{-1}{p} \\
&= \gamma(-1)p =\begin{cases}
p & \text{ if $\gamma$ trivial}\\
\kro{-1}{p}p & \text{ if $\gamma = \kro{\cdot}{p}$}.
\end{cases}\\
\end{split}
\end{equation*}
Finally, we have $\mathcal{U}_1*\mathcal{U}_1((w(p^{-1}),1))=$
\begin{equation*}
\begin{split}
&=\sum_{s=0}^{p-1}\mathcal{U}_1((y(ps),1)(w(p^{-1}),1))\mathcal{U}_1((w(-p^{-1}),1)
(y(-ps),1)(w(p^{-1}),1)) \\
&=\sum_{s=0}^{p-1}\overline{\varepsilon_p}\kro{-1}{p}\mathcal{U}_1((\mat{s}{-p^{-1}}{p}{0},\hs{p^2,-p^2s})
(w(p^{-1}),1)))\\
&=\sum_{s=0}^{p-1}\overline{\varepsilon_p}\kro{-1}{p}\mathcal{U}_1((x(s/p),\hs{p,-p})) = 
\sum_{s=1}^{p-1}\overline{\varepsilon_p}\kro{-1}{p}\mathcal{U}_1((x(s/p),1)).\\
\end{split}
\end{equation*}
Now we check that for $1 \le s \le p-1$ 
\[(x(s/p),1)(I,\kro{s}{p}) = 
(\mat{s}{0}{p}{1/s},1)(w(p^{-1}),1)(\mat{1}{0}{p/s}{1},1)\]
and so 
\begin{equation*}
\begin{split}
\mathcal{U}_1*\mathcal{U}_1((w(p^{-1}),1))&=
\sum_{s=1}^{p-1}\overline{\varepsilon_p}\kro{-1}{p}
\kro{s}{p}\overline{\gamma}(1/s)\overline{\varepsilon_p}\kro{-1}{p}\\
&= \sum_{s=1}^{p-1}\kro{-s}{p}\overline{\gamma}(1/s)\\
&=\begin{cases}
\sum_{s=1}^{p-1}\kro{-s}{p}= 0 & \text{ if $\gamma$ trivial}\\
\sum_{s=1}^{p-1}\kro{-s}{p}\kro{s^{-1}}{p} = \kro{-1}{p}(p-1) & 
\text{ if $\gamma = \kro{\cdot}{p}$}.
\end{cases}
\end{split}
\end{equation*}
Thus if we write $\mathcal{U}_1^2 = c_1\mathcal{U}_1 + c_2$, we get that 
\[c_1 = \begin{cases}
         0 & \text{ if $\gamma$ trivial}\\
         \varepsilon_p(p-1) & \text{ if $\gamma = \kro{\cdot}{p}$}
        \end{cases},
      \quad
 c_2 = \begin{cases}
         p & \text{ if $\gamma$ trivial}\\
         \kro{-1}{p}p & \text{ if $\gamma = \kro{\cdot}{p}$}.
        \end{cases}
      \] 

For $(3)$ let $\gamma$ be a trivial character.
From Proposition~\ref{prop2:rel1}$(4)$, we have 
$\mathcal{U}_0 * \mathcal{U}_1 = \mathcal{T}_1$. 
Right multiplication by $\mathcal{U}_1$ on both 
sides and using $(2)$ above gives  
$\mathcal{T}_1 * \mathcal{U}_1=p\ \mathcal{U}_0$. 
Further using the same proposition we get that 
 $\mathcal{T}_{-1} = \mathcal{U}_1 * \mathcal{U}_0 = 
 1/p \cdot \mathcal{U}_1 * \mathcal{T}_1 * \mathcal{U}_1$.
\end{proof}

\begin{remark}\label{rem:rem1}
We compare the p-adic operator $\mathcal{U}_1$ with Ueda's classical 
operator $Y_p$ \cite[Proposition 1.27]{Ueda} which satisfies a 
similar relation. 
In particular if we consider operator 
$\mathcal{U}'_1=\overline{\varepsilon_p}\mathcal{U}_1$, then 
in the case $\gamma$ is trivial we have 
\[(\mathcal{U}'_1)^2 = (\overline{\varepsilon_p}\mathcal{U}_1)^2 = 
{\varepsilon_p^2} p = \kro{-1}{p}p,\] 
while in the case $\gamma=\kro{\cdot}{p}$ we have
\[(\mathcal{U}'_1)^2 = (\overline{\varepsilon_p}\mathcal{U}_1)^2 = 
{\overline{\varepsilon_p}}^2\left(\varepsilon_p(p-1)\mathcal{U}_1 
+ p\kro{-1}{p}\right) 
= (p-1)\mathcal{U}'_1 + p.\]
Thus $\mathcal{U}'_1$ satisfies exactly the same relations as the  
operator $Y_p$. 
\end{remark}

\begin{thm}
The ``genuine'' Iwahori Hecke algebra $H(\ov{K_0^p(p)}, \gamma)$ for 
$\gamma$ trivial or $\kro{\cdot}{p}$ is generated  
as an $\C$-algebra by $\mathcal{U}_0$ and $\mathcal{U}_1$ with the 
defining relations given by above proposition.
\end{thm}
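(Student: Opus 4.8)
The plan is to exhibit the presentation by showing that the tautological surjection from the abstractly presented algebra onto $H(\ov{K}_0,\gamma)$ is an isomorphism. Write $A$ for the $\C$-algebra generated by two symbols $u_0,u_1$ subject to the relations of Proposition~\ref{prop2:rel2} (the quadratic relations for $u_0^2$ and $u_1^2$, together with the auxiliary relations in the trivial case). Since these relations have been verified to hold among $\mathcal{U}_0,\mathcal{U}_1$ in $H(\ov{K}_0,\gamma)$, there is a well-defined algebra homomorphism $\phi\colon A\to H(\ov{K}_0,\gamma)$ with $\phi(u_i)=\mathcal{U}_i$. In $A$ I would introduce elements mirroring the basis of $H$: set $t_0=1$, $t_1=\gamma(-1)u_0u_1$, $t_{-1}=\gamma(-1)u_1u_0$, then $t_n=t_1^{\,n}$ and $t_{-n}=t_{-1}^{\,n}$ for $n\ge 1$, and $v_n=u_1t_{n-1}$ for $n\ge 1$, $v_{-n}=u_0t_{-n}$ for $n\ge 0$. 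Using Proposition~\ref{prop2:rel1} one checks $\phi(t_n)=\mathcal{T}_n$ and $\phi(v_n)=\mathcal{U}_n$ (for instance $\phi(t_1)=\gamma(-1)\mathcal{U}_0\mathcal{U}_1=\gamma(-1)^2\mathcal{T}_1=\mathcal{T}_1$), so $\phi$ carries $\{t_n,v_n\}$ onto the basis $\{\mathcal{T}_n,\mathcal{U}_n\}$ of $H(\ov{K}_0,\gamma)$ provided by Proposition~\ref{prop1:sup}. In particular $\phi$ is surjective, which already yields the generation statement.

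For injectivity I would prove that $W:=\Span_\C\{t_n,v_n:n\in\Z\}$ is all of $A$. Because $1=t_0\in W$ and every element of $A$ is a $\C$-combination of words $u_{i_1}\cdots u_{i_k}$, it suffices to show that $W$ is closed under left multiplication by $u_0$ and $u_1$; induction on word length then gives $A=W$. This closure is a short finite case analysis whose only nontrivial inputs are the two quadratic relations: for example $u_1v_n=u_1^2t_{n-1}$ and $u_0v_{-n}=u_0^2t_{-n}$ collapse, via the quadratic relations, into combinations of $v_n,t_{n-1}$ and of $v_{-n},t_{-n}$, while $u_0v_n=\gamma(-1)t_1t_{n-1}=\gamma(-1)t_n$ and $u_1v_{-n}=\gamma(-1)t_{-1}t_{-n}=\gamma(-1)t_{-(n+1)}$ stay in $W$ directly; the products $u_0t_n$ and $u_1t_{-n}$ that create a square are reduced in the same manner, and $u_1t_n=v_{n+1}$, $u_0t_{-n}=v_{-n}$ need nothing. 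The same-sign products $t_mt_n=t_{m+n}$ used throughout are immediate from the definitions $t_n=t_1^{\,\pm n}$ and require no relation, which is exactly what makes the argument close.

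With $A=W$ in hand, injectivity is formal: if $\sum_n c_nt_n+\sum_n d_nv_n$ lies in $\ker\phi$, then $\sum_n c_n\mathcal{T}_n+\sum_n d_n\mathcal{U}_n=0$ in $H(\ov{K}_0,\gamma)$, and since $\{\mathcal{T}_n,\mathcal{U}_n\}$ is a basis all coefficients vanish; hence $\phi$ is an isomorphism and the listed relations are defining. I expect the main obstacle to be the closure computation of the second paragraph: it must be carried out uniformly in the two cases $\gamma$ trivial and $\gamma=\kro{\cdot}{p}$, keeping track of the factors $\varepsilon_p$ and $\kro{-1}{p}$ appearing in the quadratic relations and of the signs $\gamma(-1)^{\pm1}$ in the definitions of $t_{\pm n}$. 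A secondary point worth recording is that the auxiliary relations of Proposition~\ref{prop2:rel2}(3) in the trivial case are themselves consequences of the two quadratic relations, so that $u_0^2$ and $u_1^2$ alone already present the algebra, exactly as in the Loke--Savin description at $p=2$ in Theorem~\ref{thm:LS}.
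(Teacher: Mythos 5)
Your proposal is correct and follows essentially the same route as the paper: both construct the tautological surjection from the abstractly presented algebra onto $H(\ov{K_0^p(p)},\gamma)$ using Proposition~\ref{prop2:rel1}, and both prove injectivity by reducing every word in the generators to a spanning set (your $t_n,v_n$ are, up to the scalars $\gamma(-1)^{\pm n}$, exactly the alternating words $\tilde{\mathcal{U}}_0\tilde{\mathcal{U}}_1\cdots$ the paper invokes) that maps bijectively onto the basis $\{\mathcal{T}_n,\mathcal{U}_n\}$ supplied by Proposition~\ref{prop1:sup}. Your version simply makes the paper's terse reduction-to-alternating-words step explicit via the closure computation, and your closing observation that the relations in Proposition~\ref{prop2:rel2}(3) follow from the two quadratic relations matches the fact that the paper's proof uses only relations (1) and (2) as defining.
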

\begin{proof}
We let $\mathcal{A}$ be an abstract algebra generated by $\tilde{\mathcal{U}}_0$
and $\tilde{\mathcal{U}}_1$ with defining relations as $(1)$ and $(2)$ of 
Proposition~\ref{prop2:rel2}. We have a homomorphism
from $\tilde{A}$ to $H(\gamma)$ mapping $\tilde{\mathcal{U}}_0$ to $\mathcal{U}_0$ and 
$\tilde{\mathcal{U}}_1$ to $\mathcal{U}_1$. 
It follows from Proposition~\ref{prop2:rel1} that this homomorphism is onto. 
We let $M$ be the kernel of this homomorphism. 
Using relations $(1)$ and $(2)$
it follows that $M$ is a linear combination of words of the 
form $\tilde{\mathcal{U}}_0 \tilde{\mathcal{U}}_1\tilde{\mathcal{U}}_0....$ and 
$\tilde{\mathcal{U}}_1 \tilde{\mathcal{U}}_0\tilde{\mathcal{U}}_1....$. There are four possibilities
for the beginning and ending of such a word and each one is mapped by
the homomorphism to a different basis element (again using 
Proposition~\ref{prop2:rel1}). It follows that $M=0$.
\end{proof}

\section{Translation of adelic to classical.}\label{sec:trans}
In this section following Gelbart~\cite{Gelbart} and 
Waldspurger~\cite{Waldspurger} we review the connection between
automorphic forms on $\DSL_2(\A)$ and classical modular forms of 
half-integral weight. We use this connection to
translate certain elements in the p-adic Hecke algebra described in 
the previous section into classical operators
and thus obtain relations satisfied by these classical operators.
 
Let $\A=\A_\Q$ be the adele ring of $\Q$ and $\DSL_2(\A) = \SL_2(\A) \times \{\pm1\}$ with 
the group law: for $g=(g_\nu),\ h=(h_\nu) \in  \SL_2(\A)$ and 
$\epsilon_1,\ \epsilon_2 \in \{\pm1\}$
\[(g,\epsilon_1)(h,\epsilon_2)=(gh,\ \epsilon_1\epsilon_2\sigma(g,h) ),\ 
 \text{where}\ \sigma(g,h) = \prod_{\nu}\sigma_\nu(g_\nu,h_\nu).\]
The group $\DSL_2(\A)$ splits over $\SL_2(\Q)$ and the splitting is given by 
\[s_{\Q}:\SL_2(\Q) \longrightarrow \DSL_2(\A),\ g\mapsto(g, s_{\A}(g))\ 
\text{where}\ s_{\A}(g) = \prod_\nu s_{\nu}(g).\]
By \cite[Proposition 2.16]{Gelbart}, for $\alpha =\mat{a}{b}{c}{d} \in \Gamma_1(N)$, 
$s_{\A}(\alpha) = \kro{c}{d}_s$ unless $c=0$ in which case $s_{\A}(\alpha) = 1$. 
Here $\kro{c}{d}_s = \kro{c}{d}(c,d)_{\infty}$. 
\begin{lem}\label{lem:sA} 
Let $4 \mid N$.
For $\alpha =\mat{a}{b}{c}{d} \in \Gamma_0(N)$, we have 
\[s_{\A}(\alpha) = \begin{cases}
                   \kro{c}{d}_s\hst{c,d} & \text{if $c \ne 0$ and $\ord_2(c)$ is even}\\
                    \kro{c}{d}_s & \text{if $c \ne 0$ and $\ord_2(c)$ is odd}\\
                       1       &      \text{if $c = 0$}.
                  \end{cases}\]
\end{lem}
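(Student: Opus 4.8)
The plan is to exploit the fact that $s_{\A}(\alpha)=\prod_\nu s_\nu(\alpha)$ depends only on the bottom row $(c,d)$ of $\alpha$ and that $s_\infty\equiv 1$. First I would record that when $c\neq 0$ one automatically has $d\neq 0$: since $\det\alpha=1$ and $N\mid c$ with $N\geq 4$, the case $d=0$ would force $c=\pm1$, which is impossible. Hence for $c\neq 0$ every factor $s_p(\alpha)$ is governed by the rule in the definition of $\sigma_p$, and
\[
s_{\A}(\alpha)=\prod_{p:\ \ord_p(c)\ \mathrm{odd}}\hs{c,d},
\]
the product running over the finite primes at which $c$ has odd valuation. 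The goal is then to match this product of Hilbert symbols against the claimed expression, and the natural tool is Hilbert reciprocity $\prod_\nu(c,d)_\nu=1$.

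The key step is to isolate the prime $2$ and then convert the odd-valuation product into an even-valuation product. Splitting the odd primes according to the parity of $\ord_p(c)$ and using reciprocity, all symbols being $\pm1$, one gets
\[
\prod_{p\ \mathrm{odd},\ \ord_p(c)\ \mathrm{odd}}\hs{c,d}
=(c,d)_\infty\,\hst{c,d}\prod_{p\ \mathrm{odd},\ \ord_p(c)\ \mathrm{even}}\hs{c,d}.
\]
Next I would evaluate the remaining odd even-valuation factors. For odd $p\mid c$ with $\ord_p(c)$ even, coprimality of $c$ and $d$ forces $d$ to be a $p$-adic unit, so $\hs{c,d}=1$; for odd $p\nmid c$ one has $\hs{c,d}=\kro{c}{p}^{\ord_p(d)}$. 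Collecting these recovers $\prod_{p\ \mathrm{odd}}\kro{c}{p}^{\ord_p(d)}=\kro{c}{d}$, Shimura's symbol, by its defining multiplicativity and agreement with the Legendre symbols; combined with the identity $\kro{c}{d}_s=\kro{c}{d}(c,d)_\infty$ recalled in the excerpt this yields $(c,d)_\infty\prod_{p\ \mathrm{odd},\ \ord_p(c)\ \mathrm{even}}\hs{c,d}=\kro{c}{d}_s$.

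It then remains only to restore the factor at $p=2$. If $\ord_2(c)$ is odd then $2$ lies in the odd-valuation set, so $s_{\A}(\alpha)=\hst{c,d}\,\bigl[(c,d)_\infty\hst{c,d}\prod_{p\ \mathrm{odd},\ \ord_p(c)\ \mathrm{even}}\hs{c,d}\bigr]=\kro{c}{d}_s$, the two copies of $\hst{c,d}$ cancelling. If $\ord_2(c)$ is even then $2$ is absent from the odd-valuation set, so $s_{\A}(\alpha)=(c,d)_\infty\hst{c,d}\prod_{p\ \mathrm{odd},\ \ord_p(c)\ \mathrm{even}}\hs{c,d}=\kro{c}{d}_s\hst{c,d}$. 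Together with the trivial case $c=0$, where every $s_\nu=1$, these are exactly the three cases in the statement.

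The step I expect to cause the most trouble is the sign and $2$-adic bookkeeping in the middle two paragraphs: one must keep track that $\kro{c}{d}$ is Shimura's symbol, so that the real place is accounted for precisely by the factor $(c,d)_\infty$ in $\kro{c}{d}_s$ (including the cases $c<0$ and $d<0$), and one must evaluate the $2$-adic symbols correctly, since the whole content of the lemma is the appearance of the correction $\hst{c,d}$ exactly when $\ord_2(c)$ is even. A useful consistency check is that on $\Gamma_1(N)$, where $d\equiv 1\pmod 4$, one has $\hst{c,d}=1$ whenever $\ord_2(c)$ is even, so both cases collapse to Gelbart's formula $s_{\A}(\alpha)=\kro{c}{d}_s$.
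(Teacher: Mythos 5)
Your argument is essentially a self-contained version of the paper's proof: both reduce to $s_{\A}(\alpha)=\prod_{\ord_q(c)\ \text{odd}}\hsq{c,d}$ for $c\neq 0$, and both ultimately rest on the product formula for the Hilbert symbol. The difference is where reciprocity enters. The paper imports from the proof of Gelbart's Proposition 2.16 the identity $\kro{c}{d}_s=\prod_{q\mid c}\hsq{c,d}$ (noting that the proof only uses that $d$ is odd and coprime to $c$), and is then left comparing the index sets $\{q:\ord_q(c)\ \text{odd}\}$ and $\{q:q\mid c\}$; their symmetric difference consists of primes at which $c$ has even positive valuation, and at odd such primes $d$ and the unit part of $c$ are both $p$-adic units, so the symbol is $1$, leaving only the possible $\hst{c,d}$. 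You instead invoke Hilbert reciprocity explicitly and evaluate $\prod_{p\ \text{odd},\ \ord_p(c)\ \text{even}}\hs{c,d}$ by hand as a Jacobi symbol. Your route is independent of Gelbart's computation; the paper's is shorter.

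The step you must still pin down is exactly the one you flagged: the identification $\prod_{p\ \text{odd}}\kro{c}{p}^{\ord_p(d)}=\kro{c}{d}$. That product of Legendre symbols is the Jacobi symbol $\kro{c}{|d|}$, and with Shimura's convention for negative odd $d$ (namely $\kro{c}{d}=\mathrm{sgn}(c)\kro{c}{|d|}$ for $d<0$) one has $\kro{c}{|d|}=\kro{c}{d}(c,d)_\infty=\kro{c}{d}_s$ rather than $\kro{c}{d}$. Feeding this into your reciprocity identity turns your displayed conclusion $(c,d)_\infty\prod_{p\ \text{odd},\ \ord_p(c)\ \text{even}}\hs{c,d}=\kro{c}{d}_s$ into $\ldots=\kro{c}{d}$, so your chain of equalities is internally off by $(c,d)_\infty$, which is $-1$ precisely when $c<0$ and $d<0$. (This is invisible on $\Gamma_1(N)$, and it is the same normalization the paper buries in the citation of Gelbart, so it does not distinguish your proof from the paper's; but as written it is a gap.) Before asserting the final formula you should test a doubly negative bottom row, e.g.\ $\alpha=\mat{-1}{0}{-4}{-1}\in\Gamma_0(4)$, where every $s_q(\alpha)=1$ because $\ord_2(-4)$ is even, and compare directly with $\kro{-4}{-1}_s\hst{-4,-1}$; this single example settles which of $\kro{c}{d}$ or $\kro{c}{d}_s$ your product actually computes and hence which convention for $\kro{c}{d}_s$ makes the statement literally correct.
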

\begin{proof}
If $c=0$ then $s_{\nu}(\alpha) = 1 $ for all places $\nu$ and so $s_{\A}(\alpha)=1$. 

Suppose $c \ne 0$. Since $\alpha \in \Gamma_0(N)$ and $4 \mid N$, $d$ is odd and coprime 
to $c$. By definition, for any finite prime $q$, we have $s_q(\alpha) = \hsq{c,d}$ if 
$\ord_q(c)$ is odd and is $1$ else. Hence 
\[s_{\A}(\alpha) = \prod_{q\ \text{finite}}s_{q}(\alpha)= 
\prod_{\ord_q(c)\ \text{odd}}\hsq{c,d}.\]
It follows from the proof of \cite[Proposition 2.16]{Gelbart}  
(the proof only uses that $d$ is odd and coprime to $c$), that 
$\kro{c}{d}_s = \prod_{q\mid c}\hsq{c,d}$.
Now \[\prod_{\ord_q(c)\ \text{odd}}\hsq{c,d} = 
\prod_{q\mid c}\hsq{c,d} \prod_{\ord_q(c)\ \text{even} > 0}\hsq{c,d} 
= \kro{c}{d}_s \prod_{\ord_q(c)\ \text{even} > 0}\hsq{c,d}\]
So we just need to show that 
$\prod_{\ord_q(c)\ \text{even} > 0}\hsq{c,d}$ is $\hst{c,d}$ if 
$\ord_2(c)$ is even and is $1$ if $\ord_2(c)$ is odd 
(note that $\ord_2(c) \ge 2$). Let $p$ be any odd prime such that 
$\ord_p(c)$ is even and > $0$. Let $c = p^{2n}u$ where $u$ is 
unit in $\Z_p$. Then $\hs{c,d} = \hs{u,d} = 1$ as both 
$d,\ u$ are units in $\Z_p$. Hence we are done.
\end{proof}

For $\tilde{g} = (\mat{a}{b}{c}{d}, \epsilon) \in \DSL_2(\R)$ and $z\in \Hup$, define
\[\tilde{g}(z) = \frac{az+b}{cz+d} \qquad  \text{and} \qquad
J(\tilde{g}, z) = \epsilon(cz+d)^{1/2}.\]
By \cite[Lemma 3.3]{Gelbart}, $J(\tilde{g}, z)$ satisfies the automorphy condition i.e.,
\[J(\tilde{g}\tilde{h}, z)= J(\tilde{g}, \tilde{h}z)J(\tilde{h}, z).\]
Let $f \in S_{k+1/2}(\Gamma_0(N))$ and $\alpha \in \Gamma_0(N)$. Then
considering $\overline{\alpha} = (\alpha, s_{\A}(\alpha)) \in \DSL_2(\R)$, using
above lemma we have,
\begin{equation*}
\begin{split}
f(\overline{\alpha}z) &= \kro{c}{d} (\varepsilon_d^{-1})^{2k+1} (cz+d)^{k+1/2}f(z)\\
&=\kro{c}{d} (\varepsilon_d^{-1})^{2k+1} s_{\A}(\alpha)J(\overline{\alpha}, z)^{2k+1}f(z)\\
&=\begin{cases}
   (\varepsilon_d^{-1}J(\overline{\alpha}, z))^{2k+1}f(z) & 
   \text{if $c = 0$ or $c \ne 0$ and $\ord_2(c)$ is odd}\\
   \hst{c,d}(\varepsilon_d^{-1}J(\overline{\alpha}, z))^{2k+1}f(z) & 
   \text{if $c \ne 0$ and $\ord_2(c)$ is even}.
  \end{cases}
\end{split}
\end{equation*}
For $\theta \in \R$, let $k(\theta) =\mat{cos\theta}{sin\theta}{-sin\theta}{cos\theta}$.
Define $\tilde{K}_{\infty} := \{\tilde{k}(\theta) : \theta \in (-2\pi, 2\pi]\}$ where 
\[\tilde{k}(\theta) = \begin{cases}
                      (k(\theta), 1) & 
                      \text{if $-\pi<\theta\le \pi$},\\
                      (k(\theta), -1) & 
                      \text{if $-2\pi<\theta\le -\pi$ or $\pi<\theta\le 2\pi$}.
                     \end{cases}
\]
Then $\tilde{K}_{\infty}$ is a maximal compact subgroup of $\DSL_2(\R)$ and 
$\tilde{k}(\theta)\mapsto e^{i\frac{2k+1}{2}\theta}$ is a genuine character of 
$\tilde{K}_{\infty}$.
Let 
\[K_1(N) = \prod_{q<\infty}\{\mat{a}{b}{c}{d}\in \SL_2(\Z_q)\ :\  
c \equiv 0,\ \text{and}\ a,d\equiv 1 \pmod{N\Z_q}\}.\]
Recall the strong approximation theorem for $\DSL_2(\A)$: 
every element   
$\tilde{g} \in \DSL_2(\A)$ can be written as   
\[\tilde{g} = (\alpha, s_{\A}(\alpha))\tilde{g}_{\infty}(k_1,1),\]
where $(\alpha, s_{\A}(\alpha)) \in s_{\Q}(\SL_2(\Q))$, $k_1 \in K_1(N)$ and
$\tilde{g_{\infty}} \in \DSL_2(\R)$ determined up to left multiplication 
by elements in $s_{\Q}(\Gamma_1(N))$.

We follow the notation of Waldspurger~\cite{Waldspurger}. Let $\chi$ be an even Dirichlet character modulo $N$. 
Write $\chi_0 = \chi\kro{-1}{.}^k$. 
Define $\tilde{\gamma_2}$ on $\Z_2^\times$ as 
\[\tilde{\gamma_2}(t) = \begin{cases}
                        1 & \text{ if $t \equiv 1 \pmod{4\Z_2}$}\\
                        -i & \text{ if $t \equiv 3 \pmod{4\Z_2}$},
                       \end{cases}
                       \]
and for $k_0 = \mat{a}{b}{c}{d} \in K_0^2(4)$, define
\[\tilde{\epsilon}_2(k_0) = \begin{cases}
                             \tilde{\gamma_2}(d)^{-1}\hst{c,d}s_2(k_0) 
                             & \text{if $c \ne 0$} \\
                             \tilde{\gamma_2}(d) & \text{if $c = 0$.}
                            \end{cases}
\]
Let $\chi_0$ also denote the idelic character 
(of $\Qs \backslash \As$) corresponding to 
the Dirichlet character $\chi_0$ (it will be clear 
from the context when we consider $\chi_0$ to be idelic or 
Dirichlet character) and $\chi_{0,p}$ be
the $p$-component of idelic character $\chi_0$.
Let $A_{k+1/2}(N, \chi_0)$ denote the set of functions 
$\Phi:\DSL_2(\A)\rightarrow \C$ satisfying the following properties:
\begin{enumerate}
 \item $\Phi(s_{\Q}(\alpha) \tilde{g} (k_1,1))= \Phi(\tilde{g})$ 
 for all $k_1 \in \prod_{q\nmid N}\SL_2(\Z_q), \alpha \in \SL_2(\Q),\tilde{g} \in \DSL_2(\A)$.
 \item $\Phi$ is genuine, i.e., $\Phi((I,\zeta)\tilde{g})= \zeta\Phi(\tilde{g})$ for 
 $\zeta\in \mu_2$. 
 \item For odd primes $p$ such that $p^n \| N$,  
 $\Phi(\tilde{g}(k_0,1)) = \chi_{0,p}(d)\Phi(\tilde{g})$ for all $k_0 = \mat{a}{b}{c}{d} \in K_0^p(p^n)$.
 \item If $2^n \|N$ ($n \ge 2$),  
$\Phi(\tilde{g}(k_0,1)) = \tilde{\epsilon}_2(k_0)\chi_{0,2}(d)\Phi(\tilde{g})$ 
for all $k_0\in K_0^2(2^n)$.
 \item $\Phi(\tilde{g} \tilde{k}(\theta)) = e^{i\frac{2k+1}{2}\theta}\Phi(\tilde{g})$ 
 for all $\tilde{k}(\theta) \in \tilde{K}_\infty$.
 \item $\Phi$ is smooth as a function of $\DSL_2(\R)$ and 
 satisfies the differential equation $\Delta \Phi = -[(2k+1)/4 \cdot (2k-3)/4]\Phi$ where $\Delta$
 is the Casimir operator.
 \item $\Phi$ is square integrable, that is 
 $\int_{s_{\Q}(\SL_2(\Q))\backslash\DSL_2(\A)/\mu_2} |\Phi(\tilde{g})|^2 d\tilde{g} < \infty$.
 \item $\Phi$ is cuspidal, that is 
 $\int_{N_{\Q} \backslash N_{\A}} \Phi\left(\mat{1}{a}{0}{1}\tilde{g}\right) da =0$ for all 
 $\tilde{g} \in \DSL_2(\A)$.
\end{enumerate}

By \cite[Proposition 3]{Waldspurger} there exists an isomorphism between
\[A_{k+1/2}(N, \chi_0) \rightarrow S_{k+1/2}(\Gamma_0(N),\chi)\] given by $\Phi \mapsto f_{\Phi}$
where for $z \in \Hup$, 
\[
f_{\Phi}(z)=\Phi(\tilde{g}_\infty)J(\tilde{g}_\infty,i)^{2k+1}
\]
where $\tilde{g}_\infty \in \DSL_2(\R)$ is such that $\tilde{g}_\infty(i)=z$. 
The inverse map is given by $f\mapsto \Phi_f$ where for $g \in \DSL_2(\A)$ if
$\tilde{g} = (\alpha, s_{\A}(\alpha))\tilde{g}_{\infty}(k_1,1)$,
\[
\Phi_f(\tilde{g})=f(\tilde{g}_{\infty}(i))J(\tilde{g}_{\infty},i)^{-2k-1}.
\]
This isomorphism induces a ring isomorphism of spaces of linear operators,
\[q:\mathrm{End}_{\C}(A_{k+1/2}(N, \chi_0))\rightarrow \mathrm{End}_{\C}(S_{k+1/2}(\Gamma_0(N),\chi))\] given by
\[
q(\mathcal{T})(f)=f_{\mathcal{T}(\Phi_f)}.\]

\subsection{$N = 4M$, $M$ odd and $\ p\|M$}
Let $p$ be an odd prime and let $N=4M$ with $M$ odd such that 
$p$ strictly divides $M$. 
In this subsection we translate the elements 
$\mathcal{T}_1$, $\mathcal{U}_1$, $\mathcal{U}_0$ and $\mathcal{T}_{-1}$ in the 
$p$-adic Hecke algebra to certain classical 
operators on $S_{k+1/2}(\Gamma_0(4M),\chi)$.
We restrict ourselves to $\chi$ being the trivial character modulo $4M$. 
In this case  $\chi_0 = \kro{-1}{\cdot}^k$ has conductor either $1$ or $4$ 
and so $\chi_{0,p}$ is trivial on $\Z_p^\times$ while $\chi_{0,2}$ 
acts by $\chi_0^{-1}=\chi_0$ on $\Z_2^\times$.

Let $\gamma$ be character on $(\Z_p/p\Z_p)^\times$ induced by 
$\chi_{0,p}|\Z_p^\times$ (so in the current case $\gamma$ is trivial). Then
Iwahori Hecke algebra $H(\ov{K_0^p(p)}, \gamma)$ is a subalgebra of 
$\mathrm{End}_{\C}(A_{k+1/2}(N, \chi_0))$ via the following
action: for $\mathcal{T} \in H(\ov{K_0^p(p)}, \gamma)$ and 
$\Phi \in A_{k+1/2}(N, \chi_0)$,
\[\mathcal{T}(\Phi)(\tilde{g}) = \int_{\DSL_2(\Q_p)}\mathcal{T}(\tilde{x}) 
\Phi(\tilde{g}\tilde{x})d\tilde{x}.\]

\begin{prop}\label{prop:rel3}
Let $\chi$ be the trivial character modulo $4M$ with $M$ as above
and $\gamma$ be induced by $\chi_{0,p}$. 
Let $\mathcal{T}_1$, $\mathcal{U}_1$, $\mathcal{U}_0$, $\mathcal{T}_{-1} \in 
H(\ov{K_0^p(p)}, \gamma)$ and 
$f\in S_{k+1/2}(\Gamma_0(4M), \chi)$. Then,
\begin{enumerate}
 \item $\displaystyle{\kro{-1}{p}^k q(\mathcal{T}_1)(f)(z) 
 =  p^{-k-1/2}\sum_{s=0}^{p^2-1}f\left(\frac{z+s}{p^2}\right)
 = p^{(3-2k)/2}U_{p^2}(f)}$.
\item $\displaystyle{q(\mathcal{U}_1)(f)(z)= 
  \overline{\varepsilon_p}\kro{-1}{p}\kro{M/p}{p}
  \sum_{s=0}^{p-1}f|[(\alpha_s, \phi_{\alpha_s})]_{k+1/2}(z),}$ where \\
  $\alpha_s=\mat{p^2n-4Mms}{m}{4pM(1-s)}{p} \in \mathrm{M}_2(\Z)$ is of 
  determinant $p^2$ and $m,\ n\in\Z$ are such that $pn-(4M/p)m=1$, and 
$\phi_{\alpha_s}(z) = (4M(1-s)z + 1)^{1/2}$.
\item $\displaystyle{q(\mathcal{U}_0)(f)(z)= \sum_{s=0}^{p-1} 
f|[(\beta_s, \phi_{\beta_s})]_{k+1/2}(z),}$ where \\
$\beta_s=\mat{1}{m-s}{4M_1}{np-4M_1s} \in \Gamma_0(4M_1)$ with 
$M_1=M/p$ and $m,\ n\in\Z$ are chosen as above and
$\phi_{\beta_s}= (4M_1z+(np-4M_1s))^{1/2}$.
\item $\displaystyle{q(\mathcal{T}_{-1})(f)(z) = 
\kro{-1}{p}^k \sum_{s=0}^{p^2-1} f|[(\gamma_s, \phi_{\gamma_s}(z))]_{k+1/2}(z)}$, 
where \\
$\gamma_s = \mat{p^2}{0}{-4Ms}{1}$ and 
$\phi_{\gamma_s}(z) = (-4(M/p)sz + p^{-1})^{1/2}$.
\end{enumerate}
\end{prop}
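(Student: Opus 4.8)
The plan is to push each of the four basis elements $\mathcal{T}_1,\mathcal{U}_1,\mathcal{U}_0,\mathcal{T}_{-1}$ through the adelic-to-classical dictionary of \cite[Proposition 3]{Waldspurger}, using the explicit coset data of Lemma~\ref{lem:decomp} to make the convolution finite and Lemma~\ref{lem:sA} to evaluate the resulting splitting. First I would collapse the defining integral. Since $p\| N$, the function $\Phi_f$ transforms on the right by the genuine character $\gamma$ under $\ov{K}_0=\ov{K_0^p(p)}$ (this is precisely properties (2)–(3) in the definition of $A_{k+1/2}(N,\chi_0)$), while each Hecke function satisfies $\mathcal{T}(\tilde{k}\tilde{g}\tilde{k'})=\ov{\gamma}(\tilde{k})\ov{\gamma}(\tilde{k'})\mathcal{T}(\tilde{g})$. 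Hence on each left coset $\tilde{\xi}_i\ov{K}_0$ the two $\gamma$-factors are inverse to one another and cancel, so with the normalization $\mathrm{vol}(\ov{K}_0)=1$ the action reduces to
\[
\mathcal{T}(\Phi_f)(\tilde{g})=\sum_i \mathcal{T}(\tilde{\xi}_i)\,\Phi_f(\tilde{g}\,\tilde{\xi}_i).
\]
The representatives $\tilde{\xi}_i$ are read off from parts 1 ($n=1$), 3 ($n=1$), 4 ($n=0$) and 2 ($n=1$) of Lemma~\ref{lem:decomp}, giving $p^2$, $p$, $p$ and $p^2$ terms, which already match the ranges of the sums in the statement. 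The scalars $\mathcal{T}(\tilde{\xi}_i)$ come straight from the extension of $\gamma$ to the normalizer: $\mathcal{U}_1((w(p^{-1}),1))=\ov{\varepsilon_p}\kro{-1}{p}$, $\mathcal{U}_0((w(1),1))=1$, $\mathcal{T}_{-1}((h(p^{-1}),1))=\ov{\varepsilon_p}$ and $\mathcal{T}_1((h(p),1))=\ov{\varepsilon_p}$. These account for the prefactor $\ov{\varepsilon_p}\kro{-1}{p}$ in (2) and for the absence of any prefactor in (3).

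Next I would make each summand classical by strong approximation. Taking $\tilde{g}=\tilde{g}_\infty$ with $\tilde{g}_\infty(i)=z$, I would produce, for every $\tilde{\xi}_i$ placed at the prime $p$, a matrix $\delta_i\in\GL_2^+(\Q)$ of $p$-power determinant such that $s_{\Q}(\delta_i)^{-1}\tilde{g}_\infty\tilde{\xi}_i$ lies at $p$ in $K_0^p(p)$, at the remaining finite primes in $\prod_{q\nmid N}\SL_2(\Z_q)\times\prod_{q\mid M/p}K_0^q(q)\times K_0^2(4)$, and at $\infty$ equals $\delta_i^{-1}\tilde{g}_\infty$ up to the center. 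The congruence $pn-(4M/p)m=1$ is exactly what guarantees such a $\delta_i$ exists integrally away from $p$. Invariance property (1) then kills $s_{\Q}(\delta_i)$ and the spherical finite part, while the level contributions at $p$, at the odd primes dividing $M/p$, and at $2$ assemble, through Lemma~\ref{lem:sA} and the character conditions (3)–(4), into $\chi_{0,p}$, a reciprocity symbol $\kro{M/p}{p}$ between $p$ and $M/p$, and the $2$-adic factor $\tilde{\epsilon}_2\,\chi_{0,2}$. Rescaling $\delta_i^{-1}$ to determinant $p^2$ (or keeping it in $\SL_2(\Q)$ in the $\mathcal{U}_0$ case) yields precisely the matrices $\alpha_s$, $\beta_s$, $\gamma_s$ of the statement.

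Finally I would reassemble via $q(\mathcal{T})(f)(z)=\mathcal{T}(\Phi_f)(\tilde{g}_\infty)J(\tilde{g}_\infty,i)^{2k+1}$. Using $\Phi_f(\tilde{h}_\infty)=f(\tilde{h}_\infty(i))J(\tilde{h}_\infty,i)^{-2k-1}$ together with $\tilde{h}_\infty(i)=\delta_i^{-1}(z)$, the two $J$-factors combine into the half-integral automorphy datum $\phi_{\delta_i^{-1}}(z)^{-2k-1}$, turning each summand into $f|[(\alpha_s,\phi_{\alpha_s})]_{k+1/2}(z)$ and its analogues; the functions $\phi_{\alpha_s},\phi_{\beta_s},\phi_{\gamma_s}$ are exactly $J(\cdot,i)$ corrected by $s_{\A}(\delta_i)$. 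The determinant normalization $\det(\xi)^{(2k-3)/4}$ of the slash operator, combined with the rescaling of $\delta_i$ by $p$, produces the powers $p^{-k-1/2}=p^{(3-2k)/2}$ in (1) and (4). Part (1) is then just the classical description of $U_{p^2}$ recorded in Section~2, so for it one only has to recover the constant $\kro{-1}{p}^k$.

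The main obstacle is entirely the scalar bookkeeping in the last two steps. One must carry the metaplectic cocycle $\sigma_p$ through each product, evaluate $s_{\A}(\delta_i)$ at all finite places by Lemma~\ref{lem:sA} (the $2$-adic part entering through $\hst{\cdot,\cdot}$, $\tilde{\epsilon}_2$ and $\chi_{0,2}$), and then reconcile the resulting Hilbert and Kronecker symbols. The delicate cancellations are the ones yielding the clean coefficients: $\ov{\varepsilon_p}^{\,2}=\kro{-1}{p}$ merging the normalizer value with the $s_{\A}$-factor, the sum over $s$ of quadratic symbols collapsing to $\kro{M/p}{p}$, and the powers of $\kro{-1}{p}$ from $\chi_0=\kro{-1}{\cdot}^k$ assembling into $\kro{-1}{p}^k$. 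Verifying that these match the stated scalars exactly, rather than establishing the shape of the formulas, is where the real work lies.
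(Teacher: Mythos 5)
Your proposal follows essentially the same route as the paper: collapse the convolution to a finite sum over the left-coset representatives of Lemma~\ref{lem:decomp}, strong-approximate each summand by a rational matrix $\delta_i$ chosen via the congruence $pn-(4M/p)m=1$, evaluate the local factors at $\infty$, at $p$, at the odd primes dividing $M/p$, and at $2$ (through $\tilde{\epsilon}_2$, $\chi_{0,2}$ and Lemma~\ref{lem:sA}), and reassemble with $q(\mathcal{T})(f)=\mathcal{T}(\Phi_f)(\tilde g_\infty)J(\tilde g_\infty,i)^{2k+1}$. The normalizer values $\mathcal{U}_1((w(p^{-1}),1))=\overline{\varepsilon_p}\kro{-1}{p}$, $\mathcal{T}_{\pm1}((h(p^{\pm1}),1))=\overline{\varepsilon_p}$, $\mathcal{U}_0((w(1),1))=1$ are all quoted correctly, and the identification of where the constants come from is accurate.

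The one substantive gap is that you explicitly defer the scalar bookkeeping, and for part (4) this is not mere bookkeeping but the actual content of the claim. There the splitting factor $s_{\Q}(A_s)=(A_s,\xi_s)$ for $A_s=h(p)y(-4Ms)$, the $p$-component factor $\eta_s$, and the $2$-component factor $\vartheta_s=\tilde{\epsilon}_2(A_s)\chi_{0,2}(p^{-1})$ each depend on $s$ through a five-way case analysis (on $\ord_p(s)$ and the parity of $\ord_2(s)$), and the formula in (4) only holds because the product $\vartheta_s\cdot\eta_s\cdot\xi_s$ collapses to the $s$-independent constant $\varepsilon_p\kro{-1}{p}^{k}$; nothing in your outline guarantees this uniformity, and without it the sum would not be a single double-coset operator with a clean prefactor. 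A smaller omission: for (2) and (4) the representatives must be re-chosen as $y(4Ms)w(p^{-1})$ and $y(4Ms)h(p^{-1})$ rather than the $y(ps)$ of Lemma~\ref{lem:decomp} (legitimate since $4M/p$ is a $p$-adic unit); this re-choice is what makes the rational matrices $A_s$ land in the correct level subgroups at the places away from $p$, and citing the lemma verbatim as you do would not produce the stated $\alpha_s,\gamma_s$.
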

\begin{proof}
For $(1)$, let $\tilde{g}_\infty = (g_\infty, 1)\in \DSL_2(\R)$ such that 
$\tilde{g}_\infty i = z$. Then using decomposition in Lemma~\ref{lem:decomp} we have
\[\mathcal{T}_1(\Phi_f)(\tilde{g_\infty}) 
=\sum_{s=0}^{p^2-1}\overline{\gamma}(h(p),1) \Phi_f(\tilde{g}_\infty (x(s),1)(h(p),1))\]
\[=\overline{\varepsilon_p}\sum_{s=0}^{p^2-1}\Phi_f(\tilde{g}_\infty (x(s),1)(h(p),1)).\]
Take $A_s = h(p^{-1})x(-s) = \mat{p^{-1}}{-p^{-1}s}{0}{p} \in \SL_2(\Q)$, 
then $s_{\Q}(A_s) = (A_s, 1)$. 
Now the $\infty$-component of 
\[\underbrace{(A_s,1)}_{\text{diagonal emb.}} \cdot \underbrace{\tilde{g}_\infty}_{\infty\ \text{place}}
\underbrace{\cdot (x(s),1)(h(p),1)}_{p\ \text{place}}\]
is 
$(A_s,1)\tilde{g}_\infty$, for a prime $q$ such that $(q,2M)=1$ 
the $q$-component is 
$(A_s, 1) \in \SL_2(\Z_q)\times\{1\}$, 
for a prime $r$ such that $(r,2p)=1$ and $r^b\|M$, the $r$-component 
is $(A_s, 1) \in K_0^r(r^b)\times\{1\}$,
the $2$-component is 
$(A_s, 1)\in  K_0^2(4)\times\{1\}$ and the $p$-component is 
$(A_s, 1)(x(s),1)(h(p),1) = (I, \hs{p,p}) =(I, \kro{-1}{p})$. 

Since $\chi$ is trivial, $\chi_{0,2}=\kro{-1}{\cdot}^k$ while 
$\chi_{0,p}$ and $\chi_{0,r}$ are trivial. So the $2$-component acts by 
$\tilde{\epsilon}_2(A_2)\chi_{0,2}(p) = \tilde{\gamma_2}(p)\chi_{0,2}(p)= 
 \overline{\varepsilon_p}\kro{-1}{p}^k$.
Thus,
\begin{equation*}
\begin{split}
\mathcal{T}_1(\Phi_f)(\tilde{g}_\infty) 
 &=\overline{\varepsilon_p}\sum_{s=0}^{p^2-1}
 \Phi_f(s_{\Q}(A)\tilde{g}_\infty (x(s),1)(h(p),1))\\
 &= (\overline{\varepsilon_p})^2\kro{-1}{p}^k\kro{-1}{p}
 \sum_{s=0}^{p^2-1}\Phi_f(Ag_\infty,1)\\
 &= \kro{-1}{p}^k\sum_{s=0}^{p^2-1}
 f(Ag_\infty(i))J((Ag_\infty,1),i)^{-2k-1}.
\end{split}
\end{equation*}
Consequently, 
\[q(\mathcal{T}_1)(f)(z) 
=\mathcal{T}_1(\Phi_f)(\tilde{g}_\infty)J((g_\infty,1),i)^{2k+1}  
=\kro{-1}{p}^{k}p^{-k-1/2}\sum_{s=0}^{p^2-1}f(\frac{z+s}{p^2}).\]

For $(2)$ we need the following decomposition (we use $(4,M)=1$)
\[K_0 w(p^{-1}) K_0 = \bigcup_{s\in \Z_p / p\Z_p}
y(4Ms)w(p^{-1})K_0\]
Taking $\tilde{g}_\infty$ such that $\tilde{g}_\infty i=z$ we have
\[\mathcal{U}_1(\Phi_f)(\tilde{g}_\infty) 
=\overline{\varepsilon_p}\kro{-1}{p}
\sum_{s=0}^{p-1}\Phi_f(\tilde{g}_\infty (y(4Ms),1)(w(p^{-1}),1)).\]
Since $p$ is coprime to $4M/p$, we 
fix $m,\ n\in \Z$ such that $pn-(4M/p)m=1$. For $0\le s\le p-1$, take 
\[A_s = \mat{pn}{\frac{m}{p}}{4M}{1}\mat{1}{0}{-4Ms}{1}= 
\mat{pn-4ms\frac{M}{p}}{\frac{m}{p}}{4M(1-s)}{1} \in \SL_2(\Q).\] 
Since $s_{\nu}(A_s)=1$ for all primes $\nu$ we have $s_{\Q}(A_s) = (A_s, 1)$. 
As before the $\infty$-component of 
\[s_{\Q}(A_s)\ \tilde{g}_\infty\  (y(4Ms),1)(w(p^{-1}),1)\] is 
$(A_s,1)\tilde{g}_\infty$,  for a prime $q$ such that $(q,2M)=1$ 
the $q$-component is 
$(A_s, 1) \in \SL_2(\Z_q)\times\{1\}$, 
for a prime $r$ such that $(r,2p)=1$ and $r^b\|M$, the $r$-component 
is $(A_s, 1) \in K_0^r(r^b)\times\{1\}$,
the $2$-component is 
$(A_s, 1)\in  K_1^2(4)\times\{1\}$ (as $(2,2)$-th entry of $A_s$ is $1$). 
At the $p$-component we 
check that $(A_s, 1) = (\mat{pn}{m/p}{4M}{1},1)(y(-4Ms),1)$ and 
\[(A_s, 1)(y(4Ms),1)(w(p^{-1}),1) = 
(\mat{-m}{n}{-p}{4M/p}, \kro{M/p}{p}).\]
Since $\chi$ is trivial, the $p$ and $r$ components acts trivially and 
the $2$-component acts by $\tilde{\epsilon}_2(A_s)\chi_{0,2}(1) = 1$.
Hence
\begin{equation*}
\begin{split}
\mathcal{U}_1(\Phi_f)(\tilde{g}_\infty) 
& =\overline{\varepsilon_p}\kro{-1}{p}
\sum_{s=0}^{p-1}\Phi_f(s_{\Q}(A_s)\tilde{g}_\infty (y(4ps),1)(w(p^{-1}),1))\\
&= \overline{\varepsilon_p}\kro{-1}{p}\kro{M/p}{p}
 \sum_{s=0}^{p-1}\Phi_f((A_s,1)(g_\infty, 1))
\end{split}
\end{equation*}
\[=\overline{\varepsilon_p}\kro{-1}{p}\kro{M/p}{p}
 \sum_{s=0}^{p-1}f((A_s,1)z)J((A_s,1),z)^{-2k-1}J((g_\infty, 1),i)^{-2k-1}.\]
So we have 
\[q(\mathcal{U}_1)(f)(z)  
= \overline{\varepsilon_p}\kro{-1}{p}\kro{M/p}{p}
\sum_{s=0}^{p-1}f((A_s,1)z)J((A_s,1),z)^{-2k-1}.\] 
Let $\alpha_s = A_s \mat{p}{0}{0}{p}$ and 
$\phi_{\alpha_s}(z) = (4M(1-s)z + 1)^{1/2}$. Then $q(\mathcal{U}_1)(f)(z)=$
\[  
= \overline{\varepsilon_p}\kro{-1}{p}\kro{M/p}{p}
\sum_{s=0}^{p-1}f\left(\frac{(p^2n-4mMs)z +m}{4pM(1-s)z+p}\right)(4M(1-s)z + 1)^{-k-1/2}\]
\[=\overline{\varepsilon_p}\kro{-1}{p}\kro{M/p}{p}
\sum_{s=0}^{p-1}f|[(\alpha_s, \phi_{\alpha_s})]_{k+1/2}(z).\]

For $(3)$, using Lemma~\ref{lem:decomp} we have
\[\mathcal{U}_0(\Phi_f)(\tilde{g}_\infty) 
=\sum_{s=0}^{p-1}\Phi_f(\tilde{g}_\infty (x(s),1)(w(1),1)).\]
Let $m,\ n\in \Z$ such that $pn-(4M/p)m=1$ and 
let $M_1 = M/p$. For $0\le s\le p-1$, take 
\[A_s = \mat{1}{-s+m}{4M_1}{-4M_1s+np}\in \Gamma_0(4M_1).\]
Since the Kronecker symbol
$\kro{4M_1}{-4M_1s+np} = \kro{M_1}{-4M_1s+np} = 
\kro{-4M_1s+np}{M_1} = \kro{np}{M_1} =1$, by Lemma~\ref{lem:sA} we get that
\[s_{\A}(A_s)=\begin{cases}
              (4M_1,-4M_1s+np)_2 & \text{if $\ord_2(M_1)$ even}\\
              1 & \text{if $\ord_2(M_1)$ odd}.
              \end{cases}\]

The $\infty$-component of 
$s_{\Q}(A_s)\ \tilde{g}_\infty\ (x(s),1)(w(1),1)$ is
$(A_s,s_{\A}(A_s))(g_\infty, 1)$,  for a prime $q$ such that $(q,2M)=1$ 
the $q$-component is 
$(A_s, 1) \in \SL_2(\Z_q)\times\{1\}$, 
if $r$ is an odd prime such that $r^b\|M$ then the $r$-component is 
$(A_s, 1) \in K_0^r(r^b)\times\{1\}$ and 
the $2$-component is 
$(A_s, 1)\in  K_1^2(4)\times\{1\}$. For $p$-component,  
since $\ord_p(4M_1)=0$, we have 
$(\mat{1}{m}{4M_1}{np},1)(x(-s),1) =(A_s,1)$  and 
$(\mat{1}{m}{4M_1}{np},1)(w(1),1) =(\mat{-m}{1}{-np}{4M_1},\beta)$
where $\beta$ is either $(4M_1,-1)_p$ or $(4M_1,np)_p$ 
depending on whether $\ord_p(np)$ is odd or even. In either 
case it is clear that $\beta$ is $1$.
Thus the $p$-component is $(\mat{-m}{1}{-np}{4M_1},1)\in K_0\times\{1\}$

Since $\chi$ is trivial, the $p$ and $r$ components acts trivially, 
and the $2$-component acts by 
$\tilde{\epsilon}_2(A_s)\chi_{0,2}(-4M_1s+np) = (4M_1,-4M_1s+np)_2s_2(A_s)$
which is precisely equal to $s_{\A}(A_s)$.

Thus $\mathcal{U}_0(\Phi_f)(\tilde{g}_\infty)=$
\[=\sum_{s=0}^{p-1}\Phi_f((A_s,1)\tilde{g}_\infty)
=\sum_{s=0}^{p-1}f(A_sz)J((A_s,1),z)^{-2k-1}J((g_\infty, 1),i)^{-2k-1}\]
and consequently
\[q(\mathcal{U}_0)(f)(z)= \sum_{s=0}^{p-1} 
f\left(\frac{z+(m-s)}{4M_1z+(np-4M_1s)}\right)(4M_1z+(np-4M_1s))^{-k-1/2}.\]

For $(4)$, using 
$K_0 h(p^{-1}) K_0 = \bigcup_{s\in \Z_p / p^{2}\Z_p}
y(4Ms)h(p^{-1})K_0$ we have
\[\mathcal{T}_{-1}(\Phi_f)(\tilde{g_\infty}) 
=\overline{\varepsilon_p}\sum_{s=0}^{p^2-1}
\Phi_f(\tilde{g}_\infty (y(4Ms),1)(h(p^{-1}),1)).\]
Take $A_s = h(p)y(-4Ms) = \mat{p}{0}{-4(M/p)s}{p^{-1}}$, 
then $s_{\Q}(A_s) = (A_s, \xi_s)$ 
where $\xi_s := \begin{cases}
                1 & \text{ if $s=0$}\\
                1 & \text{ if $\ord_p(s)=1$ and $\ord_2(s)$ odd}\\
                \kro{-1}{p}(\frac{Ms}{p},p)_2 & \text{ if $\ord_p(s)=1$ and $\ord_2(s)$ even}\\
                \kro{-1}{p}(\frac{Ms}{p},p)_p & \text{ if $(s,p)=1$ and $\ord_2(s)$ odd}\\
                (\frac{Ms}{p},p)_2 \ (\frac{Ms}{p},p)_p & \text{ if $(s,p)=1$ and $\ord_2(s)$ even}.
               \end{cases}$

Thus 
\[\mathcal{T}_{-1}(\Phi_f)(\tilde{g_\infty})
 =\overline{\varepsilon_p}\sum_{s=0}^{p^2-1}\xi_s
\Phi_f((A_s,1)\tilde{g}_\infty (y(4Ms),1)(h(p^{-1}),1)).\]
Now the $\infty$-component of 
$(A_s,1)\ \tilde{g}_\infty\ (y(4Ms),1)(h(p^{-1}),1)$ is 
$(A_s,1)\tilde{g}_\infty$, for a prime $q$ such that $(q,2M)=1$ the 
$q$-component is 
$(A_s, 1) \in \SL_2(\Z_q)\times\{1\}$, 
if $r$ is an odd prime coprime to $p$ such that $r^b \| M$ then the
$r$-component belongs to $K_0^r(r^b)\times\{1\}$, 
the $2$-component is 
$(\mat{p}{0}{-4(M/p)s}{p^{-1}}, 1)\in  K_0^2(4)\times\{1\}$ and 
the $p$-component is 
$(A_s, 1)(y(4Ms),1)(h(p^{-1}),1)$ which is precisely equal to $(I, \eta_s)$ 
where $\eta_s := \begin{cases}
                 \kro{-1}{p} & \text{ if $s=0$}\\
                 1 & \text{ if $\ord_p(s)=1$}\\
                 \kro{-1}{p}(\frac{Ms}{p},p)_p & \text{ if $(s,p)=1$}.
                 \end{cases}$ 

Since $\chi$ is trivial, $\chi_{0,p}$ is trivial and so 
the $p$-component acts on $\Phi_f$ simply by multiplication by $\eta_s$.                
Next we look at how the $2$-component acts on $\Phi_f$. 
Since $\chi_{0,2}=\kro{-1}{\cdot}^k$ we get that
\[\tilde{\epsilon}_2(A_s)\chi_{0,2}(p^{-1}) = 
\begin{cases}
\tilde{\gamma_2}(p^{-1})\chi_{0,2}(p^{-1}) & \text{ if $s=0$}\\
\tilde{\gamma_2}(p^{-1})^{-1} (-4\frac{M}{p}s, p^{-1})_2 s_2(A_s)\chi_{0,2}(p^{-1}) 
& \text{ if $s\ne 0$}
\end{cases}\]
\[=
\begin{cases}
\overline{\varepsilon_p}\kro{-1}{p}^k & \text{ if $s=0$}\\
\varepsilon_p\kro{-1}{p}^{k} & \text{ if $s\ne 0$ and $\ord_2(s)$ odd}\\
\varepsilon_p\kro{-1}{p}^{k+1}(\frac{Ms}{p},p)_2 & \text{ if $s\ne 0$ and $\ord_2(s)$ even},
\end{cases} =:\vartheta_s\]
One can check that 
\[\vartheta_s\cdot \eta_s = \varepsilon_p\kro{-1}{p}^{k} \xi_s,\]
and so
\[\mathcal{T}_{-1}(\Phi_f)(\tilde{g}_\infty) 
 =\overline{\varepsilon_p}\sum_{s=0}^{p^2-1} \xi_s \cdot
 \vartheta_s\cdot \eta_s
 \Phi_f((A_s,1)\tilde{g}_\infty) = 
 \kro{-1}{p}^{k} \sum_{s=0}^{p^2-1} \Phi_f((A_s,1)\tilde{g}_\infty).
 \] 
Thus
\[q(\mathcal{T}_{-1})(f)(z) 
= \kro{-1}{p}^{k} \sum_{s=0}^{p^2-1} f\left(\frac{p^2z}{-4Msz + 1}\right)
\left(\frac{-4Msz +1}{p}\right)^{-k-1/2}\]
\[= \kro{-1}{p}^{k} \sum_{s=0}^{p^2-1} f|[(\gamma_s, \phi_{\gamma_s}(z))]_{k+1/2}(z) \]
where $\gamma_s = \mat{p^2}{0}{-4Ms}{1}$ and 
$\phi_{\gamma_s}(z) = (-4(M/p)sz + p^{-1})^{1/2}$.
\end{proof}

Let $\widetilde{Q}_p:=q(\mathcal{U}_0)$ and 
$\widetilde{W}_{p^2}:=q(p^{-1/2}\mathcal{U}_1)$. 
Then we have the following
\begin{cor}\label{cor:rel4}
On $S_{k+1/2}(\Gamma_0(4M))$ we have
\begin{enumerate}
 \item $\widetilde{W}_{p^2}$ is an involution.
 \item $(\widetilde{Q}_p-p)(\widetilde{Q}_p+1)=0$.
 \item $\widetilde{Q}_p = \kro{-1}{p}^k p^{1-k}U_{p^2}\widetilde{W}_{p^2}$.
 \item  If $f \in S_{k+1/2}(\Gamma_0(4M/p))$ then $\widetilde{Q}_p(f)=pf$.
\end{enumerate}
\end{cor}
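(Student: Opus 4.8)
The plan is to transfer each identity from the genuine Iwahori Hecke algebra $H(\ov{K_0^p(p)},\gamma)$ to $\mathrm{End}_{\C}(S_{k+1/2}(\Gamma_0(4M)))$ through the algebra isomorphism $q$. Since $\chi$ is trivial the induced character $\gamma$ is trivial, so the relations of Proposition~\ref{prop2:rel2} apply in their ``$\gamma$ trivial'' form throughout. The only structural input beyond this is that $\mathcal{T}\mapsto q(\mathcal{T})$ is multiplicative, i.e.\ convolution goes to composition: this is immediate from the defining action $\mathcal{T}(\Phi)(\tilde g)=\int_{\DSL_2(\Q_p)}\mathcal{T}(\tilde x)\Phi(\tilde g\tilde x)\,d\tilde x$ after the substitution $\tilde z=\tilde x\tilde y$, which shows $(\mathcal{T}_1*\mathcal{T}_2)(\Phi)=\mathcal{T}_1(\mathcal{T}_2(\Phi))$.

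Given this, parts (1)--(3) are formal. For (1) I would compute $(p^{-1/2}\mathcal{U}_1)^2=p^{-1}\mathcal{U}_1^2=p^{-1}\cdot p=1$ using $\mathcal{U}_1^2=p$ from Proposition~\ref{prop2:rel2}(2), and apply $q$ to get $\widetilde{W}_{p^2}^2=\mathrm{id}$. For (2) I would apply $q$ to $\mathcal{U}_0^2=(p-1)\mathcal{U}_0+p$ from Proposition~\ref{prop2:rel2}(1), obtaining $\widetilde{Q}_p^2-(p-1)\widetilde{Q}_p-p=0$, which factors as $(\widetilde{Q}_p-p)(\widetilde{Q}_p+1)=0$. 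For (3) I would start from $\mathcal{U}_0=p^{-1}\mathcal{T}_1*\mathcal{U}_1$ (Proposition~\ref{prop2:rel2}(3)), apply $q$, and substitute $q(\mathcal{U}_1)=p^{1/2}\widetilde{W}_{p^2}$ (the definition of $\widetilde{W}_{p^2}$) together with $q(\mathcal{T}_1)=\kro{-1}{p}^{k} p^{(3-2k)/2}U_{p^2}$ (from Proposition~\ref{prop:rel3}(1), using $\kro{-1}{p}^{-k}=\kro{-1}{p}^{k}$). Collecting powers of $p$ gives the exponent $-1+(3-2k)/2+1/2=1-k$ and hence the stated identity.

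The substance of the corollary is part (4), and that is where I expect the real work. Conceptually, an $f\in S_{k+1/2}(\Gamma_0(4M/p))$ is unramified at $p$, so its adelic lift is $\ov{\SL_2(\Z_p)}$-spherical; since $(w(1),1)$ lies in (the canonical lift of) $\SL_2(\Z_p)$ and the double coset $\ov{K}_0(w(1),1)\ov{K}_0$ splits into $p$ left cosets by Lemma~\ref{lem:decomp}(4), the operator $\mathcal{U}_0$ should act on such a vector by the degree $p$. To make this rigorous I would instead use the explicit form of $\widetilde{Q}_p=q(\mathcal{U}_0)$ from Proposition~\ref{prop:rel3}(3), namely $\widetilde{Q}_p(f)=\sum_{s=0}^{p-1}f|[(\beta_s,\phi_{\beta_s})]_{k+1/2}$ with $\beta_s=\mat{1}{m-s}{4M_1}{np-4M_1s}$, $M_1=M/p$, and $pn-4M_1m=1$. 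Since $\det\beta_s=1$ and its lower-left entry is $4M_1$, each $\beta_s\in\Gamma_0(4M/p)$, so $f|[\beta_s^*]_{k+1/2}=f$ for $\beta_s^*=(\beta_s,j(\beta_s,z))$.

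The key step is to match $(\beta_s,\phi_{\beta_s})$ with $\beta_s^*$ up to a scalar and to check the scalar is $1$. Writing $c=4M_1$ and $d_s=np-4M_1s$, the two automorphy factors differ by the constant $j(\beta_s,z)/\phi_{\beta_s}(z)=\varepsilon_{d_s}^{-1}\kro{c}{d_s}$, so $f|[(\beta_s,\phi_{\beta_s})]_{k+1/2}=(\varepsilon_{d_s}^{-1}\kro{c}{d_s})^{2k+1}f$. Now $pn-4M_1m=1$ forces $d_s\equiv np\equiv 1\pmod{4M_1}$; in particular $\varepsilon_{d_s}=1$, and since $\kro{c}{d_s}=\pm1$ the odd power collapses, leaving $\kro{4M_1}{d_s}=\kro{M_1}{d_s}$, the value at $d_s\equiv 1$ of the real Dirichlet character $\kro{M_1}{\cdot}$ (conductor dividing $4M_1$), which is $1$. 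Hence every summand equals $f$ and $\widetilde{Q}_p(f)=pf$. The main obstacle is exactly this cocycle bookkeeping: keeping the branch of $(cz+d)^{1/2}$ consistent between $\phi_{\beta_s}$ and $j(\beta_s,z)$, and verifying via the congruence $d_s\equiv 1\pmod{4M_1}$ and reciprocity that no residual root of unity survives for any $s$.
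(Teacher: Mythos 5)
Your proposal is correct and takes essentially the same route as the paper's own (very terse) proof: parts (1)--(3) are exactly the relations of Proposition~\ref{prop2:rel2} pushed through the algebra homomorphism $q$ and combined with Proposition~\ref{prop:rel3}(1), and part (4) is read off term by term from the explicit coset formula for $q(\mathcal{U}_0)$ in Proposition~\ref{prop:rel3}(3), using that each $\beta_s\in\Gamma_0(4M/p)$ fixes $f$. The one detail worth making explicit in your cocycle bookkeeping is the case $d_s=np-4M_1s<0$, where Shimura's convention gives $\kro{4M_1}{d_s}=\kro{4M_1}{|d_s|}$ with $|d_s|\equiv-1\pmod{4M_1}$, and quadratic reciprocity still yields the value $1$.
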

\begin{proof}
The proof of $(1)$ to $(3)$ follows by using Proposition~\ref{prop2:rel2} and 
\ref{prop:rel3}. For $(4)$ we use Proposition
\ref{prop:rel3}$(3)$.
\end{proof}
 
We further define an operator 
$\widetilde{Q}'_p$ on $S_{k+1/2}(\Gamma_0(4M))$ to be 
the conjugate of $\widetilde{Q}_p$ by $\widetilde{W}_{p^2}$, i.e., 
$\widetilde{Q}'_p = \widetilde{W}_{p^2}\widetilde{Q}_p\widetilde{W}_{p^2}$. 
Thus $\widetilde{Q}'_p$ satisfies the same quadratic as $\widetilde{Q}_p$ and we 
have $\widetilde{Q}'_p = \kro{-1}{p}^k p^{1-k}\widetilde{W}_{p^2}U_{p^2}$.

\begin{remark}
We note that for a prime $q$ such that $(q, 2M)=1$, one can similarly 
obtain the usual Hecke operator $T_{q^2}$ on $S_{k+1/2}(\Gamma_0(4M))$. 
In particular, if we take $\mathcal{T}_1 := X_{(h(q),1)} \in 
H(\ov{\SL_2(\Z_q)},\gamma_q)$ then $q(\mathcal{T}_1) = 
\kro{-1}{p}^k p^{(3-2k)/2}T_{q^2}$.

Moreover if $p$ and $q$ are distinct primes such that $p^n,\ q^m$ strictly 
divides $N$ then the operators 
$\mathcal{S} \in H(\ov{K_0^p(p^n)},\gamma_p)$ and 
$\mathcal{T} \in H(\ov{K_0^q(q^m)},\gamma_q)$ in 
$\mathrm{End}_{\C}(S_{k+1/2}(\Gamma_0(N))$ commute. 

In particular the operators $\widetilde{Q}_p$, $\widetilde{W}_{p^2}$ on 
$S_{k+1/2}(\Gamma_0(4M))$ that we defined above commute with 
$T_{q^2}$ for primes $q$ coprime to $2M$. 
\end{remark}

\subsection{Self-adjointness}
Let $M$ be odd such that $p\|M$. In this subsection we 
check that the operators $\widetilde{W}_{p^2}$, 
$\widetilde{Q}_p$ and $\widetilde{Q}'_p$ are self-adjoint operators on 
$S_{k+1/2}(\Gamma_0(4M))$. The property of 
self-adjointness will be used to give a description of our 
minus space in terms of common eigenspaces. 
\begin{prop}
 The operator $\widetilde{W}_{p^2}$ is self-adjoint 
 with respect to Petersson inner product.
\end{prop}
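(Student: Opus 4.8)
The plan is to transfer the statement to the genuine local Hecke algebra $H(\overline{K_0^p(p)},\gamma)$ and exploit its natural anti-involution. Under the isomorphism $A_{k+1/2}(4M,\chi_0)\cong S_{k+1/2}(\Gamma_0(4M))$ of Section~\ref{sec:trans}, the Petersson inner product corresponds, up to a positive real constant, to the $L^2$ inner product $\int_{s_{\Q}(\SL_2(\Q))\backslash\DSL_2(\A)/\mu_2}\Phi_1\overline{\Phi_2}$ on $A_{k+1/2}(4M,\chi_0)$, and the convolution action of $H(\overline{K_0^p(p)},\gamma)$ is a $*$-representation for the standard anti-involution $\mathcal{T}^{*}(\tilde g)=\overline{\mathcal{T}(\tilde g^{-1})}$; this is the usual consequence of unimodularity of $\DSL_2(\Qp)$ together with the fact that the global Haar measure factors through the $p$-adic slot on which $\mathcal{T}$ acts. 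Hence the Petersson adjoint of $q(\mathcal{T})$ is $q(\mathcal{T}^{*})$. Since $\widetilde{W}_{p^2}=q(p^{-1/2}\mathcal{U}_1)$, $q$ is a $\C$-algebra homomorphism, and $p^{-1/2}$ is real, it suffices to prove the single identity $\mathcal{U}_1^{*}=\mathcal{U}_1$ in $H(\overline{K_0^p(p)},\gamma)$.

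Next I would compute $\mathcal{U}_1^{*}$ directly. As $\mathcal{U}_1$ is supported on $\overline{K_0^p(p)}(w(p^{-1}),1)\overline{K_0^p(p)}$, the function $\mathcal{U}_1^{*}$ is supported on the double coset of $(w(p^{-1}),1)^{-1}$. Using the relations~\eqref{eq:eq1} and the identity $\hs{a,-a}=1$ one gets $(w(p^{-1}),1)^{-1}=(w(-p^{-1}),1)$, and since $w(-p^{-1})=(-I)w(p^{-1})$ a further application of~\eqref{eq:eq1} yields $(w(-p^{-1}),1)=(-I,\kro{-1}{p})(w(p^{-1}),1)$ with $(-I,\kro{-1}{p})\in\overline{K_0^p(p)}$. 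Thus $\mathcal{U}_1^{*}$ is supported on the same double coset as $\mathcal{U}_1$, so $\mathcal{U}_1^{*}=c\,\mathcal{U}_1$ for a scalar $c$, which I would pin down by evaluating both sides at $(w(p^{-1}),1)$ using the extension of $\gamma$ to the normalizer. This gives $\mathcal{U}_1((w(p^{-1}),1))=\overline{\varepsilon_p}\kro{-1}{p}$, and, applying left $\overline{K_0^p(p)}$-covariance to the factor $(-I,\kro{-1}{p})$, the value $\mathcal{U}_1^{*}((w(p^{-1}),1))=\overline{\mathcal{U}_1((w(-p^{-1}),1))}=\varepsilon_p$. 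Since $\varepsilon_p^2=\kro{-1}{p}$ and $|\varepsilon_p|=1$, the ratio is $\varepsilon_p/(\overline{\varepsilon_p}\kro{-1}{p})=\varepsilon_p^2\kro{-1}{p}=1$, so $c=1$ and $\mathcal{U}_1^{*}=\mathcal{U}_1$, finishing the proof. As a consistency check this matches $\mathcal{U}_1^2=p$ (Proposition~\ref{prop2:rel2}$(2)$ for trivial $\gamma$), which forces $c^2=1$, and the involutivity of $\widetilde{W}_{p^2}$ in Corollary~\ref{cor:rel4}$(1)$.

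The main obstacle is twofold. First one must justify the $*$-representation statement with the correct normalization, i.e. that the Petersson product genuinely matches the $L^2$ product on $A_{k+1/2}(4M,\chi_0)$ and that the local convolution adjoint is given by $\mathcal{T}\mapsto\mathcal{T}^{*}$; this is standard but needs care because $\mathcal{T}$ acts only on the $p$-adic component of the global function. Second, and where essentially all the real work lies, is the cocycle and Hilbert-symbol bookkeeping in evaluating $\gamma$ on $(w(\pm p^{-1}),1)$ and on $(-I,\kro{-1}{p})$: the precise cancellation that produces $c=1$ rests entirely on $\varepsilon_p^2=\kro{-1}{p}$, and a single mishandled metaplectic sign would spuriously give $c=-1$. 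I therefore expect the sign tracking in the second point to be the delicate step.
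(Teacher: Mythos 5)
Your proof is correct, but it follows a genuinely different route from the paper's. The paper proves self-adjointness of $\widetilde{W}_{p^2}$ entirely on the classical side: it writes $\widetilde{W}_{p^2}$ as an explicit sum of slash operators, splits that sum as $S_{1,p}+S_{2,p}$, and verifies $\langle \mathcal{S}_p(f),g\rangle=\kro{-1}{p}\langle f,\mathcal{S}_p(g)\rangle$ by conjugating each summand by hand-picked elements of $\Gamma_0(4M)$ and tracking Kronecker symbols. You instead work upstairs: you use that the Petersson product matches the $L^2$ product on $A_{k+1/2}(4M,\chi_0)$, that the convolution action is a $*$-representation for $\mathcal{T}^{*}(\tilde g)=\overline{\mathcal{T}(\tilde g^{-1})}$ (which holds by unimodularity and the change of variables $\tilde g\mapsto\tilde g\tilde x$, and one should also note that $\mathcal{T}^*$ again lies in $H(\ov{K_0^p(p)},\gamma)$ because $\gamma$ is unitary), and then reduce everything to the one-line local identity $\mathcal{U}_1^{*}=\mathcal{U}_1$. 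Your support and sign computations check out: $(w(p^{-1}),1)^{-1}=(w(-p^{-1}),1)=(-I,\kro{-1}{p})(w(p^{-1}),1)$, and the ratio $\varepsilon_p/(\overline{\varepsilon_p}\kro{-1}{p})=1$ exactly because $\varepsilon_p^2=\kro{-1}{p}$. Your approach buys uniformity — the same argument immediately yields that $q(\mathcal{T}_{-1})$ is the adjoint of $q(\mathcal{T}_1)$ (since $\mathcal{T}_1^{*}=\mathcal{T}_{-1}$ by an identical computation), a fact the paper proves by a second, separate classical argument via double-coset decompositions — at the cost of having to justify the isometry and $*$-representation statements, which the paper never formalizes. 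The paper's computation, while laborious, stays self-contained at the classical level and produces the explicit decomposition $\mathcal{S}_p=S_{1,p}+S_{2,p}$ that is reused in the subsequent remark comparing $\mathcal{S}_p$ with Ueda's operator $Y_p$; your route does not produce that byproduct.
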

\begin{proof}
We write
\[\widetilde{W}_{p^2}(f) = 
\frac{\overline{\varepsilon_p}}{\sqrt{p}}\kro{-1}{p}\kro{M/p}{p} 
\mathcal{S}_p(f),\ \   
\mathcal{S}_p(f):=\sum_{s \in \Z/p\Z}f|[(\alpha_s, \phi_{\alpha_s}(z))]_{k+1/2}\]
where
$(\alpha_s, \phi_{\alpha_s}(z)) = 
\left(\mat{p^2n-4mMs}{m}{4pM(1-s)}{p}, (4M(1-s)z+1)^{1/2}\right) 
\in \mathcal{G}$ and $n,\ m$ are integers such that $pn-(4M/p)m=1$. 

We will show that 
$\langle \mathcal{S}_p(f), g \rangle = 
\kro{-1}{p}\langle f, \mathcal{S}_p(g) \rangle$.
We write $\mathcal{S}_p = S_{1,p} + S_{2,p}$ where $S_{1,p}$ consists 
of $s=0$ term and $S_{2,p}$ consists of rest of the terms. 
Also let $M_1 = M/p$.

We first consider $S_{2,p}$. For $s\ne 0$, as $pn-4M_1ms = 1+ 4M_1m(1-s)$ 
it is clear that $pn-4M_1ms$ and
$4M(1-s)$ are relatively coprime, hence there exists integers $u$, $v$ such that 
$u(-pn + 4M_1ms) - v 4M(1-s)=1$. Let 
$X = \mat{u}{v}{4M(1-s)}{-pn + 4M_1ms} \in \Gamma_0(4M)$, then 
$X^* = (X, j(X,z))$ where $j(X,z) = 
(-1)^{-1/2} \kro{4M(1-s)}{-pn+4M_1ms} (4M(1-s)z + (-pn + 4M_1ms))^{1/2}$
as
$-pn + 4M_1ms \equiv -1 \pmod{4}$. Since $f$ has level $4M$ we have 
$f|[(\alpha_s, \phi_{\alpha_s}(z))]_{k+1/2} = 
 f|[X^*]_{k+1/2} |[(\alpha_s, \phi_{\alpha_s}(z))]_{k+1/2}$. 
 We claim that in $\mathcal{G}$,
 \[X^* \cdot (\alpha_s, \phi_{\alpha_s}(z)) = 
 \left(\mat{-p}{um+vp}{0}{-p}, -\kro{um}{p}\kro{M_1}{p}\right).\]
 It is easy to see equality in the matrix component, so just need to check 
 that $j(X, \alpha_s z)\cdot \phi_{\alpha_s}(z) = -\kro{um}{p}\kro{M_1}{p}$. 
 We see that $j(X, \alpha_s z)$ simplifies to 
 $(-1)^{-1/2}\kro{4M(1-s)}{-pn+4M_1ms}\left(\frac{-1}{4M(1-s)z+1}\right)^{1/2}$ 
 and so $j(X, \alpha_s z)\cdot \phi_{\alpha_s}(z) = \kro{4M(1-s)}{-pn+4M_1ms}$. 
 Thus we are left to show equality of the Kronecker symbols 
 $\kro{4M(1-s)}{-pn+4M_1ms} = -\kro{um}{p}\kro{M_1}{p}$. 
Note that 
$\kro{m}{p}\kro{M_1}{p} = \kro{-1}{p}$ and 
$\kro{u}{p} = \kro{-1 - 4M_1m(1-s)}{p}$, so we have 
$\kro{um}{p}\kro{M_1}{p} 
= \kro{p}{-1 - 4M_1m(1-s)}$.
Further 
$\kro{4M(1-s)}{-pn+4M_1ms} =
\kro{p}{-1 - 4M_1m(1-s)}\kro{-4M_1m(1-s)}{-1 - 4M_1m(1-s)}\kro{-m}{-1 - 4M_1m(1-s)} = \\
\kro{p}{-1 - 4M_1m(1-s)}\kro{-m}{-1 - 4M_1m(1-s)}$. 
We now check that
$\kro{-m}{-1 - 4M_1m(1-s)} = -1$. Suppose $m$ is odd, then 
$\kro{-m}{-1 - 4mM_1(1-s)} = -\kro{m}{-1 - 4M_1m(1-s)} = 
-\kro{-1 - 4M_1m(1-s)}{m}\kro{-1}{m} = -1$. Suppose $m = 2^km'$ where $k>1$ and $m'$ 
odd, then \\
$\kro{-m}{-1 - 4M_1m(1-s)} =
\begin{cases}
\kro{-m'}{-1 - 4M_12^km'(1-s)} & \text{if $k$  even}\\
\kro{-2m'}{-1 - 4M_12^km'(1-s)} & \text{if $k$  odd}
\end{cases} 
= \kro{-m'}{-1 - 4M_12^km'(1-s)} 
= -1$ 
(as $\kro{2}{r} = (-1)^{(r^2-1)/8}$). Thus our claim is proved. 

Hence 
$f|[(\alpha_s, \phi_{\alpha_s}(z))]_{k+1/2} = 
f|[(\mat{-p}{um}{0}{-p}, -\kro{um}{p}\kro{M_1}{p})]_{k+1/2}$ and consequently
\[S_{2,p}(f)= -\kro{M_1}{p}\sum_{u\in(\Z/p\Z)^\times}
f|[(\mat{-p}{u}{0}{-p}, \kro{u}{p})]_{k+1/2}.\] Since the adjoint of 
$|[(\mat{-p}{u}{0}{-p}, \kro{u}{p})]_{k+1/2}$ is 
$|[(\mat{-p}{-u}{0}{-p}, \kro{u}{p})]_{k+1/2}$, so the adjoint of 
$S_{2,p}(f)$ is $\kro{-1}{p}S_{2,p}(f)$, i.e., 
$\langle S_{2,p}(f), g \rangle = 
\kro{-1}{p}\langle f, S_{2,p}(g) \rangle$.

Next we consider the term $S_{1,p}(f) = 
f|[(\mat{p^2n}{m}{4pM}{p}, (4Mz+1)^{1/2})]_{k+1/2}$.
Let $\gamma_p := \mat{a}{b}{c}{d} \in \SL_2(\Z)$ such that 
$\gamma_p \equiv \mat{0}{-1}{1}{0} \pmod{p}$, 
$\equiv \mat{1}{0}{0}{1} \pmod{8}$. We claim that 
\[S_{1,p}(f) =
f|[(\mat{pa}{b}{p^2c}{pd}, 
\kro{M_1}{p}\kro{c}{d}(cpz+d)^{1/2})]_{k+1/2}
\] 
Choose $Y=\mat{a-4bM_1}{\frac{-ma+bpn}{p}}{pc-4Md}{-mc+dpn} 
\in \Gamma_0(4M)$. To prove the claim we need to check 
that 
\[Y^* \cdot (\mat{p^2n}{m}{4pM}{p}, (4Mz+1)^{1/2}) = 
 (\mat{pa}{b}{p^2c}{pd}, 
\kro{M_1}{p}\kro{c}{d}(cpz+d)^{1/2}).\]
As before, matrix equality is easy to check and the
automorphy factor of the left hand side equals 
kronecker symbol $\kro{pc-4Md}{-cm + dpn}$ 
times $(pcz +d)^{1/2}$. So we need to show that 
$\kro{pc-4Md}{-cm + dpn} = \kro{M_1}{p}\kro{c}{d}$.
Since $d-m(c-4M_1d) = -cm + dpn \equiv 1\pmod{4}$ we have  
$\kro{pc-4Md}{-cm + dpn} 
= \kro{-cm + dpn}{p}\kro{c-4M_1d}{-cm + dpn}
= \kro{-mc}{p}\kro{-m(c-4M_1d)}{d-m(c-4M_1d)}\kro{-m}{-cm + dpn}=
 \kro{-m}{p}\kro{-d}{d-m(c-4M_1d)}\kro{m}{d-m(c-4M_1d)}
= \kro{M_1}{p}\kro{d-m(c-4M_1d)}{d}\kro{m}{d-m(c-4M_1d)}
= \kro{M_1}{p}\kro{c}{d} \kro{m}{d}\kro{m}{d-m(c-4d)}$. 
We can check $\kro{m}{d} = \kro{m}{d-m(c-4d)}$ as before
by considering $m$ odd and even case (when $m$ is even 
we use that $\kro{2}{d} =1$). Thus our claim is proved.

Now note that 
\[(\mat{pa}{b}{p^2c}{pd}, 
\kro{c}{d}(cpz+d)^{1/2}) = (\mat{1}{0}{0}{p},p^{1/4})\cdot \gamma_p^*
\cdot (\mat{p}{0}{0}{1},p^{-1/4}) =: \varsigma_p,\]
and so 
$S_{1,p}(f) = \kro{M_1}{p} f|[\varsigma_p]_{k+1/2}$. 

We check similarly that 
\[f|[(\mat{1}{0}{0}{p},p^{1/4})\cdot (\gamma_p^*)^2
\cdot (\mat{p}{0}{0}{1},p^{-1/4})]_{k+1/2} = \kro{-1}{p}f.\]
By the 
above action on $f$ we get that 
$f|[\varsigma_p^{-1}]_{k+1/2} = \kro{-1}{p}f|[\varsigma_p]_{k+1/2}$.
Since the adjoint of $\varsigma_p$ is $\varsigma_p^{-1}$ we get
$\langle S_{1,p}(f), g \rangle = 
\kro{-1}{p}\langle f, S_{1,p}(g) \rangle$.

Thus $\langle \mathcal{S}_p(f), g \rangle = 
\kro{-1}{p}\langle f, \mathcal{S}_p(g) \rangle$. So
$\langle \widetilde{W}_{p^2}(f), g  \rangle = 
\frac{\overline{\varepsilon_p}}{\sqrt{p}}\kro{-M_1}{p} 
\langle \mathcal{S}_p(f), g \rangle \\
= \frac{\overline{\varepsilon_p}}{\sqrt{p}}\kro{M_1}{p}\langle f, \mathcal{S}_p(g) \rangle
= \langle f, \frac{\varepsilon_p}{\sqrt{p}}\kro{M_1}{p}\mathcal{S}_p(g) \rangle 
=\langle f, \widetilde{W}_{p^2}(g)  \rangle$.
Hence we are done.
\end{proof}

\begin{remark}
In Remark \ref{rem:rem1} we checked that the $p$-adic operator 
$\mathcal{U}'_1 = \overline{\varepsilon_p}\mathcal{U}_1$ 
and Ueda's classical operator 
$Y_p$ satisfy same relations. 
Since $q(\mathcal{U}'_1) = \mathcal{S}_p$, it is natural 
to compare $\mathcal{S}_p$ and $Y_p$. While 
$\mathcal{S}_p = S_{1,p} + S_{2,p}$, it turns out from the 
above proof that Ueda's $Y_p$ is equal to 
$\kro{M/p}{p}(S_{1,p} - S_{2,p})$.
\end{remark}

Next we want to show that $\widetilde{Q}_p = q(\mathcal{U}_0)$ is self-adjoint. 
We use the relations  
$\mathcal{U}_1\mathcal{T}_1\mathcal{U}_1 = p\mathcal{T}_{-1}$ and 
$\mathcal{T}_1\mathcal{U}_1 = p\ \mathcal{U}_0$ 
(Proposition~\ref{prop2:rel2}$(3)$). Thus we have  
\[\langle q(\mathcal{U}_0) f, g\rangle
= \frac{1}{p}\ \langle  q(\mathcal{T}_1)q(\mathcal{U}_1) f , g\rangle.\] 
Since by the above theorem $q(\mathcal{U}_1)$ is self-adjoint we get that
\[\langle f, q(\mathcal{U}_0)g \rangle
=\frac{1}{p}\ \langle f, p\ q(\mathcal{U}_0)g \rangle
=\frac{1}{p}\ \langle f, q(\mathcal{T}_1)q(\mathcal{U}_1)g \rangle\]
\[
=\frac{1}{p}\ \langle f, \frac{1}{p}\ q(\mathcal{U}_1)^2 
q(\mathcal{T}_1)q(\mathcal{U}_1)g \rangle
=\frac{1}{p}\ \langle q(\mathcal{U}_1)f, 
\frac{1}{p}\ q(\mathcal{U}_1)q(\mathcal{T}_1)q(\mathcal{U}_1)g \rangle\]
\[
=\frac{1}{p}\ \langle q(\mathcal{U}_1)f, q(\mathcal{T}_{-1})g \rangle.\]
Since $q(\mathcal{U}_1)$ is surjective 
it follows that 
$q(\mathcal{U}_0)$ is self-adjoint {\it iff}
the adjoint of $q(\mathcal{T}_{-1})$ is 
$q(\mathcal{T}_{1})$. 
We now show that the adjoint of $q(\mathcal{T}_{-1})$ is 
$q(\mathcal{T}_{1})$. 

Consider elements $\xi = (\mat{1}{0}{0}{p^2},p^{1/2})$ and 
$\eta = (\mat{p^2}{0}{0}{1},p^{-1/2})$ in $\mathcal{G}$. 
We can choose $\beta_s$ such that 
$\Gamma_0(4M)\mat{1}{0}{0}{p^2}\Gamma_0(4M) = 
\bigsqcup \Gamma_0(4M)\beta_s = \bigsqcup \beta_s \Gamma_0(4M)$. 
So by \cite[Propositions 1.1, 1.2]{Shimura} we have 
$\Delta_0(4M)\xi\Delta_0(4M) = \bigsqcup \Delta_0(4M) \xi_s = 
\bigsqcup \xi_s \Delta_0(4M) $ where $P(\xi_s) = \beta_s$.

Since $\Delta_0(4M)\eta\Delta_0(4M) = 
\Delta_0(4M)\xi^{-1}\Delta_0(4M)(\mat{p^2}{0}{0}{p^2},1)$,
it follows that $\Delta_0(4M)\eta\Delta_0(4M) = 
\bigsqcup \Delta_0(4M)\xi_s^{-1} (\mat{p^2}{0}{0}{p^2},1)$.

Thus for $f$, $g \in S_{k+1/2}(\Gamma_0(4M))$, we have
\begin{equation}\label{eq:eq2}
\begin{split}
\langle f|[\Delta_0(4M)\xi\Delta_0(4M)]_{k+1/2},\ g \rangle &= 
\langle p^{(2k-3)/2}\sum_s f|[\xi_s]_{k+1/2},\ g \rangle\\
= \langle f,\ p^{(2k-3)/2}\sum_s g|[\xi_s^{-1}]_{k+1/2}\rangle 
&=\langle f,\ g|[\Delta_0(4M)\eta\Delta_0(4M)]_{k+1/2}\rangle
\end{split}
\end{equation}
as elements of type $(aI, 1)$ belongs to the center of 
$\mathcal{G}$ and acts trivially via slash operator.

Using triangular decomposition we check that 
$\Gamma_0(4M)\mat{p^2}{0}{0}{1}\Gamma_0(4M) = 
\bigsqcup_{s=0}^{p^2-1} \Gamma_0(4M)\mat{p^2}{0}{0}{1}\mat{1}{0}{-4Ms}{1}$ 
and so 
\[\Delta_0(4M)\eta\Delta_0(4M) = \bigsqcup_{s=0}^{p^2-1} 
\Delta_0(4M)\ \eta\ (\mat{1}{0}{-4Ms}{1}, (-4Msz+1)^{1/2})\]
\[= \bigsqcup_{s=0}^{p^2-1} \Delta_0(4M) 
(\mat{p^2}{0}{-4Ms}{1}, (-4(M/p)sz +p^{-1})^{1/2}).\]
Thus it follows from Proposition~\ref{prop:rel3} $(4)$ that  
$g|[\Delta_0(4M)\eta\Delta_0(4M)]_{k+1/2} 
=\kro{-1}{p}^{k} p^{(2k-3)/2} q(\mathcal{T}_{-1})(g)$. Also we 
noted that
$f|[\Delta_0(4M)\xi\Delta_0(4M)]_{k+1/2} = 
\kro{-1}{p}^{k} p^{(2k-3)/2} q(\mathcal{T}_{1})(f)$. 
Thus by equation~\eqref{eq:eq2} we obtain the following
\begin{prop}
The operator $q(\mathcal{T}_{-1})$ is adjoint of $q(\mathcal{T}_{1})$ and 
consequently $\widetilde{Q}_p$ is self-adjoint with respect to 
Petersson inner product.
\end{prop}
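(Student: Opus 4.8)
The plan is to reduce the self-adjointness of $\widetilde{Q}_p = q(\mathcal{U}_0)$ to an adjointness relation between the two ``diagonal'' operators $q(\mathcal{T}_1)$ and $q(\mathcal{T}_{-1})$, and then to prove that relation by the classical Shimura calculus of double-coset operators. As in the discussion preceding the statement, I would first note that $q(\mathcal{U}_1)$ is self-adjoint (being $p^{1/2}\widetilde{W}_{p^2}$, a scalar multiple of the involution shown self-adjoint in the previous proposition) and surjective. Feeding this into the algebra relations $\mathcal{U}_1\mathcal{T}_1\mathcal{U}_1 = p\mathcal{T}_{-1}$ and $\mathcal{T}_1\mathcal{U}_1 = p\mathcal{U}_0$ of Proposition~\ref{prop2:rel2}$(3)$ shows that $q(\mathcal{U}_0)$ is self-adjoint if and only if the adjoint of $q(\mathcal{T}_{-1})$ is $q(\mathcal{T}_1)$; so the whole problem collapses to this single adjoint identity.

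The core step is to recognise both operators as Shimura double-coset operators on $\mathcal{G}$. I would set $\xi = (\mat{1}{0}{0}{p^2}, p^{1/2})$ and $\eta = (\mat{p^2}{0}{0}{1}, p^{-1/2})$. Using Proposition~\ref{prop:rel3}$(1)$ together with the formula $f|[\Delta_0(4M)\xi\Delta_0(4M)]_{k+1/2} = U_{p^2}(f)$ from the Preliminaries, one obtains $f|[\Delta_0(4M)\xi\Delta_0(4M)]_{k+1/2} = \kro{-1}{p}^k p^{(2k-3)/2} q(\mathcal{T}_1)(f)$. On the other side, computing the right cosets of $\Delta_0(4M)\eta\Delta_0(4M)$ from the triangular decomposition $\Gamma_0(4M)\mat{p^2}{0}{0}{1}\Gamma_0(4M) = \bigsqcup_{s=0}^{p^2-1}\Gamma_0(4M)\mat{p^2}{0}{0}{1}\mat{1}{0}{-4Ms}{1}$ and matching them with the representatives $(\gamma_s, \phi_{\gamma_s})$ of Proposition~\ref{prop:rel3}$(4)$ yields $g|[\Delta_0(4M)\eta\Delta_0(4M)]_{k+1/2} = \kro{-1}{p}^k p^{(2k-3)/2} q(\mathcal{T}_{-1})(g)$.

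It then remains to establish the adjoint identity \eqref{eq:eq2}, namely $\langle f|[\Delta_0(4M)\xi\Delta_0(4M)]_{k+1/2}, g\rangle = \langle f, g|[\Delta_0(4M)\eta\Delta_0(4M)]_{k+1/2}\rangle$. The mechanism is the standard one: choosing $\Delta_0(4M)\xi\Delta_0(4M) = \bigsqcup_s \Delta_0(4M)\xi_s = \bigsqcup_s \xi_s\Delta_0(4M)$, one moves each $\xi_s$ across the Petersson pairing via $\langle f|[\xi_s]_{k+1/2}, g\rangle = \langle f, g|[\xi_s^{-1}]_{k+1/2}\rangle$. What makes $\eta$ (and not $\xi^{-1}$) the correct partner is the relation $\Delta_0(4M)\eta\Delta_0(4M) = \Delta_0(4M)\xi^{-1}\Delta_0(4M)(\mat{p^2}{0}{0}{p^2},1)$, whose right cosets are the $\Delta_0(4M)\xi_s^{-1}(\mat{p^2}{0}{0}{p^2},1)$; since $(\mat{p^2}{0}{0}{p^2},1)$ is central in $\mathcal{G}$ and acts trivially through the slash operator, it drops out. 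Combining the three displayed identities gives $\langle q(\mathcal{T}_1)f, g\rangle = \langle f, q(\mathcal{T}_{-1})g\rangle$, so $q(\mathcal{T}_1)$ and $q(\mathcal{T}_{-1})$ are mutually adjoint, and the reduction of the first paragraph then delivers the self-adjointness of $\widetilde{Q}_p$.

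The main obstacle I anticipate is the bookkeeping at the level of $\mathcal{G}$ rather than $\GL_2^+(\R)$: one must know, via Shimura's Propositions~1.1 and~1.2 \cite{Shimura}, that a common set of left and right coset representatives $\beta_s$ for the $\GL_2$-double coset lifts to representatives $\xi_s$ that simultaneously give left and right coset decompositions of the $\mathcal{G}$-double coset, and that the half-integral-weight automorphy factors (the $\varepsilon_d$, the Hilbert symbols, and the metaplectic cocycle) are tracked accurately so that the central element $(\mat{p^2}{0}{0}{p^2},1)$ really does contribute trivially. Once this compatibility is secured, the adjoint computation is purely formal.
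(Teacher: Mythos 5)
Your proposal is correct and follows essentially the same route as the paper: the same reduction of the self-adjointness of $\widetilde{Q}_p=q(\mathcal{U}_0)$ to the mutual adjointness of $q(\mathcal{T}_1)$ and $q(\mathcal{T}_{-1})$ via the Hecke-algebra relations and the self-adjointness of $q(\mathcal{U}_1)$, and the same identification of these two operators with the Shimura double-coset operators attached to $\xi=(\mat{1}{0}{0}{p^2},p^{1/2})$ and $\eta=(\mat{p^2}{0}{0}{1},p^{-1/2})$, with the central element $(\mat{p^2}{0}{0}{p^2},1)$ dropping out of the pairing. The bookkeeping issues you flag (common left/right representatives lifting to $\mathcal{G}$ via Shimura's Propositions 1.1 and 1.2, and matching the representatives $(\gamma_s,\phi_{\gamma_s})$ of the triangular decomposition) are exactly the points the paper checks.
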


\subsection{Translating elements of $2$-adic Hecke algebra and Kohnen's plus space}
Following Niwa and Kohnen's work, Loke and Savin gave 
an interpretation of Kohnen's plus space at level $4$ in terms 
of certain elements in the $2$-adic Hecke algebra described previously.
In this subsection we shall describe Kohnen's plus space at level $4M$ for 
$M$ odd in a similar way. 

Let $\chi$ be the trivial character modulo $4$, 
thus $\chi_0 = \kro{-1}{\cdot}^k$. Let $\gamma$ be a character of $M_2$
such that
$\gamma((-I,1))= - i^{2k+1}$. Let $\zeta_8 := \gamma((w(1),1))$.
Then, for any $k_0 = \mat{a}{b}{c}{d}\in K_0^2(4)$ 
we have $\tilde{\epsilon}_2(k_0)\chi_{0,2}(d) = \gamma((k_0,1))$. 
\begin{prop}(Loke-Savin~\cite{L-S})
For $\mathcal{T}_1$, $\mathcal{U}_1 \in H(\overline{K_0^2(4)}, \gamma)$ and
$f \in S_{k+1/2}(\Gamma_0(4),\chi)$,
\begin{enumerate}
 \item $\displaystyle{q(\mathcal{T}_1)(f)(z)= 2^{(3-2k)/2} U_4(f)(z)}$. 
 \item $\displaystyle{q(\mathcal{U}_1)(f)(z)= \kro{2}{2k+1} W_4(f)(z)}$ where 
 the operator $W_4$ is given by $W_4(f)(z) = (-2iz)^{-k-1/2}f(-1/4z)$.
 \end{enumerate}
\end{prop}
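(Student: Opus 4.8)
The plan is to run the same adelic-to-classical translation as in the proof of Proposition~\ref{prop:rel3}, now at the prime $2$ with the group $\ov{K_0^2(4)}$ and the genuine character $\gamma$ normalized by $\gamma((-I,1)) = -i^{2k+1}$. For an element $\mathcal{T}$ of the $2$-adic Hecke algebra I would compute $\mathcal{T}(\Phi_f)$ from the action $\mathcal{T}(\Phi_f)(\tilde{g}) = \int_{\DSL_2(\Q_2)}\mathcal{T}(\tilde{x})\Phi_f(\tilde{g}\tilde{x})\,d\tilde{x}$, unfold it into a finite sum over the left cosets in the supporting double coset, and recover the classical operator through $q(\mathcal{T})(f)(z) = \mathcal{T}(\Phi_f)(\tilde{g}_\infty)J(\tilde{g}_\infty,i)^{2k+1}$, where $\tilde{g}_\infty = (g_\infty,1)$ satisfies $\tilde{g}_\infty(i)=z$. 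The transfer from the place $2$ to the archimedean place is effected by strong approximation: I would left-multiply by a suitable $s_{\Q}(A) = (A, s_{\A}(A))$ with $A\in\SL_2(\Q)$, invoking invariance under $s_{\Q}(\SL_2(\Q))$ (property~(1) of $A_{k+1/2}(4,\chi_0)$), right-invariance under $\prod_{q\ne 2}\SL_2(\Z_q)$ (legitimate here since $N=4$ has no odd prime divisors), and genuineness (property~(2)).

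For part~(1), $\mathcal{T}_1$ is supported on the double coset of $(h(2),1)$, and the analogue of Lemma~\ref{lem:decomp}(1) at level $4$ is $K_0^2(4)\,h(2)\,K_0^2(4) = \bigcup_{s\in\Z_2/4\Z_2}x(s)h(2)\,K_0^2(4)$, a union of four left cosets (one checks the index $[\Z_2:4\Z_2]=4$). Since $\gamma((h(2),1))=1$, unfolding yields four terms, and I would take $A_s = h(2^{-1})x(-s) = \mat{1/2}{-s/2}{0}{2}$, for which $s_{\A}(A_s)=1$ (its lower-left entry vanishes) and $A_s\,x(s)h(2)$ collapses at the place $2$ to the central element $(I,\sigma_2(\cdots))$. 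The archimedean matrix $A_s$ acts by $z\mapsto (z-s)/4$ with automorphy factor $\sqrt{2}$, so after reindexing $s\mapsto -s$ the four terms combine into $4\cdot 2^{-(2k+1)/2}U_4(f) = 2^{(3-2k)/2}U_4(f)$; this is the exact analogue of the odd-prime identity $q(\mathcal{T}_1) = \kro{-1}{p}^k p^{(3-2k)/2}U_{p^2}$, with the harmless factor $\kro{-1}{2}^k = 1$.

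For part~(2), $\mathcal{U}_1$ is supported on the double coset of $(w(2^{-1}),1)$, and the key simplification is that $w(2^{-1})$ normalizes $K_0^2(4)$: a direct computation gives $w(2^{-1})\mat{a}{b}{c}{d}w(2^{-1})^{-1} = \mat{d}{-c/4}{-4b}{a}$, which lies in $K_0^2(4)$ whenever $\mat{a}{b}{c}{d}$ does. Hence the double coset is the single coset $w(2^{-1})K_0^2(4)$ (consistent with $\mathcal{U}_1^2 = 1$ in Theorem~\ref{thm:LS}), and no sum is needed: $\mathcal{U}_1(\Phi_f)(\tilde{g}_\infty) = \overline{\gamma}((w(2^{-1}),1))\,\Phi_f(\tilde{g}_\infty(w(2^{-1}),1))$. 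I would then insert $A_0 = w(2^{-1})^{-1} = \mat{0}{-1/2}{2}{0}$, which again has $s_{\A}(A_0)=1$ and whose archimedean action $z\mapsto -1/(4z)$ is precisely the Fricke action defining $W_4$, while the place-$2$ component collapses to the central element $(I,\sigma_2(A_0,w(2^{-1})))$.

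The main obstacle is isolating the precise constant in part~(2). Everything away from $2$ is routine, but the scalar $\kro{2}{2k+1}=(-1)^{k(k+1)/2}$ emerges only after combining three genuine phases at the prime $2$: the value $\gamma((w(2^{-1}),1))$, computed from $\gamma((w(1),1)) = \zeta_8 = (1+\gamma((-I,1)))/\sqrt{2}$ via the relations \eqref{eq:eq1}; the $2$-cocycle $\sigma_2(A_0,w(2^{-1}))$ absorbed by genuineness when the place-$2$ component collapses; and the branch of the metaplectic factor $J$ together with $\tilde{\epsilon}_2$ and $\chi_{0,2}$. The delicate point is that these primitive-eighth-root phases must cancel so as to leave exactly the sign $\kro{2}{2k+1}$ and the correct branch $(-2iz)^{-k-1/2}$ of $W_4(f)(z) = (-2iz)^{-k-1/2}f(-1/4z)$; matching the normalization $\gamma((-I,1)) = -i^{2k+1}$ against Kohnen's classical $W_4$ is what pins down this phase uniquely.
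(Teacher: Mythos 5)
Your plan is correct and is exactly the method the paper itself uses for the analogous translations: the paper does not reprove this proposition (it cites Loke--Savin), but its proofs of Proposition~\ref{prop:rel3} and of the level-$4M$ identity $Q=\kro{2}{2k+1}q(\mathcal{U}_2)$ follow the same strong-approximation unfolding, and your coset counts (four cosets for $(h(2),1)$, a single coset for $(w(2^{-1}),1)$ since $w(2^{-1})$ normalizes $K_0^2(4)$) are right. The one computation you defer in part (2) closes as you predict: $\gamma((w(2^{-1}),1))=\zeta_8$, the cocycle $\sigma_2(w(2^{-1})^{-1},w(2^{-1}))$ equals $1$, and $(2z)^{-k-1/2}=e^{-i\pi(2k+1)/4}(-2iz)^{-k-1/2}$ for the principal branches, so the constant is $\overline{\zeta_8}\,e^{-i\pi(2k+1)/4}=\kro{2}{2k+1}$ by the identity $e^{(2k+1)\pi i/4}=\kro{2}{2k+1}\overline{\zeta_8}$ that the paper records in its proof of the $Q$ proposition.
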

Niwa~\cite{Niwa} considered operator $R= W_4U_4$ on $S_{k+1/2}(\Gamma_0(4),\chi)$, 
proved that it is self-adjoint and
that $(R-\alpha_1)(R-\alpha_2)=0$ where $\alpha_1 = \kro{2}{2k+1}2^k$, 
$\alpha_2=-\frac{\alpha_1}{2}$.
Kohnen~\cite{Kohnen1} defined his plus space $S^{+}(4)$ at level $4$  to be the 
$\alpha_1$-eigenspace of $R$ in $S_{k+1/2}(\Gamma_0(4))$. It 
follows from the above proposition that $S^{+}(4)$ is the $2$-eigenspace 
of $q(\mathcal{U}_1)q(\mathcal{T}_1)/\sqrt{2}$ and hence 
that of $q(\mathcal{U}_2)/\sqrt{2}$.

In the case of level $4M$ with $M$ odd and $\chi$ a trivial 
character modulo $4M$, Kohnen~\cite{Kohnen2} defines a classical 
operator $Q$ on $S_{k+1/2}(\Gamma_0(4M))$ in order to obtain 
his plus space. The operator $Q$ is defined by
\[Q := [\Delta_0(4M, \chi) \rho \Delta_0(4M, \chi)]\ \ \text{where}\  
\rho = (\mat{4}{1}{0}{4}, e^{\pi i/4}).\]
By \cite[Proposition 1]{Kohnen2} $Q$ is self-adjoint and satisfies 
$(Q-\alpha)(Q-\beta)=0$ where 
$\alpha = (-1)^{[(k+1)/2]}2\sqrt{2}$, $\beta = -\alpha/2$, and the 
plus space 
$S_{k+1/2}^{+}(4M)$ is precisely
the $\alpha$-eigenspace of $Q$. 

In our work we need that Kohnen's plus space at level $4M$ is the 
$2$-eigenspace of the product $q(\mathcal{U}_1)q(\mathcal{T}_1)/\sqrt{2}$. 
We prove this below. 

\begin{prop} Let $f \in S_{k+1/2}(\Gamma_0(4M))$ with $M$ odd. Then we have 
\[Q(f) = \kro{2}{2k+1}q(\mathcal{U}_2)(f) = 
\kro{2}{2k+1}q(\mathcal{U}_1)q(\mathcal{T}_1)(f).\]
Consequently $S_{k+1/2}^{+}(4M)$ is the 
$2$-eigenspace of $q(\mathcal{U}_1)q(\mathcal{T}_1)/\sqrt{2}$.
\end{prop}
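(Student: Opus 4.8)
The plan is to separate the claim into two parts: the purely algebraic identity $q(\mathcal{U}_2) = q(\mathcal{U}_1)q(\mathcal{T}_1)$, which is essentially formal, and the analytic identity $Q(f) = \kro{2}{2k+1}q(\mathcal{U}_2)(f)$, which carries all the real content. First I would dispose of the algebraic part. By Theorem~\ref{thm:LS}(2) applied with $n=1$ we have $\mathcal{U}_1 * \mathcal{T}_1 = \mathcal{U}_2$ in the $2$-adic Hecke algebra $H(\overline{K_0^2(4)},\gamma)$. Since this algebra acts on $A_{k+1/2}(4M,\chi_0)$ through the convolution formula $\mathcal{T}(\Phi)(\tilde g) = \int \mathcal{T}(\tilde x)\Phi(\tilde g\tilde x)\,d\tilde x$, a direct change of variables shows that convolution of kernels corresponds to composition of operators on $A_{k+1/2}$; transporting this through the ring isomorphism $q$ gives $q(\mathcal{U}_2) = q(\mathcal{U}_1 * \mathcal{T}_1) = q(\mathcal{U}_1)q(\mathcal{T}_1)$, which is the second equality.

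Next, for the first equality I would compute $q(\mathcal{U}_2)(f)$ at level $4M$. Because $M$ is odd we have $4 \| 4M$, so the local structure at the place $2$ is identical to the level-$4$ situation of Loke and Savin, and the formulas $q(\mathcal{T}_1)(f) = 2^{(3-2k)/2}U_4(f)$ and $q(\mathcal{U}_1)(f) = \kro{2}{2k+1}W_4(f)$ continue to hold on $S_{k+1/2}(\Gamma_0(4M))$. The only point needing verification is that the extra adelic components at odd primes $r \mid M$ are harmless: exactly as in the proof of Proposition~\ref{prop:rel3}, the relevant coset representatives land in $K_0^r(r^b)\times\{1\}$, and since $\chi$ (hence $\chi_{0,r}$) is trivial these components act trivially. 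Combining the two formulas gives
\[q(\mathcal{U}_2)(f) = q(\mathcal{U}_1)q(\mathcal{T}_1)(f) = \kro{2}{2k+1}2^{(3-2k)/2}W_4U_4(f) = \kro{2}{2k+1}2^{(3-2k)/2}R(f),\]
where $R = W_4U_4$ is Niwa's operator.

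It then remains to prove $Q = 2^{(3-2k)/2}R$ as operators on $S_{k+1/2}(\Gamma_0(4M))$. Here I would decompose $\Delta_0(4M,\chi)\rho\Delta_0(4M,\chi)$ into right $\Delta_0(4M)$-cosets and write $Q = \det(\rho)^{(2k-3)/4}\sum_v f|[\rho_v]_{k+1/2}$ with $\det(\rho)^{(2k-3)/4} = 2^{(2k-3)/2}$. A Smith-normal-form computation shows that $\rho = \mat{4}{1}{0}{4}$ and the representatives $\mat{4s}{-1}{16}{0}$ produced by $W_4U_4$ both lie in the $\GL_2(\Z)$-double coset of $\mathrm{diag}(1,16)$, so $Q$ and $R$ are supported on the same cosets; what must be matched are the half-integral automorphy factors attached to $\rho$ (the scalar $e^{\pi i/4}$ and the symbols $\varepsilon_d^{-1}\kro{c}{d}$) against those coming from $W_4U_4$. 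Granting $Q = 2^{(3-2k)/2}R$, the first equality $Q(f) = \kro{2}{2k+1}q(\mathcal{U}_2)(f)$ follows since $\kro{2}{2k+1}^2 = 1$. Finally the ``consequently'' is eigenvalue bookkeeping: Kohnen's eigenvalue is $\alpha = (-1)^{[(k+1)/2]}2\sqrt{2}$, and because $(-1)^{[(k+1)/2]} = (-1)^{k(k+1)/2} = \kro{2}{2k+1}$, the relation $Q(f) = \alpha f$ is equivalent to $q(\mathcal{U}_2)(f) = 2\sqrt{2}\,f$, i.e.\ to $f$ lying in the $2$-eigenspace of $q(\mathcal{U}_1)q(\mathcal{T}_1)/\sqrt{2}$.

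The hard part will be the identification $Q = 2^{(3-2k)/2}R$ in the last step: reconciling the two descriptions of the weight-$(k+1/2)$ cocycle — Kohnen's via the element $\rho = (\mat{4}{1}{0}{4}, e^{\pi i/4})$ and the one arising from $W_4U_4$ — requires careful tracking of the $2$-adic cocycle together with the Kronecker and $\varepsilon_d$ symbols, and it is precisely there that the sign $\kro{2}{2k+1}$ and the power of $2$ must come out correctly.
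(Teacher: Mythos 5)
Your algebraic step is fine and agrees with the paper: $\mathcal{U}_1*\mathcal{T}_1=\mathcal{U}_2$ from Theorem~\ref{thm:LS} plus the fact that $q$ intertwines convolution with composition gives the second equality. The problems are in the analytic part.

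First, your intermediate claim that the level-$4$ formula $q(\mathcal{U}_1)(f)=\kro{2}{2k+1}W_4(f)$ ``continues to hold'' on $S_{k+1/2}(\Gamma_0(4M))$ is false. The operator $W_4(f)(z)=(-2iz)^{-k-1/2}f(-1/4z)$ does not even preserve $S_{k+1/2}(\Gamma_0(4M))$ for $M>1$, since $\mat{0}{-1}{4}{0}$ does not normalize $\Gamma_0(4M)$; and the paper's own translation at level $4M$ (the proposition immediately following this one) gives $q(\mathcal{U}_1)(f)=\overline{\zeta_8}\kro{2}{M}\kro{-1}{M}^{k+3/2}f|[W,\phi_W]_{k+1/2}$ with $W=\mat{4n}{m}{4M}{2}$, visibly depending on $M$. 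The issue is not just the odd local components acting trivially: to descend the $2$-adic coset representatives to the archimedean place one must choose rational matrices $A_s$ that are simultaneously integral at all finite places dividing $4M$, and these matrices (hence the resulting classical operator and its constants) genuinely involve $M$. This is exactly where the factor $\omega_M=(\tilde{\gamma_2}(M))^{-1}(-1,M)_2\chi_{0,2}(M)$ and the case split $M\equiv 1$ versus $M\equiv 3\pmod 4$ appear in the paper's computation, none of which survives in your reduction. Consequently your target identity $Q=2^{(3-2k)/2}W_4U_4$ is not the right statement at level $4M$.

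Second, even granting your reduction, you explicitly defer the entire content of the proposition --- matching Kohnen's cocycle for $\rho=(\mat{4}{1}{0}{4},e^{\pi i/4})$ against the factor produced by the Hecke-algebra element, including the sign $\kro{2}{2k+1}$ and the power of $2$ --- to ``the hard part'' without carrying it out. The paper does this by writing out $Q$ (and its adjoint $\tilde{Q}$, using self-adjointness to pass between them) as an explicit sum over $s$ of slash operators, computing $q(\mathcal{U}_2)(f)$ adelically from the decomposition $K_0^2(4)w(2^{-2})K_0^2(4)=\bigcup_{s}y(4M(1-s))w(2^{-2})K_0^2(4)$ with explicit rational matrices $A_s=\mat{1-\kro{-1}{M}Ms}{-\kro{-1}{M}/4}{4Ms}{1}$, and comparing the two expressions term by term. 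That comparison is the proof; as written, your argument omits it.
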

\begin{proof}
Following \cite[Proposition 1]{Kohnen2} we can write
\begin{equation*}
\begin{split}
Q(f) &= \sum_{s=0}^{4} f|[\rho]_{k+1/2}
|[\mat{1}{0}{4Ms}{1}, (4Msz+1)^{1/2}]_{k+1/2}\\
&=e^{-(2k+1)\pi i/4} \sum_{s=0}^{4}
f|[\mat{4+4Ms}{1}{16Ms}{4}, (4Msz+1)^{1/2}]_{k+1/2}.
\end{split}
\end{equation*} and it's adjoint 
\begin{equation*}
\begin{split}
\tilde{Q}(f) &= \sum_{s=0}^{4} f|[\mat{4}{-1}{0}{4},e^{-\pi i/4} ]_{k+1/2}
|[\mat{1}{0}{4Ms}{1}, (4Msz+1)^{1/2}]_{k+1/2}\\
&=e^{(2k+1)\pi i/4} \sum_{s=0}^{4}
f|[\mat{4-4Ms}{-1}{16Ms}{4}, (4Msz+1)^{1/2}]_{k+1/2}.
\end{split}
\end{equation*}
Since $Q$ is self adjoint, $Q=\tilde{Q}$.

We now compute $q(\mathcal{U}_2)(f)$.
Let $\tilde{g}_\infty \in \DSL_2(\R)$ be such that $\tilde{g}_\infty i=z$. 
Using $K_0^2(4) w(2^{-2}) K_0^2(4) = \bigcup_{s\in \Z/ 4\Z}
y(4M(1-s))w(2^{-2})K_0^2(4)$, we get 
\[\mathcal{U}_2(\Phi_f)(\tilde{g}_\infty) 
=\overline{\zeta_8}
\sum_{s=0}^{3}\Phi_f(\tilde{g}_\infty (y(4M(1-s)),1)(w(2^{-2}),1)).\]
Take $A_s = \mat{1-\kro{-1}{M}Ms}{-\kro{-1}{M}/4}{4Ms}{1} \in \SL_2(\Q)$, 
so $s_{\Q}(A_s) = (A_s, 1)$. 
The $\infty$-component of 
\[s_{\Q}(A_s)\ \tilde{g}_\infty\  (y(4M(1-s)),1)(w(2^{-2}),1))\] 
is $(A_s,1)\tilde{g}_\infty$,  for a prime $q$ such that $(q,2M)=1$ 
the $q$-component is 
$(A_s, 1) \in \SL_2(\Z_q)\times\{1\}$, 
for an odd prime $p$ such that $p^b\|M$, the $p$-component 
is $(A_s, 1) \in K_0^p(p^b)\times\{1\}$ while
the $2$-component is 
\[(A_s,1)
(y(4M(1-s)),1)(w(2^{-2}),1)) = 
(\mat{\kro{-1}{M}}{\frac{1-M\kro{-1}{M}}{4}}{-4}{M},1).\]
Since $M$ is odd, it is clear that $\frac{1-M\kro{-1}{M}}{4}\in \Z_2$ and 
so the $2$-component is in $K_0^2(4)\times\{1\}$.
The $p$-component acts trivially while  
the $2$-component acts by 
$(\tilde{\gamma_2}(M))^{-1}(-1,M)_2\chi_{0,2}(M)=:\omega_M$. 
Hence
\begin{equation*}
\begin{split}
q(\mathcal{U}_2)(f)(z) 
&=\overline{\zeta_8}\ \omega_M
\sum_{s=0}^{3}f(A_sz)J(A_s,z)^{-2k-1} \\
&=\overline{\zeta_8}\ \omega_M
\sum_{s=0}^{3}f\left(\frac{(4-4M\kro{-1}{M}sz)-\kro{-1}{M}}{16Msz+4}\right)(4Msz+1)^{-k-1/2}
\end{split}
\end{equation*}
We note that $e^{(2k+1)\pi i/4} = \kro{2}{2k+1}\overline{\zeta_8}$.
Thus when $M\equiv 1 \pmod{4}$ since $\omega_M =1$, comparing the 
expression of $\tilde{Q}$ and $q(\mathcal{U}_2)$ we see that 
$\tilde{Q}(f)=\kro{2}{2k+1}q(\mathcal{U}_2)(f)$. In the case 
$M\equiv 3 \pmod{4}$ we get that
$\omega_M = -i(-1)^k$, so 
$\kro{2}{2k+1}\overline{\zeta_8}\omega_M = e^{-(2k+1)\pi i/4}$ and 
consequently $Q(f)=\kro{2}{2k+1}q(\mathcal{U}_2)(f)$. Since by
Theorem~\ref{thm:LS},
$\mathcal{U}_2 = \mathcal{U}_1 * \mathcal{T}_1$ we get that 
$Q(f) = \kro{2}{2k+1}q(\mathcal{U}_1)q(\mathcal{T}_1)f$.
Hence we are done.

The last statement follows since $(-1)^{[(k+1)/2]}= \kro{2}{2k+1}$.
\end{proof}

As before we can translate 
$\mathcal{T}_1,\ \mathcal{U}_1,\ \mathcal{U}_0 \in H(\ov{K_0^2(4)}, \gamma)$
to classical operators on $S_{k+1/2}(\Gamma_0(4M))$ .
\begin{prop}For $f \in S_{k+1/2}(\Gamma_0(4M))$, 
\begin{enumerate}
 \item $\displaystyle{q(\mathcal{T}_1)(f)(z)= 2^{(3-2k)/2} U_4(f)(z)}$. 
 \item $\displaystyle{q(\mathcal{U}_1)(f)(z)= 
 \overline{\zeta_8}\kro{2}{M}\kro{-1}{M}^{k+3/2}f|[W,\phi_W(z)]_{k+1/2}(z)}$
 where\\ $W=\mat{4n}{m}{4M}{2}$ with $a,b\in \Z$ such that $2n-mM=1$ and 
 $\phi_W(z)=(2Mz+1)^{1/2}$.
 \item $\displaystyle{q(\mathcal{U}_0)(f)(z)= 
 \overline{\zeta_8}\kro{-1}{M}^{k+3/2}\sum_{s=0}^{3}f|[A_s,\phi_{A_s}(z)]_{k+1/2}(z)}$
 where\\ $A_s=\mat{n}{-ns+m}{M}{-Ms+4}$ with $m,n\in\Z$ such that  $4n-mM=1$ and 
 $\phi_W(z)=(Mz+4-Ms)^{1/2}$.
 \end{enumerate}
\end{prop}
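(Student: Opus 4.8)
The plan is to compute each of $q(\mathcal{T}_1)$, $q(\mathcal{U}_1)$, $q(\mathcal{U}_0)$ by the very same adelic-to-classical translation carried out in Proposition~\ref{prop:rel3}, now for the prime $2$ with $S = \ov{K_0^2(4)}$ and the genuine character $\gamma$ fixed by $\gamma((-I,1)) = -i^{2k+1}$, $\zeta_8 = \gamma((w(1),1))$. Fix $\tilde{g}_\infty = (g_\infty,1) \in \DSL_2(\R)$ with $\tilde{g}_\infty i = z$, and recall that $\mathcal{T}(\Phi_f)(\tilde{g}_\infty) = \int_{\DSL_2(\Q_2)}\mathcal{T}(\tilde{x})\Phi_f(\tilde{g}_\infty\tilde{x})\,d\tilde{x}$. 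First I would write down the left-coset decomposition of each supporting double coset so that this integral collapses to a finite sum: $\mathcal{T}_1$ is supported on $\ov{K_0^2(4)}(h(2),1)\ov{K_0^2(4)}$ with representatives $(x(s),1)(h(2),1)$, $s \in \Z_2/4\Z_2$; the involution $\mathcal{U}_1$ (with $\mathcal{U}_1^2 = 1$ by Theorem~\ref{thm:LS}) is supported on a single coset $\ov{K_0^2(4)}(w(2^{-1}),1)$, giving a single term; and $\mathcal{U}_0$ is supported on $\ov{K_0^2(4)}(w(1),1)\ov{K_0^2(4)}$ with four representatives $(x(s),1)(w(1),1)$. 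Evaluating $\overline{\gamma}$ on $(h(2),1)$, $(w(2^{-1}),1)$, $(w(1),1)$ yields the scalars $\overline{\zeta_8}$ (and its relatives) in front of each sum.

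Next, for each representative I would choose a matrix in $\SL_2(\Q)$ — the matrices $W$ and $A_s$ of the statement, up to the central scalar $(\mat{2}{0}{0}{2},1)$ — and use strong approximation to rewrite $\Phi_f(\tilde{g}_\infty\cdot(\text{local factor at }2))$ as $\Phi_f(s_{\Q}(A_s)\tilde{g}_\infty\cdot\dots)$, computing $s_{\Q}(A_s) = (A_s, s_{\A}(A_s))$ from Lemma~\ref{lem:sA}. The key bookkeeping is to check that at every finite place the resulting element lands in the correct compact open subgroup: for $q \nmid 2M$ it lies in $\SL_2(\Z_q)\times\{1\}$, for an odd $r^b\|M$ in $K_0^r(r^b)\times\{1\}$, and at $2$ in $K_0^2(4)\times\{1\}$. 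One then reads off its contribution via defining relations (3) and (4) of $A_{k+1/2}(4M,\chi_0)$, namely $\chi_{0,p}(d)$ — trivial for odd $p$ since $\chi$ is the trivial character — and $\tilde{\epsilon}_2(k_0)\chi_{0,2}(d)$ at $2$, where $\chi_{0,2} = \kro{-1}{\cdot}^k$. After these finite factors are collected, the surviving contribution is concentrated at $\infty$, and applying $f_{\Phi}(z) = \Phi(\tilde{g}_\infty)J(\tilde{g}_\infty,i)^{2k+1}$ converts each term back into a classical slash, producing $f|[W,\phi_W]_{k+1/2}$ for $\mathcal{U}_1$ and $\sum_{s=0}^{3}f|[A_s,\phi_{A_s}]_{k+1/2}$ for $\mathcal{U}_0$.

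For part (1) the computation is essentially that of Loke and Savin at level $4$, since the extra odd prime factors of $M$ contribute nothing: the only thing I would verify is that the chosen $A_s$ has unit $(2,2)$-entry at $2$ and lies in $K_0^r(r^b)$ at each $r \mid M$, so that all finite characters are trivial and the identity $q(\mathcal{T}_1) = 2^{(3-2k)/2}U_4$ persists verbatim. The substantive work is therefore in (2) and (3).

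The hard part will be pinning down the $2$-adic phase. The factor $\tilde{\epsilon}_2(k_0)$ mixes $\tilde{\gamma_2}(d)$, the Hilbert symbol $\hst{c,d}$ and the splitting value $s_2(k_0)$, while $\gamma((w(1),1)) = \zeta_8$ and the cocycle $\sigma_2$ contribute further eighth-root-of-unity terms; these must combine to the stated constants $\overline{\zeta_8}\kro{2}{M}\kro{-1}{M}^{k+3/2}$ for $\mathcal{U}_1$ and $\overline{\zeta_8}\kro{-1}{M}^{k+3/2}$ for $\mathcal{U}_0$. Exactly as in the preceding proposition on Kohnen's operator $Q$, this forces a case split according to $M \bmod 4$, where $\tilde{\gamma_2}(M)$, $\kro{-1}{M}$ and $\kro{2}{M}$ change. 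The delicate point is checking that the $2$-adic contribution together with the rational-splitting value $s_{\A}(A_s)$ cancels against $\tilde{\epsilon}_2$ to leave precisely the claimed Kronecker symbols in each residue class — that is, establishing the analogue of the $\vartheta_s\cdot\eta_s$ identities of Proposition~\ref{prop:rel3}. I expect this reconciliation of Hilbert-symbol and Kronecker-symbol bookkeeping to be the main obstacle; everything else is a direct transcription of the odd-prime argument.
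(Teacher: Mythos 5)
Your plan is exactly the adelic-to-classical translation the paper invokes here (the paper gives no separate proof of this proposition, stating only ``as before,'' with the preceding computations of Proposition~\ref{prop:rel3} and of $q(\mathcal{U}_2)$ versus Kohnen's $Q$ serving as the template), and your coset counts, support claims (in particular that $(w(2^{-1}),1)$ normalizes $\ov{K_0^2(4)}$ so $\mathcal{U}_1$ contributes a single term), and identification of the $2$-adic phase bookkeeping with its case split on $M\bmod 4$ as the only substantive work are all consistent with that template. The approach is correct and essentially identical to the paper's.
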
  

Define $\widetilde{Q}_2:= q(\mathcal{U}_0)/\sqrt{2} = 
q(\mathcal{T}_1)q(\mathcal{U}_1)/\sqrt{2}$ and 
$\widetilde{W}_4:= q(\mathcal{U}_1)$. Further define
$\widetilde{Q}'_2$ to be 
the conjugate of $\widetilde{Q}_2$ by $\widetilde{W}_4$, thus 
$\widetilde{Q}'_2 = q(\mathcal{U}_2)/\sqrt{2} = q(\mathcal{U}_1)q(\mathcal{T}_1)/\sqrt{2}$.
Hence the Kohnen's plus space at level $4M$ is the $2$-eigenspace of 
$\widetilde{Q}'_2$. Further $\widetilde{Q}_2$ and $\widetilde{Q}'_2$ 
are self-adjoint with respect to Petersson inner product.

\section{Eigenvalues of $U_p$} 
For every positive integer $n$ and a modular form $F$, let 
$F_n(z):= V_nF(z) = F(nz)$.
Let $M$ be a positive integer such that $p\nmid M$.
If $F\in S_{2k}(\Gamma_0(M))$, then by well-known 
action of $T_p$ and $U_p$ we have 
\begin{equation} \label{E:U2}
U_p(F)(z)=T_p(F)(z)-p^{2k-1}F_p(z) .
\end{equation}
Assume that $F\in S_{2k}(\Gamma_0(M))$ is a primitive Hecke eigenform 
and $a_p$ is the $p$-th Fourier coefficient of $F$. 
Then $T_p(F)=a_p F$. It is known that $a_p$ is
real and by the Ramanujan conjecture proved by Deligne 
we have that $|a_p|\leq 2\cdot p^{(2k-1)/2}$. 
\begin{lem} \label{L:action}
\begin{flushleft}
(a) If $(p,n)=1$ then $U_p(F_n)=a_p F_n -p^{2k+1}F_{np}$. \\
(b) If $p\mid n$ then $U_p(F_n)=F_{n/p}$.
\end{flushleft}
\end{lem}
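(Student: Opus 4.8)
The plan is to reduce both statements to the elementary commutation behavior of the operators $U_p$ and $V_n$ on formal $q$-series, and then to feed in the single relation \eqref{E:U2} together with the eigenvalue relation $T_p(F)=a_pF$. First I would record two identities that follow at once from the definitions $U_m(\sum_r a_rq^r)=\sum_r a_{mr}q^r$ and $V_m(\sum_r a_rq^r)=\sum_r a_rq^{mr}$. Computing the coefficient of $q^N$ in $U_pV_n(\sum_r a_rq^r)$, one finds it equals $a_{pN/n}$ when $n\mid pN$ and $0$ otherwise. When $(p,n)=1$ the divisibility $n\mid pN$ is equivalent to $n\mid N$, and this is precisely the condition making $U_pV_n$ agree with $V_nU_p$; hence $U_pV_n=V_nU_p$. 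On the other hand, for any $n$ with $p\mid n$ the same computation gives $U_pV_p=\mathrm{id}$ and more generally $U_pV_n=V_{n/p}$.

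Part (b) is then immediate: since $p\mid n$ I write $F_n=V_nF$ and apply $U_pV_n=V_{n/p}$ to obtain $U_p(F_n)=V_{n/p}F=F_{n/p}$, which is the claim. For part (a), where $(p,n)=1$, the commutation relation gives $U_p(F_n)=V_n\bigl(U_p(F)\bigr)$, so everything reduces to the known action of $U_p$ on $F$ itself. Here I invoke \eqref{E:U2}: combined with $T_p(F)=a_pF$ it rewrites $U_p(F)$ as $a_pF$ minus the relevant power of $p$ times $F_p$. Applying $V_n$ to this and using $V_n(F_p)=V_nV_pF=V_{np}F=F_{np}$ then produces exactly $U_p(F_n)=a_pF_n-p^{2k+1}F_{np}$, the asserted identity.

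Since $F$ is a primitive eigenform with $p\nmid M$, no input beyond \eqref{E:U2} and the eigenvalue relation is needed; equivalently one may note that \eqref{E:U2} encodes the Hecke recursion for the Fourier coefficients of $F$, and part (a) is simply this recursion read off coefficient by coefficient after the shift by $V_n$. The only genuine point requiring care is the coprimality bookkeeping underlying $U_pV_n=V_nU_p$, namely the equivalence $n\mid pN\Leftrightarrow n\mid N$ when $(p,n)=1$; this is exactly where the hypothesis $(p,n)=1$ in part (a) and its failure in part (b) visibly produce the two different answers. I expect this index-chasing to be the main, and essentially the only, obstacle, and it is routine.
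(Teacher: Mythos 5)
Your proof is correct and follows essentially the same route as the paper: part (a) combines \eqref{E:U2} with the eigenvalue relation $T_p(F)=a_pF$ after commuting the relevant operator past $V_n$ (the paper commutes $T_p$ past $V_n$, you commute $U_p$ --- the same elementary coefficient check), and part (b) is just the paper's translate computation $\frac{1}{p}\sum_{k}F_{mp}\bigl(\frac{z+k}{p}\bigr)=F_m(z)$ rephrased as the formal identity $U_pV_n=V_{n/p}$. One caveat: your derivation, like the paper's own proof, actually yields $U_p(F_n)=a_pF_n-p^{2k-1}F_{np}$; the exponent $p^{2k+1}$ in the lemma statement is a typo (it contradicts \eqref{E:U2}, and the paper's proof concludes with $p^{2k-1}$), so you should not claim that your computation ``produces exactly'' the factor $p^{2k+1}$.
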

\begin{proof}
It is well known that 
if $(p,n)=1$ then $V(n)T_p(F)=T_pV(n)F$. 
Hence using (\ref{E:U2}) and that $F$ is a primitive 
Hecke eigenform we get that
\begin{equation*}
\begin{split}
U_p(F_n)&=T_p(F_n)-p^{2k-1}F_{np}
=V(n)T_p(F)-p^{2k-1}F_{np}\\
&=V(n)a_pF-p^{2k-1}F_{np}
=a_pF_n-p^{2k-1}F_{np}.
\end{split}
\end{equation*}
For $(b)$ write $n=m p$. Then
\[U_p(F_n)(z)=\frac{1}{p}\sum_{k=0}^{p-1}F_{mp}\left( \frac{z+k}{p}\right)
=\frac{1}{p}\sum_{k=0}^{p-1}F_{m}(z+k)=F_{n/p}(z).\]
\end{proof}

Thus $U_p$ stabilizes the two dimensional subspace spanned by $F_n$ and $F_{np}$ for $(p,n)=1$. We will compute the eigenvalues of $U_p$ on this space.
If $G=\lambda F_n+\beta F_{np}$ is an eigenfunction of $U_p$ then it follows from (2) that $\lambda \not=0$.
Hence we can assume that $\lambda =1$. We have
\[U_p(F_n+\beta F_{np})=(a_p+\beta) F_n-p^{2k-1}F_{np}\]
It is clear from above that $\beta$ cannot be zero and
that $G$ is an eigenfunction if and only if 
$a_p+\beta=-p^{2k-1}/\beta$ with eigenvalue
$a_p+\beta$. Hence $\beta^{2}+a_p\beta +p^{2k-1}=0$ and we have
\[\beta=\frac{-a_p\pm \sqrt{a_p^2-4p^{2k-1}}}{2}.\]
The eigenvalues of $U_p$ on the subspace $\langle F_n, F_{np} \rangle$ are
\[a_p+\beta= \frac{a_p\pm \sqrt{a_p^2-4p^{2k-1}}}{2}.\]
\begin{prop}\label{T:eigenvalue}
If an eigenvalue $\lambda$ of $(U_p)^2$ on the 
two dimensional subspace spanned by $F_n$ and $F_{np}$ 
is real then $\lambda = \pm p^{2k-1}$.
\end{prop}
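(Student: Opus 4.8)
The plan is to work directly with the eigenvalues of $U_p$ on the two-dimensional space $\langle F_n, F_{np}\rangle$ that were just computed above. Write $\mu_\pm = \frac{a_p \pm \sqrt{a_p^2 - 4p^{2k-1}}}{2}$ for these two eigenvalues. First I would observe that $\mu_+$ and $\mu_-$ are exactly the roots of the quadratic $T^2 - a_p T + p^{2k-1}$: indeed their sum is $a_p$ and their product is
\[
\mu_+\mu_- = \frac{a_p^2 - (a_p^2 - 4p^{2k-1})}{4} = p^{2k-1}.
\]
Since $U_p^2$ acts on $\langle F_n, F_{np}\rangle$ with eigenvalues $\mu_+^2$ and $\mu_-^2$, the claim is that any real value among $\mu_+^2, \mu_-^2$ equals $\pm p^{2k-1}$.

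The crucial input is Deligne's bound $|a_p| \le 2p^{(2k-1)/2}$ recalled above, which I would invoke to control the sign of the discriminant. It gives $a_p^2 - 4p^{2k-1} \le 0$, so the quantity under the square root is non-positive. Consequently $\mu_\pm$ form a complex-conjugate pair: when the discriminant is strictly negative the $\mu_\pm$ are genuinely non-real with $\overline{\mu_+} = \mu_-$ (because $a_p$ is real and $\sqrt{a_p^2 - 4p^{2k-1}}$ is purely imaginary), and when the discriminant vanishes $\mu_+ = \mu_- = a_p/2$ is real. In either case $\overline{\mu_+} = \mu_-$, so that
\[
|\mu_\pm|^2 = \mu_+\overline{\mu_+} = \mu_+\mu_- = p^{2k-1}.
\]

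I would then finish by a one-line modulus argument. From the displayed identity we get $|\mu_\pm^2| = |\mu_\pm|^2 = p^{2k-1}$, so every eigenvalue of $U_p^2$ on this subspace lies on the circle of radius $p^{2k-1}$. If such an eigenvalue $\lambda = \mu_\pm^2$ happens to be real, then a real number of absolute value $p^{2k-1}$ must be $\pm p^{2k-1}$, which is the assertion. There is no serious obstacle here beyond correctly identifying that $\mu_+\mu_- = p^{2k-1}$ and that Deligne's bound forces $\mu_+$ and $\mu_-$ to be complex conjugates; the entire content of the proposition is really this conjugacy together with the normalization of the product of eigenvalues, and the rest is immediate.
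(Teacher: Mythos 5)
Your proof is correct and rests on the same ingredients as the paper's: the explicit eigenvalues $\mu_\pm=\tfrac{a_p\pm\sqrt{a_p^2-4p^{2k-1}}}{2}$ computed just before the proposition, together with Deligne's bound $|a_p|\le 2p^{(2k-1)/2}$. The paper phrases the deduction as a case analysis (the eigenvalues of $U_p$ are real or purely imaginary exactly when $a_p=\pm 2p^{k-1/2}$ or $a_p=0$), whereas you package it via $\overline{\mu_+}=\mu_-$ and $\mu_+\mu_-=p^{2k-1}$, so that every eigenvalue of $(U_p)^2$ has modulus $p^{2k-1}$; this is a mild streamlining of the same argument and is equally valid.
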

\begin{proof}
Using the Ramanujan conjecture
we can see that the eigenvalues of $U_p$ 
are real or purely imaginary if and only if
$a_p=\pm 2p^{k-1/2}$ or $a_p=0$. 
In those cases the eigenvalue of $(U_p)^2$ 
are precisely $\pm p^{2k-1}$.
\end{proof}

\section{The minus space of half-integral weight forms}
Let $M$ be odd and square-free. In this section we shall 
define the minus space $S_{k+1/2}^{-}(4M)$ of 
weight $k+1/2$ and level $4M$ and show that there is an 
Hecke algebra isomorphism between$S_{k+1/2}^{-}(4M)$ 
and $S_{2k}^{\text{new}}(\Gamma_0(2M))$.

We shall first start with defining the minus space at level $4$ 
and then at level $4p$ for $p$ an odd prime. We can then 
extend our definition to level $4M$.

\subsection{Minus space for $\Gamma_0(4)$}
We recall the following theorem of Niwa which was obtained 
by proving equality of traces of Hecke operators.
\begin{thm}(Niwa~\cite{Niwa})\label{T:Niwa}
Let $M$ be odd and square-free.
There exists an isomorphism of vector spaces
$\psi:S_{k+1/2}(\Gamma_0(4M))\to S_{2k}(\Gamma_0(2M))$ 
satisfying 
\[T_p (\psi(f))=\psi (T_{p^2}(f))\ \ \text{for all primes $p$ coprime to $2M$}.\] 
Moreover if 
$f\in S_{k+1/2}(\Gamma_0(4))$ then we further have $U_2(\psi(f))=\psi(U_4(f))$.
\end{thm}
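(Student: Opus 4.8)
The plan is to prove the theorem by realizing both spaces as semisimple modules over a single abstract commutative Hecke algebra and matching their multiplicities through a trace identity, which is Niwa's original strategy. Let $\TT$ be the polynomial algebra generated by symbols $T_p$, one for each prime $p$ coprime to $2M$. Let $\TT$ act on $S_{2k}(\Gamma_0(2M))$ by sending $T_p$ to the usual Hecke operator $T_p$, and act on $S_{k+1/2}(\Gamma_0(4M))$ by sending $T_p$ to $T_{p^2}$; since the $T_{p^2}$ obey the same polynomial recursions as the weight $2k$ operators $T_p$ (Shimura), this is well defined and the abstract element $T_n$ acts as $T_{n^2}$ on the half-integral side. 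The operators $T_{p^2}$ and $T_p$ are self-adjoint for the Petersson inner product and commute among themselves, so each space has an orthogonal basis of simultaneous eigenforms and is a semisimple $\TT$-module. A semisimple module over a commutative algebra is determined up to isomorphism by the multiplicity of each character $\chi\colon\TT\to\C$, and two such modules are isomorphic as soon as the trace of every $a\in\TT$ agrees on them, because distinct characters furnish linearly independent trace functionals. Hence it suffices to prove
\[
\mathrm{tr}\left(T_{n^2}\mid S_{k+1/2}(\Gamma_0(4M))\right)=\mathrm{tr}\left(T_{n}\mid S_{2k}(\Gamma_0(2M))\right)\qquad\text{for all }n\text{ coprime to }2M,
\]
for then the resulting $\TT$-module isomorphism $\psi$ is exactly a vector space isomorphism with $T_p\psi(f)=\psi(T_{p^2}f)$.

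The heart of the argument is this trace identity, which I would establish by comparing explicit trace formulas. On the integral weight side one uses the Eichler--Selberg trace formula for $T_n$ on $S_{2k}(\Gamma_0(2M))$, whose terms split into an identity contribution, elliptic terms carrying Hurwitz--Kronecker class numbers, hyperbolic terms, and a parabolic contribution. On the half-integral side one uses the trace formula for $T_{n^2}$ on $S_{k+1/2}(\Gamma_0(4M))$ in the form developed by Shimura and Cohen and used by Niwa; its terms likewise involve class numbers of orders in imaginary quadratic fields, together with Sali\'e-type character sums and genus characters reflecting the level $4M$. The proof consists in matching the two expressions term by term: the elliptic contributions agree after evaluating the Sali\'e sums and reindexing by discriminants, the hyperbolic and parabolic contributions match directly, and the identity terms are compared through the dimension relation between the two spaces. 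This term-by-term matching is Niwa's theorem, and it is the step I expect to be the main obstacle, since the half-integral trace formula is substantially more intricate and its arithmetic sums must be transformed into precisely the shape produced by the classical formula.

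For the additional compatibility at level $4$ (the case $M=1$), I would enlarge $\TT$ by a further generator acting as $U_4$ on $S_{k+1/2}(\Gamma_0(4))$ and as $U_2$ on $S_{2k}(\Gamma_0(2))$, and extend the trace identity to
\[
\mathrm{tr}\left(U_4\,T_{n^2}\mid S_{k+1/2}(\Gamma_0(4))\right)=\mathrm{tr}\left(U_2\,T_{n}\mid S_{2k}(\Gamma_0(2))\right).
\]
Running the same comparison with the prime $2$ included, namely tracking the contribution of $U_2$ on the integral side against $U_4$ on the half-integral side, upgrades the module isomorphism so that it additionally intertwines $U_4$ and $U_2$, which is the asserted refinement. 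As an alternative to the trace route one could instead construct $\psi$ explicitly as a theta lift against a Siegel--Niwa theta kernel $\Theta(z,\tau)$, deduce Hecke equivariance from the intertwining properties of the kernel, and prove bijectivity by the same dimension comparison; but in either approach the quantitative matching of the underlying arithmetic data is the essential point.
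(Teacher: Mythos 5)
This theorem is not proved in the paper at all; it is recalled verbatim from Niwa's article, which the authors explicitly describe as having been obtained ``by proving equality of traces of Hecke operators.'' Your proposal reconstructs exactly that strategy --- viewing both spaces as semisimple modules over the commutative algebra generated by the $T_p$ (acting as $T_{p^2}$ on the half-integral side) and reducing the isomorphism to the trace identity $\mathrm{tr}(T_{n^2}) = \mathrm{tr}(T_n)$, with the $U_4$/$U_2$ compatibility handled by an augmented trace identity --- so it follows the same route as the cited source, and the term-by-term comparison of the two trace formulas that you correctly identify as the real content is precisely what Niwa's paper supplies.
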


We also recall Shimura lift~\cite{Shimura}: For $t$ a positive 
square-free integer, there is a linear map $\Sh_t: S_{k+1/2}(\Gamma_0(4M))\to S_{2k}(\Gamma_0(2M))$ given 
by \[\Sh_t\left(\sum_{n=1}^{\infty}a_nq^n\right) = 
\sum_{n=1}^{\infty}\left(\sum_{d \mid n} 
\kro{-1}{d}^k\kro{t}{d}d^{k-1}a\left(t\frac{n^2}{d^2}\right)\right)q^n.\]
We note the following observations \cite{Purkait}:\\
$(a)$ $\Sh_t$ need not be $1-1$ but $\Sh_t(f)=0$ for all square-free $t$ 
implies $f=0$.\\
$(b)$ $\Sh_t$ commutes with all Hecke operators, i.e.,  
$T_p (\Sh_t(f))=\Sh_t (T_{p^2}(f))$ for all primes $p$ coprime to $2M$
and $U_p (\Sh_t(f))=\Sh_t (U_{p^2}(f))$ for all primes $p$ dividing $2M$.

We need the following theorem of Kohnen.
\begin{thm}(Kohnen~\cite{Kohnen1})
\begin{enumerate}
 \item $\mathrm{dim}(S^{+}(4))=\mathrm{dim}(S_{2k}(\Gamma_0(1))$.
 \item $S^{+}(4)$ has a basis of eigenforms for all the operators 
 $T_{p^2}$, $p$ odd.
 \item If $f$ is such an eigenform then $\psi(f)$ is an old form and
$\psi(f)=\lambda F+\beta F_2$ where $F\in S_{2k}(\Gamma_0(1))$ is a 
primitive eigenform determined by the eigenvalues of $f$.
\end{enumerate}   
\end{thm}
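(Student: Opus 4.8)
The plan is to derive all three assertions from the Niwa isomorphism $\psi$ of Theorem~\ref{T:Niwa}, from the description of $S^{+}(4)$ as the $\alpha_1$-eigenspace of $R=W_4U_4$ (with $\alpha_1=\kro{2}{2k+1}2^k$, $\alpha_2=-\alpha_1/2$), and from the eigenvalue computation for $U_2$ on the two-dimensional old blocks carried out just before Proposition~\ref{T:eigenvalue}. For part (2) I would first note that the operators $T_{p^2}$ ($p$ odd) are self-adjoint and pairwise commuting, and that each commutes with $R$: indeed $T_{p^2}$ comes from the $p$-adic Hecke algebra while $R$ comes (up to scalar) from the $2$-adic one, so they commute by the remark on operators attached to distinct primes. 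Hence every $T_{p^2}$ preserves $S^{+}(4)$, and the restricted family $\{T_{p^2}\}_{p\text{ odd}}$ is a commuting family of self-adjoint operators there; simultaneous diagonalization produces the asserted basis of common eigenforms.

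For part (3), let $f\in S^{+}(4)$ be such an eigenform and set $g:=\psi(f)$. By Theorem~\ref{T:Niwa} one has $T_p g=\lambda_p g$ for all odd $p$, where $T_{p^2}f=\lambda_p f$, so $g$ is a simultaneous $T_p$-eigenform ($p\neq 2$). By strong multiplicity one the eigensystems $\{\lambda_p\}_{p\neq 2}$ arising from level-$1$ newforms and from genuine level-$2$ newforms are mutually distinct, so $g$ lies entirely in a single $T_p$-eigenspace: either the two-dimensional old block $\langle F,F_2\rangle$ of a level-$1$ newform $F$, or the line spanned by a genuine $G\in S_{2k}^{\mathrm{new}}(\Gamma_0(2))$. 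To rule out the latter I would argue that if $g$ were a multiple of $G$ then $g$ is a $U_2$-eigenform with eigenvalue $a_2$ satisfying $a_2^2=2^{2k-2}$ by Atkin--Lehner theory; since $\psi U_4=U_2\psi$, the form $f$ would be a $U_4$-eigenform with the same eigenvalue $a_2$, and then $W_4U_4f=\alpha_1 f$ together with $W_4^2=1$ forces $a_2^2=\alpha_1^2=2^{2k}$, a contradiction. Hence $g=\lambda F+\beta F_2$ is old, with $F$ determined by the eigenvalues $\lambda_p=a_p(F)$ of $f$.

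For part (1), I would show that $\psi$ gives a bijection between a basis of $S^{+}(4)$ and the level-$1$ newforms. By part (3) the image $\psi(S^{+}(4))$ lies in the old subspace $\bigoplus_F\langle F,F_2\rangle$, so it suffices to prove that $V_F:=\psi^{-1}(\langle F,F_2\rangle)$ meets $S^{+}(4)$ in exactly one dimension. Both $R$ and $W_4$ commute with the $T_{p^2}$ and hence preserve the two-dimensional space $V_F$, and $R$ satisfies $(R-\alpha_1)(R-\alpha_2)=0$ with $\alpha_1\neq\alpha_2$, so $R|_{V_F}$ is diagonalizable with eigenvalues in $\{\alpha_1,\alpha_2\}$. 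Were $R|_{V_F}$ the scalar $\alpha_i$, then $U_4=\alpha_i W_4$ on $V_F$ and, since $W_4^2=1$, the operator $U_4^2=\alpha_i^2$ would be scalar on $V_F$; transporting through $\psi$ this makes $U_2^2$ equal to the scalar $\alpha_i^2$ on $\langle F,F_2\rangle$, whereas the computation preceding Proposition~\ref{T:eigenvalue} gives $U_2$-eigenvalues $\mu_{\pm}$ with $\mu_+\mu_-=2^{2k-1}$, so $\mu_+^2\mu_-^2=2^{4k-2}\neq\alpha_i^4$; contradiction. Thus $R|_{V_F}$ carries both eigenvalues, each simple, and $S^{+}(4)\cap V_F$ is one-dimensional. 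Summing over the level-$1$ newforms $F$, which form a basis of $S_{2k}(\Gamma_0(1))$, yields $\dim S^{+}(4)=\dim S_{2k}(\Gamma_0(1))$.

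The main obstacle is the eigenvalue bookkeeping in (3) and (1): one must correctly match the plus-space eigenvalue $\alpha_1$ (with $\alpha_1^2=2^{2k}$) and its companion $\alpha_2$ (with $\alpha_2^2=2^{2k-2}$) against the $U_2$-spectrum on old blocks (product $2^{2k-1}$) and the Atkin--Lehner value $2^{2k-2}$ on genuine newforms, and to invoke strong multiplicity one to separate the two eigensystems. The supporting technical point, on which both the oldness and the dimension arguments rest, is that $W_4$ is genuinely an involution commuting with every $T_{p^2}$ and therefore preserving each block $V_F$; once this and the magnitude relations above are in place, the argument recovers Kohnen's theorem within the present framework.
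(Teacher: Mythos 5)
The paper does not actually prove this statement: it is imported verbatim from Kohnen~\cite{Kohnen1} and used as a black box, so there is no internal proof to compare yours against. Your proposal supplies a genuine derivation from the paper's own toolkit, and I find it correct. The three pillars you use are all available in the text: the intertwining relations $\psi T_{p^2}=T_p\psi$ and $\psi U_4=U_2\psi$ from Theorem~\ref{T:Niwa}; the quadratic relation $(R-\alpha_1)(R-\alpha_2)=0$ with $\alpha_1=\kro{2}{2k+1}2^k$, $\alpha_2=-\alpha_1/2$ and the involutivity of $W_4$ (quoted from Niwa, or equivalently $\mathcal{U}_1^2=1$ in Theorem~\ref{thm:LS}); and the $U_2$-spectrum on the block $\langle F,F_2\rangle$ with eigenvalue product $2^{2k-1}$ computed just before Proposition~\ref{T:eigenvalue}. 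Your exclusion arguments are exactly the right magnitude comparisons: $a_2^2=2^{2k-2}\ne\alpha_1^2=2^{2k}$ rules out a new image in part (3), and $\alpha_i^4\in\{2^{4k},2^{4k-4}\}$ versus $\det(U_2^2)=2^{4k-2}$ forces $R$ to carry both eigenvalues on each block $V_F$ in part (1). This is, in effect, the same style of eigenvalue bookkeeping the authors themselves deploy later (Corollary~\ref{cor:A4}, the proof that $S^{+}(4)\cap A^{+}(4)=\{0\}$, and Theorem~\ref{thm:minus4}), so your argument fits the paper's framework naturally; what it buys is a self-contained treatment that removes the external dependence on Kohnen's original Fourier-coefficient computations. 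The only points worth flagging explicitly, since you lean on them, are the commutation of $R$ (equivalently Kohnen's $Q$) and $W_4$ with the $T_{p^2}$ for odd $p$ --- justified in the paper by the remark that operators coming from Hecke algebras at distinct primes commute --- and strong multiplicity one for $\Gamma_0(2)$ from Atkin--Lehner, which separates the level-$1$ and level-$2$ eigensystems; both are legitimately available, so the proof stands.
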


Define $A^{+}(4) = \widetilde{W}_4S^{+}(4)$. 
We know that $S^{+}(4)$ is the $2$-eigenspace of $\widetilde{Q}'_2$, 
hence $A^{+}(4)$ is the $2$-eigenspace of $\widetilde{Q}_2$. 
Following the above theorem of Kohnen we have 
$\mathrm{dim}(A^{+}(4))=\mathrm{dim}(S_{2k}(\Gamma_0(1))$
and 
\begin{cor}\label{cor:A4}
\begin{enumerate}
 \item $A^{+}(4)$ has a basis of eigenforms under $T_{p^2}$ for all
 $p$ odd.
 \item $\psi$ maps $A^{+}(4)$ into the space of old forms in 
 $S_{2k}(\Gamma_0(2))$.
\end{enumerate}
\end{cor}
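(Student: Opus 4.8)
The plan is to transport Kohnen's eigenbasis of $S^{+}(4)$ across the involution $\widetilde{W}_4$ and then to read off the image under $\psi$ from the eigenvalue data alone. The two structural facts I would isolate first are that $\widetilde{W}_4$ is an invertible self-map and that it commutes with every $T_{p^2}$ for $p$ odd. Invertibility is immediate: since $\widetilde{W}_4=q(\mathcal{U}_1)$ and $q$ is a ring isomorphism, Theorem~\ref{thm:LS} gives $\widetilde{W}_4^2=q(\mathcal{U}_1^2)=q(1)=\mathrm{id}$, so $\widetilde{W}_4$ is an involution. Commutativity with $T_{p^2}$ for odd $p$ is the principle recorded in the remark following Corollary~\ref{cor:rel4}: $\widetilde{W}_4$ comes from the $2$-adic Hecke algebra $H(\ov{K_0^2(4)},\gamma)$ while $T_{p^2}$ comes from $H(\ov{\SL_2(\Z_p)},\gamma_p)$, and operators arising from the Hecke algebras of distinct primes commute.

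For part (1), let $\{f_i\}$ be Kohnen's basis of $S^{+}(4)$ consisting of eigenforms for all $T_{p^2}$ with $p$ odd, say $T_{p^2}f_i=\lambda_{i,p}f_i$. Since $\widetilde{W}_4$ commutes with each $T_{p^2}$, the forms $g_i:=\widetilde{W}_4 f_i$ satisfy $T_{p^2}g_i=\widetilde{W}_4 T_{p^2}f_i=\lambda_{i,p}g_i$, so each $g_i$ is again a $T_{p^2}$-eigenform with the \emph{same} eigenvalue system as $f_i$. Because $\widetilde{W}_4$ is invertible, $\{g_i\}$ is a basis of $A^{+}(4)=\widetilde{W}_4 S^{+}(4)$, proving (1).

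For part (2) I would argue by eigenvalue systems together with strong multiplicity one. Fix one $g_i=\widetilde{W}_4 f_i$. By Niwa's theorem (Theorem~\ref{T:Niwa}) the isomorphism $\psi$ intertwines $T_{p^2}$ with $T_p$ for all odd $p$, so $\psi(g_i)$ is a nonzero (both $\psi$ and $\widetilde{W}_4$ are isomorphisms) common $T_p$-eigenform in $S_{2k}(\Gamma_0(2))$ whose eigenvalue system $\{\lambda_{i,p}\}$ agrees with that of $\psi(f_i)$. By Kohnen's theorem $\psi(f_i)=\lambda F_i+\beta (F_i)_2$, where $F_i\in S_{2k}(\Gamma_0(1))$ is the primitive form determined by $\{\lambda_{i,p}\}$; comparing eigenvalues forces $\lambda_{i,p}=a_p(F_i)$. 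Decomposing $S_{2k}(\Gamma_0(2))=S_{2k}^{\mathrm{new}}(\Gamma_0(2))\oplus S_{2k}^{\mathrm{old}}(\Gamma_0(2))$ with old part spanned by the pairs $\langle G,G_2\rangle$ over level-$1$ newforms $G$, strong multiplicity one shows that no newform of level $2$ can carry the away-from-$2$ system $\{a_p(F_i)\}$ of the level-$1$ form $F_i$; hence the common $T_p$-eigenspace for $\{\lambda_{i,p}\}$ inside $S_{2k}(\Gamma_0(2))$ is exactly the two-dimensional old space $\langle F_i,(F_i)_2\rangle$. Therefore $\psi(g_i)\in\langle F_i,(F_i)_2\rangle\subseteq S_{2k}^{\mathrm{old}}(\Gamma_0(2))$, and since the $g_i$ span $A^{+}(4)$ this yields (2).

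The main obstacle is precisely this last collapse: one must know that the full common $T_p$-eigenspace attached to $F_i$ inside $S_{2k}(\Gamma_0(2))$ reduces to the old space $\langle F_i,(F_i)_2\rangle$, i.e. that no genuinely new level-$2$ contribution can share the prime-to-$2$ Hecke eigenvalues of a level-$1$ form. This is exactly where strong multiplicity one (equivalently the Atkin--Lehner--Li newform theory) enters; everything else is bookkeeping with the commuting involution $\widetilde{W}_4$ and Kohnen's description of $\psi$ on $S^{+}(4)$.
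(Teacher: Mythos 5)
Your proposal is correct and follows essentially the same route as the paper: transport Kohnen's eigenbasis of $S^{+}(4)$ through the commuting involution $\widetilde{W}_4$ to get part (1), then use Niwa's intertwining together with Atkin--Lehner multiplicity one to place $\psi(\widetilde{W}_4 f)$ in the same two-dimensional old subspace $\langle F, F_2\rangle$ as $\psi(f)$. Your write-up merely makes explicit the strong-multiplicity-one step that the paper compresses into its citation of Atkin--Lehner.
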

\begin{proof}
Let $f\in S^{+}(4)$ be an eigenform under $T_{p^2}$ for all $p$ odd 
satisfying $T_{p^2}(f)=\lambda_p f$.
Since $\widetilde{W}_4$ commutes with all such $T_{p^2}$, we get that
$g=\widetilde{W}_4f \in A^{+}(4)$ is also an eigenform under all $T_{p^2}$
with eigenvalues $\lambda_p$. By Theorem~\ref{T:Niwa}, 
$\psi(f)$ and $\psi(g)$ are eigenforms in $S_{2k}(\Gamma_0(2))$ under 
all $T_p$ with the same set of eigenvalues $\lambda_p$. Since
$\psi(f)$ is an old form it follows from Atkin-Lehner~\cite{A-L} that 
$\psi(g)$ is also an old form (belonging to the same two 
dimensional subspace $\langle F, F_2 \rangle$). 
\end{proof}

\begin{prop}
$S^{+}(4)\bigcap A^{+}(4)=\{0\}$.
\end{prop}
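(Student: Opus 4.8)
The plan is to show that any $f$ in the intersection is forced to be an eigenform of $U_4^2$ with a numerically impossible eigenvalue, and then to transport this contradiction across the Niwa map.

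First I would record the two defining conditions. Since $S^{+}(4)$ is the $2$-eigenspace of $\widetilde{Q}'_2 = q(\mathcal{U}_2)/\sqrt 2$ and $A^{+}(4)$ is the $2$-eigenspace of $\widetilde{Q}_2 = q(\mathcal{U}_0)/\sqrt 2$, a form $f\in S^{+}(4)\cap A^{+}(4)$ satisfies $\widetilde{Q}_2 f = 2f$ and $\widetilde{Q}'_2 f = 2f$. The key input is a product relation in the $2$-adic Hecke algebra: from Theorem~\ref{thm:LS} one has $\mathcal{U}_0 * \mathcal{U}_1 = \mathcal{T}_1$ and $\mathcal{U}_1 * \mathcal{T}_1 = \mathcal{U}_2$, so by associativity $\mathcal{U}_0 * \mathcal{U}_2 = \mathcal{U}_0 * \mathcal{U}_1 * \mathcal{T}_1 = \mathcal{T}_1 * \mathcal{T}_1 = \mathcal{T}_2$. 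Applying the ring homomorphism $q$ and using $q(\mathcal{T}_1) = 2^{(3-2k)/2}U_4$ gives
\[ \widetilde{Q}_2\widetilde{Q}'_2 = \tfrac{1}{2}q(\mathcal{T}_2) = \tfrac{1}{2}q(\mathcal{T}_1)^2 = 2^{2-2k}U_4^2. \]
Evaluating the left-hand side on $f$ yields $\widetilde{Q}_2\widetilde{Q}'_2 f = \widetilde{Q}_2(2f) = 4f$, whence $U_4^2 f = 2^{2k}f$.

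Next I would transport this to integral weight. By Theorem~\ref{T:Niwa} the Niwa map $\psi\colon S_{k+1/2}(\Gamma_0(4))\to S_{2k}(\Gamma_0(2))$ is an isomorphism satisfying $U_2\psi = \psi U_4$ at level $4$, hence $U_2^2\psi(f) = \psi(U_4^2 f) = 2^{2k}\psi(f)$. It therefore suffices to prove that $2^{2k}$ is not an eigenvalue of $U_2^2$ on $S_{2k}(\Gamma_0(2))$, for then $\psi(f)=0$ and $f=0$ by injectivity of $\psi$. I would decompose $S_{2k}(\Gamma_0(2))$ via Atkin--Lehner into $S_{2k}^{\mathrm{new}}(\Gamma_0(2))$ together with the two-dimensional oldform planes $\langle F, F_2\rangle$ attached to the level-one newforms $F$; each summand is $U_2$-stable, so the spectrum of $U_2^2$ is the union of the spectra on the summands. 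On a level-two newform $U_2$ acts by its coefficient $a_2$ with $a_2^2 = 2^{2k-2}$, so $U_2^2 = 2^{2k-2}\neq 2^{2k}$ there. On each oldform plane the candidate eigenvalue $2^{2k}$ is a real number, so if it occurred as an eigenvalue of $U_2^2$ it would have to equal $\pm 2^{2k-1}$ by Proposition~\ref{T:eigenvalue}; since $2^{2k}\neq\pm 2^{2k-1}$ this is impossible. Hence $2^{2k}$ lies in no summand's spectrum, $\psi(f)=0$, and $f=0$.

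The routine parts are the Hecke-algebra manipulation and the bookkeeping of normalizing constants; the one genuinely load-bearing step is the exclusion of $2^{2k}$ from the spectrum of $U_2^2$, which is precisely where Proposition~\ref{T:eigenvalue} (itself a consequence of the Ramanujan bound) is used for the old part and the standard value $a_2^2 = 2^{2k-2}$ for level-two newforms is used for the new part. The main point to be careful about is that the eigenvalue $2^{2k}$ produced in the first step is real, so that Proposition~\ref{T:eigenvalue} applies verbatim on the oldform planes and does not leave open the possibility of complex $U_2^2$-eigenvalues there.
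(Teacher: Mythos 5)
Your proof is correct and follows essentially the same route as the paper's: both derive $U_4^2 f = 2^{2k} f$ from the two eigenvalue conditions (the paper via $\widetilde{W}_4^2=1$ and $U_4\widetilde{W}_4 f = 2^k f = \widetilde{W}_4 U_4 f$, you via the identity $\mathcal{U}_0*\mathcal{U}_2=\mathcal{T}_2$), push this through the Niwa map, and rule out the real eigenvalue $2^{2k}$ of $U_2^2$ using Proposition~\ref{T:eigenvalue}. The only organizational difference is that the paper first reduces to a Hecke eigenform and invokes Kohnen's theorem to place $\psi(f)$ in an oldform plane $\langle F, F_2\rangle$, whereas you exclude $2^{2k}$ from the spectrum of $U_2^2$ on all of $S_{2k}(\Gamma_0(2))$, which additionally requires the (correct) fact that $a_2^2=2^{2k-2}$ on the new part.
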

\begin{proof}
Suppose there is a nonzero $f \in S^{+}(4)\bigcap A^{+}(4)$. 
We can assume that $f$ is an eigenform under $T_{p^2}$ for all $p$ odd 
(since $T_{p^2}$ stabilizes the intersection $S^{+}(4)\bigcap A^{+}(4)$).
Then $\widetilde{Q}_2(f) = 2f = \widetilde{Q}'_2(f)$. This implies that
$U_4\widetilde{W}_4(f) = 2^k f = \widetilde{W}_4U_4(f)$. 
Since $\widetilde{W}_4^2 =1$ we get
\[(U_4)^2(f) = 2^kU_4\widetilde{W}_4(f) 
= 2^{2k}f.\]
Applying $\psi$ to the above equation we get 
that $(U_2)^2(\psi(f))= 2^{2k}\psi(f)$. Now 
$\psi(f) \in \langle F, F_2 \rangle$ for some 
primitive form $F\in S_{2k}(\Gamma_0(1))$ 
and by Proposition~\ref{T:eigenvalue}, the eigenvalues of 
$(U_2)^2$ on this subspace are either non real
or $2^{2k-1}$. This is a contradiction.
\end{proof}

Define $S_{k+1/2}^{-}(4)$ to be the orthogonal 
complement of $S^{+}(4)\oplus A^{+}(4)$. Since 
$\widetilde{Q}_2$ and $\widetilde{Q}'_2$ are Hermitian 
it follows that
$S_{k+1/2}^{-}(4)$ is the common eigenspace with the 
eigenvalue $-1$ of the operators $\widetilde{Q}_2$ and $\widetilde{Q}'_2$. 
We shall write $S_{k+1/2}^{-}(4)$ simply by 
$S^{-}(4)$. So we have
\begin{equation} \label{E:directsum}
S_{k+1/2}(\Gamma_0(4))=S^{+}(4)\oplus A^{+}(4)\oplus S^{-}(4)
\end{equation}
\begin{thm}\label{thm:minus4}
$S^{-}(4)$ has a basis of eigenforms for all the operators $T_{p^2}$, $p$ odd; 
these eigenforms are also eigenfunctions under $U_{4}$. 
If two eigenforms in $S^{-}(4)$ share the same eigenvalues 
for all $T_{p^2}$ then they are a scalar multiple of each other. 
$\psi$ induces a Hecke algebra isomorphism:
\[S_{k+1/2}^{-}(4) \cong S_{2k}^{\mathrm{new}}(\Gamma_0(2)).\]
\end{thm}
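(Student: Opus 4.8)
The plan is to transport everything through the Niwa isomorphism $\psi$ of Theorem~\ref{T:Niwa}, using that $\psi$ intertwines $T_{p^2}$ with $T_p$ and $U_4$ with $U_2$, and then to invoke the $U$-eigenvalue dichotomy of Proposition~\ref{T:eigenvalue}. First I would record that $S^{-}(4)$ is stable under every $T_{p^2}$ ($p$ odd) and under $U_4$: the $T_{p^2}$ commute with $\widetilde{Q}_2$ and $\widetilde{Q}'_2$, while $U_4$ is built from $\widetilde{W}_4$, $\widetilde{Q}_2$, $\widetilde{Q}'_2$, so the common $-1$-eigenspace of $\widetilde{Q}_2$ and $\widetilde{Q}'_2$ is preserved. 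The $T_{p^2}$ are self-adjoint and mutually commuting, hence simultaneously diagonalizable; $U_4$ commutes with all of them, so once I show $U_4$ is semisimple on $S^{-}(4)$ I obtain a simultaneous eigenbasis for $\{T_{p^2}\}_{p\ \mathrm{odd}}$ and $U_4$.

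The semisimplicity of $U_4$ comes from an explicit relation. On $S^{-}(4)$ one has $\widetilde{Q}_2 f=-f$ and $\widetilde{Q}'_2 f=-f$. Writing $\widetilde{Q}_2=2^{1-k}U_4\widetilde{W}_4$ and $\widetilde{Q}'_2=2^{1-k}\widetilde{W}_4 U_4$ and using $\widetilde{W}_4^2=1$, these two equations give $U_4 f=-2^{k-1}\widetilde{W}_4 f$ and $U_4\widetilde{W}_4 f=-2^{k-1}f$, whence $U_4^2 f=2^{2k-2}f$. Thus $U_4$ satisfies the separable polynomial $X^2-2^{2k-2}$ on $S^{-}(4)$, so it is diagonalizable there; this simultaneously proves that the joint eigenforms are eigenfunctions of $U_4$ (with eigenvalue $\pm 2^{k-1}$), as claimed.

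Next I would identify the image $\psi(S^{-}(4))$. By Kohnen's theorem $\psi(S^{+}(4))$ consists of old forms, and by Corollary~\ref{cor:A4} so does $\psi(A^{+}(4))$; since $\dim S^{+}(4)=\dim A^{+}(4)=\dim S_{2k}(\Gamma_0(1))$, their sum has dimension $2\dim S_{2k}(\Gamma_0(1))=\dim S^{\mathrm{old}}_{2k}(\Gamma_0(2))$, and injectivity of $\psi$ forces $\psi(S^{+}(4)\oplus A^{+}(4))=S^{\mathrm{old}}_{2k}(\Gamma_0(2))$. Now take a joint eigenform $f\in S^{-}(4)$; then $\psi(f)$ is an eigenform for all $T_p$ ($p$ odd) and for $U_2$ with $U_2^2\psi(f)=2^{2k-2}\psi(f)$. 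Decomposing $\psi(f)$ along the Hecke- and $U_2$-stable splitting $S^{\mathrm{old}}_{2k}(\Gamma_0(2))\oplus S^{\mathrm{new}}_{2k}(\Gamma_0(2))$, any nonzero old component would lie in a two-dimensional block $\langle F, F_2\rangle$ with $F\in S_{2k}(\Gamma_0(1))$ primitive, on which $U_2^2$ has by Proposition~\ref{T:eigenvalue} only the real eigenvalues $\pm 2^{2k-1}$ (or non-real ones), never $2^{2k-2}$. Hence the old component vanishes and $\psi(f)\in S^{\mathrm{new}}_{2k}(\Gamma_0(2))$; as such $f$ span $S^{-}(4)$, we get $\psi(S^{-}(4))\subseteq S^{\mathrm{new}}_{2k}(\Gamma_0(2))$, and equality follows by comparing dimensions via~\eqref{E:directsum}.

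Finally, multiplicity one and the Hecke-module statement are inherited from level $2$. If $f_1,f_2\in S^{-}(4)$ are eigenforms with equal $T_{p^2}$-eigenvalues for all odd $p$, then $\psi(f_1),\psi(f_2)$ are elements of $S^{\mathrm{new}}_{2k}(\Gamma_0(2))$ with equal $T_p$-eigenvalues, so by the newform theory of Atkin--Lehner~\cite{A-L} they are proportional, and injectivity of $\psi$ gives $f_1\propto f_2$. Since $\psi$ intertwines $T_{p^2}$ with $T_p$ and $U_4$ with $U_2$, and these generate the two Hecke algebras, $\psi|_{S^{-}(4)}$ is a Hecke-module isomorphism onto $S^{\mathrm{new}}_{2k}(\Gamma_0(2))$. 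The main obstacle is the image identification: everything hinges on the clean relation $U_4^2=2^{2k-2}$ on $S^{-}(4)$ together with the fact, supplied by Proposition~\ref{T:eigenvalue}, that the value $2^{2k-2}$ cannot occur as a $U_2^2$-eigenvalue inside the level-$1$ old blocks — which is precisely what separates the minus space from the two plus spaces.
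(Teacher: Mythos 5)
Your proposal is correct, and it reaches the theorem by a genuinely different route at the crucial step. The paper identifies $\psi(S^{-}(4))$ with the new space indirectly: having shown that $\psi$ carries $S^{+}(4)\oplus A^{+}(4)$ \emph{onto} $S_{2k}^{\mathrm{old}}(\Gamma_0(2))$ by a dimension count, it argues that a $T_{p^2}$-eigenform $f\in S^{-}(4)$ cannot have $\psi(f)$ old, since injectivity of $\psi$ would then force $f$ into $S^{+}(4)\oplus A^{+}(4)$; the statement about $U_4$ is deduced only afterwards, from multiplicity one ($\psi(U_4 f)=U_2\psi(f)$ is new, so $U_4 f\in S^{-}(4)$ shares the $T_{p^2}$-eigenvalues of $f$, hence is proportional to it). You instead derive the identity $U_4^2=2^{2k-2}$ on $S^{-}(4)$ from the two relations $\widetilde{Q}_2=2^{1-k}U_4\widetilde{W}_4$, $\widetilde{Q}'_2=2^{1-k}\widetilde{W}_4U_4$ and the $-1$-eigenvalue characterization, and then rule out any old component of $\psi(f)$ because $2^{2k-2}$ is a real number different from $\pm 2^{2k-1}$, the only real eigenvalues $U_2^2$ can have on a block $\langle F,F_2\rangle$ by Proposition~\ref{T:eigenvalue}. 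This is precisely the mechanism the paper uses to prove $S^{+}(4)\cap A^{+}(4)=\{0\}$ (there the eigenvalue $2$ of both operators yields $U_4^2=2^{2k}$), transplanted to the minus space. Your version buys the $U_4$-eigenfunction claim up front, with the explicit eigenvalues $\pm2^{k-1}$, and makes the image identification independent of the surjectivity of $\psi$ onto the old space (which you still need, as does the paper, for the final dimension count via~\eqref{E:directsum}); the paper's version is shorter once that surjectivity is in hand. Both arguments are complete, and your remaining steps (stability of $S^{-}(4)$ under $\widetilde{W}_4$ and hence $U_4$, simultaneous diagonalization of the commuting self-adjoint family, and multiplicity one inherited from Atkin--Lehner at level $2$) match the paper's.
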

\begin{proof}
Since $\psi$ maps $S^{+}(4) \oplus A^{+}(4)$ into 
$S_{2k}^{\mathrm{old}}(\Gamma_0(2))$ and 
$\mathrm{dim} (S^{+}(4) \oplus A^{+}(4))\\
= 2 \mathrm{dim} (S_{2k}(\Gamma_0(1))) 
= \mathrm{dim} (S_{2k}^{\mathrm{old}}(\Gamma_0(2)))$,
we get that $\psi$ maps this direct sum onto 
$S_{2k}^{\mathrm{old}}(\Gamma_0(2))$.

Now $T_{p^2}$ commutes with $\widetilde{Q}_2$ and $\widetilde{Q}'_2$ 
for every odd prime $p$ so we get that $T_{p^2}$ stabilizes $S^{-}(4)$,
hence it has a basis of eigenforms for all $T_{p^2}$ with $p$ odd. 

If $f$ is such an eigenform then $F:=\psi(f)$ is an eigenform in 
$S_{2k}(\Gamma_0(2))$ under all $T_p$, $p$ odd. By Atkin-Lehner~\cite{A-L} $F$ 
is either an old form or a newform. 
Since $\psi$ is injective, it follows that 
$F$ must be a newform. So
$\psi$ maps the space $S^{-}(4)$ into the space
$S_{2k}^{\mathrm{new}}(\Gamma_0(2))$.
By equality of dimensions, we get that $\psi$ is an isomorphism of 
$S^{-}(4)$ onto $S_{2k}^{\mathrm{new}}(\Gamma_0(2))$. 
Consequently by \cite{A-L} an eigenform in 
$S^{-}(4)$ under all $T_{p^2}$ for $p$ odd is 
uniquely determined up to scalar multiplication. 

Further for such an eigenform $f$, by \cite[Theorem 3]{A-L},
$U_2 (F) = -2^{k-1}\lambda(2)F$ where 
$\lambda(2) = \pm 1$.
Thus $\psi(U_4(f)) = U_2 (F) \in 
S_{2k}^{\mathrm{new}}(\Gamma_0(2))$, so
$U_4(f)$ belongs to 
$S^{-}(4)$. Since $U_4$ commutes with $T_{p^2}$ for all 
$p$ odd, we get that $U_4(f)$ is an eigenform under all 
$T_{p^2}$ with the same eigenvalues as $f$ and hence is a 
scalar multiple of $f$. 
\end{proof}

\subsection{Minus space for $\Gamma_0(4p)$ for $p$ an odd prime}
In this subsection we need the involution 
$\widetilde{W}_{p^2}$, the operators $U_{p^2}$,  
$\widetilde{Q}_p$ and 
$\widetilde{Q}'_p = \widetilde{W}_{p^2}\widetilde{Q}_p\widetilde{W}_{p^2}$ 
on $S_{k+1/2}(\Gamma_0(4p))$ that we defined in Section~\ref{sec:trans}.

Consider the subspace $\mathcal{V}(1)$ of $S_{2k}(\Gamma_0(2p))$ 
coming from the old forms at level $1$, that is,  
\[\mathcal{V}(1)=S_{2k}(\Gamma_0(1))\oplus
V(2) S_{2k}(\Gamma_0(1)) \oplus
V(p) S_{2k}(\Gamma_0(1)) \oplus
V(2p) S_{2k}(\Gamma_0(1)).\]
We consider the eigenvalues of $(U_p)^2$ on $\mathcal{V}(1)$.
\begin{lem} \label{T:eigenvalues}
The operator $U_p$ stabilizes $\mathcal{V}(1)$. If an eigenvalue $\lambda$
of $(U_p)^2$ on this space is real then  $\lambda =\pm p^{2k-1}$.
\end{lem}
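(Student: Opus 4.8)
The plan is to reduce the statement to the two-dimensional computation already carried out in Proposition~\ref{T:eigenvalue}. First I would fix a basis $\{F\}$ of normalized primitive Hecke eigenforms of $S_{2k}(\Gamma_0(1))$ and write
\[
\mathcal{V}(1)=\bigoplus_{F}\langle F,\ F_2,\ F_p,\ F_{2p}\rangle,
\]
where $F_n=V(n)F$. This is a genuine direct sum because the four translates $F,F_2,F_p,F_{2p}$ are linearly independent (standard oldform theory at level $2p$) and the forms attached to distinct $F$ are independent by multiplicity one. It then suffices to understand $U_p$ on each four-dimensional summand.

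Next I would compute the action of $U_p$ on the four generators using Lemma~\ref{L:action}, noting that $p$ is odd so $(p,1)=(p,2)=1$ while $p\mid p$ and $p\mid 2p$. Part $(a)$ gives $U_p(F)=a_pF-p^{2k-1}F_p$ and $U_p(F_2)=a_pF_2-p^{2k-1}F_{2p}$, while part $(b)$ gives $U_p(F_p)=F$ and $U_p(F_{2p})=F_2$. Hence $U_p$ stabilizes $\mathcal{V}(1)$, and moreover it preserves each of the two two-dimensional subspaces $\langle F,F_p\rangle$ and $\langle F_2,F_{2p}\rangle$, on which its matrix in the given bases is $\mat{a_p}{1}{-p^{2k-1}}{0}$. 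These are exactly the spaces $\langle F_n,F_{np}\rangle$ with $n=1$ and $n=2$ treated in Proposition~\ref{T:eigenvalue}.

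Finally, since $(U_p)^2$ is block-diagonal with respect to the decomposition of $\mathcal{V}(1)$ into the subspaces $\langle F,F_p\rangle$ and $\langle F_2,F_{2p}\rangle$, its eigenvalues on $\mathcal{V}(1)$ are precisely the union of its eigenvalues on these blocks. By Proposition~\ref{T:eigenvalue}, any real eigenvalue of $(U_p)^2$ on such a block equals $\pm p^{2k-1}$, and the claim follows. I expect no serious obstacle here: all the analytic input (Deligne's bound $|a_p|\le 2p^{(2k-1)/2}$, which forces the $U_p$-eigenvalues to be real or purely imaginary only when $a_p=0$ or $a_p=\pm 2p^{k-1/2}$) is already packaged inside Proposition~\ref{T:eigenvalue}, so this lemma is essentially a bookkeeping reduction. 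The only genuine points to verify are the invariance computation via Lemma~\ref{L:action} and the passage from the block spectra to the spectrum of $(U_p)^2$ on the whole of $\mathcal{V}(1)$.
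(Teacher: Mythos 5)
Your proof is correct and follows essentially the same route as the paper: decompose $\mathcal{V}(1)$ into the four-dimensional blocks $\langle F, F_2, F_p, F_{2p}\rangle$, use Lemma~\ref{L:action} to see that $U_p$ preserves the two-dimensional subspaces $\langle F, F_p\rangle$ and $\langle F_2, F_{2p}\rangle$, and invoke Proposition~\ref{T:eigenvalue} on each block. The extra detail you supply (the explicit $2\times 2$ matrix and the block-diagonal spectral argument) is just a more careful write-up of the paper's own argument.
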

\begin{proof}
For a a primitive Hecke eigenform $F$ in $S_{2k}(\Gamma_0(1))$ 
consider the four dimensional subspace spanned by  $F,F_2,F_p,F_{2p}$. 
Then $\mathcal{V}(1)$ is a direct sum of such four dimensional subspaces.
By Lemma~\ref{L:action}, $U_p$ preserves the two dimensional subspaces
spanned by $F$ and $F_p$ and the two dimensional subspace spanned by
$F_2$ and $F_{2p}$. It follows by Proposition~\ref{T:eigenvalue}, 
that the eigenvalues of
$(U_p)^2$ on these two dimensional subspaces are either 
non real or $\pm p^{2k-1}$.
\end{proof}

Let $R:=S^{+}(4)\oplus A^{+}(4)$. Then we have
\begin{prop} \label{T:intersect}
$R\bigcap \widetilde{W}_{p^2}R=\{0\}$
\end{prop}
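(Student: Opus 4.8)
The plan is to show that any nonzero element of $R \cap \widetilde{W}_{p^2} R$ may be taken to be a common eigenform, and then to extract from it an impossible eigenvalue of $(U_p)^2$ by passing to integral weight through the Shimura lift. First I would observe that the intersection is stable under $T_{q^2}$ for every prime $q$ coprime to $2p$: both $S^{+}(4)$ and $A^{+}(4)=\widetilde{W}_4 S^{+}(4)$ are $T_{q^2}$-stable (by Kohnen's theorem and Corollary~\ref{cor:A4}, using that $\widetilde{W}_4$ commutes with $T_{q^2}$), so $R$ is $T_{q^2}$-stable, and $\widetilde{W}_{p^2}$ commutes with each such $T_{q^2}$ by the Remark following Corollary~\ref{cor:rel4}. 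Since these $T_{q^2}$ form a commuting family of self-adjoint operators, a nonzero $R \cap \widetilde{W}_{p^2} R$ would contain a nonzero common eigenform $f$, and it suffices to derive $f=0$.

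The heart of the argument is an eigenvalue computation. Since here $M=p$, we have $R \subseteq S_{k+1/2}(\Gamma_0(4))=S_{k+1/2}(\Gamma_0(4M/p))$, so Corollary~\ref{cor:rel4}(4) gives $\widetilde{Q}_p(f)=pf$ from $f\in R$. Writing $f=\widetilde{W}_{p^2}h$ with $h\in R$ and using $\widetilde{W}_{p^2}^2=1$ together with $\widetilde{Q}'_p=\widetilde{W}_{p^2}\widetilde{Q}_p\widetilde{W}_{p^2}$, we also get $\widetilde{Q}'_p(f)=pf$. Substituting into the identities $\widetilde{Q}_p=\kro{-1}{p}^k p^{1-k}U_{p^2}\widetilde{W}_{p^2}$ and $\widetilde{Q}'_p=\kro{-1}{p}^k p^{1-k}\widetilde{W}_{p^2}U_{p^2}$ of Corollary~\ref{cor:rel4} yields $U_{p^2}\widetilde{W}_{p^2}f=\kro{-1}{p}^k p^k f$ and $\widetilde{W}_{p^2}U_{p^2}f=\kro{-1}{p}^k p^k f$; applying $\widetilde{W}_{p^2}$ to the second and then $U_{p^2}$ gives $(U_{p^2})^2 f = p^{2k} f$.

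Finally I would transport this to integral weight. For each square-free $t$ the Shimura lift satisfies $U_p\,\Sh_t(f)=\Sh_t(U_{p^2}f)$ (observation (b)), hence $(U_p)^2\,\Sh_t(f)=p^{2k}\,\Sh_t(f)$. Because $f$ is a $T_{q^2}$-eigenform whose eigenvalues, by Kohnen's theorem and Corollary~\ref{cor:A4}, coincide with those of a primitive form $F\in S_{2k}(\Gamma_0(1))$, the lift $\Sh_t(f)$ is a $T_q$-eigenform with these level-$1$ eigenvalues, so strong multiplicity one forces $\Sh_t(f)$ into $\langle F,F_2,F_p,F_{2p}\rangle\subseteq \mathcal{V}(1)$. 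If some $\Sh_t(f)$ were nonzero, then $p^{2k}$ would be a real eigenvalue of $(U_p)^2$ on $\mathcal{V}(1)$, contradicting Lemma~\ref{T:eigenvalues}, which permits only $\pm p^{2k-1}$ (and $p^{2k}\ne\pm p^{2k-1}$ for $p\ge 3$). Thus $\Sh_t(f)=0$ for all square-free $t$, and by observation (a) $f=0$, proving $R\cap \widetilde{W}_{p^2}R=\{0\}$. I expect the main obstacle to be the verification that $\Sh_t(f)\in\mathcal{V}(1)$, i.e. that the lift is genuinely old from level $1$; this is exactly where the reduction to $T_{q^2}$-eigenforms and strong multiplicity one are indispensable, since $\Sh_t$ of a general level-$4$ form need not avoid $S_{2k}^{\mathrm{new}}(\Gamma_0(2))$.
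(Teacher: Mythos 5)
Your proof is correct and follows essentially the same route as the paper: reduce to a common $T_{q^2}$-eigenform, use Corollary~\ref{cor:rel4}(4) to get $\widetilde{Q}_p(f)=pf=\widetilde{Q}'_p(f)$, deduce $(U_{p^2})^2f=p^{2k}f$, and transport via $\Sh_t$ into $\mathcal{V}(1)$ to contradict Lemma~\ref{T:eigenvalues}. The only difference is that you spell out in more detail the steps the paper leaves implicit (the stability of the intersection under the $T_{q^2}$, the explicit cancellation giving $p^{2k}$, and the multiplicity-one argument placing $\Sh_t(f)$ in $\mathcal{V}(1)$), which is a faithful elaboration rather than a different approach.
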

\begin{proof}
Let $f\ne0$ belongs to the intersection. 
We can again assume that $f$ is an eigenform under $T_{q^2}$ for all 
primes $q$ coprime to $2p$. 
Since by Corollary~\ref{cor:rel4}$(4)$,  
$S_{k+1/2}(\Gamma_0(4))$ is contained 
in the $p$ eigenspace of $\widetilde{Q}_p$ and so 
$\widetilde{W}_{p^2}S_{k+1/2}(\Gamma_0(4))$ is contained 
in the $p$-eigenspace of $\widetilde{Q}'_p$ we have
$\widetilde{Q}_p(f) = pf = \widetilde{Q}'_p(f)$.
Using $\widetilde{Q}_p = \kro{-1}{p}^k p^{1-k}U_{p^2}\widetilde{W}_{p^2}$ 
in the above equality we obtain  
\[(U_{p^2})^2(f) = p^{2k} f.\] 
Since $f\ne 0$, there 
exists a $t$ square-free such that the Shimura lift 
$\mathrm{Sh}_t(f)\ne0$. Applying this $\mathrm{Sh}_t$ to the 
above equation we get that 
$(U_p)^2(\mathrm{Sh}_t(f)) = p^{2k}\mathrm{Sh}_t(f)$. 
Since $\mathrm{Sh}_t$ commutes with all the Hecke operators we get that 
$\mathrm{Sh}_t(f) \in \mathcal{V}(1)$. But by 
Lemma~\ref{T:eigenvalues}, the eigenvalues of $(U_p)^2$ on 
$\mathcal{V}(1)$ are either non real or $p^{2k-1}$
leading to a contradiction.
\end{proof}

\begin{cor} \label{C:level1}
Niwa's map $\psi$ maps $R\oplus \widetilde{W}_{p^2}R$ isomorphically 
onto $\mathcal{V}(1)$.
\end{cor}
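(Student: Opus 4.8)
The plan is to prove the corollary by a dimension count together with an eigenvalue/isotypic-component argument, leaning on the fact that $\psi$ is already known to be injective on all of $S_{k+1/2}(\Gamma_0(4p))$.

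First I would record the dimensions on both sides. Since $\widetilde{W}_{p^2}$ is an involution (Corollary~\ref{cor:rel4}(1)) it is invertible, so $\dim \widetilde{W}_{p^2}R = \dim R$; combined with $R\cap\widetilde{W}_{p^2}R=\{0\}$ from Proposition~\ref{T:intersect} and the defining decomposition $R=S^{+}(4)\oplus A^{+}(4)$, which gives $\dim R = \dim S^{+}(4)+\dim A^{+}(4) = 2\dim S_{2k}(\Gamma_0(1))$ by Kohnen's theorem, this yields $\dim(R\oplus\widetilde{W}_{p^2}R)=4\dim S_{2k}(\Gamma_0(1))$. On the target side, for each primitive form $F\in S_{2k}(\Gamma_0(1))$ the four old forms $F,F_2,F_p,F_{2p}$ are linearly independent, so $\dim\mathcal{V}(1)=4\dim S_{2k}(\Gamma_0(1))$. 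The two dimensions agree.

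Next I would show $\psi(R\oplus\widetilde{W}_{p^2}R)\subseteq\mathcal{V}(1)$. Because every operator in sight commutes with $T_{q^2}$ for $q$ coprime to $2p$, it suffices to treat a basis of simultaneous $T_{q^2}$-eigenforms; both $S^{+}(4)$ and $A^{+}(4)$ admit such bases (Kohnen's theorem and Corollary~\ref{cor:A4}(1)), and by the remark following Corollary~\ref{cor:rel4} the involution $\widetilde{W}_{p^2}$ commutes with every $T_{q^2}$, $q\nmid 2p$, carrying such eigenforms to eigenforms with the same eigenvalues. For an eigenform $f\in R$, Kohnen's theorem identifies its $T_{q^2}$-eigenvalue (for odd $q$) with the $q$-th Fourier coefficient $a_q(F)$ of a primitive form $F\in S_{2k}(\Gamma_0(1))$. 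Applying Niwa's map $\psi$ (Theorem~\ref{T:Niwa}), which intertwines $T_{q^2}$ with $T_q$ for $q$ coprime to $2p$, I obtain that $\psi(f)$ and $\psi(\widetilde{W}_{p^2}f)$ are eigenforms in $S_{2k}(\Gamma_0(2p))$ under all $T_q$ with $q\nmid 2p$, both realizing the eigenvalue system $a_q(F)$ of the level-$1$ newform $F$. By strong multiplicity one and Atkin--Lehner theory~\cite{A-L}, the only newform with this eigenvalue system is $F$ itself, so any such form lies in the $F$-isotypic old-space $\langle F,F_2,F_p,F_{2p}\rangle\subseteq\mathcal{V}(1)$. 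Hence the entire image lands in $\mathcal{V}(1)$.

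Finally, since $\psi$ is a linear isomorphism of the full spaces it is injective on $R\oplus\widetilde{W}_{p^2}R$; as its image lies in $\mathcal{V}(1)$ and the two spaces have equal dimension, $\psi$ restricts to an isomorphism of $R\oplus\widetilde{W}_{p^2}R$ onto $\mathcal{V}(1)$. The main obstacle is the containment step: I must be certain that the eigenvalue system coming from a level-$1$ form can be realized only inside the level-$1$ old-space of $S_{2k}(\Gamma_0(2p))$ and cannot leak into the blocks built from newforms of level $2$, $p$, or $2p$. This is exactly what strong multiplicity one guarantees, and it is also what forces $\psi(\widetilde{W}_{p^2}f)$---a priori only known to be a Hecke eigenform with the right eigenvalues---to sit in the same four-dimensional block $\langle F,F_2,F_p,F_{2p}\rangle$ as $\psi(f)$.
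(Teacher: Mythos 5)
Your argument is correct and is essentially the paper's own proof, just written out in full: the paper likewise establishes the containment $\psi(R\oplus\widetilde{W}_{p^2}R)\subseteq\mathcal{V}(1)$ by the eigenform/Atkin--Lehner argument (citing the proof of Corollary~\ref{cor:A4}(2)) and then concludes by equality of dimensions together with the injectivity of $\psi$. No discrepancies to report.
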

\begin{proof}
As before (see Corollary \ref{cor:A4}$(2)$) $\psi$ maps 
$R\oplus \widetilde{W}_{p^2}R$ into $\mathcal{V}(1)$. 
It follows from the equality of dimensions that the map 
is onto.
\end{proof}

Next we consider the following subspace of 
$S_{2k}(\Gamma_0(2p))$ coming from the old forms at level $2$,
\[
\mathcal{V}(2)=S^{\text{new}}_{2k}(\Gamma_0(2)) \oplus 
V(p)S^{\text{new}}_{2k}(\Gamma_0(2)).
\]
This space is a direct sum of two dimensional subspaces 
spanned by $F$ and $F_p$ where $F$ is a primitive 
Hecke eigenform in $S^{\text{new}}_{2k}(\Gamma_0(2))$. 
Using Proposition~\ref{T:eigenvalue} we have the following 
lemma.
\begin{lem}
If an eigenvalue $\lambda$
of $(U_p)^2$ on $\mathcal{V}(2)$ is real then  $\lambda =\pm p^{2k-1}$.
\end{lem}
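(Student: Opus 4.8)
The plan is to reduce the statement to Proposition~\ref{T:eigenvalue} by exhibiting $\mathcal{V}(2)$ as a direct sum of two-dimensional subspaces on which $U_p$ acts exactly as analyzed there. Since the ambient level is $4p$ with $p$ an odd prime, $p$ is coprime to $2$; this is what allows the earlier results to be applied with the relevant integral level taken to be $2$.

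First I would fix a basis of $S^{\text{new}}_{2k}(\Gamma_0(2))$ consisting of primitive Hecke eigenforms $F$, which exists by Atkin--Lehner theory~\cite{A-L}. For each such $F$ the forms $F = F_1$ and $F_p = V(p)F$ span a two-dimensional subspace $\langle F, F_p\rangle$, and as $F$ ranges over the finitely many newforms these subspaces are linearly independent: the $F$ form a basis of $S^{\text{new}}_{2k}(\Gamma_0(2))$, the $F_p$ form a basis of $V(p)S^{\text{new}}_{2k}(\Gamma_0(2))$, and the sum defining $\mathcal{V}(2)$ is direct because $V(p)$ raises the level. Hence $\mathcal{V}(2) = \bigoplus_F \langle F, F_p\rangle$.

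Next I would invoke Lemma~\ref{L:action} with $n = 1$ (which is coprime to $p$): $U_p$ stabilizes each $\langle F, F_p\rangle$, so $U_p$ preserves the whole decomposition summand by summand. Because an operator that respects a direct sum decomposition has spectrum equal to the union of the spectra of its restrictions to the summands, every eigenvalue of $(U_p)^2$ on $\mathcal{V}(2)$ is an eigenvalue of $(U_p)^2$ on some $\langle F, F_p\rangle$. Applying Proposition~\ref{T:eigenvalue} to that summand, with $F$ a primitive eigenform of level $2$ and $n = 1$, then shows that any such eigenvalue which is real equals $\pm p^{2k-1}$, which is the assertion.

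There is essentially no obstacle beyond bookkeeping: the only points requiring care are confirming that $\mathcal{V}(2) = \bigoplus_F \langle F, F_p\rangle$ is a genuine direct sum and that each summand is $U_p$-invariant, after which the conclusion is immediate. The one mild subtlety worth flagging is that a real eigenvalue of $(U_p)^2$ on the whole space could a priori come from an eigenvector mixing several summands; the $U_p$-invariance of each $\langle F, F_p\rangle$ is exactly what rules this out, letting the reality dichotomy of Proposition~\ref{T:eigenvalue} be applied one summand at a time.
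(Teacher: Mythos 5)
Your proposal is correct and follows exactly the paper's (very terse) argument: the paper likewise observes that $\mathcal{V}(2)$ decomposes as a direct sum of the $U_p$-stable two-dimensional subspaces $\langle F, F_p\rangle$ for $F$ primitive of level $2$, and then cites Proposition~\ref{T:eigenvalue}. Your additional care about the directness of the sum and about real eigenvalues not arising from vectors mixing summands is sound bookkeeping that the paper leaves implicit.
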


Since (by Theorem~\ref{thm:minus4})
$\psi$ maps $S^{-}_{k+1/2}(4)$ isomorphically onto 
$S^{\text{new}}_{2k}(\Gamma_0(2))$,
it follows that $\psi$ maps $\widetilde{W}_{p^2}S^{-}_{k+1/2}(4)$ into the space
$\mathcal{V}(2)$.  The proof of the following is 
identical to that of Proposition~\ref{T:intersect}.
\begin{prop}
$S^{-}_{k+1/2}(4)\bigcap \widetilde{W}_{p^2}S^{-}_{k+1/2}(4)=\{0\}$.
\end{prop}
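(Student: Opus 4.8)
The plan is to mirror the argument of Proposition~\ref{T:intersect} exactly, as the text explicitly states the proof is identical. First I would suppose for contradiction that there is a nonzero $f \in S^{-}_{k+1/2}(4)\bigcap \widetilde{W}_{p^2}S^{-}_{k+1/2}(4)$. Since the operators $T_{q^2}$ for primes $q$ coprime to $2p$ commute with $\widetilde{Q}_p$, $\widetilde{Q}'_p$ and $\widetilde{W}_{p^2}$, they stabilize this intersection, so I may assume $f$ is a simultaneous eigenform for all such $T_{q^2}$.

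The key step is to deduce the two eigenvalue equations $\widetilde{Q}_p(f)=pf$ and $\widetilde{Q}'_p(f)=pf$. Because $f \in S^{-}_{k+1/2}(4) \subseteq S_{k+1/2}(\Gamma_0(4))$, Corollary~\ref{cor:rel4}(4) gives $\widetilde{Q}_p(f)=pf$; and since $f$ also lies in $\widetilde{W}_{p^2}S^{-}_{k+1/2}(4)$, writing $f = \widetilde{W}_{p^2}h$ with $h\in S^{-}_{k+1/2}(4)$ and using that $\widetilde{Q}'_p = \widetilde{W}_{p^2}\widetilde{Q}_p\widetilde{W}_{p^2}$ together with $\widetilde{W}_{p^2}^2=1$ yields $\widetilde{Q}'_p(f)=pf$ as well. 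Now I substitute the formula $\widetilde{Q}_p = \kro{-1}{p}^k p^{1-k}U_{p^2}\widetilde{W}_{p^2}$ (and its conjugate $\widetilde{Q}'_p = \kro{-1}{p}^k p^{1-k}\widetilde{W}_{p^2}U_{p^2}$) into these two equalities. Composing the two relations and cancelling the common factor $\widetilde{W}_{p^2}$ (using $\widetilde{W}_{p^2}^2=1$), exactly as in Proposition~\ref{T:intersect}, gives
\[
(U_{p^2})^2(f) = p^{2k} f.
\]

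Finally, since $f\neq 0$, there is a square-free $t$ with $\mathrm{Sh}_t(f)\neq 0$ by observation $(a)$ following the Shimura lift. Applying $\mathrm{Sh}_t$ and using that it commutes with all Hecke operators (observation $(b)$), I obtain $(U_p)^2(\mathrm{Sh}_t(f)) = p^{2k}\mathrm{Sh}_t(f)$. The essential structural input, which is the analogue of the step using $\mathcal{V}(1)$ in Proposition~\ref{T:intersect}, is that $\mathrm{Sh}_t(f)$ lands in $\mathcal{V}(2)$: indeed $f\in \widetilde{W}_{p^2}S^{-}_{k+1/2}(4)$ and we already know $\psi$ maps $\widetilde{W}_{p^2}S^{-}_{k+1/2}(4)$ into $\mathcal{V}(2)$, and the same reasoning applies to the Shimura lifts. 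But the eigenvalue $p^{2k}$ is real and not equal to $\pm p^{2k-1}$, contradicting the Lemma on eigenvalues of $(U_p)^2$ on $\mathcal{V}(2)$. The main obstacle, and the only place requiring genuine care rather than quotation, is verifying rigorously that $\mathrm{Sh}_t(f)\in \mathcal{V}(2)$ (rather than in the larger space $\mathcal{V}(1)\oplus\mathcal{V}(2)$); this is what pins the contradiction to the level-$2$ eigenvalue lemma and distinguishes this proposition from the level-$1$ case of Proposition~\ref{T:intersect}.
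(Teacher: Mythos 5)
Your proof is correct and follows exactly the route the paper intends: the paper simply declares this proof ``identical to that of Proposition~\ref{T:intersect},'' and you have carried out that argument with the one necessary modification, namely that the Shimura lift of $f$ now lands in $\mathcal{V}(2)$ (via the eigenvalue matching with a level-$2$ newform under $\psi$ and Atkin--Lehner) so that the contradiction comes from the eigenvalue lemma for $(U_p)^2$ on $\mathcal{V}(2)$. You also correctly flag the only point needing care, the placement of $\mathrm{Sh}_t(f)$ in $\mathcal{V}(2)$ rather than a larger space, which is precisely the step the paper leaves implicit.
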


\begin{cor} \label{C:level2}
$\psi$ maps $S^{-}_{k+1/2}(4)\oplus \widetilde{W}_{p^2}S^{-}_{k+1/2}(4)$ 
isomorphically onto $\mathcal{V}(2)$.
\end{cor}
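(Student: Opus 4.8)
The plan is to prove Corollary~\ref{C:level2} by exactly the dimension-counting strategy used for Corollary~\ref{C:level1}: show that $\psi$ restricts to an injection of the direct sum into $\mathcal{V}(2)$, and then show that source and target have the same dimension. Recall first that, by Theorem~\ref{T:Niwa} with $M=p$, the Niwa map $\psi:S_{k+1/2}(\Gamma_0(4p))\to S_{2k}(\Gamma_0(2p))$ is an isomorphism, so $\psi$ is injective on every subspace. The remark immediately preceding the statement already establishes that $\psi$ carries both $S^{-}_{k+1/2}(4)$ and $\widetilde{W}_{p^2}S^{-}_{k+1/2}(4)$ into $\mathcal{V}(2)$, and the preceding proposition gives $S^{-}_{k+1/2}(4)\cap\widetilde{W}_{p^2}S^{-}_{k+1/2}(4)=\{0\}$. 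Taken together, these facts show that $\psi$ restricts to an injective linear map of $S^{-}_{k+1/2}(4)\oplus\widetilde{W}_{p^2}S^{-}_{k+1/2}(4)$ into $\mathcal{V}(2)$, so it remains only to match dimensions.

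Next I would compute the dimension of the source. By Theorem~\ref{thm:minus4}, $\psi$ maps $S^{-}_{k+1/2}(4)$ isomorphically onto $S^{\mathrm{new}}_{2k}(\Gamma_0(2))$, whence $\dim S^{-}_{k+1/2}(4)=\dim S^{\mathrm{new}}_{2k}(\Gamma_0(2))$. Since $\widetilde{W}_{p^2}$ is an involution by Corollary~\ref{cor:rel4}(1), it is injective, so $\dim\widetilde{W}_{p^2}S^{-}_{k+1/2}(4)=\dim S^{-}_{k+1/2}(4)$. Using the trivial intersection from the preceding proposition, the direct sum therefore has dimension $2\dim S^{\mathrm{new}}_{2k}(\Gamma_0(2))$. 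On the target side, $\mathcal{V}(2)=S^{\mathrm{new}}_{2k}(\Gamma_0(2))\oplus V(p)S^{\mathrm{new}}_{2k}(\Gamma_0(2))$ by definition, and $V(p)$ is injective, so $\dim\mathcal{V}(2)=2\dim S^{\mathrm{new}}_{2k}(\Gamma_0(2))$ as well. The two dimensions agree, and an injective linear map between finite-dimensional spaces of equal dimension is an isomorphism; hence $\psi$ maps $S^{-}_{k+1/2}(4)\oplus\widetilde{W}_{p^2}S^{-}_{k+1/2}(4)$ isomorphically onto $\mathcal{V}(2)$.

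The step demanding the most care has in fact already been discharged in the text before the statement, namely the inclusion $\psi(\widetilde{W}_{p^2}S^{-}_{k+1/2}(4))\subseteq\mathcal{V}(2)$. This is the only place where arithmetic input beyond bookkeeping is required: it rests on the fact that $\widetilde{W}_{p^2}$ commutes with every $T_{q^2}$ for $q$ coprime to $2p$, so that the $\psi$-image of a $T_{q^2}$-eigenform $f\in S^{-}_{k+1/2}(4)$ is a $T_q$-eigenform in $S_{2k}(\Gamma_0(2p))$ sharing the eigenvalues of the newform $F=\psi(f)\in S^{\mathrm{new}}_{2k}(\Gamma_0(2))$; by Atkin--Lehner theory such a form lies in $\langle F, F_p\rangle\subseteq\mathcal{V}(2)$. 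Granting that inclusion, everything else reduces to the formal injectivity-plus-dimension argument above, precisely parallel to Corollary~\ref{C:level1}.
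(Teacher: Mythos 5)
Your proof is correct and follows essentially the same route as the paper: the inclusion of both summands into $\mathcal{V}(2)$ via the commutation of $\widetilde{W}_{p^2}$ with the $T_{q^2}$ and Atkin--Lehner theory, the trivial intersection from the preceding proposition, injectivity of $\psi$, and the matching dimension count $2\dim S^{\mathrm{new}}_{2k}(\Gamma_0(2))$ on both sides. Nothing further is needed.
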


Finally, we consider the following subspace of 
$S_{2k}(\Gamma_0(2p))$ coming from the old forms at level $p$,
\[\mathcal{V}(p)=S^{\mathrm{new}}_{2k}(\Gamma_0(p)) 
\oplus V(2)S^{\mathrm{new}}_{2k}(\Gamma_0(p)). 
\]
This space is a direct sum of two dimensional subspaces 
spanned by $F$ and $F_2$ where $F$ is a primitive 
Hecke eigenform in $S^{\text{new}}_{2k}(\Gamma_0(p))$. 
We have
\begin{lem}
If an eigenvalue $\lambda$
of $(U_2)^2$ on $\mathcal{V}(p)$ is real then  $\lambda =\pm 2^{2k-1}$.
\end{lem}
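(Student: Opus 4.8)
The plan is to reproduce, almost verbatim, the argument behind Lemma~\ref{T:eigenvalues} and the preceding lemma on $\mathcal{V}(2)$, but with the roles of the acting prime and the level swapped: here the prime is $2$ and the relevant level is the odd prime $p$. First I would record that $\mathcal{V}(p)$ is a direct sum, taken over primitive Hecke eigenforms $F \in S^{\mathrm{new}}_{2k}(\Gamma_0(p))$, of the two-dimensional subspaces $\langle F, F_2\rangle$ with $F_2 = V(2)F$. Since each such subspace is $(U_2)^2$-stable (see below) and $\mathcal{V}(p)$ is their direct sum, it suffices to bound the real eigenvalues of $(U_2)^2$ on a single piece $\langle F, F_2\rangle$.

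The key observation is that because $p$ is odd, the prime $2$ is coprime to the level of $F$, so $T_2 F = a_2 F$ with $a_2$ real and, by the Ramanujan conjecture proved by Deligne, $|a_2| \le 2\cdot 2^{(2k-1)/2}$. These are exactly the hypotheses under which the eigenvalue computation leading to Proposition~\ref{T:eigenvalue} was carried out. Concretely, by Lemma~\ref{L:action} (applied with the prime $2$ and $n=1$) one has $U_2(F) = a_2 F - 2^{2k-1}F_2$ and $U_2(F_2) = F$, so $U_2$ stabilizes $\langle F, F_2\rangle$, and its action there is governed by the quadratic $\beta^2 + a_2\beta + 2^{2k-1} = 0$ exactly as in Section~6. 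I would then invoke Proposition~\ref{T:eigenvalue} directly, now for the prime $2$: a real eigenvalue of $(U_2)^2$ forces the eigenvalues of $U_2$ to be real or purely imaginary, which by the Ramanujan bound occurs only when $a_2 = \pm 2\cdot 2^{k-1/2}$ (giving $(U_2)^2$-eigenvalue $2^{2k-1}$) or $a_2 = 0$ (giving $-2^{2k-1}$). Summing over the two-dimensional pieces yields that any real eigenvalue of $(U_2)^2$ on $\mathcal{V}(p)$ equals $\pm 2^{2k-1}$.

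There is essentially no genuine difficulty here, and the lemma is a formal consequence of Proposition~\ref{T:eigenvalue}; the only point deserving care---and the one I would flag as the main (minor) obstacle---is confirming that Proposition~\ref{T:eigenvalue}, proved for a fixed prime and a level coprime to that prime, applies with the prime taken to be $2$ and the level taken to be $p$. This transfer is legitimate precisely because $p$ is odd, so $F$ is unramified at $2$ and its $T_2$-eigenvalue $a_2$ inherits both reality and the Deligne bound, which is all the earlier analysis used.
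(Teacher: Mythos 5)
Your proposal is correct and follows exactly the route the paper intends: decompose $\mathcal{V}(p)$ into the $(U_2)$-stable two-dimensional pieces $\langle F, F_2\rangle$ for primitive $F\in S^{\mathrm{new}}_{2k}(\Gamma_0(p))$ and apply Proposition~\ref{T:eigenvalue} with the prime $2$ and level $p$, which is legitimate since $2\nmid p$. The paper leaves this proof implicit precisely because it is the same argument as for $\mathcal{V}(1)$ and $\mathcal{V}(2)$ with the roles of the primes swapped, and your only flagged point of care (transferring Proposition~\ref{T:eigenvalue} to the prime $2$) is handled correctly.
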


Let $S^{+,\mathrm{new}}_{k+1/2}(4p)$ be the new space 
inside the plus space in $S_{k+1/2}(\Gamma_0(4p))$.
Kohnen \cite[Theorem 2]{Kohnen2} proved that $\psi$ maps
$S^{+,\mathrm{new}}_{k+1/2}(4p)$ into $\mathcal{V}(p)$ and the dimension of
$S^{+,\mathrm{new}}_{k+1/2}(4p)$ equals the dimension of 
$S^{\mathrm{new}}_{2k}(\Gamma_0(p))$. Then as before 
$\psi$ maps $\widetilde{W}_4S^{+,\mathrm{new}}_{k+1/2}(4)$ 
into the space
$\mathcal{V}(p)$ and we have the following proposition and corollary.
\begin{prop}
$S^{+,\mathrm{new}}_{k+1/2}(4p)\bigcap \widetilde{W}_4
S^{+,\mathrm{new}}_{k+1/2}(4p)=\{0\}$.
\end{prop}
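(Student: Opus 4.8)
The plan is to run the template of Proposition~\ref{T:intersect} essentially verbatim, now with the roles of the odd prime $p$ and the prime $2$ interchanged. Suppose, for contradiction, that there is a nonzero $f$ in the intersection. Since $\widetilde{W}_4 = q(\mathcal{U}_1)$ comes from the $2$-adic Hecke algebra, it commutes with $T_{q^2}$ for every prime $q$ coprime to $2p$ and with $U_{p^2}$; hence these operators stabilize both $S^{+,\mathrm{new}}_{k+1/2}(4p)$ and its image under $\widetilde{W}_4$, and therefore stabilize the intersection. I would thus replace $f$ by a simultaneous eigenform for all $T_{q^2}$ (with $q$ coprime to $2p$) lying in the intersection.

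Next I would extract the eigenvalue identity $(U_4)^2 f = 2^{2k} f$. As $f$ lies in $S^{+,\mathrm{new}}_{k+1/2}(4p)\subseteq S^{+}_{k+1/2}(4p)$, it is a $2$-eigenvector of $\widetilde{Q}'_2$; as $f$ also lies in $\widetilde{W}_4 S^{+,\mathrm{new}}_{k+1/2}(4p)$ and $\widetilde{Q}_2 = \widetilde{W}_4\widetilde{Q}'_2\widetilde{W}_4$ with $\widetilde{W}_4^2 = 1$, it is a $2$-eigenvector of $\widetilde{Q}_2$. From $q(\mathcal{T}_1) = 2^{(3-2k)/2}U_4$ and $\widetilde{W}_4 = q(\mathcal{U}_1)$ one reads off $\widetilde{Q}_2 = 2^{1-k}U_4\widetilde{W}_4$ and $\widetilde{Q}'_2 = 2^{1-k}\widetilde{W}_4 U_4$, so $\widetilde{Q}_2 f = 2f = \widetilde{Q}'_2 f$ gives $U_4\widetilde{W}_4 f = 2^k f = \widetilde{W}_4 U_4 f$. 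Applying $\widetilde{W}_4$ to the second equality yields $U_4 f = 2^k \widetilde{W}_4 f$, whence $(U_4)^2 f = 2^k U_4\widetilde{W}_4 f = 2^{2k} f$, exactly as in the proof that $S^{+}(4)\cap A^{+}(4)=\{0\}$.

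Finally I would transport this to weight $2k$ by a Shimura lift. Since $f\ne 0$ there is a square-free $t$ with $\mathrm{Sh}_t(f)\ne 0$, and because $\mathrm{Sh}_t$ commutes with every Hecke operator and satisfies $U_2\,\mathrm{Sh}_t = \mathrm{Sh}_t\,U_4$, I obtain $(U_2)^2\,\mathrm{Sh}_t(f) = 2^{2k}\,\mathrm{Sh}_t(f)$. It remains to place $\mathrm{Sh}_t(f)$ inside $\mathcal{V}(p)$: since $f$ is an eigenform in the new plus space, Kohnen's Theorem~2 matches its $T_{q^2}$-eigenvalue system with that of a primitive form $F\in S^{\mathrm{new}}_{2k}(\Gamma_0(p))$, so $\mathrm{Sh}_t(f)$, carrying the same $T_q$-eigenvalues, lies by strong multiplicity one in the two-dimensional eigenspace $\langle F, F_2\rangle\subseteq \mathcal{V}(p)$. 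But the preceding lemma asserts that every real eigenvalue of $(U_2)^2$ on $\mathcal{V}(p)$ equals $\pm 2^{2k-1}$, whereas $2^{2k}$ is real and differs from both $\pm 2^{2k-1}$, a contradiction.

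The main obstacle I anticipate is the membership $\mathrm{Sh}_t(f)\in\mathcal{V}(p)$: one must confirm that the eigenvalue system of $f$ really originates from a level-$p$ newform rather than a level-$1$ or level-$2$ form, which is precisely what Kohnen's characterization of $S^{+,\mathrm{new}}_{k+1/2}(4p)$ together with strong multiplicity one provides. Everything else (the commutation of $\widetilde{W}_4$ with $T_{q^2}$ and $U_{p^2}$, the clean relations $\widetilde{Q}_2 = 2^{1-k}U_4\widetilde{W}_4$ and $\widetilde{Q}'_2 = 2^{1-k}\widetilde{W}_4 U_4$, and the lift of the eigenvalue identity) is formal and parallels the earlier arguments.
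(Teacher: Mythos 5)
Your proposal is correct and follows exactly the argument the paper intends (the paper omits the proof, noting only that it proceeds ``as before'' along the lines of Proposition~\ref{T:intersect}): reduce to a simultaneous $T_{q^2}$-eigenform in the intersection, derive $\widetilde{Q}_2 f = 2f = \widetilde{Q}'_2 f$ and hence $(U_4)^2 f = 2^{2k} f$, push this through a nonvanishing Shimura lift into $\mathcal{V}(p)$, and contradict the lemma that real eigenvalues of $(U_2)^2$ there are $\pm 2^{2k-1}$. All the supporting identities you invoke ($\widetilde{Q}_2 = 2^{1-k}U_4\widetilde{W}_4$, $\widetilde{Q}'_2 = 2^{1-k}\widetilde{W}_4 U_4$, the plus space as the $2$-eigenspace of $\widetilde{Q}'_2$, and Kohnen's Theorem~2 placing the lift in $\mathcal{V}(p)$) are established in the paper, so there is nothing to add.
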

\begin{cor}\label{C:levelp}
$\psi$ maps $S^{+,\mathrm{new}}_{k+1/2}(4p)\oplus 
\widetilde{W}_4S^{+,\mathrm{new}}_{k+1/2}(4p)$ isomorphically 
onto $\mathcal{V}(p)$.
\end{cor}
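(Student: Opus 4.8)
The plan is to mirror the proofs of Corollaries \ref{C:level1} and \ref{C:level2} essentially verbatim, so that the statement collapses to the injectivity of $\psi$ together with a count of dimensions. First I would assemble the two containments. That $\psi$ sends $S^{+,\mathrm{new}}_{k+1/2}(4p)$ into $\mathcal{V}(p)$ is precisely Kohnen's \cite[Theorem 2]{Kohnen2}. That $\psi$ sends $\widetilde{W}_4 S^{+,\mathrm{new}}_{k+1/2}(4p)$ into $\mathcal{V}(p)$ is the observation recorded just above the preceding Proposition, argued exactly as in Corollary \ref{cor:A4}(2): an eigenform $f$ and its image $\widetilde{W}_4 f$ share all $T_{q^2}$ eigenvalues for $q$ coprime to $2p$, so by Theorem \ref{T:Niwa} the forms $\psi(f)$ and $\psi(\widetilde{W}_4 f)$ share all $T_q$ eigenvalues, and since $\psi(f)$ lies in a two-dimensional Atkin--Lehner block $\langle F, F_2\rangle$ with $F \in S^{\mathrm{new}}_{2k}(\Gamma_0(p))$, so must $\psi(\widetilde{W}_4 f)$. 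Hence $\psi$ maps the full sum into $\mathcal{V}(p)$.

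Next I would confirm that the sum is direct and that the two dimensions agree. Directness is exactly the preceding Proposition, $S^{+,\mathrm{new}}_{k+1/2}(4p) \cap \widetilde{W}_4 S^{+,\mathrm{new}}_{k+1/2}(4p) = \{0\}$. Since $\widetilde{W}_4$ is an involution, hence invertible, both summands have dimension $\mathrm{dim}\, S^{\mathrm{new}}_{2k}(\Gamma_0(p))$ by Kohnen's dimension identity, so the direct sum has dimension $2\,\mathrm{dim}\, S^{\mathrm{new}}_{2k}(\Gamma_0(p))$. On the target side $\mathcal{V}(p) = S^{\mathrm{new}}_{2k}(\Gamma_0(p)) \oplus V(2) S^{\mathrm{new}}_{2k}(\Gamma_0(p))$, and because $V(2)$ is injective this too has dimension $2\,\mathrm{dim}\, S^{\mathrm{new}}_{2k}(\Gamma_0(p))$.

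Finally, since $\psi$ is injective by Theorem \ref{T:Niwa}, its restriction to the direct sum is an injection into $\mathcal{V}(p)$ between spaces of equal dimension, hence an isomorphism onto $\mathcal{V}(p)$; this is the one-line conclusion used in Corollaries \ref{C:level1} and \ref{C:level2}. I expect no genuine obstacle in the corollary itself: given the cited inputs it is purely formal bookkeeping. The substantive work sits in the two results it invokes, namely Kohnen's dimension identity and the preceding Proposition. The latter is proved, as the text notes, identically to Proposition \ref{T:intersect}: a nonzero common element would force $(U_4)^2$ to act by $2^{2k}$, and applying a suitable Shimura lift $\mathrm{Sh}_t$ would transport this to an eigenvector of $(U_2)^2$ in $\mathcal{V}(p)$ with the forbidden real eigenvalue $2^{2k} \ne \pm 2^{2k-1}$, contradicting the lemma on eigenvalues of $(U_2)^2$ on $\mathcal{V}(p)$.
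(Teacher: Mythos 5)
Your argument is exactly the paper's: containment via Kohnen's theorem and the ``conjugate eigenvalue'' argument of Corollary~\ref{cor:A4}(2), directness from the preceding proposition, and then injectivity of $\psi$ plus equality of dimensions to conclude surjectivity. The proposal is correct and matches the paper's (largely implicit) proof, with the dimension bookkeeping spelled out a little more explicitly.
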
 

We define the following subspace of $S_{k+1/2}(\Gamma_0(4p))$,
\[E=R\oplus \widetilde{W}_{p^2}R
\oplus S^{-}_{k+1/2}(4)\oplus \widetilde{W}_{p^2}S^{-}_{k+1/2}(4)
\oplus
S^{+,\mathrm{new}}_{k+1/2}(4p)\oplus \widetilde{W}_4S^{+,\mathrm{new}}_{k+1/2}(4p).
\]
By Corollary~\ref{C:level1}, \ref{C:level2}
and \ref{C:levelp}, we get that $\psi$ maps the space $E$ isomorphically
onto the old space $S_{2k}^{\mathrm{old}}(\Gamma_0(2p))$.
We define the minus space to be the orthogonal complement of $E$ under the Petersson
inner product. That is,
\[S^{-}_{k+1/2}(4p):=E^{\perp}.\]
\begin{thm}
$S^{-}_{k+1/2}(4p)$ has a basis of eigenforms for all the operators 
$T_{q^2}$, $q$ an odd prime different than $p$, uniquely determined up to 
a non-zero scalar multiplication. 
$\psi$ maps the space  $S^{-}_{k+1/2}(4p)$ isomorphically to the 
space $S^{\mathrm{new}}_{2k}(\Gamma_0(2p))$.
\end{thm}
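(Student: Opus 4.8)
The plan is to mirror the argument used for Theorem~\ref{thm:minus4} at level $4$, now assembling the six building blocks that make up $E$. First I would record the dimension bookkeeping. Since $\psi$ is a bijection of the full spaces $S_{k+1/2}(\Gamma_0(4p)) \to S_{2k}(\Gamma_0(2p))$ and, by Corollaries~\ref{C:level1}, \ref{C:level2} and \ref{C:levelp}, carries $E$ isomorphically onto $S_{2k}^{\mathrm{old}}(\Gamma_0(2p))$, we obtain
\[
\dim S^{-}_{k+1/2}(4p) = \dim S_{k+1/2}(\Gamma_0(4p)) - \dim E = \dim S_{2k}(\Gamma_0(2p)) - \dim S_{2k}^{\mathrm{old}}(\Gamma_0(2p)) = \dim S_{2k}^{\mathrm{new}}(\Gamma_0(2p)).
\]
Moreover, because $\psi$ is injective and $\psi(E) = S_{2k}^{\mathrm{old}}(\Gamma_0(2p))$, anything whose image is old must already lie in $E$; hence one has the clean identity $\psi^{-1}(S_{2k}^{\mathrm{old}}(\Gamma_0(2p))) = E$, which is the fact I will lean on below.

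Next I would produce the eigenbasis. Each $T_{q^2}$ with $q$ coprime to $2p$ is self-adjoint and, as observed in the remark following Corollary~\ref{cor:rel4}, commutes with $\widetilde{Q}_p$, $\widetilde{W}_{p^2}$, $\widetilde{Q}_2$ and $\widetilde{W}_4$; consequently it preserves each eigenspace-defined summand of $E$ and hence $E$ itself. Self-adjointness then forces $T_{q^2}$ to preserve $E^\perp = S^{-}_{k+1/2}(4p)$. Thus $\{T_{q^2}\}_{q \nmid 2p}$ is a commuting family of self-adjoint operators on $S^{-}_{k+1/2}(4p)$, and this space admits a basis of simultaneous eigenforms.

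Then I would identify the image. Given a simultaneous eigenform $f \in S^{-}_{k+1/2}(4p)$, the form $F := \psi(f)$ is a Hecke eigenform in $S_{2k}(\Gamma_0(2p))$ for all $T_q$ with $q$ coprime to $2p$ (using the intertwining $T_q \psi = \psi T_{q^2}$ of Theorem~\ref{T:Niwa}). By Atkin--Lehner~\cite{A-L} such an $F$ is either an old form or a newform. Were $F$ old, the identity $\psi^{-1}(S_{2k}^{\mathrm{old}}(\Gamma_0(2p))) = E$ would put $f \in E$, contradicting $0 \ne f \in E^\perp$; hence $F \in S_{2k}^{\mathrm{new}}(\Gamma_0(2p))$. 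Therefore $\psi$ maps $S^{-}_{k+1/2}(4p)$ into the newspace, and by the dimension equality above this restriction is an isomorphism. Finally, strong multiplicity one for $S_{2k}^{\mathrm{new}}(\Gamma_0(2p))$~\cite{A-L} shows a newform is determined up to a nonzero scalar by its $T_q$-eigenvalues, so pulling this back through $\psi$ yields that an eigenform in $S^{-}_{k+1/2}(4p)$ is determined up to a nonzero scalar by its $T_{q^2}$-eigenvalues.

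The routine bookkeeping (commutation and self-adjointness of $T_{q^2}$, the dimension subtraction) is painless given the earlier sections; the one genuinely load-bearing step is the new-versus-old dichotomy, which works only because $\psi^{-1}(S_{2k}^{\mathrm{old}}(\Gamma_0(2p))) = E$ \emph{exactly} --- that is, because the six summands of $E$ are not merely sent into the old space but exhaust it. This is the point where I would double-check that the three corollaries are combined correctly, since any slack there would let $\psi(S^{-}_{k+1/2}(4p))$ stray outside the newspace.
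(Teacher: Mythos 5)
Your proposal is correct and follows essentially the same route as the paper's proof: stability of $E$ under the commuting self-adjoint operators $T_{q^2}$ gives stability of $E^{\perp}$ and hence an eigenbasis, the Atkin--Lehner old/new dichotomy combined with the injectivity of $\psi$ and the fact that $\psi(E)$ exhausts $S_{2k}^{\mathrm{old}}(\Gamma_0(2p))$ forces images of eigenforms into the newspace, and the dimension count plus strong multiplicity one finishes the argument. Your explicit flagging of the identity $\psi^{-1}(S_{2k}^{\mathrm{old}}(\Gamma_0(2p)))=E$ as the load-bearing step is exactly the point the paper relies on implicitly.
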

\begin{proof}
Since the operators $T_{q^2}$ with $(q,2p)=1$ stabilize the space 
$E$ and since they are self adjoint with respect to the Petersson 
inner product, it follows that they stabilize the space 
$S^{-}_{k+1/2}(4p)$, hence it has a basis of eigenforms for all such  
operators $T_{q^2}$. If $f$ is such an
eigenform then $\psi(f)\in S_{2k}(\Gamma_0(2p))$ is also an eigenform for all the
operators $T_q$, $(q,2p)=1$ and thus (by \cite{A-L})
$\psi(f)$ is either an old form or a newform. 
Since $\psi$ is injective and maps $E$ onto 
$S_{2k}^{\mathrm{old}}(\Gamma_0(2p))$, it follows that 
$\psi(f)$ is a newform. 
Thus $\psi$ maps the space $S^{-}_{k+1/2}(4p)$ into the space
$S_{2k}^{\mathrm{new}}(\Gamma_0(2p))$. 
By equality of dimensions, we get that $\psi$ maps the space 
$S^{-}_{k+1/2}(4p)$ isomorphically onto $S_{2k}^{\mathrm{new}}(\Gamma_0(2p))$. 
Consequently an eigenform in $S^{-}_{k+1/2}(4p)$ is uniquely determined up to 
a scalar multiplication. 
\end{proof}

\begin{cor} \label{C:Weigenvalue}
Let $f\in S^{-}_{k+1/2}(4p)$ be a Hecke eigenform for 
all the operators $T_{q^2}$, $q$ prime and $(q,2p)=1$. Then 
$\widetilde{W}_{p^2}f=\beta(p) f,\ \widetilde{W}_4f= \beta(2)f$ where 
$\beta(p)=\pm 1,\ \beta(2)=\pm 1$.
\end{cor}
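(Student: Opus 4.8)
The plan is to first show that each of the involutions $\widetilde{W}_{p^2}$ and $\widetilde{W}_4$ preserves the minus space $S^{-}_{k+1/2}(4p)$, and then to feed this into the multiplicity-one clause of the preceding theorem to force a joint $T_{q^2}$-eigenform to be a $\pm1$ eigenvector for each operator. First I would record the two structural facts that make the eigenvalues come out to $\pm1$: $\widetilde{W}_{p^2}$ is an involution by Corollary~\ref{cor:rel4}(1), and $\widetilde{W}_4 = q(\mathcal{U}_1)$ is an involution because $\mathcal{U}_1^2 = 1$ in the $2$-adic Hecke algebra (Theorem~\ref{thm:LS}). Moreover both operators commute with every $T_{q^2}$ for $q$ coprime to $2p$, since $\widetilde{W}_{p^2}$, $\widetilde{W}_4$, and $T_{q^2}$ come from the Hecke algebras at the distinct primes $p$, $2$, and $q$ (the Remark following Corollary~\ref{cor:rel4}).

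For the stabilization step I would take $f \in S^{-}_{k+1/2}(4p)$ a joint $T_{q^2}$-eigenform with eigenvalues $\lambda_q$, so that $\widetilde{W}_{p^2}f$ is again a nonzero joint $T_{q^2}$-eigenform with the same eigenvalues $\lambda_q$. Decomposing along $S_{k+1/2}(\Gamma_0(4p)) = E \oplus S^{-}_{k+1/2}(4p)$ as $\widetilde{W}_{p^2}f = e + g$, and using that both summands are $T_{q^2}$-stable, $e$ and $g$ are themselves joint eigenforms with eigenvalues $\lambda_q$. Applying $\psi$, the form $\psi(f)$ is a scalar multiple of a newform of level exactly $2p$ with $T_q$-eigenvalues $\lambda_q$, whereas $\psi(e) \in S^{\mathrm{old}}_{2k}(\Gamma_0(2p))$ would be a $T_q$-eigenform with the same system of eigenvalues but built out of newforms of level $1$, $2$, or $p$. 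By strong multiplicity one (Atkin--Lehner~\cite{A-L}) a newform of level $2p$ cannot share its $T_q$-eigenvalue system with any newform of smaller level dividing $2p$; hence $\psi(e)=0$, so $e=0$ and $\widetilde{W}_{p^2}f \in S^{-}_{k+1/2}(4p)$. Since $S^{-}_{k+1/2}(4p)$ has a basis of such eigenforms, $\widetilde{W}_{p^2}$ stabilizes it, and the identical argument (replacing $p$ by $2$) shows $\widetilde{W}_4$ does as well.

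Finally, with $\widetilde{W}_{p^2}$ preserving $S^{-}_{k+1/2}(4p)$ and commuting with all $T_{q^2}$, the form $\widetilde{W}_{p^2}f$ is a joint eigenform in $S^{-}_{k+1/2}(4p)$ with the same eigenvalues as $f$; the uniqueness-up-to-scalar clause of the preceding theorem then forces $\widetilde{W}_{p^2}f = \beta(p)f$ for some scalar $\beta(p)$, and $\widetilde{W}_{p^2}^2 = 1$ gives $\beta(p)=\pm1$. The same reasoning yields $\widetilde{W}_4 f = \beta(2)f$ with $\beta(2)=\pm1$. I expect the main obstacle to be the stabilization step, and specifically the point that the eigenvalue system of the level-$2p$ newform $\psi(f)$ cannot appear among the old forms $\psi(E) = S^{\mathrm{old}}_{2k}(\Gamma_0(2p))$; this is precisely where strong multiplicity one for newforms of levels dividing $2p$ is needed, and it is the only genuinely nontrivial input beyond the bookkeeping already established in the excerpt.
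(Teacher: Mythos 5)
Your proof is correct and takes essentially the same route as the paper's: commute $\widetilde{W}_{p^2}$ (resp.\ $\widetilde{W}_4$) past the $T_{q^2}$, push through the Niwa isomorphism, and invoke Atkin--Lehner strong multiplicity one together with the involution property to get a $\pm 1$ eigenvalue. The paper is slightly more direct---it applies multiplicity one in $S_{2k}(\Gamma_0(2p))$ to conclude at once that $\psi(\widetilde{W}_{p^2}f)$ is a scalar multiple of the newform $\psi(f)$ and then pulls back via the injectivity of $\psi$, so your intermediate decomposition of $\widetilde{W}_{p^2}f$ along $E\oplus S^{-}_{k+1/2}(4p)$ to establish stabilization, while valid, is not needed.
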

\begin{proof}
Let $g=\widetilde{W}_{p^2}f$. Since $\widetilde{W}_{p^2}$ commutes with 
all the operators $T_{q^2}$ for $(q,2p)=1$ 
we get that $g$ is an eigenform for all the operators 
$T_{q^2}$ with the same eigenvalues as $f$. 
Since $\psi(f)$ is a newform, it follows \cite{A-L} that 
$\psi(g)$ is a scalar multiple of $\psi(f)$.
Since $\psi$ is an isomorphism we get that 
$g$ is a scalar multiple of $f$. 
Since $\widetilde{W}_{p^2}$ is an involution we get
that the scalar is $\pm 1$. The same proof applies to $\widetilde{W}_4$.
\end{proof}

Let $f\in S^{-}_{k+1/2}(4p)$ be a Hecke eigenform for all the operators 
$T_{q^2}$ as above. It follows that 
$F:=\psi(f)$ is a Hecke eigenform in $S_{2k}^{\mathrm{new}}(\Gamma_0(2p))$ 
for all the operators $T_q$, $(q,2p)=1$. Since the Shimura lift $\Sh_t(f)$
is also an eigenform for all the operators $T_q$ with the same 
eigenvalues as $F$,
it follows from \cite{A-L}
that $\mathrm{Sh}_t(f)$ is a scalar multiple of $F$ (which could be zero).
Also, $U_p(F)=-p^{k-1}\lambda(p)F$ where $\lambda(p)=\pm 1$ and
$U_2(F)=-2^{k-1}\lambda(2)F$ where $\lambda(2)=\pm 1$.
\begin{prop} \label{T:Ueigenvalue}
Let $f\in S^{-}_{k+1/2}(4p)$ be a Hecke eigenform for all 
the operators $T_{q^2}$, $q$ prime and $(q,2p)=1$. Then
\[U_{p^2}(f)=-p^{k-1}\lambda(p)f,\quad U_{4}(f)=-2^{k-1}\lambda(2)f\]
where $\lambda(p)=\pm 1$ and $\lambda(2)=\pm 1$ are defined as above.
\end{prop}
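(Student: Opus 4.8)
The plan is to reduce everything to the already-known action of $U_p$ and $U_2$ on the newform $F := \psi(f)$ by transporting along the Shimura lifts $\Sh_t$, which commute with the Hecke operators. Since we know from Atkin--Lehner \cite{A-L} that $U_p(F) = -p^{k-1}\lambda(p)F$ and $U_2(F) = -2^{k-1}\lambda(2)F$, and that each $\Sh_t(f)$ is a scalar multiple of $F$, applying $\Sh_t$ to $U_{p^2}(f)$ and $U_4(f)$ reads off exactly these eigenvalues; the collective injectivity of the family $\{\Sh_t\}$ then forces the corresponding eigenvalue relations one level down, on $f$ itself.

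Concretely, I would first fix a square-free $t$ and use property (b) of the Shimura lift to write $\Sh_t(U_{p^2}(f)) = U_p(\Sh_t(f))$, valid since $p \mid 2M = 2p$. As recorded in the paragraph preceding the statement, $\Sh_t(f)$ is an eigenform for all $T_q$ with $(q,2p)=1$ sharing the eigenvalues of the newform $F$, hence by multiplicity one is a scalar multiple $c_t F$ of $F$ (possibly zero). Therefore $U_p(\Sh_t(f)) = c_t\, U_p(F) = -p^{k-1}\lambda(p)\,c_t F = -p^{k-1}\lambda(p)\,\Sh_t(f)$, which by linearity of $\Sh_t$ gives $\Sh_t\!\big(U_{p^2}(f) + p^{k-1}\lambda(p) f\big) = 0$.

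The final step is to let $t$ range over all square-free integers: the displayed identity holds for every such $t$ (for those $t$ with $\Sh_t(f)=0$ both sides vanish trivially), so property (a) of the Shimura lift --- that $\Sh_t(g)=0$ for all square-free $t$ forces $g=0$ --- yields $U_{p^2}(f) = -p^{k-1}\lambda(p) f$. Repeating the identical argument with $U_2$ in place of $U_p$, using the case $2 \mid 2M$ of property (b), namely $U_2(\Sh_t(f)) = \Sh_t(U_4(f))$, together with $U_2(F) = -2^{k-1}\lambda(2)F$, gives $U_4(f) = -2^{k-1}\lambda(2) f$.

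The only point requiring genuine care --- and the step I would treat as the main obstacle --- is justifying that each nonzero $\Sh_t(f)$ really is a scalar multiple of $F$; this is precisely where the fact that $F$ is a newform of level exactly $2p$ enters, so that its $T_q$-eigenvalue system occurs with multiplicity one in $S_{2k}(\Gamma_0(2p))$, via strong multiplicity one \cite{A-L}. Everything else is the formal bookkeeping of pushing the two eigenvalue relations through the Hecke-equivariant, collectively injective family $\{\Sh_t\}$.
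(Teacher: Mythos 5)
Your proposal is correct and follows essentially the same route as the paper: apply $\Sh_t$, use the Hecke-equivariance $\Sh_t(U_{p^2}f)=U_p(\Sh_t f)$, the fact (recorded just before the statement) that each $\Sh_t(f)$ is a scalar multiple of $F$ with $U_p(F)=-p^{k-1}\lambda(p)F$, and then the collective injectivity of the family $\{\Sh_t\}$ to descend the eigenvalue relation to $f$. Your sign in $\Sh_t\bigl(U_{p^2}(f)+p^{k-1}\lambda(p)f\bigr)=0$ is the correct one.
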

\begin{proof}
Let $g=U_{p^2}f$. Then $\mathrm{Sh}_t(g)=U_p\mathrm{Sh}_t(f)=
-p^{k-1}\lambda(p)\mathrm{Sh}_t(f)$ for every positive square-free 
integer $t$.
It follows that $\mathrm{Sh}_t(g-p^{k-1}\lambda(p)f)=0$ 
for all such $t$ implying $g-p^{k-1}\lambda(p)f=0$
which is what we need. For the prime $2$, the proof is the same.
\end{proof}

\begin{prop} \label{T:Qeigenvalues}
Let $f\in S^{-}_{k+1/2}(4p)$. Then 
$\widetilde{Q}_p(f) = -f = \widetilde{Q}'_p(f)$ 
and 
$\widetilde{Q}_2(f) = -f = \widetilde{Q}'_2(f)$.
\end{prop}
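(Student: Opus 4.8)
The plan is to reduce to simultaneous $T_{q^2}$-eigenforms and then simply read off the eigenvalues of the four operators from their factorizations through $U_{p^2}, U_4, \widetilde{W}_{p^2}, \widetilde{W}_4$, whose eigenvalues on the minus space have already been pinned down. By the theorem established above, $S^{-}_{k+1/2}(4p)$ has a basis consisting of forms $f$ that are eigenfunctions of every $T_{q^2}$ with $(q,2p)=1$. Since I will show below that each such $f$ satisfies $\widetilde{Q}_p f=\widetilde{Q}'_p f=-f$ and $\widetilde{Q}_2 f=\widetilde{Q}'_2 f=-f$, and these eigenforms span the space, the operators act as $-1$ on all of $S^{-}_{k+1/2}(4p)$; note that no separate verification that the operators stabilize the space is needed, because the outputs turn out to be scalar multiples of $f$.

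So fix such an eigenform $f$. By Corollary~\ref{C:Weigenvalue} one has $\widetilde{W}_{p^2}f=\beta(p)f$ and $\widetilde{W}_4 f=\beta(2)f$ with $\beta(p),\beta(2)\in\{\pm1\}$, and by Proposition~\ref{T:Ueigenvalue} one has $U_{p^2}f=-p^{k-1}\lambda(p)f$ and $U_4 f=-2^{k-1}\lambda(2)f$ with $\lambda(p),\lambda(2)\in\{\pm1\}$. I would substitute these into the identities $\widetilde{Q}_p=\kro{-1}{p}^k p^{1-k}U_{p^2}\widetilde{W}_{p^2}$ and $\widetilde{Q}'_p=\kro{-1}{p}^k p^{1-k}\widetilde{W}_{p^2}U_{p^2}$ of Corollary~\ref{cor:rel4}, which gives
\[
\widetilde{Q}_p f=\widetilde{Q}'_p f=-\kro{-1}{p}^k\beta(p)\lambda(p)\,f .
\]
For the even prime the same computation applies to the factorizations $\widetilde{Q}_2=2^{1-k}U_4\widetilde{W}_4$ and $\widetilde{Q}'_2=2^{1-k}\widetilde{W}_4 U_4$ (these follow from $q(\mathcal{T}_1)=2^{(3-2k)/2}U_4$, $\widetilde{W}_4=q(\mathcal{U}_1)$, and the definitions of $\widetilde{Q}_2,\widetilde{Q}'_2$), yielding $\widetilde{Q}_2 f=\widetilde{Q}'_2 f=-\beta(2)\lambda(2)\,f$. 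In every case the resulting eigenvalue is a product of signs, hence lies in $\{\pm1\}$.

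The decisive step is then purely formal. By Corollary~\ref{cor:rel4}(2) the eigenvalues of $\widetilde{Q}_p$, and of $\widetilde{Q}'_p$ since it satisfies the same quadratic, are exactly $p$ and $-1$; likewise the relation $(\mathcal{U}_0-2\sqrt2)(\mathcal{U}_0+\sqrt2)=0$ of Theorem~\ref{thm:LS} forces the eigenvalues of $\widetilde{Q}_2=q(\mathcal{U}_0)/\sqrt2$ and of $\widetilde{Q}'_2$ to be $2$ and $-1$. Because $p\ge3$ and $2>1$, a value in $\{\pm1\}$ can only be $-1$, so all four operators send $f$ to $-f$ (and incidentally $\kro{-1}{p}^k\beta(p)\lambda(p)=1$ and $\beta(2)\lambda(2)=1$). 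The point I would emphasize is that one never has to evaluate the individual signs $\beta(\cdot)$, $\lambda(\cdot)$, $\kro{-1}{p}^k$: the quadratic relations together with the trivial fact that the large root exceeds $1$ pin the eigenvalue down. There is thus no real analytic obstacle here; the only places that merit care are checking that the factorizations of $\widetilde{Q}_2,\widetilde{Q}'_2$ through $U_4$ and $\widetilde{W}_4$ remain valid at level $4p$ and not just at level $4$, and that Corollary~\ref{C:Weigenvalue} and Proposition~\ref{T:Ueigenvalue} supply eigenvalues in $\{\pm1\}$ for the very same eigenform $f$, both of which are immediate from the cited results.
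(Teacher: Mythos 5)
Your proposal is correct and follows essentially the same route as the paper: reduce to a basis of simultaneous $T_{q^2}$-eigenforms, combine Corollary~\ref{C:Weigenvalue} and Proposition~\ref{T:Ueigenvalue} with the factorizations $\widetilde{Q}_p=\kro{-1}{p}^k p^{1-k}U_{p^2}\widetilde{W}_{p^2}$ and $\widetilde{Q}_2=2^{1-k}U_4\widetilde{W}_4$ to see the eigenvalue is a sign, and then invoke the quadratic relations to force it to be $-1$. Your extra remarks (that the individual signs never need to be evaluated, and that stabilization of the space comes for free) are accurate elaborations of what the paper leaves implicit.
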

\begin{proof}
Let $f\in S^{-}_{k+1/2}(4p)$ be a Hecke eigenform for all the operators 
$T_{q^2}$, $(q,2p)=1$. 
Since
$\widetilde{Q}_p = \kro{-1}{p}^k p^{1-k}U_{p^2}\widetilde{W}_{p^2}$ 
and 
$\widetilde{Q}_2 = 2^{1-k}U_{4}\widetilde{W}_{4}$
it follows from
Corollary~\ref{C:Weigenvalue} and 
Proposition~\ref{T:Ueigenvalue} that $f$ is an 
eigenform for the operators 
$\widetilde{Q}_p$, $\widetilde{Q}'_p$, 
$\widetilde{Q}_2$ and $\widetilde{Q}'_2$ with eigenvalues $\pm 1$. 
However, the eigenvalues of $\widetilde{Q}_p$, $\widetilde{Q}'_p$ are 
$p$ and $-1$ and the eigenvalues of $\widetilde{Q}_2$ and 
$\widetilde{Q}'_2$ are $2$ and $-1$ hence the eigenvalues have 
to be $-1$. Since $S^{-}_{k+1/2}(4p)$
has a basis of such eigenforms we get the result.
\end{proof}

\begin{thm}
Let $f\in S_{k+1/2}(4p)$. Then $f\in S^{-}_{k+1/2}(4p)$ if and only if
$\widetilde{Q}_p(f) = -f = \widetilde{Q}'_p(f)$ 
and 
$\widetilde{Q}_2(f) = -f = \widetilde{Q}'_2(f)$.
\end{thm}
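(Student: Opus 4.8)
The plan is to observe that the forward implication is precisely Proposition~\ref{T:Qeigenvalues}, so the only genuinely new content is the converse: if $f\in S_{k+1/2}(\Gamma_0(4p))$ satisfies $\widetilde{Q}_p(f)=\widetilde{Q}'_p(f)=-f$ and $\widetilde{Q}_2(f)=\widetilde{Q}'_2(f)=-f$, then $f\in S^{-}_{k+1/2}(4p)=E^{\perp}$. The key structural input is that all four operators $\widetilde{Q}_p$, $\widetilde{Q}'_p$, $\widetilde{Q}_2$, $\widetilde{Q}'_2$ are self-adjoint for the Petersson inner product, so the $-1$-eigenspace of each is orthogonal to its eigenspace for the other eigenvalue ($p$, respectively $2$). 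My strategy is therefore to exhibit, for each of the six direct summands of $E$, one of the four operators for which that summand is a non-$(-1)$-eigenspace; orthogonality of $f$ to every summand then yields $f\perp E$.

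First I would locate each summand in a suitable eigenspace. By Corollary~\ref{cor:rel4}$(4)$ the whole space $S_{k+1/2}(\Gamma_0(4))$ lies in the $p$-eigenspace of $\widetilde{Q}_p$, so both $R=S^{+}(4)\oplus A^{+}(4)$ and $S^{-}_{k+1/2}(4)$ lie in the $p$-eigenspace of $\widetilde{Q}_p$. Applying $\widetilde{W}_{p^2}$ and using $\widetilde{Q}'_p=\widetilde{W}_{p^2}\widetilde{Q}_p\widetilde{W}_{p^2}$ shows that $\widetilde{W}_{p^2}R$ and $\widetilde{W}_{p^2}S^{-}_{k+1/2}(4)$ lie in the $p$-eigenspace of $\widetilde{Q}'_p$. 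Next, since $S^{+,\mathrm{new}}_{k+1/2}(4p)$ sits inside Kohnen's plus space, which is the $2$-eigenspace of $\widetilde{Q}'_2$, that summand lies in the $2$-eigenspace of $\widetilde{Q}'_2$; conjugating by the involution $\widetilde{W}_4$ and using $\widetilde{Q}_2=\widetilde{W}_4\widetilde{Q}'_2\widetilde{W}_4$ places $\widetilde{W}_4 S^{+,\mathrm{new}}_{k+1/2}(4p)$ in the $2$-eigenspace of $\widetilde{Q}_2$. Thus each of the six summands is captured by one of the four operators with eigenvalue $p$ or $2$.

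With these placements in hand I would finish by the standard self-adjointness computation. For $g\in R$ one has $-\langle f,g\rangle=\langle\widetilde{Q}_p f,g\rangle=\langle f,\widetilde{Q}_p g\rangle=p\langle f,g\rangle$, forcing $(p+1)\langle f,g\rangle=0$, hence $\langle f,g\rangle=0$; the identical computation with $\widetilde{Q}_p$ handles $S^{-}_{k+1/2}(4)$, with $\widetilde{Q}'_p$ handles $\widetilde{W}_{p^2}R$ and $\widetilde{W}_{p^2}S^{-}_{k+1/2}(4)$, with $\widetilde{Q}'_2$ (giving the factor $2+1$) handles $S^{+,\mathrm{new}}_{k+1/2}(4p)$, and with $\widetilde{Q}_2$ handles $\widetilde{W}_4 S^{+,\mathrm{new}}_{k+1/2}(4p)$. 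Since $f$ is orthogonal to each of the six summands, $f\in E^{\perp}=S^{-}_{k+1/2}(4p)$, completing the converse.

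The main obstacle I anticipate is purely the bookkeeping in the middle step: correctly matching each summand to an operator for which it is a non-$(-1)$-eigenvector, and applying the conjugation relations $\widetilde{Q}'_p=\widetilde{W}_{p^2}\widetilde{Q}_p\widetilde{W}_{p^2}$ and $\widetilde{Q}_2=\widetilde{W}_4\widetilde{Q}'_2\widetilde{W}_4$ in the right direction. Once the placements are correct, the orthogonality conclusion is immediate because the eigenvalue gaps $p-(-1)=p+1$ and $2-(-1)=3$ are nonzero, so no cancellation can occur. No analytic input is needed beyond the self-adjointness of the four operators, which was established in Section~\ref{sec:trans}.
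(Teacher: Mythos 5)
Your proposal is correct and follows essentially the same route as the paper's own proof: the forward direction is quoted from Proposition~\ref{T:Qeigenvalues}, and the converse places each summand of $E$ in the $p$- (resp.\ $2$-) eigenspace of one of the four self-adjoint operators $\widetilde{Q}_p$, $\widetilde{Q}'_p$, $\widetilde{Q}_2$, $\widetilde{Q}'_2$ and concludes orthogonality from $-\langle f,g\rangle = p\langle f,g\rangle$ (or $2\langle f,g\rangle$). The bookkeeping you flag as the main risk is exactly what the paper does, and your assignments of summands to operators all match.
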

\begin{proof}
If $f\in S^{-}_{k+1/2}(4p)$ then by Proposition~\ref{T:Qeigenvalues} 
the conditions hold. Now assume that $f\in S_{k+1/2}(4p)$ is in the 
intersection of $-1$ eigenspaces of $\widetilde{Q}_p$, 
$\widetilde{Q}'_p$, $\widetilde{Q}_2$ and $\widetilde{Q}'_2$.
For every $g \in S_{k+1/2}(\Gamma_0(4))$
we have $\widetilde{Q}_p(g)=pg$. Since $\widetilde{Q}_p$ is 
self-adjoint,
\[-\langle f, g \rangle = \langle \widetilde{Q}_pf, g \rangle
= \langle f, \widetilde{Q}_pg \rangle = p\langle f, g\rangle\]
implying $\langle f, g \rangle=0$. Thus $f$ is orthogonal to 
$R \oplus S^{-}_{k+1/2}(4)$. For every 
$g \in \widetilde{W}_{p^2}S_{k+1/2}(4)$ we have 
$\widetilde{Q}'_p(g)=pg$ and the same argument shows that 
$\langle f, g \rangle=0$ implying $f$ is orthogonal to 
$\widetilde{W}_{p^2}(R \oplus S^{-}_{k+1/2}(4))$. 
Since Kohnen's plus space is the $2$-eigenspace of 
$\widetilde{Q}'_2$, for $g \in S^{+,\mathrm{new}}_{k+1/2}(4p)$
we have $\widetilde{Q}'_2(g)=2g$, consequently for 
$g \in \widetilde{W}_{4}S^{+,\mathrm{new}}_{k+1/2}(4p)$ we have
$\widetilde{Q}_2(g)=2g$. Hence $\langle f, g \rangle=0$ for such $g$,
that is, $f$ is orthogonal to $S^{+,\mathrm{new}}_{k+1/2}(4p)
\oplus \widetilde{W}_{4}S^{+,\mathrm{new}}_{k+1/2}(4p)$.
It follows that $f \in S^{-}_{k+1/2}(4p)$.
\end{proof}

\subsection{Minus space for $\Gamma_0(4M)$ for $M$ odd and square-free}
Let $M\ne 1$ be an odd and square-free natural number. 
Write $M=p_1p_2\cdots  p_k$. 
For each $i=1, \ldots k$ define $M_{i}=M/p_i$. 
Since $S_{k+1/2}(\Gamma_0(4M_i))$ is contained in the $p_i$-eigenspace 
of $\widetilde{Q}_{p_i}$ (Corollary~\ref{cor:rel4}$(4)$),
following the proof of Proposition~\ref{T:intersect} 
we obtain that
\begin{prop}
$S_{k+1/2}(\Gamma_0(4M_i)) \bigcap 
\widetilde{W}_{p_i^2}S_{k+1/2}(\Gamma_0(4M_i))=\{0\}$.
\end{prop}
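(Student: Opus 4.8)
The plan is to mirror the proof of Proposition~\ref{T:intersect}, replacing the level-$1$ old space $\mathcal{V}(1)$ by the forms of level $2M_i$ sitting inside $S_{2k}(\Gamma_0(2M))$. First I would record that for \emph{every} $f$ in the intersection one has $\widetilde{Q}_{p_i}(f)=p_i f=\widetilde{Q}'_{p_i}(f)$: since $f\in S_{k+1/2}(\Gamma_0(4M_i))=S_{k+1/2}(\Gamma_0(4M/p_i))$, Corollary~\ref{cor:rel4}(4) gives $\widetilde{Q}_{p_i}(f)=p_i f$, and writing $f=\widetilde{W}_{p_i^2}h$ with $h\in S_{k+1/2}(\Gamma_0(4M_i))$ gives $\widetilde{Q}'_{p_i}(f)=\widetilde{W}_{p_i^2}\widetilde{Q}_{p_i}(h)=p_i f$. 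Substituting $\widetilde{Q}_{p_i}=\kro{-1}{p_i}^k p_i^{1-k}U_{p_i^2}\widetilde{W}_{p_i^2}$ and $\widetilde{Q}'_{p_i}=\kro{-1}{p_i}^k p_i^{1-k}\widetilde{W}_{p_i^2}U_{p_i^2}$ into these two equalities and using $\widetilde{W}_{p_i^2}^2=1$ yields, exactly as in Proposition~\ref{T:intersect}, the relation $(U_{p_i^2})^2(f)=p_i^{2k}f$.

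Next, assuming for contradiction that the intersection contains some $f\neq 0$, I would choose a square-free $t$ with $\Sh_t(f)\neq 0$ (such $t$ exists since the Shimura lifts separate nonzero forms). Because $\Sh_t$ commutes with all Hecke operators, applying it to the relation above gives $(U_{p_i})^2(\Sh_t(f))=p_i^{2k}\,\Sh_t(f)$. Since $f$ has level $4M_i$ its Shimura lift lies in $S_{2k}(\Gamma_0(2M_i))\subseteq S_{2k}(\Gamma_0(2M))$, a space of level prime to $p_i$; this is the precise analog of landing in $\mathcal{V}(1)$ in the level-$4p$ case. Thus $p_i^{2k}$ is a \emph{real} eigenvalue of $(U_{p_i})^2$ on a nonzero $p_i$-old form.

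The main obstacle is to supply the analog of Lemma~\ref{T:eigenvalues} for general $M_i$, namely that a real eigenvalue of $(U_{p_i})^2$ on the image of $S_{2k}(\Gamma_0(2M_i))$ in $S_{2k}(\Gamma_0(2M))$ must equal $\pm p_i^{2k-1}$. I would obtain this from Atkin--Lehner theory: decompose $S_{2k}(\Gamma_0(2M_i))$ into newform-isotypic old spaces $\bigoplus_{e\mid 2M_i/L}\C\,V(e)F$ attached to newforms $F$ of level $L\mid 2M_i$, and note that since $M$ is square-free we have $p_i\nmid 2M_i$, hence $p_i\nmid L$ and $(p_i,e)=1$ for every such $e$. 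The $U_{p_i}$-closure of this space inside $S_{2k}(\Gamma_0(2M))$ is $\bigoplus_{e}\langle V(e)F,\,V(e p_i)F\rangle$, a direct sum of two-dimensional blocks each stable under $U_{p_i}$ by Lemma~\ref{L:action}, and Proposition~\ref{T:eigenvalue} shows the only real eigenvalues of $(U_{p_i})^2$ on such a block are $\pm p_i^{2k-1}$. Since $p_i^{2k}$ is real and distinct from $\pm p_i^{2k-1}$, it cannot occur, forcing $\Sh_t(f)=0$ and hence the contradiction $f=0$. I expect the only genuine work beyond the level-$4p$ argument to be this block decomposition over the divisors $e$ of $2M_i/L$, which is exactly what is needed because $M_i$ is no longer $1$.
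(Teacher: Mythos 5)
Your proposal is correct and follows essentially the same route as the paper, which proves this proposition by invoking Corollary~\ref{cor:rel4}(4) and then saying ``following the proof of Proposition~\ref{T:intersect}'': derive $\widetilde{Q}_{p_i}(f)=p_if=\widetilde{Q}'_{p_i}(f)$, deduce $(U_{p_i^2})^2(f)=p_i^{2k}f$, push through a Shimura lift, and rule out the real eigenvalue $p_i^{2k}$ on $p_i$-old blocks. The only content you add is to make explicit the Atkin--Lehner block decomposition of $S_{2k}(\Gamma_0(2M_i))$ into $U_{p_i}$-stable two-dimensional pieces $\langle V(e)F, V(ep_i)F\rangle$, which is exactly the detail the paper leaves implicit when it reduces to Lemma~\ref{L:action} and Proposition~\ref{T:eigenvalue}.
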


\begin{cor} \label{C:imap}
The Niwa map $\psi : S_{k+1/2}(\Gamma_0(4M)) \rightarrow 
S_{2k}(\Gamma_0(2M))$ maps
$S_{k+1/2}(\Gamma_0(4M_i))\oplus 
\widetilde{W}_{p_i^2}S_{k+1/2}(\Gamma_0(4M_i))$
isomorphically onto $S_{2k}(\Gamma_0(2M_i))\oplus 
V(p_i) S_{2k}(\Gamma_0(2M_i))$.
\end{cor}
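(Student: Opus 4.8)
The plan is to follow the template of Corollaries~\ref{C:level1}, \ref{C:level2} and \ref{C:levelp}: match dimensions on the two sides, prove the inclusion $\psi(\text{LHS})\subseteq\text{RHS}$, and then use that $\psi$ is injective to promote the inclusion to an isomorphism. First I would record the dimensions. The left-hand sum is direct by the preceding Proposition, and $\widetilde{W}_{p_i^2}$ is an involution by Corollary~\ref{cor:rel4}(1), so its dimension is $2\dim S_{k+1/2}(\Gamma_0(4M_i))$; applying Theorem~\ref{T:Niwa} with $M$ replaced by $M_i$, this equals $2\dim S_{2k}(\Gamma_0(2M_i))$. On the right, since $p_i\nmid 2M_i$ the degeneracy maps $g\mapsto g$ and $g\mapsto V(p_i)g$ have linearly independent images in $S_{2k}(\Gamma_0(2M))$ by Atkin--Lehner~\cite{A-L}, so that sum is direct of dimension $2\dim S_{2k}(\Gamma_0(2M_i))$ as well. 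Thus the dimensions agree and only the inclusion remains.

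For the inclusion I would reduce to eigenforms. The operators $T_{q^2}$ with $(q,2M)=1$ commute and are self-adjoint, hence are simultaneously diagonalizable on $S_{k+1/2}(\Gamma_0(4M_i))$, so it suffices to treat a simultaneous eigenform $f$ together with $\widetilde{W}_{p_i^2}f$. For $f$ itself, Theorem~\ref{T:Niwa} applied at level $4M_i$ provides a Niwa isomorphism $\psi_{M_i}\colon S_{k+1/2}(\Gamma_0(4M_i))\to S_{2k}(\Gamma_0(2M_i))$, so $\psi_{M_i}(f)$ is a nonzero $T_q$-eigenform in $S_{2k}(\Gamma_0(2M_i))$ carrying the same eigensystem as $f$; therefore the newform $F_0$ attached to that eigensystem has level $N_0\mid 2M_i$. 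Now $\psi(f)$ is a $T_q$-eigenform for $(q,2M)=1$ with those same eigenvalues by Theorem~\ref{T:Niwa} at level $4M$, so it lies in the $F_0$-isotypic subspace of $S_{2k}(\Gamma_0(2M))$, which by Atkin--Lehner~\cite{A-L} equals $\bigoplus_{d\mid 2M/N_0}\C\,V(d)F_0$. Writing $2M/N_0=p_i\,(2M_i/N_0)$ with $p_i\nmid 2M_i/N_0$, each generator $V(d)F_0$ lies in $S_{2k}(\Gamma_0(2M_i))$ when $p_i\nmid d$ and in $V(p_i)S_{2k}(\Gamma_0(2M_i))$ when $p_i\mid d$; hence $\psi(f)\in\text{RHS}$.

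The summand $\widetilde{W}_{p_i^2}f$ is handled the same way: $\widetilde{W}_{p_i^2}$ commutes with every $T_{q^2}$, $(q,2M)=1$ (the remark following Corollary~\ref{cor:rel4}), so $\widetilde{W}_{p_i^2}f$ shares the eigensystem of $f$, whence $\psi(\widetilde{W}_{p_i^2}f)$ lies in the same $F_0$-isotypic space and again in the right-hand side. This establishes $\psi(\text{LHS})\subseteq\text{RHS}$, and with the dimension equality and the injectivity of $\psi$ the corollary follows. The one step that carries the real content, and the place I would be most careful, is the level assertion $N_0\mid 2M_i$: the map $\psi$ transports only the eigensystem of $f$, which by itself pins $F_0$ down merely up to level $2M$ and would allow a $V(p_i)$-component built on a newform of level divisible by $p_i$ to escape the right-hand space. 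It is precisely the lower-level Niwa isomorphism $\psi_{M_i}$ (equivalently, the level-compatibility of the Shimura lifts $\Sh_t$) landing in $S_{2k}(\Gamma_0(2M_i))$ that forces $p_i\nmid N_0$ and confines the entire oldclass to $S_{2k}(\Gamma_0(2M_i))\oplus V(p_i)S_{2k}(\Gamma_0(2M_i))$.
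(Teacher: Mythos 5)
Your proof is correct and follows the same template the paper uses for Corollaries~\ref{C:level1}--\ref{C:levelp} (and implicitly here): reduce to simultaneous $T_{q^2}$-eigenforms, transport the eigensystem through the Niwa map, invoke Atkin--Lehner to confine the image to the oldclass $\bigoplus_{d\mid 2M/N_0}\C\,V(d)F_0$, and finish by the dimension count together with the injectivity of $\psi$. The one step you rightly flag as carrying the content --- using the level-$4M_i$ Niwa isomorphism to force $N_0\mid 2M_i$ --- is exactly the ingredient the paper leaves implicit, so your write-up is a faithful (and slightly more detailed) version of the intended argument.
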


Let $S^{+,\mathrm{new}}_{k+1/2}(4M)$ be the new space 
inside the Kohnen plus subspace of $S_{k+1/2}(4M)$. Then 
similarly we have
\begin{prop}
$S^{+,\mathrm{new}}_{k+1/2}(4M) \bigcap 
\widetilde{W}_4S^{+,\mathrm{new}}_{k+1/2}(4M)=\{0\}$.
\end{prop}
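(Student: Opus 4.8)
The plan is to follow the strategy of Proposition~\ref{T:intersect} and its level-$4p$ analogue, but carried out at the prime $2$, with $\widetilde{Q}_2$, $\widetilde{Q}'_2$ and $\widetilde{W}_4$ playing the roles of $\widetilde{Q}_p$, $\widetilde{Q}'_p$ and $\widetilde{W}_{p^2}$. First I would suppose for contradiction that there is a nonzero $f$ in the intersection. Each $T_{q^2}$ with $(q,2M)=1$ is self-adjoint, stabilizes $S^{+,\mathrm{new}}_{k+1/2}(4M)$, and commutes with $\widetilde{W}_4$ (operators attached to distinct primes commute), so it stabilizes the intersection; as these commuting self-adjoint operators act on the finite-dimensional intersection, I may assume $f$ is a simultaneous eigenform for all of them. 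Since the Kohnen plus space at level $4M$ is the $2$-eigenspace of $\widetilde{Q}'_2$ and $S^{+,\mathrm{new}}_{k+1/2}(4M)$ lies inside it, membership $f\in S^{+,\mathrm{new}}_{k+1/2}(4M)$ gives $\widetilde{Q}'_2(f)=2f$; writing $f=\widetilde{W}_4 g$ with $g$ in the new plus space and using $\widetilde{Q}_2=\widetilde{W}_4\widetilde{Q}'_2\widetilde{W}_4$ together with $\widetilde{W}_4^2=1$ gives $\widetilde{Q}_2(f)=2f$ as well.

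Next I would combine these two equalities exactly as in the level-$4$ computation. From $\widetilde{Q}_2=2^{1-k}U_4\widetilde{W}_4$ and its conjugate $\widetilde{Q}'_2=2^{1-k}\widetilde{W}_4 U_4$, the relations $\widetilde{Q}_2(f)=2f=\widetilde{Q}'_2(f)$ yield $U_4\widetilde{W}_4(f)=2^k f$ and $\widetilde{W}_4 U_4(f)=2^k f$, whence
\[(U_4)^2(f)=2^k U_4\widetilde{W}_4(f)=2^{2k}f.\]
Since $f\neq 0$, by observation $(a)$ on Shimura lifts there is a square-free $t$ with $\Sh_t(f)\neq 0$; applying $\Sh_t$ and using that it intertwines $U_4$ with $U_2$ (observation $(b)$) gives $(U_2)^2\Sh_t(f)=2^{2k}\Sh_t(f)$.

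Finally I would locate $\Sh_t(f)$. Setting $\mathcal{V}(M)=S^{\mathrm{new}}_{2k}(\Gamma_0(M))\oplus V(2)S^{\mathrm{new}}_{2k}(\Gamma_0(M))$, the generalization of Kohnen's newform theory identifies the Hecke eigenvalue systems occurring in $S^{+,\mathrm{new}}_{k+1/2}(4M)$ with those of genuine level-$M$ newforms, so the eigenform $\Sh_t(f)\in S_{2k}(\Gamma_0(2M))$ carries the eigenvalue system of some primitive $F\in S^{\mathrm{new}}_{2k}(\Gamma_0(M))$ and therefore, by multiplicity one and Atkin--Lehner theory, lies in the two-dimensional oldclass $\langle F,F_2\rangle\subseteq\mathcal{V}(M)$. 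As $\mathcal{V}(M)$ is a direct sum of such spaces $\langle F,F_2\rangle$, Proposition~\ref{T:eigenvalue} (applied with $p=2$, $n=1$; note $2\nmid M$) shows that every real eigenvalue of $(U_2)^2$ on $\mathcal{V}(M)$ equals $\pm 2^{2k-1}$. Since $2^{2k}\neq\pm 2^{2k-1}$, this contradicts $(U_2)^2\Sh_t(f)=2^{2k}\Sh_t(f)$, forcing $f=0$.

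I expect the main obstacle to be the step placing $\Sh_t(f)$ inside $\mathcal{V}(M)$: one must confirm that Kohnen's level-$4p$ result, that the new plus space maps into the level-$p$ oldclass, extends to square-free $M$, so that the eigenvalue system of $f$ is that of a newform of \emph{exact} level $M$ rather than of a proper divisor, and then invoke multiplicity one at level $2M$ to pin $\Sh_t(f)$ into the rank-two oldclass generated by the corresponding level-$M$ newform. The remaining steps are formal consequences of the operator identities already established in Section~\ref{sec:trans}.
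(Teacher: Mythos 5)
Your proposal is correct and follows exactly the argument the paper intends: the paper states this proposition with only "similarly we have," referring back to the proof of Proposition~\ref{T:intersect}, and your adaptation — forcing $\widetilde{Q}_2(f)=2f=\widetilde{Q}'_2(f)$ on a putative eigenform in the intersection, deducing $(U_4)^2f=2^{2k}f$, pushing through a nonvanishing Shimura lift into $S^{\mathrm{new}}_{2k}(\Gamma_0(M))\oplus V(2)S^{\mathrm{new}}_{2k}(\Gamma_0(M))$ via Kohnen's newform theory for square-free $M$, and contradicting Proposition~\ref{T:eigenvalue} — is precisely that argument. The one step you flag as a possible obstacle, that the eigenvalue system of $f$ comes from a newform of exact level $M$, is exactly what \cite[Theorem 2]{Kohnen2} supplies for odd square-free $M$, so no gap remains.
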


\begin{cor} \label{C:2map}
$\psi$ maps
$S^{+,\mathrm{new}}_{k+1/2}(4M) \oplus 
\widetilde{W}_4S^{+,\mathrm{new}}_{k+1/2}(4M)$ isomorphically onto 
$S^{\mathrm{new}}_{2k}(\Gamma_0(M)) \oplus 
V(2) S^{\mathrm{new}}_{2k}(\Gamma_0(M))$.
\end{cor}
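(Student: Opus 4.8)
The plan is to mirror the arguments already carried out for Corollary~\ref{C:level1} and Corollary~\ref{cor:A4}, now at the level of the new subspaces. First I would establish the containment
$\psi\big(S^{+,\mathrm{new}}_{k+1/2}(4M)\big) \subseteq S^{\mathrm{new}}_{2k}(\Gamma_0(M)) \oplus V(2)\, S^{\mathrm{new}}_{2k}(\Gamma_0(M))$.
This is the content of the generalization of Kohnen's newform theory \cite{Kohnen2} to odd square-free level $M$: the Kohnen correspondence factors through the Niwa map $\psi$ and identifies $S^{+,\mathrm{new}}_{k+1/2}(4M)$ with $S^{\mathrm{new}}_{2k}(\Gamma_0(M))$, so the $\psi$-image of the plus-new space is built from level-$M$ newforms, and inside $S_{2k}(\Gamma_0(2M))$ these lie precisely in the old space $S^{\mathrm{new}}_{2k}(\Gamma_0(M)) \oplus V(2)\, S^{\mathrm{new}}_{2k}(\Gamma_0(M))$. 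In particular this also yields the dimension equality $\dim S^{+,\mathrm{new}}_{k+1/2}(4M) = \dim S^{\mathrm{new}}_{2k}(\Gamma_0(M))$.

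Next I would show that $\psi\big(\widetilde{W}_4 S^{+,\mathrm{new}}_{k+1/2}(4M)\big)$ lands in the same old space. Choosing a basis of $T_{q^2}$-eigenforms (for $q$ coprime to $2M$) of $S^{+,\mathrm{new}}_{k+1/2}(4M)$ and using that $\widetilde{W}_4$ commutes with every such $T_{q^2}$, each $g = \widetilde{W}_4 f$ is again a $T_{q^2}$-eigenform with the same eigenvalues as $f$. By Theorem~\ref{T:Niwa} both $\psi(f)$ and $\psi(g)$ are $T_q$-eigenforms in $S_{2k}(\Gamma_0(2M))$ with identical eigenvalues; since $\psi(f)$ lies in the two-dimensional old subspace $\langle F, F_2 \rangle$ attached to a level-$M$ newform $F$, the strong multiplicity-one theorem of \cite{A-L} forces $\psi(g)$ into the same subspace. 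Summing over the basis gives the desired containment, exactly as in the proof of Corollary~\ref{cor:A4}(2).

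With both halves mapping into the target space $S^{\mathrm{new}}_{2k}(\Gamma_0(M)) \oplus V(2)\, S^{\mathrm{new}}_{2k}(\Gamma_0(M))$, I would finish by a dimension count. The preceding Proposition gives that the sum $S^{+,\mathrm{new}}_{k+1/2}(4M) \oplus \widetilde{W}_4 S^{+,\mathrm{new}}_{k+1/2}(4M)$ is direct, so its dimension is $2\dim S^{+,\mathrm{new}}_{k+1/2}(4M) = 2\dim S^{\mathrm{new}}_{2k}(\Gamma_0(M))$, which equals the dimension of the target space. Since $\psi$ is a vector-space isomorphism by Theorem~\ref{T:Niwa}, its restriction to the direct sum is injective into the target; equality of dimensions then upgrades this to an isomorphism onto the target, which is the claim.

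The step I expect to require the most care is the first one: pinning down that the $\psi$-image of $S^{+,\mathrm{new}}_{k+1/2}(4M)$ is contained in the level-$M$ old space, together with the matching of dimensions. For $M$ prime this is quoted directly from \cite[Theorem 2]{Kohnen2} (as in the $4p$ case leading to Corollary~\ref{C:levelp}), but for general square-free $M$ one must invoke the full newform theory inside Kohnen's plus space and check that the Niwa normalization used here agrees with Kohnen's, so that the image genuinely avoids the oldforms coming from proper divisors of $M$. Everything downstream --- the Atkin--Lehner bookkeeping and the dimension count --- is then routine and parallels the already-established cases.
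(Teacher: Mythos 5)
Your proposal is correct and follows essentially the same route the paper intends: containment of both summands in the level-$M$ old space (via Kohnen's newform theory for the plus-new space and the Atkin--Lehner eigenvalue argument for its $\widetilde{W}_4$-conjugate, exactly as in Corollary~\ref{cor:A4} and Corollary~\ref{C:levelp}), directness from the preceding proposition, and a dimension count using the injectivity of $\psi$. The only caveat you raise --- whether Kohnen's theorem applies beyond prime level --- is not an issue, since \cite[Theorem 2]{Kohnen2} is stated for general odd square-free $M$.
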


We let $B_i=S_{k+1/2}(\Gamma_0(4M_i)) \oplus 
\widetilde{W}_{p_i^2}S_{k+1/2}(\Gamma_0(4M_i))$, 
$i=1,\ldots k$. Define
\[E=\sum_{i=1}^k B_i \oplus S^{+,\text{new}}_{k+1/2}(4M) \oplus 
\widetilde{W}_4S^{+,\mathrm{new}}_{k+1/2}(4M).\]
\begin{prop}
Under $\psi$ the space $E$ maps isomorphically onto the old space 
$S_{2k}^{\mathrm{old}}(\Gamma_0(2M))$.
\end{prop}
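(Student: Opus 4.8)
The plan is to reduce the whole proposition to the single equality of subspaces $\psi(E) = S^{\mathrm{old}}_{2k}(\Gamma_0(2M))$. By Theorem~\ref{T:Niwa} the Niwa map $\psi$ is an isomorphism of vector spaces on all of $S_{k+1/2}(\Gamma_0(4M))$, hence in particular injective; its restriction to the subspace $E$ is therefore automatically injective, and once I show that its image is exactly the old space, $\psi|_E$ is an isomorphism onto $S^{\mathrm{old}}_{2k}(\Gamma_0(2M))$. In this way no separate dimension count, nor any verification that the defining sum of $E$ is internally direct, is required.

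First I would dispose of the easy inclusion $\psi(E) \subseteq S^{\mathrm{old}}_{2k}(\Gamma_0(2M))$. By Corollary~\ref{C:imap} each $\psi(B_i)$ equals $S_{2k}(\Gamma_0(2M_i)) \oplus V(p_i)S_{2k}(\Gamma_0(2M_i))$; since $2M_i = 2M/p_i$ is a proper divisor of $2M$, forms pulled back from level $2M_i$ and their $V(p_i)$-images are old at level $2M$. Likewise, by Corollary~\ref{C:2map} the image of the two Kohnen summands is $S^{\mathrm{new}}_{2k}(\Gamma_0(M)) \oplus V(2)S^{\mathrm{new}}_{2k}(\Gamma_0(M))$, which sits at level $M$ and its $V(2)$-shift, hence in the old space. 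Summing over the pieces gives the inclusion.

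The substance is the reverse inclusion, for which I would invoke the Atkin--Lehner decomposition \cite{A-L}
\[S^{\mathrm{old}}_{2k}(\Gamma_0(2M)) = \bigoplus_{\substack{d \mid 2M \\ d \ne 2M}}\ \bigoplus_{e \mid (2M/d)} V(e)\,S^{\mathrm{new}}_{2k}(\Gamma_0(d))\]
and show each constituent $V(e)S^{\mathrm{new}}_{2k}(\Gamma_0(d))$ lands in $\psi(E)$, by a case analysis on the proper divisor $d$. If $d = M$ then $2M/d = 2$, so the constituents are precisely $S^{\mathrm{new}}_{2k}(\Gamma_0(M))$ and $V(2)S^{\mathrm{new}}_{2k}(\Gamma_0(M))$, which are the image of the Kohnen block by Corollary~\ref{C:2map}. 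If instead $d \ne M$ (and $d \ne 2M$), then since $M$ is square-free $d$ must omit some odd prime $p_i$, i.e. $p_i \nmid d$, whence $d \mid 2M_i$; writing $2M/d = p_i\,(2M_i/d)$, every $e \mid (2M/d)$ has the form $e_0$ or $p_i e_0$ with $e_0 \mid (2M_i/d)$. In the first case $V(e)S^{\mathrm{new}}_{2k}(\Gamma_0(d)) \subseteq S_{2k}(\Gamma_0(2M_i))$ because $d e_0 \mid 2M_i$, while in the second $V(e)S^{\mathrm{new}}_{2k}(\Gamma_0(d)) = V(p_i)\,V(e_0)S^{\mathrm{new}}_{2k}(\Gamma_0(d)) \subseteq V(p_i)S_{2k}(\Gamma_0(2M_i))$; in either case the constituent lies in $\psi(B_i)$ by Corollary~\ref{C:imap}. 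Thus every summand of the old space is captured, giving $\psi(E) \supseteq S^{\mathrm{old}}_{2k}(\Gamma_0(2M))$ and hence equality.

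I expect the only real obstacle to be the bookkeeping in this last step: one must check that the partition of the proper divisors $d$ of $2M$ into the single exceptional value $d = M$ and the remaining values (each omitting some $p_i$) is exhaustive, and that the degeneracy indices $e$ are correctly routed — to $B_i$ via the splitting $e = e_0$ or $e = p_i e_0$, and to the Kohnen block when $d = M$. Square-freeness of $M$ is exactly what guarantees that every proper divisor other than $M$ genuinely drops an odd prime, which is what makes the routing complete; confirming this combinatorial coverage, rather than any analytic input, is the crux. With $\psi(E) = S^{\mathrm{old}}_{2k}(\Gamma_0(2M))$ established, the injectivity of $\psi$ from Theorem~\ref{T:Niwa} immediately upgrades this to the asserted isomorphism.
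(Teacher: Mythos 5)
Your proof is correct and follows essentially the same route as the paper: the paper likewise deduces the result from Corollaries~\ref{C:imap} and~\ref{C:2map} together with the decomposition of $S_{2k}^{\mathrm{old}}(\Gamma_0(2M))$ into the blocks $S_{2k}(\Gamma_0(2M_i))\oplus V(p_i)S_{2k}(\Gamma_0(2M_i))$ and $S_{2k}^{\mathrm{new}}(\Gamma_0(M))\oplus V(2)S_{2k}^{\mathrm{new}}(\Gamma_0(M))$, which it simply asserts. Your divisor-routing argument via the Atkin--Lehner decomposition (using square-freeness of $M$ to see that every proper divisor $d\ne M$ of $2M$ omits some $p_i$) is exactly the verification of that decomposition which the paper leaves implicit, and your use of global injectivity of $\psi$ to upgrade surjectivity onto the old space to an isomorphism is sound.
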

\begin{proof}
This follows from Corollary~\ref{C:imap} and~\ref{C:2map} and from the decomposition
\[
S_{2k}^{\mathrm{old}}(\Gamma_0(2M))=\left( 
\sum_{i=1}^{k}S_{2k}(\Gamma_0(2M_i))\oplus 
V(p_i)S_{2k}(\Gamma_0(2M_i)) \right) \oplus\]
\[\left(S_{2k}^{\mathrm{new}}(\Gamma_0(M))\oplus 
V(2)S_{2k}^{\mathrm{new}}(\Gamma_0(M))\right).\]
\end{proof}

We now define the minus space to be the orthogonal complement of $E$, 
\[S^{-}_{k+1/2}(4M):=E^{\perp}\]
Let $f\in S^{-}_{k+1/2}(4M)$ be a Hecke eigenform for all the 
operators $T_{q^2}$ where $q$ is an odd prime
satisfying $(q,M)=1$. Let $\psi(f)=F$. 
The proof of the following results is identical to the proofs 
in the previous subsections.
\begin{prop}
$F$ is up to a scalar a primitive Hecke eigenform in $S_{2k}^{new}(2M)$.
\end{prop}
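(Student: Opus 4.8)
The plan is to mimic the argument used for the level-$4p$ minus space, replacing the single odd prime $p$ by the primes dividing $M$ and feeding in the preceding proposition that $\psi$ carries $E$ isomorphically onto $S_{2k}^{\mathrm{old}}(\Gamma_0(2M))$. Since $q$ is odd, the hypothesis $(q,M)=1$ is the same as $(q,2M)=1$, so I am free to use the Niwa intertwining property at all such $q$.

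First I would record that the operators $T_{q^2}$ with $(q,2M)=1$ stabilize $E$ and are self-adjoint for the Petersson inner product. Each summand is preserved: $T_{q^2}$ stabilizes $S_{k+1/2}(\Gamma_0(4M_i))$ and, by the remark following Corollary~\ref{cor:rel4}, commutes with the involutions $\widetilde{W}_{p_i^2}$ and $\widetilde{W}_4$, hence it also stabilizes $\widetilde{W}_{p_i^2}S_{k+1/2}(\Gamma_0(4M_i))$, the space $S^{+,\mathrm{new}}_{k+1/2}(4M)$, and $\widetilde{W}_4S^{+,\mathrm{new}}_{k+1/2}(4M)$. By self-adjointness the orthogonal complement $S^{-}_{k+1/2}(4M)=E^{\perp}$ is stabilized as well, so these commuting self-adjoint operators admit a common eigenbasis on it. This legitimizes reducing to a single eigenform $f$, which is exactly the hypothesis of the proposition.

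Next, for such an eigenform with $T_{q^2}f=\lambda_q f$, Theorem~\ref{T:Niwa} yields $T_qF=\lambda_q F$ for all $(q,2M)=1$, so $F=\psi(f)$ is a common eigenform for all the good Hecke operators on $S_{2k}(\Gamma_0(2M))$. By Atkin--Lehner theory~\cite{A-L}, such an $F$ lies, up to scalar, in the span of the translates $\{F_0(\ell z) : \ell \mid (2M/d)\}$ of a unique primitive form $F_0$ of some level $d\mid 2M$.

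The crux is to rule out $d<2M$. If $d$ were a proper divisor of $2M$, then $F$ would lie in $S_{2k}^{\mathrm{old}}(\Gamma_0(2M))$; since the preceding proposition gives that $\psi$ maps $E$ isomorphically onto $S_{2k}^{\mathrm{old}}(\Gamma_0(2M))$ and $\psi$ is injective on all of $S_{k+1/2}(\Gamma_0(4M))$, this would force $f=\psi^{-1}(F)\in E$, contradicting $0\ne f\in E^{\perp}$. Hence $d=2M$, the relevant span reduces to $\{F_0\}$ (as $2M/d=1$), and $F$ is a scalar multiple of the newform $F_0\in S_{2k}^{\mathrm{new}}(\Gamma_0(2M))$, which is the assertion. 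I expect this final step -- using Atkin--Lehner to pin down the exact level of the underlying newform and then converting ``$F$ is old'' into ``$f\in E$'' via injectivity of $\psi$ -- to be the only genuine content; everything preceding it is bookkeeping already carried out in the $M=p$ case.
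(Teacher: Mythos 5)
Your proposal is correct and follows essentially the same route as the paper, which simply declares the proof ``identical to the proofs in the previous subsections'' — namely: transfer the eigenvalues via the Niwa intertwining, invoke Atkin--Lehner to get the old/new dichotomy, and rule out the old case because $\psi$ is injective and carries $E$ onto $S_{2k}^{\mathrm{old}}(\Gamma_0(2M))$ while $f\in E^{\perp}\setminus\{0\}$. Your identification of that last step as the only genuine content matches the paper exactly.
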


\begin{thm}
The space $S^{-}_{k+1/2}(4M)$ has a basis of eigenforms for 
all the operators $T_{q^2}$ where $q$ is an odd prime
satisfying $(q,M)=1$. Under
$\psi$, the space  $S^{-}_{k+1/2}(4M)$ maps isomorphically onto the 
space $S^{\mathrm{new}}_{2k}(\Gamma_0(2M))$. If 
two forms in $S^{-}_{k+1/2}(4M)$ have the same eigenvalues for
all the operators $T_{q^2}$, $(q,2M)=1$, then they are same up to a scalar 
factor.
\end{thm}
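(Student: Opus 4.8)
The plan is to run exactly the argument used at levels $4$ and $4p$, now organised around the decomposition of the preceding proposition, namely that $\psi$ carries $E$ isomorphically onto $S_{2k}^{\mathrm{old}}(\Gamma_0(2M))$. First I would record that for every prime $q$ with $(q,2M)=1$ the operator $T_{q^2}$ is self-adjoint for the Petersson product and stabilises $E$: each summand $S_{k+1/2}(\Gamma_0(4M_i))$ is preserved by $T_{q^2}$ (as $q\nmid 4M_i$), $T_{q^2}$ commutes with every $\widetilde{W}_{p_i^2}$ and with $\widetilde{W}_4$, and it preserves the Kohnen plus newspace. Hence $T_{q^2}$ also stabilises the orthogonal complement $S^{-}_{k+1/2}(4M)=E^{\perp}$, and since these operators form a commuting family of self-adjoint operators, $S^{-}_{k+1/2}(4M)$ admits a basis of simultaneous eigenforms for all such $T_{q^2}$. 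This yields the first assertion.

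Next I would pin down the dimension. Because $\psi$ is a linear isomorphism, $\dim S_{k+1/2}(\Gamma_0(4M))=\dim S_{2k}(\Gamma_0(2M))$, and by the preceding proposition $\dim E=\dim S_{2k}^{\mathrm{old}}(\Gamma_0(2M))$; therefore $\dim S^{-}_{k+1/2}(4M)=\dim S_{2k}^{\mathrm{new}}(\Gamma_0(2M))$. The core step is then to show $\psi\big(S^{-}_{k+1/2}(4M)\big)\subseteq S_{2k}^{\mathrm{new}}(\Gamma_0(2M))$. Let $f\in S^{-}_{k+1/2}(4M)$ be one of the eigenforms above and set $F=\psi(f)$; by Niwa's intertwining relation $T_q(F)=\psi(T_{q^2}(f))$, $F$ is an eigenform for all $T_q$ with $(q,2M)=1$. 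By Atkin--Lehner theory \cite{A-L}, $F$ lies in the isotypic component attached to a single newform $G$ of some level $d\mid 2M$, spanned by the forms $G(\delta z)$. If $d$ were a proper divisor of $2M$ this entire component would lie in $S_{2k}^{\mathrm{old}}(\Gamma_0(2M))=\psi(E)$, forcing $f\in E$ and contradicting $0\neq f\in E^{\perp}$; hence $d=2M$ and $F$ is a constant multiple of the newform $G$. Combining this inclusion with the equality of dimensions gives $\psi\big(S^{-}_{k+1/2}(4M)\big)=S_{2k}^{\mathrm{new}}(\Gamma_0(2M))$, and since $\psi$ intertwines $T_{q^2}$ with $T_q$ this is a Hecke-module isomorphism.

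Finally, the uniqueness statement follows by strong multiplicity one: if two eigenforms in $S^{-}_{k+1/2}(4M)$ share all eigenvalues under the $T_{q^2}$ with $(q,2M)=1$, their images are newforms in $S_{2k}^{\mathrm{new}}(\Gamma_0(2M))$ with identical $T_q$-eigenvalues, hence proportional by \cite{A-L}, and injectivity of $\psi$ transfers the proportionality back. I expect the one genuinely delicate point to be the core inclusion $\psi(S^{-}_{k+1/2}(4M))\subseteq S_{2k}^{\mathrm{new}}(\Gamma_0(2M))$: the shorthand ``$F$ is either old or new'' used at level $4p$ must here be replaced by the isotypic-component formulation above, whose validity rests squarely on multiplicity one together with the already-established identity $\psi(E)=S_{2k}^{\mathrm{old}}(\Gamma_0(2M))$. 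Everything else is a formal transcription of the $4p$ argument.
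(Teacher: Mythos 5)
Your proposal is correct and follows essentially the same route as the paper, which explicitly says the proof is identical to the level $4$ and $4p$ arguments: stability of $E$ under the self-adjoint $T_{q^2}$ gives the eigenbasis of $E^{\perp}$, the Atkin--Lehner dichotomy combined with injectivity of $\psi$ and $\psi(E)=S_{2k}^{\mathrm{old}}(\Gamma_0(2M))$ forces images of eigenforms to be newforms, and dimension count plus strong multiplicity one finishes. Your expansion of the ``old or new'' step via the isotypic-component formulation is a welcome clarification but not a different argument.
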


Let $f\in S^{-}_{k+1/2}(4M)$ be a Hecke eigenform for all the 
operators $T_{q^2}$, $(q,2M)=1$. Then $\psi(f)=F$ is a 
Hecke eigenform in $S^{\mathrm{new}}_{2k}(\Gamma_0(2M))$ for 
all operators $T_q$, $(q,2M)=1$. By \cite{A-L}, for all primes $p$ 
such that $p|M$, $U_p(F)=-p^{k-1}\lambda(p)F$ where 
$\lambda(p)=\pm 1$ and $U_2(F)=-2^{k-1}\lambda(2)F$ 
where $\lambda(2)=\pm 1$.
\begin{prop}
Let $f\in S^{-}_{k+1/2}(4M)$ be a Hecke eigenform for all 
the operators $T_{q^2}$, $q$ prime, $(q,2M)=1$. Then for all 
primes $p$ such that $p|M$
\[U_{p^2}(f)=-p^{k-1}\lambda(p)f\ \text{ and }\ U_{4}(f)=-2^{k-1}\lambda(2)f\]
where $\lambda(p)=\pm 1$ and $\lambda(2)=\pm 1$ are defined as above.
\end{prop}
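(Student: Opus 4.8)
The plan is to run, essentially verbatim, the argument of Proposition~\ref{T:Ueigenvalue}, now with $M$ square-free rather than prime. The only inputs are the two observations on the Shimura lift recalled after Theorem~\ref{T:Niwa}: that $\Sh_t$ intertwines $U_{p^2}$ with $U_p$ (observation $(b)$), and that $\Sh_t(h)=0$ for every positive square-free $t$ forces $h=0$ (observation $(a)$), together with the fact---established in the paragraph immediately preceding this proposition---that $F:=\psi(f)$ is a primitive newform in $S^{\mathrm{new}}_{2k}(\Gamma_0(2M))$ satisfying $U_p(F)=-p^{k-1}\lambda(p)F$ for each $p\mid M$ and $U_2(F)=-2^{k-1}\lambda(2)F$.

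First I would fix a prime $p\mid M$ and set $g:=U_{p^2}(f)$. Applying the intertwining property of the Shimura lift gives $\Sh_t(g)=U_p(\Sh_t(f))$ for every positive square-free $t$. Since $f$ is a $T_{q^2}$-eigenform for all $q$ with $(q,2M)=1$, the form $\Sh_t(f)$ is an eigenform for all the $T_q$ with the same eigenvalues as $F$, so by Atkin--Lehner theory~\cite{A-L} it is a scalar multiple of $F$ (possibly zero). Consequently $U_p(\Sh_t(f))=-p^{k-1}\lambda(p)\Sh_t(f)$, and combining these identities and using linearity of $\Sh_t$ yields
\[\Sh_t\!\left(g+p^{k-1}\lambda(p)f\right)=\Sh_t(g)+p^{k-1}\lambda(p)\Sh_t(f)=0\]
for every square-free $t$. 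By observation $(a)$ the form $g+p^{k-1}\lambda(p)f$ vanishes, i.e.\ $U_{p^2}(f)=-p^{k-1}\lambda(p)f$. The identity $U_4(f)=-2^{k-1}\lambda(2)f$ follows in exactly the same way, replacing $U_{p^2},\,U_p,\,\lambda(p)$ by $U_4,\,U_2,\,\lambda(2)$.

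There is no genuine obstacle here: every ingredient has already been assembled in the square-free setting, and the passage from a single prime to a square-free $M$ changes nothing in the local argument, since $\Sh_t$ commutes with each $U_{p^2}$ simultaneously and the Atkin--Lehner eigenvalues $\lambda(p)$ are read off componentwise from the single newform $F$. The one point worth stating explicitly is that the conclusion holds for \emph{every} $p\mid M$ starting from the same eigenform $f$; this is automatic, because the scalar by which $U_p$ acts on $F$ is governed independently at each prime dividing the level $2M$, so no compatibility condition across distinct primes needs to be verified.
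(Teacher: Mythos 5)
Your proposal is correct and follows essentially the same route as the paper, which simply asserts that the argument of Proposition~\ref{T:Ueigenvalue} (Shimura-lift intertwining $\Sh_t\circ U_{p^2}=U_p\circ\Sh_t$, the fact that each $\Sh_t(f)$ is a scalar multiple of the newform $F$, and the injectivity of the collection $\{\Sh_t\}_t$) carries over verbatim from $4p$ to $4M$. You have in fact written the signs more carefully than the paper's own proof, which contains the typo $\Sh_t(g-p^{k-1}\lambda(p)f)=0$ where $\Sh_t(g+p^{k-1}\lambda(p)f)=0$ is meant.
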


Following \cite[Theorem 1.9]{Shimura} we have 
\begin{cor}
Let $f = \sum_{n=0}^{\infty} a_nq^n \in S^{-}_{k+1/2}(4M)$ be a Hecke 
eigenform for all Hecke operators, i.e, $T_{q^2}(f) = \omega_q f$ for 
all primes $(q, 2M)=1$ and $U_{p^2}(f)=\omega_pf$ for all primes $p\mid 2M$. 
Let $F = \sum_{n=0}^{\infty} A_nq^n \in S^{\mathrm{new}}_{2k}(\Gamma_0(2M))$ 
be the unique normalized primitive form determined by $f$, i.e. $A_p = \omega_p$ 
for all primes $p$. Then for a fundamental discriminant $D$ 
such that $(-1)^k D>0$,
\[\mathrm{L}\left(s-k+1,\kro{D}{\cdot}\right)
\sum_{n=1}^{\infty} a_{|D|n^2}\ n^{-s} 
=a(|D|)\sum_{n=1}^{\infty} A_n n^{-s}.\]
\end{cor}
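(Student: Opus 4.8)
The plan is to identify the Shimura lift $\Sh_{|D|}(f)$ with the scalar multiple $a(|D|)\,F$ of the primitive newform, and then to read off the asserted identity by expanding the defining series of $\Sh_{|D|}(f)$ as a Dirichlet convolution. The cited Theorem~1.9 of \cite{Shimura} supplies the underlying factorization of the Dirichlet series of a Shimura lift; the work specific to our setting is to pin down \emph{which} integral-weight form the lift produces and to normalize it correctly against $F$.

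First I would prove that $\Sh_{|D|}(f)=a(|D|)\,F$. By the definition recalled above, $\Sh_{|D|}$ carries $S_{k+1/2}(\Gamma_0(4M))$ into $S_{2k}(\Gamma_0(2M))$, and by observation $(b)$ it commutes with every $T_{q^2}$ for $(q,2M)=1$ and with every $U_{p^2}$ for $p\mid 2M$. Hence $\Sh_{|D|}(f)\in S_{2k}(\Gamma_0(2M))$ is a simultaneous eigenform whose $T_q$-eigenvalue is $\omega_q$ and whose $U_p$-eigenvalue is $\omega_p$, i.e.\ it carries exactly the eigensystem of $F$. Since the preceding theorem realizes $F=\psi(f)$ as a primitive newform in $S^{\mathrm{new}}_{2k}(\Gamma_0(2M))$, multiplicity one \cite{A-L} forces $\Sh_{|D|}(f)=c\,F$ for a scalar $c$. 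Comparing the coefficient of $q^1$ --- which is $a(|D|)$ for $\Sh_{|D|}(f)$ (the $n=1$ term of the defining series, since $\kro{D}{1}=1$) and $c\,A_1=c$ for $c\,F$ --- gives $c=a(|D|)$. If $a(|D|)=0$ the lift vanishes and both sides of the claimed identity are zero, so that degenerate case needs no separate argument.

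Next I would compute the Dirichlet series of $\Sh_{|D|}(f)$ directly. Write $\Sh_{|D|}(f)=\sum_n b_n q^n$. The hypothesis $(-1)^kD>0$ with $D$ a fundamental discriminant gives $(-1)^k|D|=D$ (checked separately for $k$ even and $k$ odd), so $\kro{-1}{d}^k\kro{|D|}{d}=\kro{D}{d}$ and therefore
\[
b_n=\sum_{d\mid n}\kro{D}{d}\,d^{k-1}\,a_{|D|(n/d)^2}.
\]
Setting $e=n/d$ exhibits $b_n$ as the Dirichlet convolution of $d\mapsto\kro{D}{d}d^{k-1}$ with $e\mapsto a_{|D|e^2}$, so that
\[
\sum_{n=1}^{\infty}b_n\,n^{-s}
=\mathrm{L}\!\left(s-k+1,\kro{D}{\cdot}\right)\sum_{n=1}^{\infty}a_{|D|n^2}\,n^{-s}.
\]
Combining this with $\Sh_{|D|}(f)=a(|D|)\,F=a(|D|)\sum_n A_nq^n$, which gives $\sum_n b_n n^{-s}=a(|D|)\sum_n A_n n^{-s}$, yields the stated formula.

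The two delicate points are the eigensystem identification and the sign bookkeeping. The identification $\Sh_{|D|}(f)=c\,F$ rests on knowing that $F$ is primitive and that the simultaneous eigenspace for $\{T_{q^2}\}_{(q,2M)=1}$ and $\{U_{p^2}\}_{p\mid 2M}$ attached to its eigensystem inside $S_{2k}(\Gamma_0(2M))$ is one-dimensional; both are furnished by the preceding theorem together with \cite{A-L}. The sign computation $(-1)^k|D|=D$, which collapses the product $\kro{-1}{d}^k\kro{|D|}{d}$ appearing in the definition of $\Sh_{|D|}$ into the single Kronecker character $\kro{D}{\cdot}$, is precisely where the fundamental-discriminant hypothesis enters and is the one place a parity check is unavoidable.
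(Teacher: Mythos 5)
Your proposal is correct and follows essentially the same route the paper intends: the paper's (unwritten) proof is precisely to invoke the identification $\Sh_t(f)=c\,F$ established in the preceding paragraph via \cite{A-L}, pin down $c=a(|D|)$ from the first coefficient, and read off the Dirichlet-series factorization from the definition of $\Sh_{|D|}$ as in \cite[Theorem 1.9]{Shimura}. Your only departure is that you unfold the Dirichlet convolution and the sign identity $\kro{-1}{d}^k\kro{|D|}{d}=\kro{D}{d}$ explicitly rather than citing Shimura for them, which is a harmless (and arguably clearer) expansion of the same argument.
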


We finally give the characterization of our minus space. The proofs 
of the following proposition and theorem are as before. 
\begin{prop}
Let $f\in S^{-}_{k+1/2}(4M)$. Then for every prime $p$ dividing $M$ we have
$\widetilde{Q}_p(f) = -f = \widetilde{Q}'_p(f)$ 
and 
$\widetilde{Q}_2(f) = -f = \widetilde{Q}'_2(f)$.
\end{prop}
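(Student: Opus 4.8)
The plan is to follow the proof of Proposition~\ref{T:Qeigenvalues} from the level $4p$ case essentially verbatim, replacing the inputs by their level-$4M$ analogues. First I would reduce to the case in which $f$ is a simultaneous eigenform for all the Hecke operators $T_{q^2}$ with $(q,2M)=1$. By the Theorem above, $S^{-}_{k+1/2}(4M)$ has a basis of such eigenforms; since each of $\widetilde{Q}_p$, $\widetilde{Q}'_p$, $\widetilde{Q}_2$, $\widetilde{Q}'_2$ is additive, it then suffices to verify the four eigenvalue identities for a single such $f$ and conclude for general $f\in S^{-}_{k+1/2}(4M)$ by linearity.

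Fix such an eigenform $f$ and a prime $p\mid M$. The two ingredients I need are the actions of $U_{p^2}$ and of $\widetilde{W}_{p^2}$ on $f$. The first is already recorded: by the preceding Proposition, $U_{p^2}(f)=-p^{k-1}\lambda(p)f$ with $\lambda(p)=\pm1$. For the second I would run the argument of Corollary~\ref{C:Weigenvalue}: since $\widetilde{W}_{p^2}$ commutes with every $T_{q^2}$, the form $g:=\widetilde{W}_{p^2}f$ is again a $T_{q^2}$-eigenform sharing the eigenvalues of $f$, so $\psi(g)$ is a $T_q$-eigenform with the same eigenvalues as the newform $F:=\psi(f)\in S^{\mathrm{new}}_{2k}(\Gamma_0(2M))$ (the preceding Proposition). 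Atkin--Lehner multiplicity one forces $\psi(g)$ to be a scalar multiple of $F$, and injectivity of $\psi$ then gives $g=\beta(p)f$; since $\widetilde{W}_{p^2}$ is an involution (Corollary~\ref{cor:rel4}$(1)$), $\beta(p)=\pm1$. The same reasoning yields $\widetilde{W}_4f=\beta(2)f$ and $U_4(f)=-2^{k-1}\lambda(2)f$ with $\beta(2),\lambda(2)=\pm1$.

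With these in hand I would simply substitute into the factorizations $\widetilde{Q}_p=\kro{-1}{p}^k p^{1-k}U_{p^2}\widetilde{W}_{p^2}$ (Corollary~\ref{cor:rel4}$(3)$) and $\widetilde{Q}_2=2^{1-k}U_4\widetilde{W}_4$, obtaining $\widetilde{Q}_p(f)=-\kro{-1}{p}^k\beta(p)\lambda(p)\,f$ and $\widetilde{Q}_2(f)=-\beta(2)\lambda(2)\,f$, so in each case the eigenvalue lies in $\{\pm1\}$. However, by Corollary~\ref{cor:rel4}$(2)$ the operator $\widetilde{Q}_p$ has only the eigenvalues $p$ and $-1$, and likewise $\widetilde{Q}_2$ has only the eigenvalues $2$ and $-1$; as $\pm1$ can be neither $p$ nor $2$, the eigenvalue must be $-1$, giving $\widetilde{Q}_p(f)=-f$ and $\widetilde{Q}_2(f)=-f$. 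Finally, using $\widetilde{Q}'_p=\widetilde{W}_{p^2}\widetilde{Q}_p\widetilde{W}_{p^2}$ together with $\widetilde{W}_{p^2}f=\beta(p)f$ and $\beta(p)^2=1$ gives $\widetilde{Q}'_p(f)=-f$, and identically $\widetilde{Q}'_2(f)=-f$.

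Once the inputs are assembled the argument is pure bookkeeping, so the only substantive step --- and thus the main obstacle --- is the $\widetilde{W}$-eigenform property $\widetilde{W}_{p^2}f=\pm f$ (and $\widetilde{W}_4f=\pm f$). This is exactly where the newform theory enters: it rests on $\psi(f)$ being a genuine newform in $S^{\mathrm{new}}_{2k}(\Gamma_0(2M))$, on multiplicity one for newforms, and on the injectivity of the Niwa map $\psi$, precisely as in Corollary~\ref{C:Weigenvalue}. Everything else transfers from the level-$4p$ case without change.
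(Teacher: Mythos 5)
Your proposal is correct and follows essentially the same route as the paper: the paper proves the level-$4p$ case by combining the factorization $\widetilde{Q}_p=\kro{-1}{p}^k p^{1-k}U_{p^2}\widetilde{W}_{p^2}$ with the $\pm1$ eigenvalues of $\widetilde{W}_{p^2}$ and the $U_{p^2}$-eigenvalue $-p^{k-1}\lambda(p)$ to force the $\widetilde{Q}_p$-eigenvalue into $\{\pm1\}$, then rules out $p$ (resp. $2$), and for level $4M$ it simply states that the proof is "as before." Your write-up supplies exactly that transferred argument, including the correct identification of the $\widetilde{W}$-eigenform property via multiplicity one as the only substantive input.
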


\begin{thm}
Let $f\in S_{k+1/2}(4M)$. Then $f\in S^{-}_{k+1/2}(4M)$ if and only if
$\widetilde{Q}_p(f) = -f = \widetilde{Q}'_p(f)$ for every prime 
$p$ dividing $M$ and 
$\widetilde{Q}_2(f) = -f = \widetilde{Q}'_2(f)$.
\end{thm}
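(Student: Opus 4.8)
The plan is to prove the two implications separately, following the level $4p$ argument verbatim across the larger direct-sum decomposition. The forward implication is immediate: if $f \in S^{-}_{k+1/2}(4M)$, then the preceding proposition already records that $\widetilde{Q}_p(f) = -f = \widetilde{Q}'_p(f)$ for every prime $p \mid M$ together with $\widetilde{Q}_2(f) = -f = \widetilde{Q}'_2(f)$, so nothing further is required in that direction.

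For the converse, suppose $f \in S_{k+1/2}(4M)$ lies in the common $-1$ eigenspace of all the operators $\widetilde{Q}_p$, $\widetilde{Q}'_p$ (for $p \mid M$) and $\widetilde{Q}_2$, $\widetilde{Q}'_2$. Since $S^{-}_{k+1/2}(4M) = E^{\perp}$, it suffices to show that $f$ is orthogonal to each summand of $E = \sum_{i=1}^{k} B_i \oplus S^{+,\mathrm{new}}_{k+1/2}(4M) \oplus \widetilde{W}_4 S^{+,\mathrm{new}}_{k+1/2}(4M)$. The mechanism throughout is uniform: each summand is an eigenspace, with eigenvalue $p$ or $2$, of one of the four self-adjoint operators, whereas $f$ has eigenvalue $-1$ for that same operator, and self-adjointness forces the Petersson pairing to vanish.

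Concretely I would argue summand by summand. For $g \in S_{k+1/2}(\Gamma_0(4M_i)) = S_{k+1/2}(\Gamma_0(4M/p_i))$, Corollary~\ref{cor:rel4}$(4)$ gives $\widetilde{Q}_{p_i}(g) = p_i g$, whence
\[-\langle f, g \rangle = \langle \widetilde{Q}_{p_i} f, g \rangle = \langle f, \widetilde{Q}_{p_i} g \rangle = p_i \langle f, g \rangle,\]
so $\langle f, g \rangle = 0$. Conjugating by the involution $\widetilde{W}_{p_i^2}$ and using $\widetilde{Q}'_{p_i} = \widetilde{W}_{p_i^2}\widetilde{Q}_{p_i}\widetilde{W}_{p_i^2}$ shows that $\widetilde{Q}'_{p_i}$ acts by $p_i$ on $\widetilde{W}_{p_i^2} S_{k+1/2}(\Gamma_0(4M_i))$, so the identical computation with $\widetilde{Q}'_{p_i}$ replacing $\widetilde{Q}_{p_i}$ gives orthogonality to the second half of $B_i$. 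For the plus-new pieces I would invoke that Kohnen's plus space is the $2$-eigenspace of $\widetilde{Q}'_2$: on $S^{+,\mathrm{new}}_{k+1/2}(4M)$ the operator $\widetilde{Q}'_2$ acts by $2$, and on $\widetilde{W}_4 S^{+,\mathrm{new}}_{k+1/2}(4M)$ the conjugate $\widetilde{Q}_2 = \widetilde{W}_4 \widetilde{Q}'_2 \widetilde{W}_4$ acts by $2$ (using $\widetilde{W}_4^2 = 1$); self-adjointness again annihilates both pairings. Assembling these gives $f \perp E$, that is, $f \in S^{-}_{k+1/2}(4M)$.

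The argument is essentially routine once the earlier structural results are in place, so there is no serious obstacle; the genuine engine of the proof is simply having the eigenvalue bookkeeping correctly installed. The three ingredients doing all the work are Corollary~\ref{cor:rel4}$(4)$ (forms of level dividing $4M/p$ lie in the $p$-eigenspace of $\widetilde{Q}_p$), the self-adjointness of all four families $\widetilde{Q}_p, \widetilde{Q}'_p, \widetilde{Q}_2, \widetilde{Q}'_2$, and the identification of Kohnen's plus space as the $\widetilde{Q}'_2$-eigenspace for eigenvalue $2$. Since each of these is already established, the characterization follows by transporting the level $4p$ proof across the direct-sum decomposition of $E$.
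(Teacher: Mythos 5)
Your proof is correct and follows essentially the same route as the paper: the forward direction is the preceding proposition, and the converse is the level-$4p$ orthogonality argument (self-adjointness plus the eigenvalue $p$ or $2$ versus $-1$ discrepancy on each summand of $E$) transported across the larger decomposition, which is exactly what the paper indicates by saying the proof is ``as before.''
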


\begin{remark}
We note that although Kohnen has a strong multiplicity one
property on his plus new space $S^{+,\text{new}}_{k+1/2}(4M)$, 
this no longer holds for the full space.   
That is, for a primitive Hecke eigenform 
$F \in S^{\text{new}}_{2k}(\Gamma_0(M))$ there are two 
different Hecke eigenforms in $S_{k+1/2}(\Gamma_0(4M))$ that 
correspond to it; one in the Kohnen plus new space and one 
in the "conjugate" space $\widetilde{W}_4S^{+,\mathrm{new}}_{k+1/2}(4M)$.
On the other hand the minus space $S^{-}_{k+1/2}(4M)$ has 
strong multiplicity one property in the full space, in particular, 
for a primitive Hecke eigenform 
$F \in S^{\text{new}}_{2k}(\Gamma_0(2M))$ there is precisely 
one Hecke eigenform in $S_{k+1/2}(\Gamma_0(4M))$ that corresponds 
to it and this form is in the minus space.
\end{remark}

We summarize the above remark as follows.
\begin{thm}
Let $f_1$ and $f_2$ be Hecke eigenforms in $S_{k+1/2}(\Gamma_0(4M))$ 
with the same eigenvalues for all $T_{q^2}$, $(q,2M)=1$. If $f_1$ 
is a nonzero element of the minus space $S^{-}_{k+1/2}(4M)$ then 
$f_2$ is a scalar multiple of $f_1$.
\end{thm}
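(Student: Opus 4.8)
The plan is to transport the whole question down to integral weight via the Niwa isomorphism $\psi$ and then invoke strong multiplicity one for newforms. First I would record what $\psi$ does to $f_1$: by the proposition preceding this theorem, $\psi(f_1)$ is a nonzero scalar multiple of a primitive Hecke eigenform $F\in S^{\mathrm{new}}_{2k}(\Gamma_0(2M))$ of level \emph{exactly} $2M$; write $a_q=a_q(F)$ for its Hecke eigenvalues. By hypothesis $f_1$ and $f_2$ share the eigenvalue $\omega_q$ under every $T_{q^2}$ with $(q,2M)=1$, and $\psi$ intertwines $T_{q^2}$ with $T_q$ (Theorem~\ref{T:Niwa}), so $a_q=\omega_q$ and both $\psi(f_1)$ and $\psi(f_2)$ lie in the simultaneous eigenspace
\[
V := \{\, G \in S_{2k}(\Gamma_0(2M)) : T_q(G) = a_q\,G \ \text{ for all } (q,2M)=1 \,\}.
\]

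The heart of the argument, and the step I expect to be the main obstacle, is to show that $V$ is one-dimensional. Here I would appeal to Atkin--Lehner theory \cite{A-L}. Decomposing $S_{2k}(\Gamma_0(2M))$ as the direct sum of the shift spaces $\Span\{V(d)g : d \mid 2M/N_g\}$, taken over newforms $g$ of level $N_g \mid 2M$, each generator $V(d)g$ is a $T_q$-eigenform with eigenvalue $a_q(g)$ for $(q,2M)=1$. Thus $V$ is the sum of those shift spaces for which $a_q(g)=a_q$ for all $(q,2M)=1$, i.e. for all but the finitely many primes dividing $2M$. Strong multiplicity one forces such a $g$ to equal $F$, hence $N_g = 2M$, so the only admissible shift is $d=1$ and $V = \C\,F$.

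With $\dim V = 1$ in hand the conclusion is immediate: $\psi(f_2) \in V = \C\,F = \C\,\psi(f_1)$, and since $\psi$ is a vector-space isomorphism this yields $f_2 \in \C\,f_1$, as required. (If $f_2=0$ the statement is trivial, so one may assume $f_2\ne 0$, whence $\psi(f_2)\ne 0$ and the scalar is nonzero.) I would emphasize that no decomposition of $f_2$ against $S^{-}_{k+1/2}(4M)\oplus E$ is needed: the argument never uses that $f_2$ itself lies in the minus space, only that $f_1$ does — which is exactly what makes $\psi(f_1)$ a newform of level precisely $2M$ and hence forces $V$ to be one-dimensional. This is the structural reason multiplicity one succeeds for the minus space while failing for the plus space, where $\psi$ produces newforms of level $M$ whose $V(2)$-shift also sits inside level $2M$, so that the corresponding eigenspace is two-dimensional.
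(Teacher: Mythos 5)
Your proof is correct and follows essentially the same route as the paper: the paper deduces this theorem from the preceding remark by sending everything through $\psi$, noting that $\psi(f_1)$ is a primitive newform of level exactly $2M$, and invoking Atkin--Lehner strong multiplicity one to conclude that $\psi(f_2)$ must be a scalar multiple of it. The only difference is that you spell out the one-dimensionality of the simultaneous $T_q$-eigenspace (no $V(d)$-shifts survive because the level is exact), a step the paper leaves to the citation of \cite{A-L}.
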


\subsection{Some examples}
We complete this section by giving two examples. 
\begin{ex}
The space $S_{3/2}(\Gamma_0(28))$ is one dimensional and is
spanned by 
\[f= q - q^2 - q^4 + q^7 + q^8 - q^9 + q^{14} - 2q^{15} + q^{16} + 
3q^{18} - 2q^{21} + \ldots\]
By Shimura decomposition (for notation see \cite{Purkait2})
\[S_{3/2}(\Gamma_0(28)) = 
\bigoplus_{\substack{F \in S^{\mathrm{new}}_{2}(\Gamma_0(M))\\ \text{prim.,}\ M \mid 14}} 
S_{3/2}(28,F) = S_{3/2}(28,F_{14})\]
as there are no primitive Hecke eigenforms of weight $2$ at level $1$, $2$, $7$ and 
$F_{14}
\in S^{\mathrm{new}}_{2}(\Gamma_0(14))$ is the only primitive Hecke eigenform at 
level $14$. 
In particular, we have $S_{3/2}^{+}(28)=\{0\}$ and 
$S_{3/2}^{-}(28)=S_{3/2}(\Gamma_0(28))= \langle f \rangle$.
\end{ex}

\begin{ex}
The space $S_{17/2}(\Gamma_0(12))$ is $13$-dimensional. 
We first give Shimura decomposition of this 
space \cite{Purkait2}. We have seven primitive Hecke eigenforms of weight $16$ 
and level dividing $6$, namely, $F_1$ of level $1$, $G_2$ of level $2$, 
$H_3$, $K_3$ of level $3$ each and $L_6$, $M_6$, $N_6$ each of level $6$. 
We have 
\begin{equation*}
\begin{split}
S_{17/2}(\Gamma_0(12)) = S_{17/2} & (12, F_1) \oplus S_{17/2}(12, G_2)
\oplus S_{17/2}(12, H_3)\oplus S_{17/2}(12, K_3)\\
&\oplus S_{17/2}(12, L_6)
\oplus S_{17/2}(12, M_6)\oplus S_{17/2}(12, N_6),  
\end{split}
\end{equation*}
where $S_{17/2}(12, F_1)$ is $4$-dimensional space spanned by
\begin{equation*}
\begin{split}
f_1 &= q + 88q^4 + 513q^9 + 3024q^{12} - 4368q^{13} - 13760q^{16} +
    33264q^{21} + \cdots\\
f_2 &= 11q^2 + 64q^4 + 232q^7 - 1408q^8 + 4608q^9 + 190q^{10} 
-6578q^{11} + \cdots\\
f_3 &= 9q^3 - 64q^4 + 189q^6 - 232q^7 - 190q^{10} + 1152q^{12} 
 -3328q^{13} +\cdots\\
f_4 &= q^5 - 11q^8 + 18q^9 - 9q^{12} - 116q^{17} + 344q^{20} - 99q^{21} -
    189q^{24} +\cdots; 
\end{split}
\end{equation*}
the space $S_{17/2}(12, G_2)$ is $2$-dimensional and is spanned by
\begin{equation*}
\begin{split}
g_1 &= q + 21q^3 - 128q^4 - 609q^6 + 3192q^7 + 5313q^9 - 12810q^{10}
     + \cdots \\
g_2 &= 3q^2 + 7q^3 - 203q^6 - 384q^8 - 416q^9 + 2706q^{11} -
    896q^{12} + \cdots;
\end{split}
\end{equation*}
the space $S_{17/2}(12, H_3)$ is $2$-dimensional spanned by
\begin{equation*}
\begin{split}
h_1 &= q^5 + 7q^8 - 27q^{12} - 80q^{17} + 56q^{20} + 189q^{21} + 
81q^{24} + 231q^{29} + \cdots\\
h_2 &= 7q^2 - 27q^3 + 81q^6 - 896q^8 + 854q^{11} + 3456q^{12} -
    1876q^{14} +\cdots;
\end{split}
\end{equation*}
the space $S_{17/2}(12, K_3)$ is $2$-dimensional spanned by
\begin{equation*}
\begin{split}
k_1 &= q - 362q^4 - 2187q^9 - 11826q^{12} + 19032q^{13} + 51940q^{16} 
+\cdots \\
k_2 &= 1971q^3 + 13184q^4 + 31266q^6 - 20158q^7 + 271340q^{10}
    + \cdots;
\end{split}
\end{equation*}
the last three summands are $1$-dimensional each with 
$S_{17/2}(12, L_6)$ spanned by
\[l_1 =
13q^2 + 129q^3 + 736q^5 + 1323q^6 + 1664q^8 + 5918q^{11} +16512q^{12}
+ \cdots; \]
the space $S_{17/2}(12, M_6)$ spanned by
\[m_1 = q^3 - 18q^6 - 42q^7 - 12q^{10} + 128q^{12} + 384q^{13} - 126q^{15} -
    1074q^{19} + 896q^{21} + \cdots;\]
and the space $S_{17/2}(12, N_6)$ spanned by
\[n_1 = 16q - 1539q^3 - 2048q^4 - 5994q^6 - 50178q^7 - 34992q^9 -
    2460q^{10} + \cdots.\]
We can also check (using bound in \cite{Kumar}) 
that the Kohnen's plus space $S_{17/2}^{+}(12)$ is $4$-dimensional.  
Indeed 
\[S_{17/2}^{+}(12) = \langle f_1,\ f_4,\ h_1,\ k_1\rangle = 
 S_{17/2}^{+}(4) \oplus \widetilde{W}_9S_{17/2}^{+}(4) \oplus 
 S_{17/2}^{+,\mathrm{new}}(12)
\]
with 
$S_{17/2}^{+}(4)=\langle f_1- 336f_4 \rangle$ and 
$S_{17/2}^{+,\mathrm{new}}(12)=\langle h_1,\ k_1 \rangle$. 
Further from the Shimura decomposition of $S_{17/2}(\Gamma_0(4))$ 
we get $A_{17/2}^{+}(4) = \langle f_2 + 2f_3 -256f_4 \rangle$ and 
$S_{17/2}^{-}(4) = \langle g_1 + 3g_2 \rangle$. Thus we have  
\[S_{17/2}(12, F_1) = R \oplus \widetilde{W}_9R\ \text{where}\ 
R = S_{17/2}^{+}(4) \oplus A_{17/2}^{+}(4),\]
\[S_{17/2}(12, G_2) = S_{17/2}^{-}(4) \oplus \widetilde{W}_9S_{17/2}^{-}(4),\]
\[S_{17/2}(12, H_3) \oplus S_{17/2}(12, K_3) = 
S_{17/2}^{+,\mathrm{new}}(12) \oplus \widetilde{W}_4S_{17/2}^{+,\mathrm{new}}(12)\]
and 
\[S_{17/2}(12, L_6) \oplus 
S_{17/2}(12, M_6) \oplus S_{17/2}(12, N_6) = \langle l_1,\ m_1,\ n_1\rangle 
= S_{17/2}^{-}(12).\]
\end{ex}

\begin{remark}
$(i)$ In general,
$S_{k+1/2}^{-}(4M) = \bigoplus_{F} S_{k+1/2}(4M,F)$
where $F$ runs through all primitive Hecke eigenforms of weight $2k$ and level $2M$.\\
$(ii)$ The Kohnen plus space is given by well-known Fourier coefficient 
condition. But we do not expect any such Fourier 
coefficient condition for forms in our minus space. 
This is also evident from the above examples. 
\end{remark}

\appendix
\section{Some observations on cocycle multiplication}

Let $p$ denotes any prime.
In this appendix we note down some useful observations on multiplication 
in $\DSL_2(Q_p)$ by cocycle $\sigma_p$.

Recall the Hilbert symbol $\hs{\cdot,\cdot}$ 
defined on $\Qs_p \times \Qs_p$. For an odd prime $p$ it can be given by the 
formula: For $a$, $b$ coprime to $p$,
\[ \hs{p^s a, p^t b} = \kro{-1}{p}^{st} \kro{a}{p}^t \kro{b}{p}^s.\]
Thus $\hs{p,p} = \kro{-1}{p}$ and $\hs{-p,u} = \hs{p,u} = \kro{u}{p}$ where 
$u$ is a unit in $\Z_p$. For an even prime, if $a$, $b$ are odd 
\[ \hst{2^s a, 2^t b} = (-1)^{\frac{(a-1)(b-1)}{4}} \kro{2}{|a|}^t \kro{2}{|b|}^s.\]

Let $A = \left(\begin{matrix} a & b\\ c & d \end{matrix}\right) \in \SL_2(\Q_p)$.
For $(A,\epsilon_1) \in \DSL_2(\Q_p)$, $(A,\ \epsilon_1)^{-1} 
= (A^{-1},\ \epsilon_1 \sigma_p(A,A^{-1}))$ where
\begin{itemize}
 \item[(i)] If $c = 0$ then $\sigma_p(A, A^{-1}) = \hs{a,a} = \hs{d,d}$.
 \item[(ii)] If $c \ne 0$ and $\ord_p(c)$ is even then $\sigma_p(A, A^{-1})=1$.
 \item[(iii)] If $c \ne 0$ and $\ord_p(c)$ is odd then 
 \[\sigma_p(A, A^{-1}) = 
  \begin{cases}
   \hs{c,d} \hs{-c, a} & \text{if $d \ne 0,\ a \ne 0$}\\
   \hs{c,d}  & \text{if $d \ne 0,\ a = 0$}\\
   \hs{-c, a} & \text{if $d = 0,\ a \ne 0$}\\
   1 & \text{if $d = 0,\ a = 0$}.\\
  \end{cases}
  \]
\end{itemize}
In particular if $A \in \{x(p^n),\ y(p^n),\ w(p^n)\}_{n \in \Z}$ 
then $\sigma_p(A, A^{-1})=1$. For $A = h(p^n)$ with $n\in \Z$, 
if $p=2$ then $\sigma_p(A, A^{-1})=1$, however if $p$ is 
an odd prime then
\[\sigma_p(A, A^{-1})=\begin{cases}
                       1 & \text{if $n$ even},\\
                       \kro{-1}{p} & \text{else}.
                      \end{cases}
\]

Let $(A,\epsilon_1),\  (B,\ \epsilon_2) \in \DSL_2(\Q_p)$.
The following lemmas can be easily obtained using cocycle formula. 
\begin{lem} We have 
$[(B,\ \epsilon_2)^{-1},\ (A,\ \epsilon_1)^{-1}] = (B^{-1}A^{-1}BA, \xi)$
where
$\xi = \sigma_p(A, A^{-1})\sigma_p(B, B^{-1}) \sigma_p(B,A) 
\sigma_p(A^{-1},BA) \sigma_p(B^{-1},A^{-1}BA)$.
\end{lem}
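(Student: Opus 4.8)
The plan is to prove the lemma by a direct computation in the group $\DSL_2(\Q_p) = \SL_2(\Q_p) \times \mu_2$, using only the defining group law $(g,\epsilon)(h,\delta) = (gh,\ \epsilon\delta\,\sigma_p(g,h))$ together with the formula for inverses. First I would record that for any $(A,\epsilon_1) \in \DSL_2(\Q_p)$ one has $(A,\epsilon_1)^{-1} = (A^{-1},\ \epsilon_1\sigma_p(A,A^{-1}))$; this is immediate from the group law and the fact that $\sigma_p$ takes values in $\mu_2$, since then the product $(A,\epsilon_1)(A^{-1},\epsilon_1\sigma_p(A,A^{-1}))$ collapses to $(I,1)$. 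Writing $\tilde{A} = (A,\epsilon_1)$ and $\tilde{B} = (B,\epsilon_2)$, the commutator in question is $[\tilde{B}^{-1},\tilde{A}^{-1}] = \tilde{B}^{-1}\tilde{A}^{-1}\tilde{B}\tilde{A}$, whose first coordinate is visibly $B^{-1}A^{-1}BA$, matching the claim; it remains only to track the $\mu_2$-component.

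The heart of the argument is to form this product in the grouping $\tilde{B}^{-1}\bigl(\tilde{A}^{-1}(\tilde{B}\tilde{A})\bigr)$, building up from the right. The innermost product $\tilde{B}\tilde{A}$ contributes the factor $\sigma_p(B,A)$. Multiplying on the left by $\tilde{A}^{-1}$ then contributes $\sigma_p(A,A^{-1})$ from the inverse formula and $\sigma_p(A^{-1},BA)$ from the group law, while the two copies of $\epsilon_1$ cancel since $\epsilon_1^2 = 1$. Finally multiplying by $\tilde{B}^{-1}$ contributes $\sigma_p(B,B^{-1})$ and $\sigma_p(B^{-1},A^{-1}BA)$, with $\epsilon_2^2 = 1$ removing the remaining sign. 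Collecting these five factors---and using that the values of $\sigma_p$ lie in $\mu_2$ and hence commute, so their order is immaterial---yields
\[
\xi = \sigma_p(A,A^{-1})\sigma_p(B,B^{-1})\sigma_p(B,A)\sigma_p(A^{-1},BA)\sigma_p(B^{-1},A^{-1}BA),
\]
which is precisely the asserted formula.

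Since everything reduces to repeated application of the group law, there is no genuine difficulty here; the entire content is careful bookkeeping of the cocycle factors. The one point worth flagging is that the precise shape of $\xi$ depends on the bracketing one chooses: a different order of association would a priori produce a superficially different product of five values of $\sigma_p$, and the equality of all such expressions is exactly the $2$-cocycle identity $\sigma_p(g,h)\sigma_p(gh,k) = \sigma_p(h,k)\sigma_p(g,hk)$ that encodes associativity of the extension. The virtue of the right-to-left grouping $\tilde{B}^{-1}(\tilde{A}^{-1}(\tilde{B}\tilde{A}))$ is that it reads off the stated form directly, so no cocycle manipulation is needed at all.
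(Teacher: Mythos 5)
Your proposal is correct and follows essentially the same route as the paper's own (omitted) computation: expand the inverses via $(A,\epsilon_1)^{-1}=(A^{-1},\epsilon_1\sigma_p(A,A^{-1}))$, then multiply out $\tilde{B}^{-1}\tilde{A}^{-1}\tilde{B}\tilde{A}$ from the right, collecting one cocycle factor at each step and cancelling $\epsilon_1^2=\epsilon_2^2=1$. Your closing remark about the bracketing and the cocycle identity is a nice observation but not needed; the argument is complete as stated.
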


\begin{lem}\label{lem1:sigma}
The $\sigma_p$-factor ($\xi$ factor above) of 
$[(B,\ \epsilon_2)^{-1},\ (A,\ \epsilon_1)^{-1}]$ equals
\begin{equation*}
\begin{split}
&\hsb{\tau(B),\ \tau(B^{-1})} \hsb{\tau(A),\ \tau(A^{-1})} \\ 
& \hsb{\tau(BA)\tau(B),\ \tau(BA)\tau(A)} \hsb{\tau(A^{-1}BA)\tau(A^{-1}),\ \tau(A^{-1}BA)\tau(BA)} \\
& \hsb{\tau(B^{-1}A^{-1}BA)\tau(B^{-1}),\ \tau(B^{-1}A^{-1}BA)\tau(A^{-1}BA)}s_p(B^{-1}A^{-1}BA).
\end{split}
\end{equation*}
\end{lem}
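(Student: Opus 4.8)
The plan is to start from the preceding lemma, which already identifies the $\sigma_p$-factor $\xi$ of the commutator $[(B,\epsilon_2)^{-1},(A,\epsilon_1)^{-1}]$ as the five-fold product
\[
\xi = \sigma_p(A,A^{-1})\,\sigma_p(B,B^{-1})\,\sigma_p(B,A)\,\sigma_p(A^{-1},BA)\,\sigma_p(B^{-1},A^{-1}BA),
\]
and then to substitute the defining cocycle formula $\sigma_p(g,h) = \hs{\tau(gh)\tau(g),\,\tau(gh)\tau(h)}\,s_p(g)s_p(h)s_p(gh)$ into each of the five factors. First I would treat the two self-inverse factors $\sigma_p(A,A^{-1})$ and $\sigma_p(B,B^{-1})$: in each case the relevant product $gh$ is the identity matrix $I$, whose lower-left entry vanishes, so $\tau(I)=1$ and $s_p(I)=1$. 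Hence these two factors collapse to $\hs{\tau(A),\tau(A^{-1})}\,s_p(A)s_p(A^{-1})$ and $\hs{\tau(B),\tau(B^{-1})}\,s_p(B)s_p(B^{-1})$ respectively, producing precisely the first two Hilbert symbols in the claimed expression (their order in the statement is immaterial, since each Hilbert symbol is a $\pm1$ scalar and the product is commutative).

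For the remaining three factors the products $BA$, $A^{-1}BA$ and $B^{-1}A^{-1}BA$ need not be trivial, so I would write each one out verbatim from the definition: $\sigma_p(B,A)$ yields $\hs{\tau(BA)\tau(B),\tau(BA)\tau(A)}\,s_p(B)s_p(A)s_p(BA)$; $\sigma_p(A^{-1},BA)$ yields $\hs{\tau(A^{-1}BA)\tau(A^{-1}),\tau(A^{-1}BA)\tau(BA)}\,s_p(A^{-1})s_p(BA)s_p(A^{-1}BA)$; and $\sigma_p(B^{-1},A^{-1}BA)$ yields the last Hilbert symbol together with $s_p(B^{-1})s_p(A^{-1}BA)s_p(B^{-1}A^{-1}BA)$. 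The five Hilbert symbols read off in this way are exactly the five appearing in the statement, so the only point still requiring verification is that the accumulated $s_p$-factors collapse to the single surviving factor $s_p(B^{-1}A^{-1}BA)$.

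This $s_p$-bookkeeping is the crux of the argument, and I would settle it by tabulating the multiplicity of each value across all five expanded factors. Gathering the terms, the arguments $A$, $A^{-1}$, $B$, $B^{-1}$, $BA$ and $A^{-1}BA$ each occur in exactly two of the factors, whereas $B^{-1}A^{-1}BA$ occurs in only one. Since $s_p$ is $\{\pm1\}$-valued, every doubly-occurring term squares to $1$ and drops out, leaving $s_p(B^{-1}A^{-1}BA)$ as the sole remaining $s_p$-contribution; combining this with the five Hilbert symbols gives the asserted formula. The main obstacle is thus not conceptual but purely the accuracy of this tally: one must confirm that each intermediate matrix genuinely appears an even number of times — in particular that the $s_p(BA)$ from $\sigma_p(B,A)$ cancels the one from $\sigma_p(A^{-1},BA)$, and likewise that the two occurrences of $s_p(A^{-1}BA)$ cancel — which is exactly where an off-by-one slip would most easily occur.
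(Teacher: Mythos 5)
Your proposal is correct and follows essentially the same route as the paper's own argument: expand each of the five cocycle factors from the preceding lemma via $\sigma_p(g,h)=\hs{\tau(gh)\tau(g),\tau(gh)\tau(h)}s_p(g)s_p(h)s_p(gh)$, use $\tau(I)=1$ and $s_p(I)=1$ for the two self-inverse factors, and observe that every $s_p$-term except $s_p(B^{-1}A^{-1}BA)$ occurs exactly twice and so cancels. The tally you describe is exactly the one the paper performs.
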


In the proofs for checking support of our local Hecke algebra (section 3)
we required the following lemma.
\begin{lem}\label{lem2:sigma}
Let $A = \mat{a}{b}{c}{d} \in \SL_2(\Q_p)$. Then 
\begin{enumerate}
\item[(a)] If $B = x(s)$ where $s \ne 0$, then $\sigma_p$-factor is 
 \[\begin{cases}
  \hs{-sc^2,\ 1-cds} & \text{if $sc^2(1-cds) \ne 0$ and 
  $\ord_p(s)$ is odd,}\\
   1 & \text{ else}.
    \end{cases}\]
 \item[(b)] If $B = h(u)$ where $u \ne \pm 1$, then $\sigma_p$-factor is 
 \[\begin{cases}
  \hs{ac(1-u^2),\ 1+(1-u^2)bc} &\parbox[t]{.4\textwidth}{if $ac(1-u^2)(1+(1-u^2)bc) \ne 0$ 
  and $\ord_p(ac(1-u^2))$ is odd,}\\
   1 & \text{ else}.
 \end{cases}\]
 \item[(c)] If $B = y(t)$ where $t \ne 0$, then $\sigma_p$-factor is 
 \[\begin{cases}
  \hs{(a^2-1)t+abt^2, 1+ abt +b^2t^2} &\parbox[t]{.49\textwidth}
  {if $((a^2-1)t+abt^2)(1+ abt +b^2t^2) \ne 0$ 
  and $\ord_p((a^2-1)t+abt^2)$ is odd,}\\
   1 & \text{ else}.
   \end{cases}\]
\end{enumerate}
In each of the above cases the $\sigma_p$-factor is simply $s_p(B^{-1}A^{-1}BA)$.
\end{lem}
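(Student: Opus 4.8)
The plan is to start from the explicit five-fold Hilbert-symbol expression for the $\sigma_p$-factor furnished by Lemma~\ref{lem1:sigma}, to show that this product collapses to $1$ in each of the three cases, and then to read off the value directly from $s_p(B^{-1}A^{-1}BA)$. First I would compute the commutator matrix $B^{-1}A^{-1}BA$ explicitly, repeatedly using $\det A = ad-bc = 1$. A direct multiplication gives, for $B=x(s)$,
\[
B^{-1}A^{-1}BA = \mat{1+scd+s^2c^2}{sd^2-s+s^2cd}{-sc^2}{1-scd},
\]
for $B=h(u)$,
\[
B^{-1}A^{-1}BA = \mat{ad-u^{-2}bc}{bd(1-u^{-2})}{ac(1-u^2)}{ad-u^2bc},
\]
and for $B=y(t)$,
\[
B^{-1}A^{-1}BA = \mat{1-abt}{-b^2t}{(a^2-1)t+abt^2}{1+abt+b^2t^2}.
\]
In every case the lower-left entry is precisely the first argument in the statement, and rewriting the lower-right entry via $ad=1+bc$ (so that $ad-u^2bc = 1+(1-u^2)bc$, for example) produces the second argument. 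Since $\ord_p$ of a quantity of the form $sc^2$ has the same parity as $\ord_p(s)$, the definition of $s_p$ then yields the asserted value of $s_p(B^{-1}A^{-1}BA)$ in each case.

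It remains to prove that the four remaining Hilbert symbols in Lemma~\ref{lem1:sigma} cancel against the first, i.e. that the whole five-fold product is $1$. Here I would substitute the relevant $\tau$-values. For $B=x(s)$ in the generic subcase $c\ne0$ one has $\tau(B)=\tau(B^{-1})=1$, $\tau(A)=c$, $\tau(A^{-1})=-c$, $\tau(BA)=c$ and $\tau(A^{-1}BA)=\tau(B^{-1}A^{-1}BA)=-sc^2$; the five symbols then read $\hs{1,1}$, $\hs{c,-c}$, $\hs{c,c^2}$, $\hs{sc^3,-sc^3}$, $\hs{-sc^2,s^2c^4}$, each of which is $1$ by the standard identities $\hs{x,-x}=1$ and $\hs{x,y^2}=1$ together with bilinearity. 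The cases $B=h(u)$ and $B=y(t)$ are of the same character but more laborious: after inserting the $\tau$-values one checks, using bilinearity, that within each symbol the second argument is the first multiplied either by a perfect square or by the negative of itself, so every factor dies by one of the two basic identities.

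The main obstacle will be the degenerate subcases in which the lower-left entry of one of $A$, $BA$, $A^{-1}BA$ or $B^{-1}A^{-1}BA$ vanishes, forcing $\tau$ to fall back to the corresponding diagonal entry. For example, when $c=0$ the matrices $A^{-1}BA$ and $B^{-1}A^{-1}BA$ become upper triangular, the fallback values $\tau=1$ and $\tau=d^{\pm1}$ enter, and the nontrivial symbols regroup as $\hs{d,d}^{2}=1$. I would enumerate these boundary configurations ($c=0$, and the further vanishings $a=0$ or $d=0$ relevant to $h(u)$ and $y(t)$; the value $u=\pm1$ is excluded by hypothesis), checking in each that the extra symbols again multiply to $1$ using only bilinearity and the identities $\hs{x,-x}=1$, $\hs{x,y^2}=1$. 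Once the five-fold product is seen to be $1$ in all configurations, Lemma~\ref{lem1:sigma} gives that the $\sigma_p$-factor equals $s_p(B^{-1}A^{-1}BA)$, which was computed above; this simultaneously establishes the three displayed formulas and the final assertion of the lemma.
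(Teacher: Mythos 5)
Your overall strategy is the same as the paper's: compute $B^{-1}A^{-1}BA$ explicitly, identify its lower-left and lower-right entries (after rewriting with $ad=1+bc$) as the two arguments of the displayed Hilbert symbol so that the asserted value is exactly $s_p(B^{-1}A^{-1}BA)$, and then show that the remaining five Hilbert symbols furnished by Lemma~\ref{lem1:sigma} multiply to $1$. Your commutator matrices agree with the paper's, the parity observation $\ord_p(sc^2)\equiv\ord_p(s)\pmod 2$ is the right justification for the stated oddness conditions, and your factor-by-factor treatment of case (a) with $c\ne 0$ matches the paper's computation exactly.

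The flaw is in the mechanism you propose for cases (b) and (c): the claim that \emph{within each symbol} the second argument is the first times a square or times its negative, so that \emph{every factor individually} equals $1$, is false, and if you run the computation as described you will get stuck. For $B=h(u)$ with $ac\ne 0$ the five factors are $\hs{u,u^{-1}}=\hs{u,u}=\hs{-1,u}$, then $1$, then $\hs{c,u^{-1}}$, $\hs{-a(u^{-1}-u),u^{-1}}$ and $\hs{ac(u^{-1}-u),u^{-1}}$; the first and third of these are genuinely nontrivial (e.g.\ $\hst{3,3}=-1$, and $\hs{c,u^{-1}}$ has no reason to vanish). Only the product collapses: by bimultiplicativity the last three combine to $\hs{-a^2c^2(u^{-1}-u)^2,\,u^{-1}}=\hs{-1,u}$, which cancels the first factor. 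Likewise for $B=y(t)$ in the generic subcase the third and fourth factors are both equal to the nontrivial symbol $\hs{t(at+c),-ct}$ and cancel as a pair, not individually. This is exactly how the paper argues (it tabulates each factor and then ``multiplies all the above terms''), and it is the correct repair of your plan; the remainder of your outline, including the enumeration of the degenerate subcases where $\tau$ falls back to a diagonal entry, is sound.
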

\begin{proof}
For $(a)$ let $B = x(s)$ where $s \ne 0$. Then we have
 \[BA= \mat{ a+sc }{ b+sd }{c}{d}, \quad A^{-1}BA= \mat{ 1 + cds }{ sd^2 }{ -sc^2 }{ 1-cds },\]
 \[B^{-1}A^{-1}BA= \mat{ 1 + cds + s^2c^2 }{ sd^2-s+cds^2 }{ -sc^2 }{ 1-cds }. \]
It is easy to see that $\hsb{\tau(B),\ \tau(B^{-1})} = 1$ and 
that $\hsb{\tau(A),\ \tau(A^{-1})}=$ 
\[ =  
 \hsb{\tau(A^{-1}BA)\tau(A^{-1}),\ \tau(A^{-1}BA)\tau(BA)} 
 = \begin{cases}
    1 & \text{ if $c \ne 0$}\\
    \hs{d,a} & \text{ else}.
    \end{cases}
\]
Further one can check that $\hsb{\tau(BA)\tau(B), \tau(BA)\tau(A)}= 1$ 
and also
\[\hsb{\tau(B^{-1}A^{-1}BA)\tau(B^{-1}),\ \tau(B^{-1}A^{-1}BA)\tau(A^{-1}BA)}= 1.\]
Finally we have $s_p(B^{-1}A^{-1}BA)$
\[= \begin{cases}
\hs{-sc^2,\ 1-cds} &\parbox[t]{.5\textwidth}{if $sc^2(1-cds) \ne 0$ and $ord_p(s)$ is odd,}\\
1 & \text{ else}.
\end{cases}\]
By using Lemma~\ref{lem1:sigma}, multiplying all the above terms 
we get the required $\sigma_p$-factor.

For $(b)$ we proceed similarly.  Let $B = h(u)$ where $u \ne \pm 1$. Then
\[BA= \mat{ua}{ub}{u^{-1}c}{u^{-1}d}, \quad A^{-1}BA= \mat{ uad - u^{-1}bc }{ bd(u-u^{-1}) }{ ac(u^{-1}-u) }{ u^{-1}ad - ubc },\]
\[B^{-1}A^{-1}BA= \mat{ 1 + (1-u^{-2})bc }{ bd(1-u^{-2}) }{ ac(1-u^2) }{ 1+ (1-u^2)bc }. \]
 
We have $\hsb{\tau(B),\ \tau(B^{-1})} = \hs{u,u^{-1}}$. Also $\hsb{\tau(A),\ \tau(A^{-1})}= 1$ if $c \ne 0$ and $\hs{d,a}$ else.
We check that
\[\hsb{\tau(BA)\tau(B),\ \tau(BA)\tau(A)} = \begin{cases}
                                             \hs{c,u^{-1}} & \text{ if $c \ne 0$}\\
                                             \hs{d,u^{-1}} & \text{ else},
                                            \end{cases}
\]
$\hsb{\tau(A^{-1}BA)\tau(A^{-1}),\ \tau(A^{-1}BA)\tau(BA)}$
\[=\begin{cases}
                                             \hs{-a(u^{-1}-u),u^{-1}} & \text{ if $ac \ne 0$}\\
                                             \hs{bu,-b} & \text{ if $a=0$ and $c \ne 0$}\\
                                             \hs{du^{-1},a}& \text{ if $a \ne 0$ and $c = 0$},
                                            \end{cases}
\]
$\hsb{\tau(B^{-1}A^{-1}BA)\tau(B^{-1}),\ \tau(B^{-1}A^{-1}BA)\tau(A^{-1}BA)}$
\[=\begin{cases}
                                             \hs{ac(u^{-1}-u),u^{-1}} & \text{ if $ac \ne 0$}\\
                                             \hs{bc,u} = \hs{1,u} & \text{ if $a=0$ and $c \ne 0$}\\
                                             \hs{-ad,u}& \text{ if $a \ne 0$ and $c = 0$},
                                             \end{cases} 
\]
and $s_p(B^{-1}A^{-1}BA)=$ 
\[\begin{cases}
  \hs{ac(1-u^2),\ 1+(1-u^2)bc} &\parbox[t]{.4\textwidth}{if $ac(1-u^2)(1+(1-u^2)bc) \ne 0$ and $\ord_p(ac(1-u^2))$ is odd,}\\
   1 & \text{ else}.
\end{cases}
\]
Again by multiplying all the above terms we get the required $\sigma_p$-factor. 

For $(c)$, let $B = y(t)$ where $t \ne 0$. Then 
\[BA= \mat{ a }{ b }{at+c}{bt+d}, \quad A^{-1}BA= \mat{ 1 -abt }{ -b^2 t }{ a^2 t }{ 1+abt },\]
\[B^{-1}A^{-1}BA= \mat{ 1 -abt }{ -b^2 t }{(a^2-1)t+abt^2} {1+ abt +b^2t^2}. \]
 
As before,  
$\hsb{\tau(B),\ \tau(B^{-1})} = \hs{t,-t} =1$, and $\hsb{\tau(A),\ \tau(A^{-1})}= 1$ if $c \ne 0$ and $\hs{d,a}$ else.
One can compute (using $ad-bc =1$ in the Hilbert symbol calculations) that
\[\hsb{\tau(BA)\tau(B),\ \tau(BA)\tau(A)} = \begin{cases}
                                             \hs{t(at+c),-ct} & \text{ if $a \ne -c/t$ and $c \ne 0$}\\
                                             \hs{-c,a} & \text{ if $a = -c/t$ and $c \ne 0$}\\
                                             \hs{a,-dt} & \text{ if $c=0$},
                                            \end{cases}
\]
$\hsb{\tau(A^{-1}BA)\tau(A^{-1}),\ \tau(A^{-1}BA)\tau(BA)} = $
\[=\begin{cases}
                                            \hs{t(at+c),-ct} & \text{ if $a \ne -c/t$ and $c \ne 0$ and $a \ne 0$}\\
                                              1 & \text{ if $a \ne -c/t$ and $c \ne 0$ and $a = 0$}\\
                                             \hs{-c, a} & \text{ if $a = -c/t$ and $c \ne 0$}\\
                                            \hs{a,at}& \text{ if $c = 0$}.
                                            \end{cases}
\]
All the above factors clearly multiplies to $1$. Also it turns out that
\[\hsb{\tau(B^{-1}A^{-1}BA)\tau(B^{-1}),\ \tau(B^{-1}A^{-1}BA)\tau(A^{-1}BA)}=1,\] 
so we get the required $\sigma_p$-factor.
\end{proof}

We also note the triangular decomposition of $\ov{K_0^p(p^n)}$.
\begin{lem}\label{lem:char}
We have a triangular decomposition 
\[\ov{K_0^p(p^n)}= N^{\ov{K_0^p(p^n)}} T^{\ov{K_0^p(p^n)}} 
\bar{N}^{\ov{K_0^p(p^n)}}.\]
More precisely for $(A, \epsilon)=(\mat{a}{b}{c}{d}, \epsilon) \in \ov{K_0^p(p^n)}$, 
\[(A, \epsilon) = (x(s),1)(h(u),1)(y(t),1)(I, \epsilon\delta) \]
where 
\[u=d^{-1},\ s={d^{-1}b},\ t={d^{-1}c},\]
and  
\[\delta = \begin{cases}
            1 & c=0\\
            \hs{d,-1} & c \ne 0,\ \ord_p(c) \text{ is odd }\\
            \hs{-c,d} & c \ne 0,\ \ord_p(c) \text{ is even}.
           \end{cases}
\] 
\end{lem}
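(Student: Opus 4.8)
The plan is to reduce the statement to the ordinary $LU$-decomposition of $A$ in $\SL_2(\Q_p)$ together with a single cocycle computation controlling the $\mu_2$-component. First I would observe that for $(A,\epsilon) = (\mat{a}{b}{c}{d},\epsilon) \in \ov{K_0^p(p^n)}$ the entry $d$ is a unit in $\Z_p$: since $A \in \SL_2(\Z_p)$ and $c \in p^n\Z_p$ with $n \ge 1$, the identity $ad - bc = 1$ gives $ad \equiv 1 \pmod{p^n\Z_p}$, so $a, d \in \Z_p^\times$. Consequently $u := d^{-1}$, $s := d^{-1}b$ and $t := d^{-1}c$ all lie in $\Z_p$, and moreover $t \in p^n\Z_p$. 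A direct multiplication shows $x(s)h(u)y(t) = A$ at the level of $\SL_2(\Q_p)$ (the $(1,1)$-entry check uses $1 + bc = ad$), which gives $(x(s),1) \in N^{\ov{K_0^p(p^n)}}$, $(h(u),1) \in T^{\ov{K_0^p(p^n)}}$ and $(y(t),1) \in \bar{N}^{\ov{K_0^p(p^n)}}$.

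The content is in the $\mu_2$-component. By the group law, $(x(s),1)(h(u),1)(y(t),1) = (A, \delta')$ with $\delta' = \sigma_p(x(s),h(u))\,\sigma_p(x(s)h(u),y(t))$, so it suffices to show $\delta' = \delta$. I would first compute $\sigma_p(x(s),h(u)) = 1$ straight from the cocycle formula: all of $x(s)$, $h(u)$ and $x(s)h(u)$ have lower-left entry $0$, so each $s_p$-factor is $1$ and the Hilbert factor collapses to $\hs{u^{-1}, u^{-2}} = 1$. For the second cocycle I would set $B := x(s)h(u) = \mat{u}{su^{-1}}{0}{u^{-1}}$, noting $By(t) = A$. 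When $c = 0$ we have $t = 0$, so $y(t) = I$, $\sigma_p(B,I) = 1$ and $\delta' = 1 = \delta$. When $c \ne 0$ the Hilbert factor is $\hs{\tau(A)\tau(B),\tau(A)\tau(y(t))} = \hs{cd,\,d^{-1}c^2}$, which simplifies by bilinearity, by discarding the square $c^2$, and by the identity $\hs{d,d^{-1}} = \hs{d,d} = \hs{d,-1}$, to $\hs{c,d}\hs{d,-1}$; the surviving $s_p$-factor is $s_p(A)$, equal to $\hs{c,d}$ if $\ord_p(c)$ is odd and to $1$ if $\ord_p(c)$ is even.

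Multiplying, $\delta' = \hs{d,-1}$ when $\ord_p(c)$ is odd and $\delta' = \hs{c,d}\hs{d,-1} = \hs{-c,d}$ when $\ord_p(c)$ is even, which is precisely $\delta$. To finish, since $\sigma_p(A,I) = 1$ and the element $(I,\epsilon\delta)$ is central, $(A,\delta')(I,\epsilon\delta) = (A, \delta'\delta\,\epsilon) = (A,\epsilon)$ because $\delta'\delta = 1$ in $\mu_2$; absorbing the central factor into $(h(u),1)$ then presents $(A,\epsilon)$ as an element of $N^{\ov{K_0^p(p^n)}} T^{\ov{K_0^p(p^n)}} \bar{N}^{\ov{K_0^p(p^n)}}$, giving the triangular decomposition. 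The one genuinely delicate point is the parity analysis of $\ord_p(c)$ in the second cocycle, since this is exactly what produces the two nontrivial values of $\delta$; the remaining steps are routine Hilbert-symbol bookkeeping.
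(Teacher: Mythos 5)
Your proposal is correct and follows essentially the same route as the paper: write $A = x(s)h(u)y(t)$ and evaluate the two cocycles $\sigma_p(x(s),h(u))$ and $\sigma_p(x(s)h(u),y(t))$, the only cosmetic difference being that you carry out the Hilbert-symbol bookkeeping directly in terms of $c,d$ (using $\ord_p(t)=\ord_p(c)$ since $d\in\Z_p^\times$) where the paper first records the answer in terms of $t,u$ and then substitutes. All the individual simplifications ($\hs{u^{-1},u^{-2}}=1$, $\hs{cd,d^{-1}c^2}=\hs{c,d}\hs{d,-1}$, $s_p(A)=\hs{c,d}$ exactly when $\ord_p(c)$ is odd) check out.
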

\begin{proof}
Clearly 
\[\mat{a}{b}{c}{d} = \mat{1}{bd^{-1}}{0}{1} 
\mat{d^{-1}}{0}{0}{d} \mat{1}{0}{cd^{-1}}{1}.\]
Let $u=d^{-1},\ s={bd^{-1}},\ t={cd^{-1}}$.
Since 
\[x(s)h(u)y(t)= \mat{u}{su^{-1}}{0}{u^{-1}}\mat{1}{0}{t}{1} = \mat{u+su^{-1}t}{su^{-1}}{tu^{-1}}{u^{-1}},\]
we get that 
\[(x(s),1)(h(u),1)(y(t),1)= (x(s)h(u)y(t), \delta) = (A, \delta)\]
where 
\[\delta = \sigma(x(s),h(u))\sigma(x(s)h(u),y(t))= \begin{cases}
                                                    1 & t=0\\
                                                    \hs{u,-1} & t \ne 0,\ \ord_p(t) \text{ is odd }\\
                                                    \hs{t,u}  & t \ne 0,\ \ord_p(t) \text{ is even}.
                                                   \end{cases}\]
Substituting $u$, $s$, $t$ in terms of $b$, $c$, $d$ we get 
$\delta$ as in the statement. 
\end{proof}

\end{document}